\documentclass[a4paper, 11pt]{amsart}

\usepackage{amsmath,amssymb,enumitem,verbatim,stmaryrd,xcolor,microtype}
\usepackage[T1]{fontenc}
\usepackage[utf8]{inputenc}
\usepackage[english]{babel}
\usepackage[top=3.5cm,bottom=3.5cm,left=3.2cm,right=3.2cm]{geometry}
\usepackage[bookmarksdepth=3,linktoc=page,colorlinks,linkcolor={red!80!black},citecolor={red!80!black},urlcolor={blue!80!black}]{hyperref}\newcommand{\arxiv}[1]{\href{http://arxiv.org/pdf/#1}{arXiv:#1}}
\usepackage{tikz}\usetikzlibrary{matrix,arrows,decorations.markings}
\usepackage[all]{xy}
\usepackage[mathcal]{euscript}                               
\usepackage{mathptmx}                                        
\usepackage{etoolbox}\makeatletter\patchcmd{\@startsection}{\@afterindenttrue}{\@afterindentfalse}{}{}\makeatother    
\patchcmd{\section}{\scshape}{\bfseries}{}{}\makeatletter\renewcommand{\@secnumfont}{\bfseries}\makeatother           
\usepackage[backgroundcolor=yellow,linecolor=yellow,textsize=footnotesize]{todonotes}\setlength{\marginparwidth}{2,5cm} \makeatletter \providecommand \@dotsep{5} \def\listtodoname{List of Todos} \def\listoftodos{\@starttoc{tdo}\listtodoname} \makeatother 

\theoremstyle{plain}
\newtheorem{thm}{Theorem}[section]
\newtheorem{cor}[thm]{Corollary}
\newtheorem{lemma}[thm]{Lemma}
\newtheorem{prop}[thm]{Proposition}
\newtheorem{thmA}{Theorem}  

\theoremstyle{definition}
\newtheorem{df}[thm]{Definition}
\newtheorem{rem}[thm]{Remark}
\newtheorem{ex}[thm]{Example}

\usepackage{etoolbox}
\makeatletter
\patchcmd{\@startsection}{\@afterindenttrue}{\@afterindentfalse}{}{}             
\patchcmd{\part}{\bfseries}{\bfseries\LARGE}{}{}
\patchcmd{\section}{\scshape}{\bfseries}{}{}\renewcommand{\@secnumfont}{\bfseries} 
\patchcmd{\@settitle}{\uppercasenonmath\@title}{\large}{}{}
\patchcmd{\@setauthors}{\MakeUppercase}{}{}{}
\addto{\captionsenglish}{} 
\addto{\captionsenglish}{} 
\addto{\captionsenglish}{} 
\makeatother

\usepackage{fancyhdr}

\pagestyle{fancy}
\fancyhead{}
\fancyfoot{}
\fancyhead[OR,EL]{\footnotesize \thepage}
\fancyhead[OC]{\footnotesize A unifying approach to tropicalization}
\fancyhead[EC]{\footnotesize Oliver Lorscheid}
\setlength{\headheight}{12pt}

\setcounter{tocdepth}{1}   
\DeclareRobustCommand{\gobblefour}[4]{}    
\DeclareSymbolFont{sfoperators}{OT1}{bch}{m}{n} \DeclareSymbolFontAlphabet{\mathsf}{sfoperators} \makeatletter\def\operator@font{\mathgroup\symsfoperators}\makeatother 
\DeclareSymbolFont{cmletters}{OML}{cmm}{m}{it}              
\DeclareSymbolFont{cmsymbols}{OMS}{cmsy}{m}{n}
\DeclareSymbolFont{cmlargesymbols}{OMX}{cmex}{m}{n}
\DeclareMathSymbol{\myjmath}{\mathord}{cmletters}{"7C}     \let\jmath\myjmath 
\DeclareMathSymbol{\myamalg}{\mathbin}{cmsymbols}{"71}     
\DeclareMathSymbol{\mycoprod}{\mathop}{cmlargesymbols}{"60}\let\coprod\mycoprod
\DeclareMathSymbol{\myalpha}{\mathord}{cmletters}{"0B}     \let\alpha\myalpha 
\DeclareMathSymbol{\mybeta}{\mathord}{cmletters}{"0C}      \let\beta\mybeta
\DeclareMathSymbol{\mygamma}{\mathord}{cmletters}{"0D}     \let\gamma\mygamma
\DeclareMathSymbol{\mydelta}{\mathord}{cmletters}{"0E}     \let\delta\mydelta
\DeclareMathSymbol{\myepsilon}{\mathord}{cmletters}{"0F}   \let\epsilon\myepsilon
\DeclareMathSymbol{\myzeta}{\mathord}{cmletters}{"10}      \let\zeta\myzeta
\DeclareMathSymbol{\myeta}{\mathord}{cmletters}{"11}       \let\eta\myeta
\DeclareMathSymbol{\mytheta}{\mathord}{cmletters}{"12}     \let\theta\mytheta
\DeclareMathSymbol{\myiota}{\mathord}{cmletters}{"13}      \let\iota\myiota
\DeclareMathSymbol{\mykappa}{\mathord}{cmletters}{"14}     \let\kappa\mykappa
\DeclareMathSymbol{\mylambda}{\mathord}{cmletters}{"15}    \let\lambda\mylambda
\DeclareMathSymbol{\mymu}{\mathord}{cmletters}{"16}        \let\mu\mymu
\DeclareMathSymbol{\mynu}{\mathord}{cmletters}{"17}        \let\nu\mynu
\DeclareMathSymbol{\myxi}{\mathord}{cmletters}{"18}        \let\xi\myxi
\DeclareMathSymbol{\mypi}{\mathord}{cmletters}{"19}        \let\pi\mypi
\DeclareMathSymbol{\myrho}{\mathord}{cmletters}{"1A}       \let\rho\myrho
\DeclareMathSymbol{\mysigma}{\mathord}{cmletters}{"1B}     \let\sigma\mysigma
\DeclareMathSymbol{\mytau}{\mathord}{cmletters}{"1C}       \let\tau\mytau
\DeclareMathSymbol{\myupsilon}{\mathord}{cmletters}{"1D}   \let\upsilon\myupsilon
\DeclareMathSymbol{\myphi}{\mathord}{cmletters}{"1E}       \let\phi\myphi
\DeclareMathSymbol{\mychi}{\mathord}{cmletters}{"1F}       \let\chi\mychi
\DeclareMathSymbol{\mypsi}{\mathord}{cmletters}{"20}       \let\psi\mypsi
\DeclareMathSymbol{\myomega}{\mathord}{cmletters}{"21}     \let\omega\myomega
\DeclareMathSymbol{\myvarepsilon}{\mathord}{cmletters}{"22}\let\varepsilon\myvarepsilon
\DeclareMathSymbol{\myvartheta}{\mathord}{cmletters}{"23}  \let\vartheta\myvartheta
\DeclareMathSymbol{\myvarpi}{\mathord}{cmletters}{"24}     \let\varpi\myvarpi
\DeclareMathSymbol{\myvarrho}{\mathord}{cmletters}{"25}    \let\varrho\myvarrho
\DeclareMathSymbol{\myvarsigma}{\mathord}{cmletters}{"26}  \let\varsigma\myvarsigma
\DeclareMathSymbol{\myvarphi}{\mathord}{cmletters}{"27}    \let\varphi\myvarphi

\DeclareMathOperator{\Spec}{Spec}

\DeclareMathOperator{\Sch}{Sch}
\DeclareMathOperator{\OBSch}{OBSch}
\DeclareMathOperator{\Hom}{Hom}
\DeclareMathOperator{\Proj}{Proj}
\DeclareMathOperator{\colim}{colim}

\DeclareMathOperator{\im}{im}
\DeclareMathOperator{\eq}{eq}
\DeclareMathOperator{\coeq}{coeq}

\DeclareMathOperator{\OBAff}{{OBAff}}

\DeclareMathOperator{\Alg}{{Alg}}
\DeclareMathOperator{\An}{{An}}
\DeclareMathOperator{\Trop}{{Trop}}
\DeclareMathOperator{\Bend}{{Bend}}
\DeclareMathOperator{\bend}{{bend}}

\DeclareMathOperator{\Val}{{Val}}
\DeclareMathOperator{\Blpr}{{Blpr}}
\DeclareMathOperator{\OBlpr}{{OBlpr}}
\DeclareMathOperator{\Sets}{Sets}
\DeclareMathOperator{\Rings}{{Rings}}
\DeclareMathOperator{\SRings}{{SRings}}
\DeclareMathOperator{\Mon}{{Mon}}
\DeclareMathOperator{\Halos}{{Halos}}
\DeclareMathOperator{\OBlprSp}{{OBlprSp}}

\newcommand\A{{\mathbb A}}
\newcommand\B{{\mathbb B}}
\newcommand\C{{\mathbb C}}
\newcommand\F{{\mathbb F}}
\newcommand\G{{\mathbb G}}

\newcommand\N{{\mathbb N}}
\renewcommand\P{{\mathbb P}}

\newcommand\R{{\mathbb R}}
\renewcommand\S{{\mathbb S}}
\newcommand\T{{\mathbb T}}
\newcommand\Z{{\mathbb Z}}

\newcommand\cC{{\mathcal C}}
\newcommand\cD{{\mathcal D}}

\newcommand\cF{{\mathcal F}}
\newcommand\cG{{\mathcal G}}

\newcommand\cM{{\mathcal M}}

\newcommand\cO{{\mathcal O}}
\newcommand\cP{{\mathcal P}}

\newcommand\cR{{\mathcal R}}
\newcommand\cS{{\mathcal S}}

\newcommand\cU{{\mathcal U}}
\newcommand\cV{{\mathcal V}}

\newcommand\cZ{{\mathcal Z}}

\newcommand\fm{{\mathfrak m}}
\newcommand\fp{{\mathfrak p}}
\newcommand\fq{{\mathfrak q}}
\newcommand\Fun{{\F_1}}
\newcommand\Funsq{{\F_{1^2}}}

\renewcommand\int{\textup{int}}

\newcommand\id{\textup{id}}

\newcommand\inv{{\textup{inv}}}
\newcommand\proper{{\textup{prop}}}
\newcommand\pos{{\textup{pos}}}

\newcommand\hull{\textup{hull}}
\newcommand\alg{\textup{alg}}
\newcommand\an{\textup{an}}

\newcommand\ob{\textup{ob}}

\newcommand\conic{\textup{conic}}
\newcommand\idem{\textup{idem}}
\newcommand\blue{\textup{blue}}

\newcommand\core{\textup{core}}
\newcommand\padd{\textup{padd}}
\newcommand\mon{\textup{mon}}

\newcommand\ev{\textup{ev}}
\newcommand\trop{\textup{trop}}
\renewcommand\log{\textup{log}}
\newcommand\toric{\textup{toric}}
\newcommand\gp{\textup{gp}}
\newcommand\init{\textup{in}}
\newcommand\mult{\textup{mult}}
\newcommand\univ{\textup{univ}}

\newcommand\Sotimes{\otimes^{\!+}}

\renewcommand\={\equiv}
\renewcommand\geq{\geqslant}
\renewcommand\leq{\leqslant}
\newcommand{\gen}[1]{\langle #1 \rangle}
\newcommand{\bpquot}[2]{#1\!\sslash\!#2}
\newcommand{\bpgenquot}[2]{#1\!\sslash\!\gen{#2}}
\newdir{ >}{{}*!/-5pt/@^{(}}\newcommand{\arincl}[1]{\ar@{ >->}@<-0,0ex>#1} 
\newcommand{\norm}[1]{| #1 |}

\title{A unifying approach to tropicalization}

\author{Oliver Lorscheid}
\address{\rm University of Groningen, the Netherlands, and IMPA, Rio de Janeiro, Brazil}
\email{{oliver@impa.br}}

\begin{document}

\ \vspace{-0,1cm}

\begin{abstract}
 In this paper, we introduce ordered blueprints and ordered blue schemes, which serve as a common language for the different approaches to tropicalizations and which enhances tropical varieties with a schematic structure. As an abstract concept, we consider a tropicalization as a moduli problem about extensions of a given valuation $v:k\to T$ between ordered blueprints $k$ and $T$. If $T$ is idempotent, then we show that a generalization of the Giansiracusa bend relation leads to a representing object for the tropicalization, and that it has yet another interpretation in terms of a base change along $v$. We call such a representing object a \emph{scheme theoretic tropicalization}.
 
 This theory recovers and improves other approaches to tropicalizations as we explain with care in the second part of this text. 
 
 The Berkovich analytification and the Kajiwara-Payne tropicalization appear as rational point sets of a scheme theoretic tropicalization. The same holds true for its generalization by Foster and Ranganathan to higher rank valuations.
 
 The scheme theoretic Giansiracusa tropicalization can be recovered from the scheme theoretic tropicalizations in our sense. We obtain an improvement due to the resulting blueprint structure, which is sufficient to remember the Maclagan-Rinc\'on weights. 
 
 The Macpherson analytification has an interpretation in terms of a scheme theoretic tropicalization, and we give an alternative approach to Macpherson's construction of tropicalizations.
 
 The Thuillier analytification and the Ulirsch tropicalization are rational point sets of a scheme theoretic tropicalization. Our approach yields a generalization to any, possibly nontrivial, valuation $v:k\to T$ with idempotent $T$ and enhances the tropicalization with a schematic structure.
\end{abstract}

\maketitle

\begin{small} \tableofcontents \end{small}


\section*{Introduction}
\label{intro}

\subsection*{Outlook to the main results}
The purpose of this paper is the development of a language that allows us to consider the different techniques of tropicalizing a classical scheme within the same framework and that provides a scheme theoretic structure for tropicalizations. 

This sought-after language is that of ordered blueprints and ordered blue schemes. Our theory recovers and extends the following earlier approaches to analytification and tropicalization in terms of functorial constructions:
\begin{itemize}
 \item Berkovich analytification (Thm.\ \ref{thm: Berkovich analytification as rational point set}) and its variant by Thuillier (Thm.\ \ref{thm: Thuillier analytification as rational point set}), considered as topological spaces;
 \item Kajiwara-Payne tropicalization (Thm.\ \ref{thm: Kajiwara-Payne tropicalization as rational point set});
 \item its generalization to higher rank valuations by Foster and Ranganathan (Thm.\ \ref{thm: Foster-Ranganathan analytification as rational point set});
 \item Jeffrey and Noah Giansircusa's ``scheme theoretic tropicalization'' (Thm.\ \ref{thm: Giansiracusa tropicalization as bend});
 \item Maclagan and Rinc\'on's recovery of the weights of the ``classical'' tropicalization from the Giansiracusa tropicalization (Thm.\ \ref{thm: Maclagan-Rincon weights});
 \item Macpherson's analytification, including an extension of his theory to tropicalization (Thm.\ \ref{thm: Macpherson analytification and the bend functor});
 \item Ulirsch's tropicalization of logarithmic schemes (Thm.\ \ref{thm: Ulirsch tropicalization as rational point set}), in good situations.
\end{itemize}
In fact, the structure of an ordered blue scheme carries more information than any of the aforementioned approaches, which fits natural into the context of tropicalizations. We hope that this enriched structure will be useful for future developments in tropical geometry.

\subsection*{History}
Let us begin with a historical outline of tropical geometry, in order to explain the relevance of our results.

In spite of the early works of Bergman (\cite{Bergman71}) and Bieri and Groves (\cite{Bieri-Groves84}), tropical geometry became an active research area only in the early 2000s when it became clear that the combinatorial nature of tropical varieties could be used to study their classical counterparts; for instance, see Mikhalkin's celebrated computation of Gromov-Witten invariants (\cite{Mikhalkin05}). 

Let $k$ be a field and $X$ a closed subvariety of the torus $(k^\times)^n$. From its early days on, the tropicalization $\Trop(X)$ along a logarithmic valuation $v:k^\times\to \R$ was equally understood as an amoeba, as the corner locus of its defining polynomials and as the coordinatewise evaluation of seminorms extending $v$ (\cite{Bieri-Groves84}, \cite{Einsiedler-Kapranov-Lind06}). It was known that $\Trop(X)$ can be endowed with the structure of a finite polyhedral complex in $\R^n$, whose top dimensional polyhedra carry weights that satisfy a certain balancing condition (\cite{Bieri-Groves84}, \cite{Speyer05}). It was also clear that the tropicalization of $X$ could be compactified via an embedding of $(k^\times)^n$ into a complete toric variety (\cite{Mikhalkin00}, \cite{Nishinou-Siebert06}, \cite{Speyer05}). 

However, it took some years till this knowledge found a clear formulation in the independent works of Kajiwara (\cite{Kajiwara08}) and Payne (\cite{Payne09}) who defined the tropicalization of a closed subvariety $X$ of a toric variety along a nonarchimedean valuation as a quotient of the Berkovich space of $X$, which can be understood as a stack quotient (\cite{Ulirsch14}).

From this point on, the understanding of tropicalization was broadened in different directions. One important source of inspiration were skeleta of Berkovich spaces, as introduced by Berkovich (\cite{Berkovich99}). In the situation of a variety over a discretely valued field, a semistable model over the discrete valuation ring defines a skeleton for the Berkovich space. While a strict correspondence between skeleta from semistable models and tropica\-lizations holds only in special situations, a generalized framework of skeleta for semistable pairs illuminated this relation; cf.\ Tyomkin (\cite{Tyomkin12}), Baker, Payne and Rabinoff (\cite{Baker-Payne-Rabinoff11}, \cite{Baker-Payne-Rabinoff13}), and Gubler, Rabinoff and Werner (\cite{Gubler-Rabinoff-Werner14}, \cite{Gubler-Rabinoff-Werner15}). 

An important variant of skeleta for semistable models is Thuillier's theory of skeleta for toroidal embeddings over trivially valued fields (\cite{Thuillier07}). Abramovich, Caporaso and Payne interpreted these skeleta as tropicalizations (\cite{Abramovich-Caporaso-Payne12}), and Ulirsch (\cite{Ulirsch13}) clarified this process in terms of a tropicalization associated to fine and saturated log schemes, which passes through an associated Kato fan and the local tropicalization of Popescu-Pampu and Ste\-panov (\cite{Popescu-Pampu-Stepanov13}). Ulirsch's tropicalization of fine and saturated Zariski log schemes coincides with the approach of Gross and Siebert (\cite{Gross-Siebert13}) in their study of logarithmic Gromov-Witten invariants.

A recent variant of the Kajiwara-Payne tropicalization replaces the valuation $v:k^\times\to \R$ by a valuation $v:k^\times\to \R^n$ of higher rank. This was first considered by Banerjee (\cite{Banerjee15}) in the case of higher local fields, and the idea was taken up and generalized by Foster and Ranganathan (\cite{Foster-Ranganathan15}), who showed that higher rank tropicalizations reflect certain properties of classical varieties over $k$.

With the progress of generalized scheme theory, often coined as $\Fun$-geometry, a theory of semiring schemes and, in particular, schemes over the tropical numbers became available; see the work of Durov (\cite{Durov07}), To\"en and Vaqui\'e (\cite{Toen-Vaquie09}), and the author (\cite{blueprints1}). Jeff and Noah Giansiracusa used this theory in the case of closed subschemes of toric varie\-ties to enhance the tropicalization with a schematic structure (\cite{Giansiracusa13}). At the same time, Macpherson endowed such a tropicalization with the structure of an analytic space (\cite{Macpherson13}). Strikingly, Maclagan and Rinc\'on showed that the schematic structure of the tropicalization together with the embedding into an ambient torus encodes the structure of the tropical variety as a balanced weighted polyhedral complex (\cite{Maclagan-Rincon14}).

In the following, we will explain how to put these different approaches to tropicalizations on a common footing via ordered blueprints.

\subsection*{From coordinates to blueprints}
Grosso modo, a tropicalization of a $k$-scheme $X$ is the image of certain chosen coordinates of $X$ under a valuation $v$ of the field $k$. The coordinates for $X$ can be given by different means: an embedding of $X$ into affine space or into a toric variety; a simple normal crossing divisor on $X$; a simple toroidal embedding; a fine and saturated log structure for $X$.

For simplicity, let $X=\Spec R$ be an affine $k$-scheme. The choice of coordinates singles out a multiplicative subset $A$ of $R$. In case of an closed immersion $\iota:X\to \Spec k[A_0]$ into a toric variety, $A$ equals the set of elements of the form $\Gamma\iota(c\cdot a)\in R$ where $c\in k$ and $a\in A_0$ and $\Gamma\iota:k[A_0]\to R$ is the map between the respective global sections. In case of the complement $U\subset X$ of a simple normal crossing divisor, or, more general, a simple toroidal embedding $U\subset X$, the multiplicative set $A$ equals the intersection $R\cap\cO_X(U)^\times$. In case of a fine and saturated log structure $\alpha:\cM_X\to\cO_X$, the multiplicative subset is $A=\cM_X(X)$.

Let $B^+$ be the subring of $R$ that is generated by $A$. Then the inclusion $A\subset B^+$ is a blueprint in the sense of \cite{blueprints1}. In this paper, we explain how to tropicalize $X$ with respect to the choice of blueprint $B=(A\subset B^+)$, and we show that this recovers the previously mentioned concepts of tropicalization.

\subsection*{Analytification as a base change}
Before we enter the theory of ordered blueprints, we want to explain the underlying idea that is inspired by Paugam's approach \cite{Paugam09} to analytic geometry. The following is a simplified account of this theory.

An ordered semiring is a (commutative) semiring $R$ (with $0$ and $1$) together with a partial order $\leq$ that is additive and multiplicative. A \emph{subadditive homomorphism of ordered semirings} is an order preserving multiplicative map $f:R_1\to R_2$ with $f(0)=0$, $f(1)=1$ and $f(a+b)\leq f(a)+f(b)$. 

This allows us to perform the following gedankenexperiments. We consider rings as trivially ordered semirings. Note that a subadditive homomorphism between trivially ordered semirings is always additive, which means that subadditive homomorphisms of rings are homomorphisms.

If we endow the semiring $\R_{\geq0}$ with its natural total order, then a seminorm $v:k\to\R_{\geq0}$ on a ring $k$ is nothing else than a subadditive homomorphism of ordered semirings. If we exchange the usual addition of $\R_{\geq0}$ by the maximum operation, which yields the ordered semiring $\T$ of tropical numbers, then a subadditive homomorphism $v:k\to \T$ is nothing else than a nonarchimedean seminorm on $k$.

Given a field $k$ with a nonarchimedean absolute value $v:k\to\T$ and an affine $k$-scheme $X=\Spec R$, the Berkovich analytification $X^\an$ equals the set of all seminorms $w:R\to \T$ that extend $v$, i.e.\ the set of all subadditive homomorphisms $w$ that make the diagram
\[
 \xymatrix@R=1pc@C=6pc{k \ar[r]^v \ar[d] & \T \ar[d]_\id \\ R \ar@{-->}[r]^w &\T}
\]
commute. This suggests the interpretation of the Berkovich space $X^\an$ as the set $(X\otimes_k\T)(\T)$ of $\T$-rational points of the base change $X\otimes_k\T=\Spec(R\otimes_k\T)$ of $X$ along $v$.

The problem is that it is not clear if tensor products exist in general and how to construct them. We circumvent this problem by considering the larger category of ordered blueprints, which contains tensor products naturally; cf.\ Remarks \ref{rem: halos} and \ref{rem: tropicalization as base change} for details.

\subsection*{Ordered blueprints}
In the following exposition, we present a different, but equivalent, definition of ordered blueprints from the main text of this paper. For the precise connection between these two viewpoints, cf.\ Remark \ref{rem: ordered blueprints as monoids in ordered semirings}.

An \emph{ordered blueprint} is an ordered semiring $B^+$ together with a multiplicatively closed subset $B^\bullet\subset B^+$ of generators of $B^+$ that contains $0$ and $1$. A \emph{morphism of ordered blueprints} is an order preserving homomorphism of semirings that sends generators to generators. This defines the category $\OBlpr$ of ordered blueprints, which turns out to be closed under small limits and colimits and, in particular, has a tensor product. We write $B$ for the ordered blueprint $B^\bullet\subset B^+$.

Some examples are the following. A semiring $R$ can be considered as the ordered blueprint $B=(R\subset R)$, together with the trivial order on $R$. We call ordered blueprints $B$ whose semiring is trivially ordered \emph{algebraic}, and we can associate with every ordered blueprint $B$ its \emph{algebraic core $B^\core$} which results from replacing the order of $B^+$ by the trivial order. 

We denote the algebraic semiring $\R_{\geq0}$ together with its natural total order by $\R_{\geq0}^\pos$. Similarly, we denote the algebraic semiring $\T$ of tropical numbers together with its natural total order by $\T^\pos$. More generally, we can define for every ordered blueprint $B$ its associated \emph{totally positive blueprint $B^\pos$}, which is $B$ together with the order generated by the relations $a\leq b$ whenever there is an $c\in B^+$ such that $a+c\leq b$ in $B^+$. Note that it comes with a morphism $B\to B^\pos$.

The name stems from the fact that $0\leq a$ for every $a\in B^\pos$. Note that in general, the order of $B^\pos$ might identify different elements of $B^+$. For instance, $R^\pos$ is trivial if $R$ is a ring. For an idempotent semiring $R$, however, the totally positive blueprint $R^\pos$ carries the natural partial order of $R$ and we recover $R$ as $(R^\pos)^\core$.

\subsection*{Valuations}
In agreement with the naive approach explained above, totally positive blue\-prints will play the role of the targets of valuations. Our interpretation of the domains of valuations will pass through the following construction.

Let $B=(B^\bullet\subset B^+)$ be an ordered blueprint. We define its associated \emph{monomial blueprint} as the ordered blueprint $B^\bullet\subset B^{\mon,+}$ where $B^{\mon,+}$ is the monoid semiring $\N[B^\bullet]$ of $B^\bullet$ modulo the identification of the respective zeros of $B^\bullet$ and $\N[B^\bullet]$. The partial order of $B^{\mon,+}$ is generated by the \emph{left monomial relations} $a\leq\sum b_j$ with $a,b_j\in B^\bullet$ whenever this holds in $B^+$. Note that the identity on $B^\bullet$ induces a morphism $B^\mon\to B$.

With these definitions at hand, we see that a map $v:B\to \R_{\geq0}$ from a ring $B$ to the non-negative reals is a seminorm if and only if the composition
\[
 B^\mon \ \longrightarrow \ B \ \stackrel v \longrightarrow \ \R_{\geq0} \ \longrightarrow \ \R_{\geq0}^\pos
\]
is a morphism of ordered blueprints. A map $v:B\to \T$ is a nonarchimedean seminorm if and only if $B^\mon\to\T^\pos$ is a morphism. 

This motivates our general definition of a \emph{valuation} as a multiplicative map $v:B\to T$ between ordered blueprints $B$ and $T$ such that $B^\mon\to T^\pos$ is a morphism.

\subsection*{Scheme theory}
It is possible to extend ordered blueprints to a geometric category $\OBSch$ of \emph{ordered blue schemes} in terms of topological spaces with a sheaf in $\OBlpr$. This comes with a contravariant functor  $\Spec:\OBlpr\to\OBSch$ that associates with an ordered blueprint the space of its prime ideals.

In particular, we can consider the sets $X(T)$ of $T$-rational points of an ordered blue $T$-scheme $X$. If $T$ carries a topology, then $X(T)$ becomes a topological space with respect to the fine topology, which was introduced by the author and Salgado in \cite{Lorscheid-Salgado14}.

\subsection*{Tropicalization and the bend relation}
To avoid technicalities concerning scheme theory, we restrict ourselves to affine schemes in the following presentation of our results. This suffices to explain the essential content of our theory since tropicalization is a process that commutes with restrictions to affine patches and a generalization to geometry is achieved by standard arguments in most situations.

Let $k$ be an ordered blueprint and $B$ an \emph{ordered blue $k$-algebra}, i.e.\ a morphism $k\to B$. Let $v:k\to T$ be a valuation. Consider the functor $\Val_v(B,-)$ that associates with an ordered blue $T$-algebra $S$ the set of valuations $w:B\to S$ that extend $v$, which means that the diagram
\[
 \xymatrix@R=1pc@C=6pc{B \ar[r]^w & S \\ k \ar[u] \ar[r]^v & T \ar[u] }
\]
commutes. Let $X=\Spec B$. A \emph{tropicalization of $X$ along $v$} is an ordered blue $T$-scheme that represents $\Val_v(B,-)$.

In complete generality, a tropicalization of $X$ along $v$ does not exist. However, in the following two situations, we can prove its existence in terms of an explicit description. Let $X^\mon=\Spec B^\mon$.

\begin{thmA}\label{thmA}
 If $T$ is totally positive, then $X^\mon\otimes_{k^\mon}T$ is a tropicalization of $X$ along $v$.
\end{thmA}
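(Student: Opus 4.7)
The strategy is to apply the universal property of the pushout (tensor product) in $\OBlpr$ and then identify the resulting hom-set with the valuation functor. For an ordered blue $T$-algebra $S$, the universal property of the pushout immediately yields
\[
\Hom_T\bigl(B^\mon \otimes_{k^\mon} T,\, S\bigr) \;=\; \bigl\{\, f : B^\mon \to S \text{ in }\OBlpr \;\bigm|\; (k^\mon\to B^\mon\xrightarrow f S) = (k^\mon\to T\to S) \,\bigr\},
\]
so the theorem reduces to constructing a natural bijection between the right hand side and $\Val_v(B,S)$.

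To build this bijection I would describe both sets in terms of multiplicative maps on $\bullet$-parts. Because $B^{\mon,+}$ is the free semiring on $B^\bullet$ modulo the identification of zeros, a morphism $f:B^\mon\to S$ in $\OBlpr$ is the same datum as a multiplicative map $f^\bullet:B^\bullet\to S^\bullet$ that sends every left monomial relation $a\leq\sum b_j$ holding in $B^+$ to an inequality $f^\bullet(a)\leq\sum f^\bullet(b_j)$ in $S^+$. Similarly, a valuation $w:B\to S$ extending $v$ amounts to such a multiplicative map $w^\bullet$, but now the order-preservation is demanded in $S^{\pos,+}$, and the extension condition translates into precisely the $v$-compatibility appearing in the universal property. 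So the remaining content is the comparison of the two order-preservation conditions: in $S^+$ versus in $S^{\pos,+}$.

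This comparison is the heart of the argument and, I expect, the main technical obstacle. The direction ``morphism $\Rightarrow$ valuation'' is immediate by postcomposition with the canonical morphism $S\to S^\pos$, which preserves order. For the converse I would use crucially that $T$ is totally positive: the structure morphism $T\to S$ carries the totally positive order on $T$ into $S^+$, and this positivity, together with the $v$-compatibility that forces the constants coming from $k$ to factor through $T$, is what allows one to upgrade an $S^{\pos,+}$-relation between images of left monomial data in $B^+$ to a genuine $S^+$-relation. Concretely, the totally positive witnesses on the $T$-side of the tensor product are what supply the ``slack'' elements $c$ appearing in the defining relations $a+c\leq b$ of the totally positive order, and the additivity and multiplicativity of the order on $S^+$ then promote the desired inequality from $S^{\pos,+}$ to $S^+$.

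Once this equivalence of order-preservation conditions is in place, the resulting bijection is evidently natural in $S$, and $\Spec(B^\mon\otimes_{k^\mon}T)$ represents $\Val_v(B,-)$ on the category of ordered blue $T$-algebras, proving that it is a tropicalization of $X$ along $v$.
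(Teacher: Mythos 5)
Your overall strategy matches the paper's: use the universal property of the pushout to reduce to comparing $\OBlpr$-morphisms $B^\mon\to S$ with valuations $B\to S$, and observe that total positivity of $T$ is what makes these coincide. However, you have substantially overcomplicated the decisive step, and part of your justification is a red herring. The clean argument is a one-liner: since $T$ is totally positive (i.e.\ $0\leq 1$ in $T^+$) and every ordered blue $T$-algebra $S$ receives a structure morphism $T\to S$, the relation $0\leq 1$ transfers, so $S$ is totally positive; but an ordered blueprint is totally positive if and only if the canonical morphism $S\to S^\pos$ is an isomorphism (this is exactly the reflectivity of $(-)^\pos$, stated in section 2.7). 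Hence $S=S^\pos$, and a valuation $w:B\to S$ over $v$ is literally the same datum as a morphism $\tilde w:B^\mon\to S$ compatible with $v$, which by the pushout universal property is a $T$-morphism $B^\mon\otimes_{k^\mon}T\to S$. This is precisely how the paper argues via Lemma \ref{lemma: valuations as homs if T is idempotent or totally positive}.

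Your detour --- invoking ``$v$-compatibility that forces the constants coming from $k$ to factor through $T$'' and searching for ``slack'' elements $c$ coming from the $T$-side of the tensor product --- is both unnecessary and subtly misdirected. The slack witnesses in the defining relations $a+c\leq b$ of the positive order live in $S^+$, not in the tensor factor $T$, and the $v$-compatibility of $w$ plays no role in establishing $S=S^\pos$; it only governs which morphisms $B^\mon\to S$ count, which is already handled by the pushout universal property. Once you replace this paragraph with the observation that $S$ totally positive gives $S\cong S^\pos$, your proof closes cleanly and coincides with the paper's.
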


This is Theorem \ref{thm: tropicalization for totally positive tropical base}. This theorem realizes the idea that the tropicalization is the base change along a valuation. In particular, if $k$ and $B$ are monomial, then $X\otimes_k T$ is a tropicalization of $X$ along $v$.

In order to formulate the second existence theorem for tropicalizations, we have to introduce the bend, which is a generalization of the Giansiracusa tropicalization (\cite{Giansiracusa13}) to the context of ordered blueprints. The \emph{bend of an ordered blueprint $B$ along $v$} is the ordered blue $T$-algebra $\Bend_v(B)$ whose ordered semiring $\Bend_v(B)^+$ and whose underlying monoid $\Bend_v(B)^\bullet$ are defined as follows. The semiring $\Bend_v(B)^+$ is the quotient of the semigroup semiring $T^+[B^\bullet]$ by the relations of the form 
\[\textstyle
 (v(c)t)\cdot a \ = \ t\cdot(c.a) \qquad \text{and} \qquad t\cdot a+\sum t\cdot b_j \ = \ \sum t\cdot b_j
\]
with $c\in k^\bullet$, $t\in T^\bullet$, $a, b_j\in B^\bullet$ and $a\leq\sum b_j$ in $B^+$. The monoid $\Bend_v(B)^\bullet$ consists of the classes of elements of the form $t\cdot a$ in $\Bend_v(B)^+$ and the order of $\Bend_v(B)^+$ is generated by the order of $T$. 

We say that $T$ is \emph{idempotent} if $T^+$ is an idempotent semiring. For $X=\Spec B$, we define $\Bend_v(X)=\Spec\big(\Bend_v(B)\big)$. The following is Theorem \ref{thm: tropicalization for idempotent base}.

\begin{thmA}\label{thmB}
 If $T$ is idempotent, then there is a canonical isomorphism 
 \[
  \Bend_v(X) \quad \stackrel\sim\longrightarrow \quad (X^\mon\otimes_{k^\mon}T^\pos)^\core\otimes_{T^\core}T,
 \]
 and $\Bend_v(X)$ is a tropicalization of $X$ along $v$.
\end{thmA}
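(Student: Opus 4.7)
The theorem has two assertions — the canonical isomorphism and the representability — and I plan to prove them by matching universal properties, with Theorem A providing the main input. A key preliminary observation is that because $T^+$ is idempotent, every ordered blue $T$-algebra $S$ has an idempotent underlying semiring: the equation $1 + 1 = 1$ in $T^+$ transports to $S^+$ via the structure map, and distributivity yields $s + s = s$ for every $s \in S^+$. Moreover, in any idempotent semiring the positivity order collapses to the natural idempotent order, so for every ordered blue $T$-algebra $S$ the inequality $x \leq y$ in $S^\pos$ is equivalent to the equation $x + y = y$ in $S^+$.

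For the representability I would first verify directly that $\Bend_v(B)$ corepresents $\Val_v(B, -)$ on ordered blue $T$-algebras. A $T$-algebra morphism $\Bend_v(B) \to S$ is determined by a multiplicative map $w: B^\bullet \to S^\bullet$ such that the two families of bend relations map to equations in $S^+$: the first, $(v(c)t) \cdot a = t \cdot (c.a)$, becomes $v(c) w(a) = w(c.a)$, expressing that $w$ extends $v$; the second, $t \cdot a + \sum t \cdot b_j = \sum t \cdot b_j$ when $a \leq \sum b_j$ in $B^+$, becomes $t w(a) + \sum t w(b_j) = \sum t w(b_j)$ in $S^+$. By the preliminary observation, this latter equation is equivalent to $w(a) \leq \sum w(b_j)$ in $S^\pos$ (the case $t=1$ suffices, with general $t$ following by multiplicativity), which is precisely the condition that $w^\mon: B^\mon \to S^\pos$ is a morphism of ordered blueprints. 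Together, these conditions characterize $w$ as a valuation extending $v$.

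For the canonical isomorphism I would apply Theorem A to the induced valuation $v^\pos: k \to T^\pos$ (the codomain is totally positive by construction) to conclude that $B^\mon \otimes_{k^\mon} T^\pos$ corepresents $\Val_{v^\pos}(B, -)$ on ordered blue $T^\pos$-algebras. For any ordered blue $T$-algebra $S$, the preliminary observation identifies $\Val_v(B, S)$ with $\Val_{v^\pos}(B, S^\pos)$: both are the multiplicative maps $B \to S$ whose monomial companion factors through the same $S^\pos$. The passage from $T^\pos$-algebras to $T$-algebras at the level of corepresenting objects is implemented by the operation $A \mapsto A^\core \otimes_{T^\core} T$, which is left adjoint to $S \mapsto S^\pos$ via the common algebraic core. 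Combining these yields that $(B^\mon \otimes_{k^\mon} T^\pos)^\core \otimes_{T^\core} T$ corepresents $\Val_v(B, -)$ on ordered blue $T$-algebras, and Yoneda's lemma then produces the canonical isomorphism to $\Bend_v(B)$.

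The main obstacle will be the careful bookkeeping of order and algebra structures across the operations $(-)^\mon$, $(-)^\pos$, $(-)^\core$ and the various tensor products. In particular, one must justify rigorously that $(-)^\core \otimes_{T^\core} T$ is left adjoint to $(-)^\pos$ in the appropriate sense, so that representability on $T^\pos$-algebras transfers correctly to representability on $T$-algebras. The essential feature that makes everything cohere is the idempotence of $T$, which ensures that the positivity order of any $T$-algebra agrees with its natural order, collapsing the two notions of valuation into one and turning bend equations in $S^+$ into honest monomial-order preservation in $S^\pos$.
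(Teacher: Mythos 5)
Your approach is essentially the paper's, repackaged. The paper proves Theorem~\ref{thm: tropicalization for idempotent base} by a circular-inclusion argument among the three Hom sets $\Hom_T(B_v, S^\pos)$, $\Hom_T(B_v^\core \otimes_{T^\core} T, S)$ and $\Hom_T(\Bend_v(B), S)$, all viewed inside $\Hom_T(A \otimes_{k^\bullet} T, S^\pos)$; your three ingredients (direct verification that $\Bend_v(B)$ corepresents $\Val_v(B,-)$, Theorem~A applied to $v^\pos:k\to T^\pos$, and the adjunction $(-)^\core\otimes_{T^\core}T \dashv (-)^\pos$) correspond to the paper's inclusion~3, its Lemma~\ref{lemma: valuations as homs if T is idempotent or totally positive}, and its inclusion~1, respectively. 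The technical lemmas you lean on are the same: Corollary~\ref{cor: idempotent implies core=core after pos and B subset B-pos} gives $S^\core\cong(S^\pos)^\core$ and $S\to S^\pos$ bijective, and the strict-conicity argument (Lemma~\ref{lemma: idempotent or totally positive implies strictly conic}) gives the collapse $x\leq y$ in $S^\pos$ $\iff$ $x+y=y$ in $S^+$ that you correctly single out as the driving observation.

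What is genuinely different, and worth flagging, is the framing. The paper proves Theorem~A and Theorem~B by independent arguments and remarks (Remark~\ref{rem: proofs for totally positive and idempotent bases}) that it seems odd to need two disjoint proofs; your route deduces the idempotent case from the totally positive case plus an explicit adjunction across $(-)^\core\otimes_{T^\core}T$ and $(-)^\pos$, making the logical dependence explicit. This is a valid reorganization and arguably more conceptual. The cost is that the adjunction must be verified in both directions (the paper's circular-inclusion argument effectively proves both directions by going around the cycle once, so no adjunction statement is needed); you correctly identify this as the delicate step. The verification goes through because every $T^\pos$-algebra $A$ is totally positive and idempotent, hence strictly conic, so its order is recoverable from $A^\core$ via $x\leq y\iff x+y\=y$, which lets morphisms $A^\core\to S^\core$ promote to morphisms $A\to S^\pos$ and back. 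So your proposal is correct modulo that bookkeeping, which you explicitly acknowledge; there is no gap, only a different way of packaging the same material.
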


As a consequence, a tropicalization of $X$ along $v$ is algebraic and equal to the spectrum of $(B^\mon\otimes_{k^\mon}T^\pos)^\core$ if $T$ is idempotent and algebraic. If $T$ is idempotent and totally positive, then $\Bend_v(B)=B^\mon\otimes_{k^\mon}T$.

In the following, we will explain how the different concepts of analytification and tropicalization of classical schemes fit into the framework of tropicalizations of ordered blueprints and ordered blue schemes.

\subsection*{Berkovich analytification and Kajiwara-Payne tropicalization}
Let $k$ be a field and $v:k\to \T$ a valuation. Let $Y=\Spec R$ a $k$-scheme and $\iota:Y\to \Spec k[A_0]$ a closed embedding into a toric $k$-variety. 

The restriction of a seminorm $w:k[A_0]\to \T$ in $Y^\an$ to $A_0$ is a multiplicative map $A_0\to \T$. If we define $\Hom(A_0,\T)$ with the real topology coming from $\T$, then this restriction defines a continuous map $\trop_{v,\iota}^{KP}:Y^\an\to \Hom(A_0,\T)$. The \emph{Kajiwara-Payne tropicalization of $Y$} is the image $\Trop_{v,\iota}^{KP}(Y)=\trop_{v,\iota}^{KP}(Y^\an)$ under this map.

The associated blueprint $B$ is defined as $B^+=R$ and $B^\bullet=\{\Gamma\iota(ca)|c\in k^\bullet, a \in A_0\}$ where $\Gamma\iota:k[A_0]\to R$ is the map of global sections induced by $\iota$. 

The inclusion $B\to R$ defines a morphism $\beta:Y\to Z$ of ordered blue schemes where $Z=\Spec B$. The following summarizes Theorems \ref{thm: Berkovich analytification as rational point set} and \ref{thm: Kajiwara-Payne tropicalization as rational point set}.

\begin{thmA}\label{thmC}
  The Berkovich space $Y^\an$ is naturally homeomorphic to $\Bend_v(Y)(\T)$, the Kajiwara-Payne tropicalization $\Trop_{v,\iota}^{KP}(Y)$ is naturally homeomorphic to $\Bend_v(Z)(\T)$ and the diagram 
 \[
  \xymatrix@R=1pc@C=6pc{Y^\an \ar[r]^{\trop_{v,\iota}^{KP}} \ar[d]^\simeq & \Trop_{v,\iota}^{KP}(Y) \ar[d]^\simeq \\ \Bend_v(Y)(\T) \ar[r]^{\Bend_v(\beta)(\T)} & \Bend_v(Z)(\T)}
 \]
 of continuous maps commutes.
\end{thmA}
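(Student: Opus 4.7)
The plan is to reduce everything to Theorem~B, since $\T$ is idempotent, and then compute the $\T$-rational points of the bend as sets of valuations with values in $\T$. By Theorem~B and the definition of a tropicalization, the set $\Bend_v(Y)(\T)$ is naturally in bijection with $\Val_v(R,\T)$, the set of valuations $w:R\to \T$ extending $v$; likewise $\Bend_v(Z)(\T)$ is in bijection with $\Val_v(B,\T)$. Both identifications are functorial, so $\Bend_v(\beta)(\T)$ corresponds to the restriction map $\Val_v(R,\T)\to\Val_v(B,\T)$ induced by the inclusion $B\hookrightarrow R$ of ordered blueprints (same underlying semiring, smaller generator set). This reduces the theorem to three independent tasks: identify $\Val_v(R,\T)$ with $Y^\an$ as a topological space, identify $\Val_v(B,\T)$ with $\Trop_{v,\iota}^{KP}(Y)$ as a topological space, and check that the restriction map corresponds to $\trop_{v,\iota}^{KP}$.

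For the first task I would unwind the definition. Because $Y$ is viewed as the algebraic ordered blueprint $(R\subset R)$, the order on $R^+=R$ is trivial, so the left monomial relations of $R^\mon$ are exactly the equalities $a=\sum b_j$ that hold in the semiring $R$. A multiplicative map $w:R\to\T$ extending $v$ is thus a valuation if and only if $w(\sum b_j)\leq \max w(b_j)$ in $\T^\pos$ for every finite sum in $R$, which is precisely the nonarchimedean seminorm inequality. Hence $\Val_v(R,\T)=Y^\an$ as sets, and the fine topology on $\Bend_v(Y)(\T)$ coincides with the Berkovich topology because both are characterised as the coarsest topology making all evaluations $w\mapsto w(a)$ for $a\in R$ continuous—this is immediate from the universal property of $\Bend_v(Y)$ as a representing object and the standard fact that the fine topology on $X(\T)$ is the initial topology for these coordinate functions.

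For the second task I would use that $B^+=R$ but $B^\bullet=\{\Gamma\iota(ca)\mid c\in k^\bullet, a\in A_0\}$. A multiplicative $w:B^\bullet\to\T$ extending $v$ is completely determined by its restriction $\overline w:A_0\to \T$, via $w(ca)=v(c)\cdot\overline w(a)$. The condition that $B^\mon\to\T^\pos$ be a morphism translates into the requirement that $\overline w$ satisfy the tropical vanishing relations coming from all identities $ca\leq \sum c_j a_j$ that hold in $R$ with $c,c_j\in k^\bullet$ and $a,a_j\in A_0$—which are exactly Kapranov's bend relations for the defining equations of $Y$ in $\Spec k[A_0]$. By the Fundamental Theorem of Tropical Geometry (Payne, Kapranov, Speyer--Sturmfels), the set of multiplicative maps $A_0\to \T$ satisfying these relations coincides with the image of $Y^\an$ under coordinate-wise restriction, i.e.\ with $\Trop_{v,\iota}^{KP}(Y)$. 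I would again match the topologies by noting that both are the initial topologies with respect to the evaluation maps $\overline w\mapsto \overline w(a)$, $a\in A_0$.

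Commutativity of the diagram is then formal. Under the identifications above, $\Bend_v(\beta)(\T)$ is the map sending a valuation $w:R\to\T$ to its restriction to $B^\bullet$, while $\trop_{v,\iota}^{KP}$ sends a seminorm on $R$ to its restriction to $A_0$; these agree on the nose because the restriction to $B^\bullet$ is determined by the restriction to $A_0$ together with $v$, which is fixed. The principal obstacle in this plan is the second task: while Theorem~B makes the bijection on sets formal, identifying $\Val_v(B,\T)$ with $\Trop_{v,\iota}^{KP}(Y)$ essentially requires the Fundamental Theorem of Tropical Geometry to guarantee that every abstract valuation on $B$ extending $v$ really lifts to a seminorm on $R$. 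The topological part is comparatively routine provided one has a clean description of the fine topology on $\T$-points of an affine ordered blue scheme.
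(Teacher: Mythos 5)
Your proposal follows the same overall strategy as the paper's proofs of Theorems~\ref{thm: Berkovich analytification as rational point set} and~\ref{thm: Kajiwara-Payne tropicalization as rational point set}: reduce to the affine case via the fine-topology machinery, invoke Theorem~\ref{thm: tropicalization for idempotent base} to identify $\Bend_v(-)(\T)$ with $\Val_v(-,\T)$, unwind the monomial relations to recognise this as the Berkovich space (respectively the set of coordinatewise valuations satisfying the bend conditions), and match the topologies by inspecting basic opens. Your closing remark is the most significant observation in the proposal and deserves emphasis. The Kajiwara--Payne tropicalization is \emph{defined} as the image $\trop_{v,\iota}^{KP}(Y^\an)\subset\Hom(A_0,\T)$; the inclusion $\Trop_{v,\iota}^{KP}(Y)\subseteq\Val_v(B,\T)$ is formal (restrictions of valuations on $R$ to $B^\bullet$ satisfy the monomial relations of $B$), but the reverse inclusion---equivalently, surjectivity of $\Bend_v(\beta)(\T)\colon \Bend_v(Y)(\T)\to\Bend_v(Z)(\T)$---is exactly the content of Payne's theorem / the Fundamental Theorem of Tropical Geometry. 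The paper's proof of Theorem~\ref{thm: Kajiwara-Payne tropicalization as rational point set} simply asserts this equality in a single clause, so your proposal correctly surfaces an external input that the paper treats as background; that is a feature of your write-up, not a gap in it. One small caveat on the topological matching: the evaluation maps on $\Bend_v(Y)(\T)=\Hom_\T(\Bend_v(B),\T)$ are indexed by elements $a\otimes t$ with $a\in B^\bullet$ and $t\in\T$, not just by $a\in R$, so after observing that the $\ev_{a\otimes 1}$ reproduce the Berkovich topology one must also check that the additional maps $\ev_{a\otimes t}$ for $t\neq 1$ do not strictly refine it. The paper does this explicitly via continuity of scalar multiplication by $t$ on $\T$; your appeal to the initial-topology characterisation of the fine topology is the right idea, but this is precisely the step where the hypothesis that $\T$ is a topological semiring enters and should be spelled out.
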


\subsection*{Foster-Ranganathan tropicalization}
Let $\T^{(n)}=\R_{>0}^n\cup\{0\}$ be the idempotent semiring with componentwise multiplications and whose addition is defined as taking the maximum with respect to the lexicographical order. With respect to the order topology, it is a topological Hausdorff semifield. 

Let $k$ be a field, endowed with a higher rank valuation $v:k\to \T^{(n)}$. Let $Y=\Spec R$ be an affine $k$-scheme and $\iota:Y\to\Spec[A_0]$ a closed immersion into a toric $k$-variety.

Replacing $\T$ by $\T^{(n)}$ in the definitions of the Berkovich analytification and the Kajiwara-Payne tropicalization yields the \emph{Foster-Ranganathan analytification $\An_v^{FR}(Y)$ of $Y$ along $v$} and the \emph{Foster-Ranganathan tropicalization $\Trop_{v,\iota}^{FR}(Y)$ of $Y$ along $v$ with respect to $\iota$}, respectively.

Let $B$ be the blueprint associated with $\iota$ as defined above and $Z=\Spec B$. Let $Y\to Z$ be the induced morphism of blue $k$-schemes. The following is Theorem \ref{thm: Foster-Ranganathan analytification as rational point set}.

\begin{thmA}\label{thmD}
 The Foster-Ranganathan analytification $\An^{FR}_v(Y)$ is naturally homeomorphic to $\Bend_v(Y)(\T^{(n)})$, the Foster-Ranganathan tropicalization $\Trop^{FR}_{v,\iota}(Y)$ is naturally homeomorphic to $\Bend_v(Z)(\T^{(n)})$ and the diagram
 \[
   \xymatrix@R=1pc@C=6pc{\An_v^{FR}(Y) \ar[r]^{\trop_{v,\iota}^{FR}} \ar[d]^\simeq & \Trop_{v,\iota}^{FR}(Y) \ar[d]^\simeq \\ \Bend_v(Y)(\T^{(n)}) \ar[r]^{\Bend_v(\beta)(\T^{(n)})} & \Bend_v(Z)(\T^{(n)})}
 \]
 of continuous maps commutes.
\end{thmA}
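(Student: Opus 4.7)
The plan is to mimic the proof of Theorem C verbatim, with $\T$ replaced throughout by $\T^{(n)}$. The key observation that makes this possible is that $\T^{(n)}$ satisfies exactly the same structural properties that were used there: it is idempotent (so Theorem B applies) and it is a topological Hausdorff semifield (so the fine topology on rational point sets has the expected description as pointwise convergence). So the argument proceeds by first performing the set-theoretic identifications and then upgrading them to homeomorphisms.

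For the set-theoretic part, since $\T^{(n)}$ is idempotent, Theorem B yields a natural bijection
\[
 \Bend_v(Y)(\T^{(n)}) \ \cong \ \Val_v(R,\T^{(n)}),
\]
where the right-hand side is the set of valuations $w:R\to\T^{(n)}$ extending $v$. Unwinding the definition of a valuation in the excerpt, such a $w$ is a multiplicative map with $w(0)=0$, $w(1)=1$, whose associated map $R^\mon\to(\T^{(n)})^\pos$ is a morphism of ordered blueprints; this is precisely the subadditivity condition $w\bigl(\sum a_j\bigr)\leq\sum w(a_j)$ for the componentwise multiplication and the lexicographic maximum on $\T^{(n)}$. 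Hence this is exactly the definition of a point of $\An^{FR}_v(Y)$. The same argument applied to $B$ identifies $\Bend_v(Z)(\T^{(n)})$ with the set of valuations $B\to\T^{(n)}$ extending $v$; restriction of such a valuation to $A_0\subset B^\bullet$ lands in $\Hom(A_0,\T^{(n)})$, and the image is precisely $\Trop^{FR}_{v,\iota}(Y)$ by its very definition, while the restriction map is bijective onto this image because the bend relations between $B^+=R$ and $\T^{(n)}$ are exactly the subadditivity constraints that cut out $\Trop^{FR}_{v,\iota}(Y)$ from $\Hom(A_0,\T^{(n)})$.

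Commutativity of the diagram follows by functoriality: the morphism $\beta:Y\to Z$ is induced by the inclusion $B\hookrightarrow R$, and under the identifications above the map $\Bend_v(\beta)(\T^{(n)})$ is just restriction of a valuation $R\to\T^{(n)}$ to $B^\bullet\supseteq A_0$, which by construction equals $\trop^{FR}_{v,\iota}$.

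The main obstacle is the topological upgrade, and this is where the proof of Theorem C has to be reread with care. On the analytic side both $\An^{FR}_v(Y)$ and $\Trop^{FR}_{v,\iota}(Y)$ carry the topology of pointwise convergence induced by the order topology of $\T^{(n)}$; on the scheme-theoretic side, the set $\Bend_v(Y)(\T^{(n)})$ carries the fine topology of Lorscheid--Salgado, which is defined via continuous maps from test spaces into the representing functor. I would show, following the arguments of Theorem C, that for the affine ordered blue scheme $\Bend_v(Y)=\Spec\Bend_v(R)$ the fine topology coincides with the topology of pointwise convergence on generators, using that $\T^{(n)}$ is Hausdorff and that its semiring operations are continuous. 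Since these are the only properties of $\T$ that entered in Theorem C, no new input is required beyond checking them for $\T^{(n)}$; the same verification then applies to $Z$ and the continuity of $\Bend_v(\beta)(\T^{(n)})$ is automatic from functoriality of the fine topology.
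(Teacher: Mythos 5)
Your proposal is correct and takes essentially the same approach as the paper, whose proof is a one-line remark that the proofs of Theorems \ref{thm: Berkovich analytification as rational point set} and \ref{thm: Kajiwara-Payne tropicalization as rational point set} apply verbatim after replacing $\T$ by $\T^{(n)}$. You have simply unpacked that remark by identifying the two properties of $\T$ actually used (idempotence, for Theorem \ref{thm: tropicalization for idempotent base}; being a local topological Hausdorff semifield with open unit group, for Theorem \ref{thm: properties of the fine topology}) and verifying that $\T^{(n)}$ shares them.
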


\subsection*{Giansiracusa tropicalization and Maclagan-Rinc\'on weights}
We remarked already that the bend of an ordered blueprint is a generalization of the Giansiracusa tropicalization from \cite{Giansiracusa13}. The precise relation is as follows.

Let $k$ be a ring and $v:k\to T$ be a valuation into a totally ordered idempotent semiring $T$. Let $Y=\Spec R$ be a $k$-scheme. Let $A_0$ be a monoid and $\eta:A_0\to R$ a multiplicative map such that $k[A_0]\to R$ is surjective. The \emph{Giansiracusa tropicalization $\Trop_{v,\eta}^{GG}(Y)$ of $Y$ with respect to $v$ and $\eta$} is the spectrum of the semiring 
\[\textstyle
 \Trop_{v,\eta}^{GG}(R) \quad = \quad T[A_0] \ \bigl/ \ \bigl\{ \ a+\sum b_j\=\sum b_j \ \bigl| \ \eta(a)+\sum \eta(b_j)=0 \text{ in }R \ \bigr\}.
\]

The blueprint associated with $\eta$ is $B=(A\subset R)$ where $A=\{c.\eta(a)\in R|c\in k, a\in A_0\}$. The following is Theorem \ref{thm: Giansiracusa tropicalization as bend}.

\begin{thmA}\label{thmE}
 There is a canonical isomorphism $\Trop_{v,\eta}^{GG}(R)\simeq \Bend_v(B)^+$ of semirings.
\end{thmA}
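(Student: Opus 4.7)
The plan is to construct mutually inverse $T$-semiring homomorphisms between $\Trop_{v,\eta}^{GG}(R)$ and $\Bend_v(B)^+$. Both sides are presented as quotients of free semirings over $T$, so I would define maps on generators and verify that the defining relations descend.

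For $\Phi\colon \Trop_{v,\eta}^{GG}(R) \to \Bend_v(B)^+$, the natural candidate sends $a_0 \in A_0$ to the class of $\eta(a_0) \in B^\bullet$ and is extended $T$-linearly. To show this descends, I take a defining Giansiracusa relation $v(c)\cdot a + \sum v(c_j)\cdot b_j \equiv \sum v(c_j)\cdot b_j$ coming from a witness $c\cdot\eta(a) + \sum c_j\cdot\eta(b_j) = 0$ in $R$ with $a,b_j \in A_0$ and $c,c_j \in k$. Rewriting this equation as $c\cdot\eta(a) = \sum(-c_j)\cdot\eta(b_j)$ in $R = B^+$, each term $(-c_j)\cdot\eta(b_j)$ lies in $B^\bullet$, so the additive bend relation (b) yields $1\cdot(c\eta(a)) + \sum 1\cdot((-c_j)\eta(b_j)) = \sum 1\cdot((-c_j)\eta(b_j))$ in $\Bend_v(B)^+$. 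Using the scalar bend relation (a) together with $v(-1) = 1$---forced because $v(-1)^2 = v(1) = 1$ in the totally ordered idempotent semifield $T$---this rewrites as $v(c)\cdot\eta(a) + \sum v(c_j)\cdot\eta(b_j) = \sum v(c_j)\cdot\eta(b_j)$, which is exactly the image under $\Phi$.

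For the inverse $\Psi\colon \Bend_v(B)^+ \to \Trop_{v,\eta}^{GG}(R)$, I would invoke the universal property of $T^+[B^\bullet]$: such a $T$-semiring map corresponds to a multiplicative map $\sigma\colon B^\bullet \to \Trop_{v,\eta}^{GG}(R)$ that descends iff $\sigma(c\cdot a) = v(c)\sigma(a)$ and $\sigma(p) + \sum \sigma(q_j) = \sum \sigma(q_j)$ whenever $p = \sum q_j$ in $R$. For $a = c\cdot\eta(a_0) \in B^\bullet$, I would set $\sigma(a) = v(c)\cdot[a_0]$. The main obstacle is well-definedness: if $c_1\eta(a_1) = c_2\eta(a_2)$ in $R$, then $c_1\eta(a_1) - c_2\eta(a_2) = 0$ is a Giansiracusa-type relation, and applying it symmetrically to the pair $(a_1, a_2)$ produces both $v(c_1)[a_1] + v(c_2)[a_2] \equiv v(c_1)[a_1]$ and $\equiv v(c_2)[a_2]$ (using $v(-c_2) = v(c_2)$), whence $v(c_1)[a_1] = v(c_2)[a_2]$. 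Multiplicativity of $\sigma$ is immediate from multiplicativity of $v$ and $\eta$, and the two required bend identities for $\sigma$ follow by the same device applied to $c\eta(a) - \sum c_j\eta(b_j) = 0$.

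It remains to check that $\Phi$ and $\Psi$ are mutually inverse, which is straightforward on generators. For $a_0 \in A_0$, $\Psi(\Phi([a_0])) = \sigma(\eta(a_0)) = v(1)\cdot[a_0] = [a_0]$. For $a = c\cdot\eta(a_0) \in B^\bullet$, $\Phi(\Psi([a])) = \Phi(v(c)\cdot[a_0]) = v(c)\cdot\eta(a_0) = c\cdot\eta(a_0) = [a]$ in $\Bend_v(B)^+$ by relation (a). Since both maps are $T$-semiring homomorphisms agreeing on generating sets, they are mutually inverse, establishing the canonical isomorphism of the theorem.
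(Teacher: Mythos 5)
Your proof is correct and is essentially the same argument as the paper's. The paper packages the isomorphism as a single bijection $\varphi\colon k[A_0]^\bullet\otimes_{k^\bullet}T\to T[A_0]$ between the free objects, under which the two bend congruences are shown to correspond generator by generator (so well-definedness is automatic), whereas you build the two inverse homomorphisms on the quotients directly and verify well-definedness by hand; the key inputs are identical in both cases -- $B$ is algebraic with $-1$ so that $a\leq\sum b_j$ becomes $a=\sum b_j$ with signed coefficients, and $v(-c)=v(c)$ (equivalently $v(-1)=1$), obtained from $v(-1)^2=1$ together with the total order and idempotency of $T$.
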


The Giansiracusa tropicalization $\Trop_{v,\eta}^{GG}(Y)$ comes with a closed embedding into the toric $\T$-scheme $\Spec \T[A_0]$. Maclagan and Rinc\'on (\cite[Thm.\ 1.2]{Maclagan-Rincon14}) show that the structure of the tropical variety $\Trop(Y)=\Trop_{v,\eta}^{GG}(Y)(\T)$ as a weighted polyhedral complex can be recovered from this embedding, assuming the following context: $v:k\to \T$ is a valuation with dense image such that the value group $v(k^\times)$ lifts to $k^\times$ and assume that $Y=\Spec R$ is an equidimensional closed $k$-subvariety of $\G_{m,k}^n$, which corresponds to a multiplicative map $\eta:A_0\to R$ where $A_0=\{X_1^{e_1}\dotsb X_n^{e_n}|(e_1,\dotsc,e_n)\in\Z^n\}$. 

We show that in this situation, the structure of a weighted polyhedral complex can still be recovered from the weaker structure of the associated blue $\T$-scheme $\Bend_v(Z)$. More precisely, we exhibit an explicit formula for the \emph{Maclagan-Rinc\'on weight} $\mu(w)$ of a $\T$-rational point $w$ of $\Bend_v(Z)$ and show the following in Theorem \ref{thm: Maclagan-Rincon weights}.

\begin{thmA}\label{thmF}
 Let $\sigma$ be a top dimensional polyhedron of $\Trop(Y)=\Bend_v(Z)(\T)$. Then $\mult(\sigma)=\mu(w)$ for every $w$ in the relative interior of $\sigma$.
\end{thmA}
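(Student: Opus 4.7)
The plan is to give $\mu(w)$ an explicit combinatorial description purely in terms of the blueprint data of $\Bend_v(Z)$ at $w$, observe that this description coincides on the interior of a top-dimensional polyhedron with the lattice index occurring in Maclagan-Rinc\'on's formula for $\mult(\sigma)$, and then invoke \cite[Thm.\ 1.2]{Maclagan-Rincon14}.

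\textbf{Unpacking the bend.} The blueprint $B$ has monomial part $A=\{c\cdot X^\alpha\mid c\in k^\bullet,\alpha\in\Z^n\}\subset R$, and its $\leq$-relations are generated by $c_iX^{\alpha_i}\leq\sum_{j\neq i}c_jX^{\alpha_j}$, one family per polynomial $f=\sum c_jX^{\alpha_j}$ in the defining ideal $I$ of $Y\subset\G_{m,k}^n$. By the construction of $\Bend_v(B)$ (see Theorem \ref{thmB}, or equivalently Theorem \ref{thmE}), these descend to $v(c_i)\cdot X^{\alpha_i}\leq\sum_{j\neq i}v(c_j)\cdot X^{\alpha_j}$ inside $\Bend_v(B)$. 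A $\T$-rational point $w$ of $\Bend_v(Z)$ is a valuation extending $v$; writing $w(c_jX^{\alpha_j})$ for its value on the generator $c_jX^{\alpha_j}$, the bend inequality becomes an equality exactly on the max-attaining set $I_w(f)=\{i\mid w(c_iX^{\alpha_i})=\max_j w(c_jX^{\alpha_j})\}$, which coincides with the support of the classical initial form $\init_w(f)$.

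\textbf{The formula and the match.} Define $L_w\subset\Z^n$ as the sublattice generated by $\{\alpha_i-\alpha_j\mid i,j\in I_w(f),\,f\in I\}$, and set $\mu(w)=[\Z^n:L_w]$ when this index is finite, and $\mu(w)=0$ otherwise. The sets $I_w(f)$ are constant on the relative interior of any polyhedron of $\Trop(Y)$, hence so is $\mu$, yielding a well-defined weight $\mu(\sigma)$ on each top-dimensional $\sigma$. Maclagan-Rinc\'on's Theorem 1.2 computes $\mult(\sigma)$ as the same lattice index $[\Z^n:L_w^{MR}]$, but where $L_w^{MR}$ is generated using only supports $I_w(f)$ for $f$ ranging over a tropical basis of $I$. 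Thus the statement reduces to the identity $L_w=L_w^{MR}$; the inclusion $L_w^{MR}\subset L_w$ is immediate.

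\textbf{Main obstacle.} The crux is the reverse inclusion $L_w\subset L_w^{MR}$: the bend relations inherited from \emph{every} $f\in I$ must generate no new exponent-difference relations beyond those coming from a tropical basis. This uses the existence of a finite tropical basis (Maclagan-Sturmfels) together with the observation that replacing a polynomial $f\in I$ by another ideal element having the same support does not alter $\Bend_v(B)$, so one may enlarge the generating data to include a tropical basis without changing the blueprint. The hypotheses that $v$ has dense value group lifting to $k^\times$ enter here to guarantee that initial forms lift to honest ideal elements whose supports are recorded by the bend. Once this is granted, $\mu(w)=\mult(\sigma)$ follows for every $w$ in the relative interior of any top-dimensional polyhedron $\sigma$.
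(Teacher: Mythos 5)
The proposal rests on a guessed definition of $\mu(w)$ that does not match the paper and, even granting that definition, takes a route that cannot work.

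\textbf{Wrong formula for $\mu(w)$.} In the paper, $\mu(w)$ is defined as
\[
  \mu(w) \ = \ \dim_\B \bigl(\B[X_{d+1}^{\pm1},\dotsc,X_m^{\pm1}]^+/\init_w(\cR_\pi)'\bigr),
\]
the $\B$-dimension of a quotient semiring by the restricted initial preaddition of the bend relation, after changing coordinates so that the lineality space becomes $\langle e_1,\dotsc,e_d\rangle$. You instead define $\mu(w)$ as a lattice index $[\Z^n:L_w]$ built from exponent differences of initial supports. These are genuinely different invariants: the multiplicity $\mult(w)$ (and hence the theorem's $\mu(w)$) is a sum of lengths of primary components of an initial ideal, which can be any positive integer and is \emph{not} a lattice index in general (for instance, non-reduced $Y$, or initial ideals with embedded or thick components, produce multiplicities with no lattice-index description). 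You are also mischaracterizing Maclagan--Rinc\'on's Theorem 1.2: it is a recoverability statement (the tropical scheme determines the weighted complex), not a lattice-index formula for $\mult(\sigma)$.

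\textbf{The gap you flag is real and is not filled.} Your ``main obstacle'' paragraph acknowledges the reverse inclusion $L_w\subset L_w^{MR}$ is unproved; as you present it, you would need a lifting/tropical-basis argument to close it, but you do not carry it out. With the correct definition of $\mu(w)$ this issue simply does not arise, because one never compares lattices. The paper's argument instead proceeds by (i) using the invariance of weights under linear reparametrization to reduce to a Maclagan--Rinc\'on-style closed immersion $\iota:Y\to\G_{m,k}^{m,+}$ with $\pi=\Bend_v(\eta)$, so that $\cR_\pi=\bend_{v,\iota}^{GG}(I)$; (ii) invoking the Maclagan--Sturmfels local formula
\[
  \mult(w)=\dim_{k_0}\bigl(k_0[X_{d+1}^{\pm1},\dotsc,X_m^{\pm1}]^+/\init_w(I)'\bigr);
\]
(iii) using the compatibility of initial forms with the bend (Maclagan--Rinc\'on, Prop.~3.4), $\init_w\bigl(\bend_{v,\eta}^{GG}(I)\bigr)=\bend_{v_0,\eta}^{GG}\bigl(\init_w(I)\bigr)$, which survives restriction to the transverse variables; and (iv) using that tropicalization of a finite-dimensional $k_0$-vector space yields a $\B$-linear space of the same dimension (Giansiracusa, Lemma~7.1.3), so that $\mu(w)=\mult(w)$. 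None of steps (ii)--(iv) appear in your argument, and your lattice-index framing has no way to reproduce them.

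In short: revisit the actual definition of $\mu(w)$ as a $\B$-dimension, and aim at the commutation of $\init_w$ with the bend together with $\B$-dimension preservation; the lattice-index path is both the wrong target and is not closed.
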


\subsection*{Macpherson analytification}
Let $k$ be a ring and $B$ a $k$-algebra. The \emph{Macpherson analytification of $B$ over $k$} is the idempotent semiring $\An(B,k)$ of finitely generated $k$-submodules $M_1$ and $M_2$ of $B$ with respect to the addition $M_1+M_2$ and the multiplication $M_1\cdot M_2$ given by elementwise operations. The semiring $\An(B,k)$ represents the functor $\Val(B,k;-)$ that associates with an idempotent semiring $T$ the set of all valuations $v:B\to T$ that are integral on $k$, i.e.\ $v(c)+1=1$ for all $c\in k$.

This concept can be generalized to any ordered blue $k$-algebra $B$ over an ordered blueprint $k$. A $k$-span of $B$ is a subset $M$ of $B$ that is stable under multiplication by $k^\bullet$ and contains all $b\in B$ for which there are elements $a_i\in M$ and a relation $b\leq \sum a_i$ in $B^+$. A $k$-span is finitely generated if it contains a finite subset such that there is no smaller $k$-span containing it. We define $\An(B,k)$ as the idempotent semiring of all finitely generated $k$-spans of $B$. 

The definition of $\Val(B,k;-)$ extends to this setting as a functor on idempotent semirings, which are the same as $\B$-algebras where $\B$ is the Boolean semifield. 

In order to describe $\An(B,k)$ as a bend, we define $B^\mon_{k\leq 1}$ as the ordered blueprint $B^\mon$ together with the order that contains all relations of $B^\mon$ together with the relations $a.1\leq 1$ for $a\in k$. Define $\Fun=(\{0,1\}\subset\N)$, together with the trivial order. There is a unique valuation $v_0:\Fun\to \B$, given by $v_0(0)=0$ and $v_0(a)=1$ for $a>0$. The following is Theorem \ref{thm: Macpherson analytification and the bend functor}.

\begin{thmA}\label{thmG}
 There is a canonical isomorphism $\An(B,k)\simeq \Bend_{v_0}(B^\mon_{k\leq1})^+$ of semirings and $\An(B,k)$ represents $\Val(B,k;-)$. 
\end{thmA}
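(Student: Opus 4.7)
The plan is to use Theorem~B to present $\Bend_{v_0}(B^\mon_{k\leq 1})^+$ as the representing object of the functor $\Val_{v_0}(B^\mon_{k\leq 1},-)$ on idempotent semirings ($=\B$-algebras), to show that this functor coincides with $\Val(B,k;-)$ by unfolding definitions, and to construct an explicit semiring isomorphism between the bend and $\An(B,k)$. The two assertions then follow jointly by Yoneda.

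First I would unpack the bend. Since $\Fun^\bullet=\B^\bullet=\{0,1\}$, the first family of bend relations $(v_0(c)t)\cdot a=t\cdot(c.a)$ is tautological. The subadditivity family reduces to $[a]+\sum[b_j]=\sum[b_j]$ coming from generators $a\leq\sum b_j$ of the order on $B^\mon_{k\leq 1}$, which split into the monomial inequalities inherited from $B^+$ and the extra generators $c\leq 1$ for $c\in k^\bullet$. Hence $\Bend_{v_0}(B^\mon_{k\leq 1})^+$ is the idempotent semiring on generators $\{[a]\mid a\in B^\bullet\}$ whose multiplicative structure is inherited from $B^\bullet$, subject to the bend relations $[a]+\sum[b_j]=\sum[b_j]$ whenever $a\leq\sum b_j$ in $B^+$, together with $[c]+[1]=[1]$ for $c\in k^\bullet$.

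Next I would define $\Phi:\Bend_{v_0}(B^\mon_{k\leq 1})^+\to\An(B,k)$ by $\sum[a_i]\mapsto\langle a_1,\dotsc,a_n\rangle$, where $\langle\cdot\rangle$ denotes the $k$-span. Well-definedness on the bend relations uses the two defining closure properties of a $k$-span: $a\leq\sum b_j$ in $B^+$ gives $a\in\langle b_j\rangle$, so $\langle a,b_j\rangle=\langle b_j\rangle$; and $c\cdot 1=c$ for $c\in k^\bullet$ gives $c\in\langle 1\rangle$ by $k$-stability, so $\langle c,1\rangle=\langle 1\rangle$. Since sum and product of $k$-spans satisfy $\langle a_i\rangle+\langle b_j\rangle=\langle a_i,b_j\rangle$ and $\langle a_i\rangle\cdot\langle b_j\rangle=\langle a_ib_j\rangle$, the map $\Phi$ is a semiring homomorphism, and every finitely generated $k$-span lies in its image.

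For the universal property, Theorem~B applied to the idempotent $\B$ shows that $\Bend_{v_0}(B^\mon_{k\leq 1})$ represents $T\mapsto\Val_{v_0}(B^\mon_{k\leq 1},T)$ on idempotent semirings. Unfolding, and using that $B^\mon_{k\leq 1}$ is already monomial while $T^\pos=T$ for idempotent $T$, such a valuation is exactly a multiplicative $w:B\to T$ with $B^\mon\to T$ a morphism of ordered blueprints (hence a valuation in the sense of the paper) and $w(c)\leq 1$, equivalently $w(c)+1=1$, for every $c\in k^\bullet$; this is precisely $\Val(B,k;T)$. Transporting the universal property through $\Phi$ then gives both that $\An(B,k)$ represents $\Val(B,k;-)$ and that $\Phi$ is an isomorphism, with the tautological valuation $b\mapsto\langle b\rangle$ as the universal element. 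The main technical hurdle is this last unpacking: one must verify that the order on $(B^\mon_{k\leq 1})^\mon=B^\mon_{k\leq 1}$ is generated precisely by the monomial relations of $B$ together with $c\leq 1$ for $c\in k^\bullet$, and that being a morphism into $T^\pos=T$ does not secretly impose more than preservation of that order.
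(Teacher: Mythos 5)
Your construction of $\Phi$, the verification that it respects the bend relations, the identification of $\Val_{v_0}(B^\mon_{k\leq 1},-)$ with $\Val(B,k;-)$, and the use of Theorem~B to represent the latter by the bend all match the paper's argument closely. But there is a genuine gap at the last step. You establish that $\Phi$ is a \emph{surjective} semiring morphism, and that its source $\Bend_{v_0}(B^\mon_{k\leq1})^+$ represents $\Val(B,k;-)$; you then assert that "transporting the universal property through $\Phi$" yields both that $\An(B,k)$ represents the functor and that $\Phi$ is an isomorphism. That inference is not valid: a surjection out of a representing object onto some other object does not make the target a representing object, and Yoneda gives you nothing until you already know that $\Phi$ is injective (equivalently, that the tautological valuation $b\mapsto\gen b$ is universal, whose well-definedness-of-inverse check is the same computation).

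The missing content is precisely the injectivity of $\Phi$, which is where the real work of the theorem sits. Concretely, you must show that if $\gen{a_1,\dotsc,a_n}=\gen{b_1,\dotsc,b_m}$ as $k$-spans in $B$, then $\sum [a_i]=\sum [b_j]$ already holds in $\Bend_{v_0}(B^\mon_{k\leq1})^+$. The paper does this by unwinding membership: each $a_i\leq\sum_j c_{i,j}b_j$ with $c_{i,j}\in k$, and each $b_j\leq\sum_i d_{j,i}a_i$ with $d_{j,i}\in k$; then the added relations $c\leq 1$ for $c\in k$ in $B^\mon_{k\leq 1}$ strip the coefficients to give $a_i\leq\sum_j b_j$ and $b_j\leq\sum_i a_i$, and finally idempotence of the bend lets one manipulate $\sum_i a_i$ and $\sum_j b_j$ into each other. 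Without some version of this computation your argument only shows that $\An(B,k)$ is a quotient of the representing object, not that it coincides with it. Once you insert that step, the rest of your proposal goes through and matches the paper's proof.
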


Let $v:k\to T$ be a valuation of a ring $k$ into an idempotent semiring and $\cO_k=\{a\in k|v(a)+1=1\}$, which is a subring of $k$. As a consequence of Theorem \ref{thmG}, we obtain $\Bend_v(B)^+\simeq \An(B,\cO_k)\otimes_{\An(k,\cO_k)}T$ in Corollary \ref{cor: the bend as Macpherson analytification}. This provides an alternative to Macpherson's original construction of tropicalizations via nonarchimedean analytic geometry, cf.\ \cite[section 7.3]{Macpherson13}.

\subsection*{Thuillier analytification and Ulirsch tropicalization}
Let $k$ be a field and $v:k\to \cO_\T$ the trivial valuation where $\cO_\T$ is the subsemiring $\{a\in\T|a+1=1\}$ of $\T$. We consider $\cO_\T$ together with its topology as a subset of $\R$. Let $X=\Spec R$ be a $k$-scheme. The Thuillier analytification $X^\beth$ is the set of all extensions $w:R\to\cO_\T$ of $v$ to $R$, together with the topology induced by $\cO_\T$.

Let $\alpha:\cM_X\to\cO_X$ be a fine and saturated log structure for $X$ without monodromy. Then there is a universal morphism $(X,\cM_X/\cM_X^\times)\to F_X$ of monoidal spaces into a Kato fan $F_X$, and this morphism induces a continuous map $\trop_\alpha^U:X^\beth \to\overline\Sigma_X$ into the extended cone complex $\overline\Sigma_X$ of $\cO_\T$-rational points of the Kato fan $F_X$. The Ulirsch tropicalization of a closed $k$-subscheme $Y=\Spec S$ of $X$ is the image $\Trop_{\alpha,\iota}^U(Y)=\trop_\alpha^U(Y^\beth)$ in $\overline\Sigma_X$ where $\iota$ refers to the closed immersion $\iota:Y\to X$.

For the sake of simplifying this exposition, we assume that the Kato fan is affine. We define the associated blue $k$-scheme $Z$ as the spectrum of the blueprint $B$ where $B^\bullet$ is the image of $\cM_X(X)$ under $\Gamma\iota:R\to S$ and $B^+$ is the subsemiring of $S$ generated by $B^\bullet$. The blue scheme $Z$ comes together with a morphism $\beta:Y\to Z$. Then the following summarizes Theorems \ref{thm: Thuillier analytification as rational point set}, \ref{thm: Ulirsch tropicalization as rational point set} and \ref{thm: recovering the Kato fan}.

\begin{thmA}\label{thmH}
 The Thuillier space $X^\beth$ is naturally homeomorphic to $\Bend_v(X)(\cO_\T)$, the Ulirsch tropicalization $\Trop_{\alpha,\iota}^U(Y)$ is naturally homeomorphic to the topological space $\Bend_v(Z)(\cO_\T)$ and the diagram
 \[
  \xymatrix@R=1pc@C=6pc{Y^\beth \ar[d]^\simeq\ar[r]^{\trop_{\alpha,\iota}^U(Y)} & \Trop_{\alpha,\iota}^U(Y)\ar[d]^\simeq \\ \Bend_v(Y)(\cO_\T) \ar[r]^{\Bend_v(\beta)(\cO_\T)} & \Bend_v(Z)(\cO_\T)}
 \]
 of continuous maps commutes. If $Y=X$ and $\alpha:\cM_X\to\cO_X$ is a monomorphism of sheaves, then we can recover the Kato fan $F_X$ and the embedding $\Trop_{\alpha,\iota}^U(X)\to\overline\Sigma_X$ from $\Bend_v(Z)$.
\end{thmA}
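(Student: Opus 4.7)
The plan is to invoke Theorem \ref{thmB} to replace the point sets in question by sets of valuations, and then identify these valuations with the classical objects. Because $\cO_\T$ is idempotent (it is a subsemiring of the idempotent semiring $\T$), Theorem \ref{thmB} gives $\Bend_v(W)(\cO_\T)=\Val_v(\Gamma W,\cO_\T)$ for every affine ordered blue $k$-scheme $W=\Spec C$. Throughout, the topology on $\Bend_v(W)(\cO_\T)$ is the fine topology of \cite{Lorscheid-Salgado14}, which is generated by preimages of opens of $\cO_\T$ under evaluation at the generators of $C$, matching pointwise convergence on those generators.

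For the first homeomorphism, unpack a valuation $w:R\to \cO_\T$ extending the trivial $v:k\to \cO_\T$: it is a multiplicative map with $w(0)=0$, $w(1)=1$, $w(c)\leq 1$ for all $c\in k$, and, since $R$ is a trivially ordered ring and addition on $\cO_\T^\pos$ is the maximum, $w(\sum a_i)\leq \max_i w(a_i)$. These are exactly the axioms cutting $X^\beth$ out of $\cO_\T^R$. As $R$ is its own set of generators, the fine topology on $\Bend_v(X)(\cO_\T)$ coincides with the subspace topology from $\cO_\T^R$ that defines the Thuillier space, giving the natural homeomorphism $X^\beth\simeq \Bend_v(X)(\cO_\T)$.

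For the second homeomorphism, apply the first step to $Y$ to obtain $Y^\beth\simeq \Bend_v(Y)(\cO_\T)$, and separately $\Bend_v(Z)(\cO_\T)=\Val_v(B,\cO_\T)$. The morphism $\beta:Y\to Z$ is induced by the inclusion of blueprints $B\hookrightarrow S$, so $\Bend_v(\beta)(\cO_\T)$ is restriction of valuations to $B$. By construction of the Kato fan $F_X$ and of its extended cone complex $\overline\Sigma_X$, the Ulirsch map $\trop_\alpha^U$ is exactly restriction of Thuillier seminorms to $\cM_X(X)/\cM_X^\times(X)$. Since $B^\bullet=\Gamma\iota(\cM_X(X))$ generates $B^+$ and every valuation in $\cO_\T$ extending the trivial $v$ sends $k^\bullet$ into $\{1\}$, restriction to $B$ and restriction to $\cM_X(X)/\cM_X^\times(X)$ cut out the same equivalence relation on $Y^\beth$. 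Thus the image of the restriction $\Bend_v(Y)(\cO_\T)\to\Bend_v(Z)(\cO_\T)$ is naturally in bijection with $\Trop_{\alpha,\iota}^U(Y)$, and surjectivity onto all of $\Bend_v(Z)(\cO_\T)$ follows from the fact that every valuation on $B$ arises, via restriction, from some Thuillier point of $Y$ (using that $B$ sits inside $S$ and $\cO_\T$ is injective as a target for the relevant monoid maps). The commutativity of the diagram is then automatic from the functoriality of $\Bend_v(-)(\cO_\T)$ applied to $\beta$.

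For the recovery statement, assume $Y=X$ and $\alpha$ is a monomorphism. Then $\cM_X(X)\hookrightarrow R$ is injective and identifies $\cM_X(X)$ with $B^\bullet\subset R$; affinely, $F_X=\Spec(\cM_X(X)/\cM_X^\times(X))$ as a monoidal space. Both the monoid $B^\bullet$ and its submonoid of units $k^\bullet\cap B^\bullet$ are visible in $\Bend_v(Z)$: the first as the set of generators of the defining ordered blueprint, the second as the generators whose image under the universal $\cO_\T$-point is forced to be $1$. Their quotient recovers $\cM_X(X)/\cM_X^\times(X)$ and hence $F_X$, and the embedding $\Trop_{\alpha,\iota}^U(X)\hookrightarrow \overline\Sigma_X$ arises canonically from this identification. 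The main technical obstacle is the topological comparison: one must verify that the fine topology on $\Bend_v(Z)(\cO_\T)$, defined via evaluation at blueprint generators, matches the subspace topology on $\overline\Sigma_X\subset \Hom(\cM_X(X),\cO_\T)$ (and similarly for $\Bend_v(X)(\cO_\T)$ versus $X^\beth$). This reduces to a direct check that every generator of the relevant blueprint differs from an element of the monoid $\cM_X(X)$ by a unit of $k^\bullet$, so that the two sets of subbasic opens generate the same topology.
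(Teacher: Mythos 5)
The overall strategy mirrors the paper's: invoke Theorem~\ref{thmB} to identify $\Bend_v(-)(\cO_\T)$ with sets of valuations, then match these with the classical Thuillier and Ulirsch objects, and for the Kato fan pass to the underlying monoid of the bend. The first part (Thuillier, Theorem~\ref{thm: Thuillier analytification as rational point set}) is sound; the paper shortcuts the topological comparison by noting that $\cO_\T$ satisfies the hypotheses of Theorem~\ref{thm: properties of the fine topology}, so the Berkovich argument transfers verbatim, whereas you redo the matching of subbasic opens by hand — both are fine.

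The gap is in the middle part. You assert that the restriction map $\Val_v(S,\cO_\T)\to\Val_v(B,\cO_\T)$ is surjective ``using that $B$ sits inside $S$ and $\cO_\T$ is injective as a target for the relevant monoid maps.'' That does not establish what is needed. Extending a monoid morphism $B^\bullet\to\cO_\T^\bullet$ to $S^\bullet\to\cO_\T^\bullet$ is not the issue; the issue is that the extension must be a \emph{valuation} on $S$, i.e.\ respect the full preaddition of $S$ (the relations coming from the ring $S$), and no injectivity property of $\cO_\T$ delivers that. The paper's proof of Theorem~\ref{thm: Ulirsch tropicalization as rational point set} instead imports the nontrivial fact that $\trop_\alpha^U:\Val_v(R,\cO_\T)\to\Hom(M,\cO_\T)$ is surjective (part of Thuillier/Ulirsch theory, essentially because Thuillier's map is a retraction), uses it to lift a given $w:B\to\cO_\T$ to some $w':R\to\cO_\T$ compatible with $\Psi(w)$, and then observes that $w'$ factors through $S$ precisely because the preaddition $\cR$ of $B$ already contains every relation of $S$. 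You would need some replacement for this step; as written the surjectivity claim is unsupported.

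Two smaller points. For the first homeomorphism you write $w(c)\leq 1$ for all $c\in k$; for a valuation extending the trivial $v$ this is an equality on $k^\times$, and the weaker ``integral on $k$'' condition characterizes a different functor ($\Val(X,k;-)$ in the Macpherson section). Make sure you are cutting out the right subset of $\cO_\T^R$. For the Kato fan recovery you gesture at reading off $B^\bullet$ and a submonoid of units from ``the universal $\cO_\T$-point,'' but the paper's Theorem~\ref{thm: recovering the Kato fan} is more structural: it passes to $\Bend_{v_0}(Z)$ over $\B$, shows the underlying monoid is $B^\bullet\otimes_{k^\bullet}\B\simeq B^\bullet/k^\times$, identifies the prime spectrum of this monoid with that of $\cM_X(X)$, and recovers the sharp sheaf by quotienting by units; the hypothesis that $\alpha$ is a monomorphism is used to identify $\cM_X(X)$ with $B^\bullet-\{0\}$. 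Your sketch is consistent with this but would need to be made precise before it counts as a proof.
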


Note that the restriction to the trivial valuation $v:k\to\cO_T$ is caused by the following technical obstruction: the Ulirsch tropicalization relies on the choices of local sections to $\cM_X\to\cM_X/\cM_X^\times$, and this contribution can be avoided since $\cO_\T^\times=\{1\}$. Passing from the fine and saturated log structure to the associated blue scheme avoids these choices and overcomes any restrictions on the valuation.

\subsection*{Conclusion}
The scheme theoretic tropicalization in terms of blue schemes provides a framework that embraces all other concepts of tropicalization considered in this paper, up to some technical restrictions that we address below. This theory extends the different generalizations of the Kajiwara-Payne tropicalization commonly into all directions, with exception of the restrictions mentioned below. In particular, this means that the scheme theoretic structure of the Giansiracusa tropicalization extends to the context of Thuillier analytification and Ulirsch tropicalization of fine and saturated log schemes with respect to any valuation $v:k\to T$ into idempotent $T$.

Another more subtle improvement is the following. The tropicalization of a blue scheme comes with the structure of a blue scheme. This additional structure determines the tropical variety as a topological space, and in the case of a closed subscheme of a torus, it encodes the weights that appear if the tropical variety is gets identified with a polyhedral weighted complex. This has the consequence that we can detach the tropicalization from its ambient space like a toric variety or an extended cone complex.

\subsection*{Technical restrictions}
In this text, we do not pursue a theory of \'etale morphisms for blue schemes. Therefore we restrict ourselves to Zariski log schemes in our treatment of Ulirsch tropicalization. We further assume that the continuous map $\chi:X\to F_X$ from the log scheme $X$ to its Kato fan satisfies that the inverse image of an affine open subset is affine.

We expect that these restrictions are not essential, but we leave the treatment of a more general theory and, in particular, \'etale morphisms for ordered blue schemes to future investigations.

\subsection*{Content overview} This text is divided into two parts. The first part introduces ordered blueprints and ordered blue schemes. The second part applies this theory to tropicalization.

The first part contains the following sections. After settling some conventions for this paper in section \ref{section: conventions}, we introduce ordered blueprints and various subcategories and functorial constructions in section \ref{section: ordered blueprints}. In section \ref{section: valuations}, we explain the relation of our notion of valuations to seminorms and Krull valuations. In section \ref{section: Ordered blue schemes}, we review and extend the theory of blue schemes to the realm of ordered blueprints, which provides the scheme theoretic background for the constructions in the second part of the paper. In section \ref{section: endofunctors and base extension to semiring schemes}, we extend several endofunctors on ordered blueprints to ordered blue schemes and we introduce the base extension to semiring schemes. In section \ref{section: rational points}, we explain how a topology on an ordered blueprint $T$ yields a topology on the set $X(T)$ of $T$-rational points. 

Note that the step from blueprints to ordered blueprints is subtle, in the sense that we merely relax an axiom, yet powerful, in the sense that it comes with various new aspects. Much of sections \ref{section: ordered blueprints} and \ref{section: valuations} is dedicated to a discussion of these novelties. Much of sections \ref{section: Ordered blue schemes} and \ref{section: endofunctors and base extension to semiring schemes} follows our previous developments for blue schemes. Topologies for rational point sets (section \ref{section: rational points}) are a novel aspect to the general theory.

The second parts starts with section \ref{section: Scheme theoretic tropicalization}, which introduces the general concept of tropicalizing an ordered blue scheme along a valuation and prove the central results Theorems \ref{thmA} and \ref{thmB}. In the subsequent sections, we explain the relation to other concepts of analytifications and tropicalizations and prove Theorems \ref{thmC}--\ref{thmH}. Since the section headers are self-explanatory, we would like to refer the reader to the table of contents for finding the corresponding results.

\subsection*{Acknowledgements} 
I would like to thank Sam Payne for organizing the meeting Algebraic Foundations for Tropical Geometry in May 2014. I would like to thank all participants for our discussions during the workshop. My particular thanks go to Matt Baker and Andrew Macpherson for their patient explanations on analytic geometry and skeleta; to Jeff and Noah Giansiracusa for our conversations on scheme theoretic foundations; to Diane Maclagan for sharing her ideas on tropical schemes and her comments on a previous version of  this text; and to Martin Ulirsch for our discussions on the connection between blue schemes and log schemes, and for his careful corrections of a previous version. I would like to thank Ethan Cotterill for bringing several publications to my attention and for his help with various questions on tropical geometry. I would like to thank Walter Gubler, Joseph Rabinoff and Annette Werner for their explanations on skeleta, which led me to the conclusion that the theory of this paper is not yet sufficiently developed to explain skeleta as rational point sets of underlying schemes. I would like to thank Tyler Foster and Dhruv Ranganathan for their explanations on higher rank analytifications and tropicalizations. I would like to thank an anonymous referee for his or her thoughtful report and valuable feedback on the paper.


\part{Ordered blueprints}

In the first part, we set up the theory of ordered blueprints and ordered blue schemes. The category of ordered blueprints recovers well-known objects as ordered semirings and monoids as well as blueprints, halos, hyperrings and sesquiads. Several constructions of endofunctors allow us to talk about seminorms and valuations in terms of morphisms. Section \ref{subsection: overview of subcategories} contains an illustration of the relevant subcategories of the category of ordered blueprints.

The spectrum of an ordered blueprint is based on the notion of a prime $k$-ideal, which yields a topological space together with a structure sheaf. This lets us define an ordered blue scheme as a so-called ordered blueprinted space that is covered by spectra of ordered blueprints. After explaining how a topology for an ordered blueprint $T$ induces a topology for the set $X(T)$ of $T$-rational points of an ordered blue scheme $X$, we briefly sketch how the approach to ordered blue schemes is connected to To\"en and Vaqui\'e's relative schemes, and comment on the gap originating from different Grothendieck topologies on the category of ordered blueprints.

\section{Conventions}
\label{section: conventions}

In this text we use the following conventions. A \emph{monoid} is a multiplicatively written commutative semigroup $A$ with unit element $1$ and a morphism of monoids is a multiplicative map that maps $1$ to $1$. A \emph{monoid with zero} is a monoid $A$ with an additional element $0$ that satisfies $0\cdot a=0$ for all $a\in A$. A morphism of monoids with zero is a morphism of monoids that maps $0$ to $0$. We denote the category of monoids with zero by $\Mon$.

A \emph{semiring} is always commutative with $0$ and $1$. A ring is a semiring with an additive inverse $-1$ of $1$. An idempotent semiring is a semiring with $1+1=1$.

The reader that is familiar with blueprints will find that the definition in section \ref{subsection: algebraic blueprints} is equivalent to the definitions in other texts on blueprints, like \cite{Lorscheid17} and \cite{blueprintedview}, with the exception of \cite{blueprints1} where a blueprint in the sense of this text is called a \emph{proper blueprint with zero}.

As a last point, we like to draw the reader's attention to the following inconsistency with standard notation, as used in the introduction. Tensor products and free objects of (ordered) semirings, considered in the category of (ordered) blueprints, are not (ordered) semirings. Our convention is to use the according standard symbols for the constructions \emph{in the category of ordered blueprints} and to refer to the corresponding construction inside the category of ordered semirings with a superscript ``$+$''. This applies to the following notations.

Given a blueprint $B$ and a monoid with zero $A$, we write $B[A]$ for the free blueprint over $B$, whose underlying set is $\{0\}\cup\{b\cdot a|b\in B, a\in A\}$. We write $B[A]^+$ for the generated semiring. For instance, while we denote by $\N[A]^+=\{\sum a_iT^i|a_i\in\N\}$ the semiring of polynomials, we denote by $\N[A]$ the blueprint of monomials $aT^i$ with $a\in\N$. Given semiring homomorphisms $D\to B$ and $D\to C$, we denote by $B\Sotimes_DC$ the tensor product in the category of semirings, which differs in general from the tensor product $B\otimes_DC$ in the category of blueprints. Note that the precise relationship is given by $B\Sotimes_DC=(B\otimes_DC)^+$ since the ``semiring completion'' $(-)^+:\Blpr\to\SRings$ is left-adjoint to the embedding $\SRings\to\Blpr$.

Other instances for this notation are the affine line and the multiplicative group scheme. In the category of (ordered) blueprints, the functor $B\mapsto B$ is represented by $\A^1_{B}=\Spec B[X]$ and the functor $B\mapsto B^\times$ is represented by $\G_{m,B}=\Spec B[X^{\pm1}]$. To distinguish these objects from the classical affine line and the classical multiplicative group scheme for a semiring $B$, we use $\A^{1,+}_{B}=\Spec B[X]^+$ and $\G^+_{m,B}=\Spec B[X^{\pm1}]^+$ for the latter objects.


\section{Basic definitions}
\label{section: ordered blueprints}

In this section, we introduce the category of ordered blueprints and various subcategories and endofunctors that are of relevance for this paper.

\begin{df}
 An \emph{ordered blueprint} is a monoid $A$ with zero together with a \emph{subaddition on $A$}, which is a relation $\cR$ on the set $\N[A]^+=\{\sum a_i|a_i\in A\}$ of finite formal sums of elements of $A$ that satisfies the following list of axioms (where we write $\sum a_i\leq \sum b_j$ for $(\sum a_i,\sum b_j)\in\cR$ with $a_i,b_j\in A$, and where $0$ is the zero in $A$ and $\text{(empty sum)}$ is the empty sum in $\N[A]^+$). 
 \begin{enumerate}[label={(B\arabic*)}]
  \item\label{B1} $a\leq a$ for all $a\in A$;                                                                           \hfill\textit{(reflective)}
  \item\label{B2} $\sum a_i\leq\sum b_j$ and $\sum b_j\leq\sum c_k$ implies $\sum a_i\leq\sum c_k$;                     \hfill\textit{(transitive)}
  \item\label{B3} $\sum a_i\leq\sum b_j$ and $\sum c_k\leq\sum d_l$ implies $\sum a_i+\sum c_k\leq\sum b_j+\sum d_l$;   \hfill\textit{(additive)}
  \item\label{B4} $\sum a_i\leq\sum b_j$ and $\sum c_k\leq\sum d_l$ implies $\sum a_ic_k\leq\sum b_jd_l$;               \hfill\textit{(multiplicative)}
  \item\label{B5} $0\leq\text{(empty sum)}$ and $\text{(empty sum)}\leq 0$;                                             \hfill\textit{(zero)}
  \item\label{B6} $a\leq b$ and $b\leq a$ implies $a=b$ as elements in $A$.                                             \hfill\textit{(proper)}
 \end{enumerate}
 We write $B=\bpquot A\cR$ for an ordered blueprint with $A$ and $\cR$ as above. A \emph{morphism of ordered blueprints $B_1=\bpquot{A_1}{\cR_1}$ and $B_2=\bpquot{A_2}{\cR_2}$} is a monoid morphism $f:A_1\to A_2$ such that $\sum a_i\leq_1\sum b_j$ implies $\sum f(a_i)\leq_2\sum f(b_j)$. We denote the category of ordered blueprints by $\OBlpr$.
\end{df}

Note that axioms \ref{B1} and \ref{B2} state that $\cR$ is a \emph{pre-order} on $\N[A]^+$, and axiom \ref{B6} states that $\cR$ restricts to a partial order on $A$, considered as a subset of $\N[A]^+$.

Often, we refer to the underlying set of $A$ by the symbol $B$, i.e.\ we write $a\in B$ for $a\in A$ and $f:B_1\to B_2$ for the underlying map $A_1\to A_2$ of monoids. We say that $\sum a_i\leq \sum b_j$ holds in $B=\bpquot A\cR$ if $(\sum a_i,\sum b_j)$ is an element of $\cR$. We write $\sum a_i\geq\sum b_j$ if $\sum b_j\leq\sum a_i$, and $\sum a_i\=\sum b_j$ if both $\sum a_i\leq\sum b_j$ and $\sum a_i\geq\sum b_j$.  For instance, axiom \ref{B5} can be rewritten as $0\=\text{(empty sum)}$.

Any set $S$ of relations (of the form $\sum a_i\leq\sum b_j$) has a closure $\gen S$ under axioms \ref{B1}--\ref{B5}, which is the smallest relation $\cR$ on $\N[A]^+$ that contains $S$ and satisfies axioms \ref{B1}--\ref{B5}. However, axiom \ref{B6} plays a restrictive role: not every set $S$ of relations is contained in a relation $\cR$ on $\N[A]^+$ that satisfies \ref{B6}. Therefore $\gen S$ will always refer to the \emph{closure under \ref{B1}--\ref{B5}}, while $B=\bpgenquot AS$ refers to the proper quotient, as introduced in the following section.

Every monoid $A$ with zero has a smallest subaddition $\gen\emptyset$, and we consider $A$ as the ordered blueprint $\bpgenquot{A}{\emptyset}$. Since a map $A_1\to A_2$ is a morphism in $\Mon$ if and only it is a morphism in $\OBlpr$, this defines a full embedding $\Mon\to\OBlpr$. We say that an ordered blueprint $B$ is a \emph{monoid} if it is in the essential image of this embedding.


\subsection{Proper quotients}
\label{subsection: proper quotients for blueprints}

Given a pair $B=(A,\cR)$ that satisfies axioms \ref{B1}--\ref{B5}, we can associate with it the following ordered blueprint $B_\proper=\bpquot{(A/\sim)}{\widetilde\cR}$. We define the equivalence relation $\sim$ on $A$ by $a\sim b$ if $a\= b$. We define $\sum\bar a_i\widetilde\leq \sum \bar b_j$ if $\sum a_i\leq\sum b_j$ where $\bar a_i,\bar b_j\in (A/\sim)$ are the respective classes of $a_i, b_j\in A$. By axiom \ref{B5}, $A/\sim$ is a monoid with zero, and axiom \ref{B2} ensures that the definition of $\widetilde\cR$ is independent of the choice of representatives. It is easily verified that $B_\proper$ is indeed an ordered blueprint, which we call the \emph{proper quotient of $B$}; we say that $B=(A,\cR)$ is an \emph{(improper) representation of $B_\proper$}. We denote by $\bpquot A\cR$ the proper quotient of $(A,\cR)$.

Let $B=(A,\cR)$ be as above and $f:B\to C$ a multiplicative map into an ordered blueprint $C$ such that $\sum a_i\leq\sum b_j$ in $B$ implies $\sum f(a_i)\leq\sum f(b_j)$ in $C$. Then $f$ factors uniquely into the quotient map $B\to B_\proper$ followed by a morphism $f_\proper: B_\proper\to C$. This shows that $(-)_\proper$ is functorial in pairs $(A,\cR)$ as above. If we want to stress that $(A,\cR)$ is a proper representation of $B$, then we write that $B=(A,\cR)$ is an ordered blueprint.

Let $B=\bpquot A\cR$ be an ordered blueprint and $\sim$ the restriction of $\cR$ to $A$. Then we call $B^\bullet=A/\sim$ the \emph{underlying monoid $A/\sim$} of $\bpquot A\cR$. Note that a morphism $B\to C$ determines a monoid morphism $f^\bullet:B^\bullet\to C^\bullet$ between the underlying monoids of $B$ and $C$. This yields a functor $(-)^\bullet:\OBlpr\to\Mon$, which is right adjoint and left inverse to the embedding $\Mon\to\OBlpr$.

We say that a morphism $f:B\to C$ is \emph{injective} or \emph{surjective} if the map $f^\bullet:B^\bullet\to C^\bullet$ between the underlying monoids is injective or surjective, respectively. A morphism $f:B\to C$ of blueprints is \emph{full} if every relation $\sum f(a_i)\leq\sum f(b_j)$ in $C$ with $a_i,b_j\in B$ implies $\sum a_i\leq\sum b_j$ in $B$.

A \emph{subblueprint of $B$} is a blueprint $B'$ together with an injective morphism $B'\to B$. Note that a morphism is a monomorphism if and only if the map between the underlying monoids is injective. A subblueprint is \emph{full} if the inclusion $B'\to B$ is full. Note that a full subblueprint $B'\subset B$ is determined by the submonoid $(B')^\bullet$ of $B^\bullet$.

Let $B=\bpquot{A}{\cR}$ be a blueprint and $S$ be a set of relations on $\N[A]^+$. We denote by $\bpgenquot BS$ the blueprint $\bpgenquot{A}{\cR\cup S}$.


\subsection{Limits and colimits}
\label{subsection: limits and colimits for blueprints}

The product $B=\bpquot A\cR$ of a family of blueprints $B_l=\bpquot{A_l}{\cR_l}$ is represented by the Cartesian product $A=\prod A_l$ of the underlying monoids with coordinatewise multiplication, together with the subaddition
\[\textstyle
 \cR \quad = \quad \{ \ \sum a_{i} \leq\sum b_{j} \ | \ \sum a_{i,l} \leq \sum b_{j,l} \text{ for all }l \ \}.
\]

The equalizer $\eq(f,g)$ of two morphisms $f,g: B_1\to B_2$ is represented by the full ordered subblueprint $B=\bpquot A\cR$ of $B_1$ with
\[
 A \quad = \quad \{ \ a\in B_1 \ | \ f(a)=g(a)\ \}.
\]

The coequalizer of two morphisms $f_1,f_2:B_1\to B_2$ is $\bpquot A\cR$ where $A$ is the underlying monoid of $B_2$ and $\cR$ is generated by the subaddition of $B_2$ and all relations of the form $f_1(a)\=f_2(a)$ with $a\in B_1$.

The coproduct of two ordered blueprints $B$ and $C$ is the smash product $B\wedge C$, which is obtained from Cartesian product $B\times C$ by identifying $B\times\{0\}\cup\{0\}\times C$ with $0$. In particular, there is a tensor product $B\otimes_DC$ for every diagram $B\leftarrow D \to C$, which is the quotient of $B\times C$ by all relations of the form $(db,c)=(b,dc)$ where $b\in B$, $c\in C$ and $d\in D$. The coproduct of an infinite family is the filtered colimit of the coproducts of its finite subfamilies. The filtered colimit can be constructed as usual; for instance, see \cite{CLS12}.

An initial object of $\OBlpr$ is the monoid $\Fun=\{0,1\}$ and a terminal object is the \emph{trivial blueprint} $0=\bpgenquot{\{0\}}{\emptyset}$. We summarize:

\begin{lemma}
 The category of ordered blueprints is complete and cocomplete with initial and terminal objects. \qed
\end{lemma}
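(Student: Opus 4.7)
The plan is to verify that the constructions already exhibited in section \ref{subsection: limits and colimits for blueprints} do indeed satisfy the relevant universal properties, and then invoke the standard categorical fact that a category with all small products and equalizers is complete, and a category with all small coproducts and coequalizers is cocomplete. The existence of $\Fun$ and the trivial blueprint $0$ as initial and terminal objects is immediate: any monoid morphism out of $\Fun=\{0,1\}$ is forced on $0$ and $1$, and any morphism into $0$ must send everything to $0$; since both have trivial subadditions, there is nothing further to check.

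First I would verify the product. Given morphisms $f_l:C\to B_l$, the map $f=(f_l):C\to \prod A_l$ is a monoid morphism, and by construction $\sum a_i\leq \sum b_j$ in $C$ implies $\sum f_l(a_i)\leq\sum f_l(b_j)$ for every $l$, hence $\sum f(a_i)\leq\sum f(b_j)$ in the product. Uniqueness is clear. Axioms \ref{B1}--\ref{B6} for the proposed $\cR$ follow coordinatewise. The equalizer verification is equally mechanical: the full subblueprint on $\{a\in B_1\mid f(a)=g(a)\}$ is closed under multiplication and contains $0,1$, and every map $h:C\to B_1$ with $fh=gh$ factors uniquely through it because factoring a morphism through a full subblueprint is automatic once the underlying monoid morphism factors.

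Next I would verify coequalizers and coproducts. For the coequalizer, one begins with the pair $(A,\cR)$ where $A$ is the monoid of $B_2$ and $\cR$ is generated (under axioms \ref{B1}--\ref{B5}) by the subaddition of $B_2$ together with the relations $f_1(a)\=f_2(a)$; this is a pre-subaddition, and passage to the proper quotient $(-)_\proper$ (section \ref{subsection: proper quotients for blueprints}) yields an ordered blueprint together with the universal property, precisely because any $h:B_2\to C$ with $hf_1=hf_2$ sends the generating relations into $\cR_C$ and therefore factors through the closure and then through the proper quotient. For the smash product as coproduct, a pair of morphisms $B\to D$ and $C\to D$ gives a well-defined multiplicative map $B\wedge C\to D$ since the zero identifications are respected; one checks that the induced subaddition on $B\wedge C$ consists exactly of the relations needed to guarantee the universal property. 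Tensor products are then obtained by a further coequalizer of the two maps $B\wedge D\wedge C\rightrightarrows B\wedge C$.

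The main obstacle is the interaction of axiom \ref{B6} with colimit constructions, since the naive closure under \ref{B1}--\ref{B5} need not be proper. This is handled precisely by the functor $(-)_\proper$: the image of the quotient $B\to B_\proper$ is initial among morphisms from $B$ (viewed as a preblueprint) into ordered blueprints, so applying it after every colimit construction gives an ordered blueprint with the correct universal property. Once coequalizers and arbitrary coproducts (small filtered colimits of finite smash products, constructed as in \cite{CLS12}) exist, the standard argument presenting an arbitrary colimit as a coequalizer between coproducts shows that $\OBlpr$ is cocomplete; the dual argument with products and equalizers shows completeness. \qed
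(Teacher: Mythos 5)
Your proposal is correct and takes essentially the same approach as the paper, which simply exhibits the product, equalizer, coequalizer, coproduct (smash product), and the initial and terminal objects in section 2.3 and then marks the lemma as immediate. You usefully spell out the one point where a reader might pause — that coequalizers and coproducts must be followed by the proper-quotient construction $(-)_\proper$ to restore axiom \ref{B6} — but this is exactly what the paper's notation $\bpquot A\cR$ already encodes, so there is no substantive divergence.
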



\subsection{Algebraic blueprints}
\label{subsection: algebraic blueprints}

Whenever we see the need to make a clear distinction between blueprints as considered in \cite{Lorscheid17} and other types of ordered blueprints as they appear in this text, we shall add the attribute ``algebraic'' to ``blueprint.''

An \emph{(algebraic) blueprint} is a pair of a monoid $A$ together with a \emph{preaddition}, which is a subaddition $\cR$ that satisfies
\begin{enumerate}[label={(B\arabic*)}]\setcounter{enumi}{6}
 \item\label{B7} $\sum a_i\leq\sum b_j$ if and only if $\sum b_j\leq\sum a_i$,                                          \hfill\textit{(symmetric)}
\end{enumerate}
i.e.\ $\cR$ is an equivalence relation on $\N[A]^+$ that satisfies the additional axioms \ref{B3}--\ref{B6}. In other words, an ordered blueprint $B=\bpquot A\cR$ is an algebraic blueprint if and only if $\cR$ is symmetric.

A \emph{morphism of algebraic blueprints} is the same as a morphism of ordered blueprints. We denote the full subcategory of algebraic blueprints in $\OBlpr$ by $\Blpr=\OBlpr^\alg$ and the embedding as full subcategory by
\[
 \iota^\alg: \quad \Blpr \quad \longrightarrow \quad \OBlpr.
\]

The embedding $\iota^\alg$ has a left adjoint
\[
 (-)^\hull: \quad \OBlpr \quad \longrightarrow \quad \Blpr,
\]
which sends an ordered blueprint $B=\bpquot A\cR$ to its \emph{algebraic hull $B^\hull=\bpquot A{\cR^\hull}$} with 
\[\textstyle
 \cR^\hull \quad = \quad \{ \ \sum a_i\=\sum b_j \ | \ \sum a_i \leq \sum b_j\text{ in }B\ \}.
\]
Note that $(A,\cR^\hull)$ is in general not a proper representation of $B^\hull$, even if $(A,\cR)$ is a proper representation of $B$. The algebraic hull comes with a canonical morphism $B\to B^\hull$ that maps $a\in B$ to its class in the proper quotient of $(A,\cR^\hull)$. This morphism is universal among all morphisms from $B$ to algebraic blueprints, which explains the functoriality of $(-)^\hull$.

The embedding $\iota^\alg$ has also a right adjoint
\[
 (-)^\core: \quad \OBlpr \quad \longrightarrow \quad \Blpr,
\]
which sends an ordered blueprint $B=\bpquot A\cR$ to its \emph{algebraic core $B^\core=\bpquot A{\cR^\core}$} with
\[\textstyle
 \cR^\core \quad = \quad \{ \ \sum a_i\=\sum b_j \ | \ \sum a_i \= \sum b_j\text{ in }B \ \}.
\]
Note that $(A,\cR^\core)$ is always a proper representation of $B^\core$ if $(A,\cR)$ is a proper representation of $B$. The algebraic core comes with a canonical morphism $B^\core\to B$ that is the identity on the underlying monoid $A$. This morphism is universal among all morphisms from an algebraic blueprint to $B$, which explains the functoriality of $(-)^\core$.

\begin{lemma}
 An ordered blueprint $B$ is an algebraic blueprint if and only if one of the following equivalent conditions are satisfied:
 \begin{enumerate}
  \item The canonical morphism $B\to B^\hull$ is an isomorphism.
  \item The canonical morphism $B^\core\to B$ is an isomorphism.
  \item The canonical morphism $B^\core\to B^\hull$ is an isomorphism.
 \end{enumerate}
\end{lemma}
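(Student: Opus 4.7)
My plan is to prove that each of (i), (ii), and (iii) is equivalent to $B$ being algebraic by directly comparing the three subadditions $\cR^\core\subseteq\cR\subseteq\cR^\hull$ on the underlying monoid $A$ of a proper representation $B=\bpquot A\cR$. The key structural facts I will use are that $\cR^\core$ is obtained from $\cR$ by retaining exactly the symmetric pairs, that $\cR^\hull$ is symmetric by construction, and that a morphism of ordered blueprints is an isomorphism precisely when it induces a bijection on the underlying monoids under which the two subadditions correspond.

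For the direction starting from $B$ being algebraic, I note that if $\cR$ is already symmetric then every relation in $\cR$ is symmetric, so $\cR^\core=\cR$ and the canonical morphism $B^\core\to B$ is the identity on $(A,\cR)$, giving (ii). Moreover the set $\{\sum a_i\=\sum b_j\mid\sum a_i\leq\sum b_j\text{ in }B\}$ generating $\cR^\hull$ is by symmetry already contained in $\cR$, and since $\cR$ is itself closed under \ref{B1}--\ref{B5} this forces $\cR^\hull=\cR$; in particular the identification $\sim^\hull$ on $A$ is trivial, so $B\to B^\hull$ is an isomorphism, giving (i); composition then yields (iii).

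The converse directions all rely on the sandwich $\cR^\core\subseteq\cR\subseteq\cR^\hull$ on a common underlying monoid $A$: an isomorphism in (i) forces $\cR=\cR^\hull$, which is symmetric; an isomorphism in (ii) forces $\cR=\cR^\core$, which is symmetric; and an isomorphism in (iii) collapses the entire sandwich, again giving $\cR=\cR^\core$ symmetric. In each case $\cR$ is symmetric, so $B$ is algebraic.

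The main obstacle in these forward implications is the proper-quotient aspect of $B^\hull$: the subaddition $\cR^\hull$ may a priori identify distinct elements of $A$ whenever a zig-zag chain of $B$-relations connects them, so I first need to verify that the hypothesized isomorphism lifts to a genuine bijection on $A$ itself before one can meaningfully compare $\cR$ with $\cR^\hull$ on a common underlying set. I expect this to be routine: a bijection on proper quotients forces $\sim^\hull$ to be trivial on $A$, whereupon the closure of the generating relations of $\cR^\hull$ under \ref{B1}--\ref{B5} can be compared directly with $\cR$ and shown to agree by the sandwich.
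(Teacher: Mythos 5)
Your proposal is correct and spells out the argument the paper summarizes with "this follows easily from the definitions." The forward direction (algebraic implies (i)–(iii)) and the converses for (ii) and (iii) are clean; the one genuinely delicate point — that an isomorphism $B\to B^\hull$ (or $B^\core\to B^\hull$) forces the proper-quotient identification $\sim^\hull$ on $A$ to be trivial — you correctly identify and correctly resolve, since $A\to A/{\sim^\hull}$ is always surjective, so a bijection forces injectivity and hence triviality of $\sim^\hull$. The one sub-step you state without proof is that $\cR^\hull$ is itself a symmetric relation: this does hold, because its generating set is symmetric by construction and the closure operation $\gen{-}$ under \ref{B1}--\ref{B5} preserves symmetry (each axiom is self-dual under reversing relations, so $\gen{S}^{\textup{op}}$ is again closed and contains $S^{\textup{op}}=S$, forcing $\gen{S}^{\textup{op}}=\gen{S}$). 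With that small addendum your proof is complete and is the intended one.
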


\begin{proof}
 This follows easily from the definitions.
\end{proof}


\subsection{Blueprints with inverses}
\label{subsection: blueprints with inverses}

A \emph{blueprint with $-1$} (or \emph{with inverses}) is an ordered blueprint $B$ that contains an element $-1$ that satisfies $1+(-1)\=0$. This implies that every element $a\in B$ has an additive inverse, which is an element $-a$ with $a+(-a)\=0$. The additive inverse is necessarily unique.

The blueprint $\Funsq=\bpgenquot{\{0,\pm1\}}{1+(-1)\=0}$ has a unique morphism into any other blueprint with $-1$. Therefore the full subcategory of blueprints with $-1$ corresponds to $\Blpr_\Funsq$, and the base change functor $-\otimes_\Fun\Funsq:\OBlpr\to\Blpr_\Funsq$ is left adjoint and left inverse to the inclusion $\iota^\inv:\Blpr_\Funsq\to\OBlpr$ as a subcategory. We also write $B^\inv$ for $B\otimes_\Fun\Funsq$.

\begin{lemma}\label{lemma: ordered blueprints with -1 are algebraic}
 Every ordered blueprint with $-1$ is an algebraic blueprint. 
\end{lemma}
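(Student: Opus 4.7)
My plan is to show symmetry of the subaddition by explicit algebraic manipulation, exploiting the element $-1$ to ``invert'' any given inequality and then canceling the auxiliary terms. Concretely, I want to prove that in a blueprint $B$ with $-1$, whenever $\sum a_i\leq \sum b_j$ holds in $B$, the reverse inequality $\sum b_j\leq \sum a_i$ is automatic; by \ref{B7} this is exactly what it means for $B$ to be algebraic.

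The first preparatory step is to promote the relation $1+(-1)\=0$ to the statement $a+(-a)\=0$ for any $a\in B$, where I write $-a:=(-1)\cdot a$. This is immediate from multiplicativity \ref{B4}: tensoring the two directions of $1+(-1)\=0$ with $a\=a$ yields $a+(-a)\=0$ (using also $a\cdot 0\=0$, which follows from \ref{B5} via \ref{B4}). Iterating via additivity \ref{B3} and \ref{B5} then gives the ``formal-sum'' version
\[
 \sum a_i \ + \ \sum (-a_i) \ \= \ 0
\]
for every finite formal sum $\sum a_i$, and similarly for $\sum b_j$.

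Now assume $\sum a_i\leq \sum b_j$. Applying \ref{B4} to this relation and to $(-1)\leq(-1)$ from \ref{B1} produces $\sum(-a_i)\leq\sum(-b_j)$. Adding $\sum b_j\leq \sum b_j$ via \ref{B3} gives
\[
 \sum b_j \ + \ \sum (-a_i) \ \leq \ \sum b_j \ + \ \sum (-b_j) \ \= \ 0,
\]
where the last identification is the formal-sum version of the inverse relation. Adding $\sum a_i \leq \sum a_i$ by \ref{B3} and using transitivity \ref{B2} now yields
\[
 \sum b_j \ + \ \sum (-a_i) \ + \ \sum a_i \ \leq \ \sum a_i.
\]
On the other hand, from $0\leq \sum(-a_i)+\sum a_i$ and $\sum b_j\leq \sum b_j$, axiom \ref{B3} combined with $\sum b_j + 0\=\sum b_j$ (which follows from \ref{B5} and \ref{B3}) gives $\sum b_j\leq \sum b_j+\sum(-a_i)+\sum a_i$. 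A final application of transitivity \ref{B2} produces the desired inequality $\sum b_j\leq \sum a_i$.

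The argument is essentially a formal rewriting, so there is no serious obstacle; the only care required is to keep the manipulation entirely at the level of formal sums in $\N[A]^+$ (never appealing to an ambient additive structure that the axioms do not provide) and to invoke \ref{B5} cleanly each time a $0$ needs to be absorbed into or extracted from a formal sum. Once the identity $a+(-a)\=0$ and its sum-version are in place, symmetry of the subaddition follows, and the three equivalent characterizations from the preceding lemma then identify $B$ with its algebraic hull and algebraic core, confirming that $B$ is algebraic.
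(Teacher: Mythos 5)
Your proof is correct and follows essentially the same route as the paper: both derive $\sum -a_i \leq \sum -b_j$ by multiplying the given relation by $-1$, both use the formal-sum cancellation identity $\sum x_i + \sum(-x_i) \= 0$, and both chain to obtain $\sum b_j \leq \sum a_i$. The paper compresses this into a single display $\sum b_j \= \sum b_j + \sum -a_i + \sum a_i \leq \sum b_j + \sum -b_j + \sum a_i \= \sum a_i$, whereas you spell out the individual applications of \ref{B3}, \ref{B4}, \ref{B5}, and \ref{B2}; the extra care (in particular, the preparatory derivation of $a+(-a)\=0$ from $1+(-1)\=0$ via \ref{B4}) makes the argument fully rigorous but does not change its substance.
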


\begin{proof}
 By multiplication with $-1$, a relation $\sum a_i\leq \sum b_j$ implies that $\sum -a_i\leq \sum -b_j$ and thus
 \[\textstyle
  \sum b_j \quad \= \quad \sum b_j +\sum -a_i +\sum a_i \quad \leq \quad \sum b_j +\sum -b_j +\sum a_i  \quad \= \quad \sum a_i,
 \]
 which shows that, indeed, $\sum a_i\=\sum b_j$.
\end{proof}

Summing up all facts, we justified the notation $\Blpr^\inv$ or $\Blpr_\Funsq$ for the full subcategory of blueprints with $-1$ in $\OBlpr$.


\subsection{The universal ordered semiring}
\label{subsection: the universal ordered semiring}

In this text, an \emph{ordered semiring} is a semiring $R$ together with a partial order $\leq$ that satisfies for all $x,y,z,t\in R$
\begin{enumerate}[label={(S\arabic*)}]
 \item $x\leq y$ and $z\leq t$ implies $x+z\leq y+t$;                 \hfill\textit{(additive)}
 \item $x\leq y$ and $z\leq t$ implies $xz\leq yt$.                   \hfill\textit{(multiplicative)}
\end{enumerate}
A morphism of ordered semirings is an order-preserving homomorphism of ordered semirings that maps $0$ to $0$ and $1$ to $1$.

Let $B=\bpquot A\cR$ be an ordered blueprint. The \emph{universal ordered semiring $B^+$ associated with $B$} is the semiring $\N[A]^+/\cR^\core$ together with the partial order defined by
\[\textstyle
 \left[\sum a_i\right] \leq \left[\sum b_j\right] \quad \text{in }B^+\text{ if and only if}\quad \sum a_i \leq \sum b_j \quad\text{in }B.
\]
Note that $B^+$ is well-defined as an ordered semiring: by additivity and multiplicativity of $\cR$, $B^+$ inherits the structure of a semiring as a quotient of the semiring $\N[A]^+$; by transitivity, the partial order on $B^+$ is well-defined (as a relation on $B^+$); by reflexivity and transitivity of $\cR$ and the definition of $B^+$, this relation is indeed a partial order on $B^+$; again by additivity and multiplicativity, the partial order of $B^+$ is additive and multiplicative.

A morphism $f:B\to C$ of ordered blueprints induces a morphism of ordered semirings $f^+:B^+\to C^+$ that is defined by $f([\sum a_i])=[\sum f(a_i)]$. This establishes the functor $(-)^+$ from $\OBlpr$ to the category of ordered semirings.

Conversely, we can consider every ordered semiring $(R,\leq)$ as an ordered blueprint $B=\bpquot A\cR$: we let $A$ be the underlying multiplicative monoid of $R$ and define
\[\textstyle
 \cR \quad = \quad \{ \ \sum a_i \leq \sum b_j \ | \ \sum a_i \leq \sum b_j \text{ in }R \ \}.
\]
Then a map between ordered semirings is the same as a morphism between the associated ordered blueprints. This defines an embedding $\iota^+$ of the category of ordered semirings as a full subcategory of the category of ordered blueprints, which is a left adjoint to $(-)^+$. Moreover, $(-)^+\circ\iota^+$ is isomorphic to the identity functor on the category of ordered semirings. 

From now on, we identify the category of ordered semirings with the essential image of $\iota^+$, which allows us to talk about morphisms from ordered blueprints into ordered semirings. We see that an ordered blueprint $B=\bpquot A\cR$ comes with the morphism $B\to B^+$ that sends $a\in A$ to the class $[a]\in B^+$. This morphism is universal for morphisms from $B$ into ordered semirings.

We identify the category of semirings with the subcategory of trivially ordered semirings. This coincides with the realization of semirings as algebraic blueprints, followed by the embedding of $\Blpr$ into $\OBlpr$. We derive yet another characterization of algebraic blueprints in $\OBlpr$.

\begin{lemma}
 An ordered blueprint $B$ is an algebraic blueprint if and only if $B^+$ is trivially ordered. \qed
\end{lemma}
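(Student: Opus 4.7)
The plan is to unwind the definitions on both sides and observe that the statement reduces to the observation that symmetry of the subaddition on $B$ is exactly what forces the induced partial order on $B^+$ to collapse to equality.

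First, I would recall the relevant definitions: $B=\bpquot{A}{\cR}$ is \emph{algebraic} if and only if the subaddition $\cR$ satisfies axiom \ref{B7}, i.e.\ $\sum a_i\leq\sum b_j$ implies $\sum b_j\leq\sum a_i$ (equivalently, $\leq$ and $\equiv$ coincide on $\N[A]^+$). On the other hand, $B^+=\N[A]^+/\cR^\core$ carries the partial order given by $[\sum a_i]\leq[\sum b_j]$ iff $\sum a_i\leq\sum b_j$ in $B$, and \emph{trivially ordered} means that $\leq$ on $B^+$ is equality.

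For the forward direction, assume $B$ is algebraic. Then any relation $\sum a_i\leq\sum b_j$ in $B$ is automatically symmetric, hence $\sum a_i\equiv\sum b_j$, which means $[\sum a_i]=[\sum b_j]$ in $B^+=\N[A]^+/\cR^\core$. So every $\leq$-relation in $B^+$ is an equality, i.e.\ $B^+$ is trivially ordered.

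For the converse, assume $B^+$ is trivially ordered. Given $\sum a_i\leq\sum b_j$ in $B$, the definition of the order on $B^+$ gives $[\sum a_i]\leq[\sum b_j]$ in $B^+$; by hypothesis this forces $[\sum a_i]=[\sum b_j]$, which by definition of $\cR^\core$ means $\sum a_i\equiv\sum b_j$ in $B$, and in particular $\sum b_j\leq\sum a_i$. Thus $\cR$ is symmetric, so $B$ is algebraic.

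There is no real obstacle here — the only point requiring a moment's care is to make sure one reads the definition of the order on $B^+$ correctly (the preorder used is $\leq$, while the quotient is by $\cR^\core$, i.e.\ by $\equiv$); once this is noted, both implications follow by a single unwinding.
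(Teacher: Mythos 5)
Your proof is correct. The paper gives no explicit argument for this lemma (it is stated with an immediate \qed), and the definitional unwinding you supply is exactly the intended one: the order on $B^+$ is induced by $\cR$ while equality in $B^+=\N[A]^+/\cR^\core$ is identification along $\cR^\core=\{\equiv\}$, so triviality of the order on $B^+$ is precisely the condition that every $\leq$-relation in $\cR$ is symmetric.
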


\begin{rem} \label{rem: ordered blueprints as monoids in ordered semirings}
 The subaddition $\cR$ of $B$ can be recovered from the embedding $B^\bullet\subset B^+$ as 
 \[\textstyle
  \cR \quad = \quad \Bigl\langle \ \sum a_i \leq \sum b_j \ \Bigl| \ \sum a_i\leq\sum b_j \text{ in } B^+ \ \Bigr\rangle.
 \]
 An order preserving homomorphism $f:B_1^+\to B_2^+$ of ordered semirings comes from a morphism of blueprints $B_1\to B_2$ if and only if $f$ maps the underlying monoid of $B_1$ to the underlying monoid of $B_2$.
 
 This yields an equivalence between the category of ordered blueprints, as defined in this section, with the category of inclusions $B^\bullet\subset B^+$ as considered in the introduction.
\end{rem}


\subsection{Monomial blueprints}
\label{subsection: monomial blueprints}

The first step towards realizing norms and valuations as morphisms is to concentrate on inequalities of the form $a\leq \sum b_j$. We can associate with every ordered blueprint an ordered blueprint based on inequalities of this sort in a functorial way.

A \emph{(left) monomial relation} is a relation of the form $a\leq\sum b_j$. A \emph{(left) monomial (ordered) blueprint} is an ordered blueprint $B$ whose subaddition $\cR$ is generated by monomial relations, i.e.
\[\textstyle
 \cR \quad = \quad \left\langle \ a\leq\sum b_j \ \left| \ (a,\sum b_j)\in\cR \ \right.\right\rangle.
\]
We denote the full subcategory of monomial blueprints in $\OBlpr$ by $\OBlpr^\mon$.

Let $B=\bpquot A\cR$ be an ordered blueprint. The \emph{associated monomial blueprint} is defined as $B^\mon=\bpquot A{\cR^\mon}$ with
\[\textstyle
 \cR^\mon \quad = \quad \left\langle \ a\leq\sum b_j \ \left| \ a\leq \sum b_j\text{ in }B \right.\right\rangle.
\]
The obvious inclusion $B^\mon\to B$ is universal for all morphisms from a monomial blueprint to $B$, and it is an isomorphism if and only if $B$ itself is monomial. This defines a right adjoint and left inverse $(-)^\mon:\OBlpr\to\OBlpr^\mon$ to the inclusion functor $\iota^\mon:\OBlpr^\mon\to\OBlpr$.

\begin{rem}\label{rem: halos}
 Paugam's category $\Halos$ of halos and halo morphisms in \cite{Paugam09} appears naturally as a full subcategory of $\OBlpr^\mon$. A \emph{halo} is an ordered semiring and a \emph{(multiplicative) halo morphism} is an order preserving multiplicative map $f:B_1\to B_2$ of ordered semirings such that $f(0)=0$, $f(1)=1$ and $f(a+b)\leq f(a)+f(b)$. If we consider $B_1$ and $B_2$ as ordered blueprints, then it is easily seen that a map $f:B_1\to B_2$ is a halo morphism if and only if the composition $f':B_1^\mon\to B_1\to B_2$ is a morphism of ordered blueprints. By the universal property of a monomial blueprint, $f'$ factors uniquely through the morphism $f^\mon:B_1^\mon\to B_2^\mon$ of monomial blueprints. This defines a fully faithful embedding $(-)^\mon:\Halos\to\OBlpr^\mon$.
\end{rem}
 
\begin{rem}\label{rem: hyperrings}
 Another closely related concept is the notion of a hyperring, cf.\ \cite{Stratigopoulos69}, \cite{Krasner57}, \cite{Viro10}, \cite{Viro11} and \cite{Connes-Consani11b}. A \emph{(commutative) hyperring} is a multiplicative monoid $R$ together with a function $f:R\times R\to \cP(R)$ into the power set $\cP(R)$ of $R$ that associates defines the sum $a+b$ of two elements $a,b\in R$ as a non-empty subset of $R$. This function satisfies certain axioms in analogy to the classical ring axioms. Given a hyperring $R$, we define the blueprint $B=\bpquot{R^\bullet}{\cR_R}$ where $\cR_R$ is generated by the monomial relations $c\leq a+b$ whenever $c\in a+b$. This defines a full embedding of the category of hyperrings into the category of monomial blueprints. 
\end{rem}


\subsection{Partially additive blueprints}
\label{subsection: partially additive blueprints}

A \emph{partially additive blueprint} is an algebraic blueprint $B$ whose preaddition is generated by relations of the form $a\=\sum b_j$. We denote the full subcategory of partially additive blueprints in $\OBlpr$ by $\Blpr^\padd$. To an ordered blueprint $B=\bpquot A\cR$, we can associate a partially additive blueprint $B^\padd=\bpquot{B}{\cR^\padd}$ where $\cR^\padd$ is generated by all relations $a\=\sum b_j$ that are contained in $\cR$. The inclusion $B^\padd\to B$ is universal for all morphism from a partially ordered blueprint to $B$. This defines a right adjoint and left inverse $(-)^\padd:\OBlpr\to \Blpr^\padd$ to the inclusion functor $\iota^\padd:\Blpr^\padd\to\OBlpr$. 

By the very definition of partially additive and monomial blueprints, we obtain that $(-)^\mon:\Blpr^\padd\to\Blpr^\mon$ is a fully faithful embedding of categories with left adjoint and left inverse $(-)^\hull:\Blpr^\mon\to\Blpr^\padd$. This means that we model the category of partially additive blueprints as monomial blueprints, which will be of importance for our observations in section \ref{subsection: tropicalization for a totally positive base}. Note that examples of partially additive blueprints include semirings, monoids and blueprints with $-1$. 

\begin{rem}\label{rem: sesquiads}
 Partially additive blueprints are closely connected to the idea of a sesquiad, cf.\ \cite{Deitmar11a}. Namely, a \emph{sesquiad} is a monoid $A$ together with partial functions $f_n:A^n\dashrightarrow A$ (for $n\geq1$) that satisfies certain axioms. The functions $f_n$ express the sum $a=\sum b_j\in A$ if $(b_1,\dotsc,b_n)$ is in the domain of $f_n$. Equivalently, a sesquiad is a partially additive blueprint $B=\bpquot A\cR$ that is \emph{cancellative}, i.e.\ the canonical morphism $B^+\to B^{+,\inv}$ is injective.
\end{rem}


\subsection{Totally positive blueprints}
\label{subsection: totally positive blueprints}

A \emph{totally positive} blueprint is an ordered blueprint $B$ that satisfies $0\leq 1$. We denote the full subcategory of totally positive blueprints in $\OBlpr$ by $\OBlpr^\pos$.

Let $B=\bpquot A\cR$ be an ordered blueprint. The \emph{associated totally positive blueprint} is defined as $B^\pos=\bpgenquot B{0\leq 1}$, which is the same as $B\otimes_\Fun\bigl(\bpgenquot\Fun{0\leq1}\bigr)$. The obvious morphism $B\to B^\pos$ is universal for all morphisms from $B$ to a totally positive blueprint, and it is an isomorphism if and only if $B$ itself is totally positive. This defines the functor $(-)^\pos:\OBlpr\to\OBlpr^\pos$, which is left adjoint and left inverse to the inclusion $\OBlpr^\pos\to\OBlpr$ as a subcategory.

\begin{lemma}\label{lemma: totally positive blueprints}
 Let $B$ be an ordered blueprint. Then the following are equivalent.
 \begin{enumerate}
 \item\label{pos1} $B$ is totally positive;
 \item\label{pos2} $0\leq a$ for all $a\in B$;
 \item\label{pos3} $\sum a_i+\sum c_k\leq\sum b_j$ implies $\sum a_i\leq\sum b_j$.
 \end{enumerate}
\end{lemma}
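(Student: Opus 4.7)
\medskip

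The plan is to prove the chain of implications (i) $\Rightarrow$ (ii) $\Rightarrow$ (iii) $\Rightarrow$ (i), using only the subaddition axioms \ref{B1}--\ref{B5}; properness \ref{B6} will play no role. All manipulations take place at the level of formal sums in $\N[A]^+$, so I will read each inequality as a relation on formal sums rather than as a statement in $B^+$.

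For (i) $\Rightarrow$ (ii), I would fix $a\in B$, combine the hypothesis $0\leq 1$ with the reflexive relation $a\leq a$ from \ref{B1}, and invoke multiplicativity \ref{B4} to obtain $0\cdot a\leq 1\cdot a$, i.e.\ $0\leq a$. For (ii) $\Rightarrow$ (iii), assuming $0\leq c_k$ for every $k$, I would first produce the auxiliary relation $\sum a_i\leq \sum a_i+\sum c_k$: starting from $a_i\leq a_i$ (by \ref{B1}) and $0\leq c_k$ (by (ii)), iterated additivity \ref{B3} gives $\sum a_i+\sum 0\leq \sum a_i+\sum c_k$, and using \ref{B5} (which lets one successively replace formal $0$-summands by the empty sum via \ref{B3}) rewrites the left-hand side as $\sum a_i$. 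Composing this with the hypothesis $\sum a_i+\sum c_k\leq \sum b_j$ via transitivity \ref{B2} yields $\sum a_i\leq \sum b_j$, as required.

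The delicate step is (iii) $\Rightarrow$ (i), where one must manufacture a specific instance of the premise of (iii) whose conclusion is exactly $0\leq 1$. The key observation is that the axioms alone already force the formal-sum relation $0+1\leq 1$: indeed, from $0\leq(\text{empty sum})$ in \ref{B5} and $1\leq 1$ in \ref{B1}, additivity \ref{B3} gives $0+1\leq (\text{empty sum})+1$, and in $\N[A]^+$ the right-hand side is literally the one-term formal sum $1$. Applying (iii) with $\sum a_i=0$, $\sum c_k=1$, $\sum b_j=1$ then produces the desired inequality $0\leq 1$, closing the cycle.

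The only place where I expect to have to think, rather than just chase axioms, is this last implication: one needs to recognize that the machinery of \ref{B1}, \ref{B3}, and \ref{B5} by itself already supplies the relation $0+1\leq 1$, which is exactly the shape required so that (iii) can strip away the summand $1$ on the left and deliver $0\leq 1$ as output. The other two implications are direct applications of multiplicativity and of additivity combined with transitivity.
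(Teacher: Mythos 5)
Your proof is correct and follows the same three-step cycle (i)$\Rightarrow$(ii)$\Rightarrow$(iii)$\Rightarrow$(i) as the paper, using the same manipulations: multiplication by $a$ for the first step, insertion of zero summands plus transitivity for the second, and the instance $0+1\leq 1$ with (iii) for the last. The one place you go beyond the paper is in spelling out why $0+1\leq 1$ is automatic from \ref{B1}, \ref{B3}, and \ref{B5}; the paper asserts this without comment, so your extra justification is a useful clarification rather than a divergence.
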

       
\begin{proof}
 Let $B$ satisfy \eqref{pos1}. Multiplying the relation $0\leq1$ by $a\in B$ yields \eqref{pos2}.
 
 Let $B$ satisfy \eqref{pos2}. A relation $\sum a_i + \sum c_k\leq \sum b_j$ implies $\sum a_i \= \sum a_i + \sum 0 \leq \sum a_i+\sum c_k \leq \sum b_j$, which is \eqref{pos3}.
 
 Let $B$ satisfy \eqref{pos3}. Then $0+1\leq 1$ implies $0\leq 1$. Thus \eqref{pos1}.
\end{proof}

\begin{cor}\label{cor: properties of totally positive blueprints}
 Let $B$ be an ordered blueprint. 
 \begin{enumerate}
  \item\label{sub1} If $a\leq 0$ in $B$, then $a\=0$ in $B^\pos$.
  \item\label{sub2} If $1+\sum c_k\leq 0$ for some $c_k$ in $B$, then $B^\pos$ is the trivial blueprint. Thus if $B$ is with $-1$, then $B^\pos=0$.
  \item\label{sub3} The canonical morphism $B\to B^\pos$ is the identity between the respective underlying monoids if and only if 
        \[\textstyle
         a+\sum c_k\leq b \quad \text{and} \quad b+\sum d_l\leq a \quad \text{imply} \quad a =b \quad \text{in} \quad B.
        \]
        If $B$ is a semiring, then this is the case if and only if $a+c+d=a$ implies $a+c=a$.
 \end{enumerate}
\end{cor}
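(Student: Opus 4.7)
For part (i), I would transport $a\leq 0$ along the order-preserving morphism $B\to B^\pos$ and pair it with $0\leq a$, which holds in any totally positive blueprint by Lemma~\ref{lemma: totally positive blueprints}\eqref{pos2}, to conclude $a\=0$ in $B^\pos$. For part (ii), I would transport $1+\sum c_k\leq 0$ to $B^\pos$ and apply Lemma~\ref{lemma: totally positive blueprints}\eqref{pos3} to absorb the left-hand sum, giving $1\leq 0$; combined with $0\leq 1$ from total positivity this yields $1\=0$ in $B^\pos$, whence every element $a\=a\cdot 1\=a\cdot 0\=0$ and $B^\pos$ is the trivial blueprint. The addendum for blueprints with $-1$ specializes: the defining relation $1+(-1)\=0$ in such $B$ fits the hypothesis with $\sum c_k=-1$.

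The crux of part (iii) is the following characterization of the subaddition of $B^\pos$ on $\N[A]^+$, before the proper quotient:
\[
\sum a_i\leq\sum b_j\ \text{in}\ B^\pos \quad\Longleftrightarrow\quad \text{there exists}\ \sum c_k\in\N[A]^+\ \text{with}\ \sum a_i+\sum c_k\leq\sum b_j\ \text{in}\ B.
\]
The direction from right to left combines $0\leq\sum c_k$ in $B^\pos$ (obtained from $0\leq 1$ by axiom \ref{B4}) with additivity and transitivity. For the other direction I would verify that the candidate relation $\leq'$ defined by the right-hand side already satisfies axioms \ref{B1}--\ref{B5} on $\N[A]^+$ and contains $\cR\cup\{0\leq 1\}$, so it must contain the closure $\leq^\pos$; the most delicate point is transitivity, where $x+c_1\leq y$ and $y+c_2\leq z$ in $B$ combine to $x+c_1+c_2\leq z$ by adding $c_2$ to both sides of the first relation and invoking transitivity in $B$. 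Multiplicativity is handled by expanding the product $(x_1+c_1)(x_2+c_2)$.

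Applying this characterization to monoid elements $a,b\in B^\bullet$, the relation $a\=b$ in $B^\pos$ is equivalent to the simultaneous existence of $\sum c_k$ and $\sum d_l$ satisfying the two inequalities in the statement. Since $B^\bullet\to(B^\pos)^\bullet$ is always surjective, being the identity amounts to injectivity, which is precisely the contrapositive of the stated condition, yielding the first equivalence. For the semiring case, each relation in $B$ is an equality; substituting $b=a+\sum c_k$ into $b+\sum d_l=a$ produces $a+c+d=a$ with $c=\sum c_k$ and $d=\sum d_l$, while the desired conclusion $a=b$ becomes $a+c=a$, so the condition takes the reformulated shape "$a+c+d=a$ implies $a+c=a$". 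The principal obstacle is the closure verification establishing the $B^\pos$-subaddition; once in place, the remaining steps are formal consequences of the adjunction defining $B^\pos$ and of Lemma~\ref{lemma: totally positive blueprints}.
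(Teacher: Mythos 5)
Your proof is correct and follows essentially the same route as the paper: parts (i) and (ii) are proved identically via Lemma~\ref{lemma: totally positive blueprints}, and part (iii) hinges on the same characterization of the subaddition of $B^\pos$ as ``discard summands from the left-hand side of relations in $B$,'' which the paper asserts without detail and which you verify carefully as a closure argument on $\N[A]^+$. The only difference is one of rigor: you spell out the closure verification that the paper takes for granted.
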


\begin{proof}
 By Lemma \ref{lemma: totally positive blueprints} \eqref{pos2}, we have $0\leq a$ for all $a$ in $B^\pos$. If $a\leq 0$ in $B$, then $a\=0$ in $B^\pos$, which shows \eqref{sub1}.

 By Lemma \ref{lemma: totally positive blueprints} \eqref{pos3}, a relation $1+\sum c_k\leq 0$ in $B$ implies $1\leq0$ in $B^\pos$. Thus $0\=1$ by \eqref{sub1}, which is equivalent with $B^\pos=0$. This shows \eqref{sub2}.

 We prove \eqref{sub3}. By definition, the canonical morphism $B\to B^\pos$ is a surjective monoid morphism. Two relations $a+\sum c_k\leq b$ and $b+\sum d_l\leq a$ in $B$ imply that $a\=b$ in $B^\pos$, which shows that already $a\=b$ in $B$ if $B\to B^\pos$ is an isomorphism between the respective underlying monoids.

 Since all additional relations in $B^\pos$ are generated by leaving out summands on the left hand side of relations in $B$, we get only new relations $a\leq b$ in $B^\pos$ for relations of the form $a+\sum c_k\leq b$ in $B$. From this, the other direction of the claim follows.

 If $B$ is a semiring, we can substitute $\sum c_k$ by its sum $c$ and $\sum d_l$ by its sum $d$. Moreover, the inequalities $a+c\leq b$ and $b+d\leq a$ are equalities, which yields
 \[
  a+c+d \quad = \quad b+d \quad = \quad a.
 \]
 On the other hand, $a+c+d=a$ yields $b+d=a$ if we set $b=a+c$, i.e.\ we re-obtain the two equations that we started with. With the same substitution $b=a+c$, the equation $a=b$ is equivalent to $a=a+c$. This proves the latter claim of \eqref{sub3}.
\end{proof}


\subsection{Strictly conic blueprints}
\label{subsection: strictly conic blueprints}

A property that is relevant to our proof of Theorem \ref{thmA} (tropicalization with a totally positive base) is that $(B^\pos)^\core$ is isomorphic to $B$. In this section, we shall see that this property singles out strictly conic ordered blueprints whose name is derived from the notion of a strict semiring: a semiring $R$ is strictly conic if for every $x\in R$, the subset $x+R$ forms a ``strict cone,'' in the sense that $x\notin y+R$ for every $y\in x+R$ different from $y=x$.

A \emph{strictly conic ordered blueprint} is an ordered blueprint $B$ that satisfies
\begin{enumerate}[label={(B\arabic*)}]\setcounter{enumi}{7}
 \item\label{B8} $\sum a_i+\sum c_k\leq\sum b_j$ and $\sum b_j+\sum d_l\leq\sum a_i$ imply $\sum a_i\=\sum b_j$.                                          \hfill\textit{(strictly conic)}
\end{enumerate}
We denote the full subcategory of strictly conic ordered blueprints in $\OBlpr$ by $\OBlpr^\conic$. Let $B=\bpquot A\cR$ be an ordered blueprint. The \emph{strictly conic ordered blueprint associated with $B$} is the ordered blueprint $B^\conic=\bpquot A{\cR^\conic}$ where $\cR^\conic$ is generated by $\cR$ and
\[\textstyle
 \left\{ \ \sum a_i\=\sum b_j \ \left| \ \sum a_i+\sum c_k\leq\sum b_j\text{ and }\sum b_j+\sum d_l\leq\sum a_i \ \right.\right\}.
\]
The associated strictly conic ordered blueprint comes together with the obvious morphism $B\to B^\conic$, which is universal for all morphisms from $B$ to a strictly conic ordered blueprint. This defines the left adjoint and left inverse $(-)^\conic:\OBlpr\to\OBlpr^\conic$ to the inclusion $\OBlpr^\conic\to\OBlpr$ as a subcategory.


In order to investigate the relation between an ordered blueprint $B$ and $(B^\pos)^\core$, consider the commutative diagram
\[
 \xymatrix@C=5pc@R=1pc{  & B \ar[dr]^{\alpha_B^\pos} \\ B^\core \ar[ur]^{\alpha_B^\core} \ar[dr]_{\beta_B=(\alpha_B^\pos)^\core\quad}  & & B^\pos. \\ & (B^\pos)^\core \ar[ur]_{\alpha_{B^\pos}^\core}}
\]

\begin{prop} \label{prop: beta isom iff B strictly conic}
 The map $\beta_B$ is an isomorphism of blueprints if and only if $B$ is strictly conic.
\end{prop}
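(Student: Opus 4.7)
The plan is to analyze $\beta_B$ separately on underlying monoids and on subadditions, by first nailing down an explicit description of the subaddition of $B^\pos$.

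First I would establish (in the spirit of the remark inside the proof of Corollary~\ref{cor: properties of totally positive blueprints}~\eqref{sub3}) the description
\[
\sum a_i\leq\sum b_j \ \text{holds in }B^\pos \quad\Longleftrightarrow\quad \sum a_i+\sum c_k\leq\sum b_j\ \text{holds in }B\ \text{for some }\sum c_k.
\]
The ``$\Leftarrow$'' direction is immediate from Lemma~\ref{lemma: totally positive blueprints}~\eqref{pos3} applied to $B^\pos$. The ``$\Rightarrow$'' direction reduces to verifying that the relation on the right is itself closed under \ref{B1}--\ref{B5} and contains $0\leq 1$, whence by minimality it agrees with the subaddition generated by that of $B$ together with $0\leq 1$. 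Throughout, observe that $\beta_B$ acts on underlying monoids as the canonical map $B\to B^\pos$, since $(-)^\core$ preserves underlying monoids.

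For the backward direction, assume $B$ is strictly conic. Applied to single elements, \ref{B8} specializes to the condition of Corollary~\ref{cor: properties of totally positive blueprints}~\eqref{sub3}, so $B\to B^\pos$ is the identity on underlying monoids and therefore so is $\beta_B$. To match subadditions, suppose $\sum a_i\=\sum b_j$ in $(B^\pos)^\core$, i.e.\ both $\sum a_i\leq\sum b_j$ and $\sum b_j\leq\sum a_i$ hold in $B^\pos$. By the description above, there exist $\sum c_k,\sum d_l$ with $\sum a_i+\sum c_k\leq\sum b_j$ and $\sum b_j+\sum d_l\leq\sum a_i$ in $B$, and \ref{B8} then yields $\sum a_i\=\sum b_j$ in $B$, i.e.\ in $B^\core$. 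The reverse implication is immediate from functoriality of $\beta_B$, so $\beta_B$ is an isomorphism.

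For the forward direction, I would reverse this argument. If $\beta_B$ is an isomorphism and $\sum a_i+\sum c_k\leq\sum b_j$, $\sum b_j+\sum d_l\leq\sum a_i$ hold in $B$, the description of $B^\pos$ gives $\sum a_i\=\sum b_j$ in $B^\pos$ and hence in $(B^\pos)^\core$; transporting this equality back along $\beta_B^{-1}$ shows $\sum a_i\=\sum b_j$ in $B^\core$, which means the same in $B$. This is exactly \ref{B8}. The main obstacle I foresee is the initial description of the subaddition of $B^\pos$: one must verify by hand that the displayed set of relations is already closed under \ref{B1}--\ref{B5}, using translation invariance of the generators. Once this description is in hand the remainder is formal bookkeeping of which relations survive on passing to the algebraic cores.
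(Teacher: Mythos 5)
Your proof is correct and follows essentially the same route as the paper's: both directions hinge on the description of the subaddition of $B^\pos$ as those relations $\sum a_i\leq\sum b_j$ for which $\sum a_i+\sum c_k\leq\sum b_j$ holds in $B$ for some $\sum c_k$, the underlying monoids are handled via Corollary~\ref{cor: properties of totally positive blueprints}~\eqref{sub3}, and \ref{B8} is invoked in exactly the way you do. The one thing you add is an explicit statement (and a correct flag of the needed closure check under \ref{B1}--\ref{B5}) of that description of $B^\pos$, which the paper leaves implicit in the phrase ``By the definition of $B^\pos$, there must be relations of the form \ldots''.
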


\begin{proof}
 Assume that $\beta_B$ is an isomorphism. Since $\sum a_i+\sum c_k\leq\sum b_j$ and $\sum b_j+\sum d_l\leq\sum a_i$ in $B$ imply $\sum a_i\=\sum b_j$ in $B^\pos$, and therefore in $(B^\pos)^\core$, this must also hold in $B^\core$ as $\beta_B$ is an isomorphism. By the definition of the algebraic core, $\sum a_i\=\sum b_j$ in $B$, which shows that $B$ is strictly conic.

 To prove the reverse direction, assume that $B$ is strictly conic. By Corollary \ref{cor: properties of totally positive blueprints} \eqref{sub3}, $\alpha_B^\pos$ is an isomorphism between the underlying monoids. The maps $\alpha_B^\core$ and $\alpha_{B^\pos}^\core$ are so, too, by the definition of the algebraic core. This shows that $\beta_B$ is an isomorphism between the underlying monoids.

 Given an equality $\sum a_i\=\sum b_j$ in $(B^\pos)^\core$, this must already hold in $B^\pos$. By the definition of $B^\pos$, there must be relations of the form $\sum a_i+\sum c_k\leq\sum b_j$ and $\sum b_j+\sum d_l\leq\sum a_i$ in $B$. As $B$ is strictly conic, we have $\sum a_i\=\sum b_j$ in $B$ and therefore in $B^\core$. This shows that $\beta_B$ is an isomorphism.
\end{proof}

Let $\Blpr^{\conic}$ be the full subcategory of strictly conic algebraic blueprints in $\OBlpr$.

\begin{cor} \label{cor: strictly conic algebraic blueprint}
 Let $B$ be an algebraic blueprint. Then  $(B^\pos)^\core$ is isomorphic to $B$ if and only if $B$ is strictly conic. Consequently, $(-)^\pos$ embeds $\Blpr^\conic$ fully faithfully into $\OBlpr^\pos$, with left-inverse $(-)^\core$. \qed
\end{cor}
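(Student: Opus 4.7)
The plan is to bootstrap everything from Proposition \ref{prop: beta isom iff B strictly conic}. For the first claim, since $B$ is algebraic the canonical morphism $\alpha_B^\core: B^\core \to B$ is an isomorphism, so $\beta_B: B^\core \to (B^\pos)^\core$ can be read as the natural map $B \to (B^\pos)^\core$. The proposition then says directly that this map is an isomorphism precisely when $B$ is strictly conic, which is exactly the first assertion.

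For the second claim, I would first observe that the isomorphisms $(B^\pos)^\core \cong B$ obtained from the first part are natural in $B \in \Blpr^\conic$, giving a natural isomorphism $(-)^\core \circ (-)^\pos \cong \id_{\Blpr^\conic}$. This already yields the ``left-inverse'' statement and implies faithfulness of $(-)^\pos$ on $\Blpr^\conic$. It also provides a well-defined candidate inverse $G: \Hom_{\OBlpr^\pos}(B_1^\pos, B_2^\pos) \to \Hom_{\Blpr^\conic}(B_1, B_2)$ sending a morphism $g$ to $\beta_{B_2}^{-1} \circ g^\core \circ \beta_{B_1}$ for $B_1, B_2 \in \Blpr^\conic$.

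Fullness then reduces to showing $(-)^\pos \circ G = \id$. Given $g: B_1^\pos \to B_2^\pos$ and $f = G(g)$, I would verify that $f^\pos = g$ by comparing underlying monoid maps. The key observation is that for strictly conic $B$ the hypothesis of Corollary \ref{cor: properties of totally positive blueprints} \eqref{sub3} holds, by specialising the strictly conic axiom \ref{B8} to single-element sums and applying the properness axiom \ref{B6}; hence $\alpha_B^\pos: B \to B^\pos$ is the identity on underlying monoids, and therefore so is $\beta_B$. It follows that $f^\pos$ and $g$ induce the same map on underlying monoids, and since a morphism of ordered blueprints is determined by its underlying monoid map together with source and target, this forces $f^\pos = g$. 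The hard part is purely the bookkeeping of confirming that the canonical comparison maps $\alpha_B^\core$, $\alpha_B^\pos$, and $\beta_B$ all act as the identity on underlying monoids for strictly conic algebraic $B$; once this is pinned down the argument is formal.
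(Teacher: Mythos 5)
Your proof is correct and fills in exactly the bookkeeping that the paper treats as immediate (the paper leaves this corollary with a bare \(\qed\)). The first part is a clean translation of Proposition~\ref{prop: beta isom iff B strictly conic} through the isomorphism \(\alpha_B^\core\), and the fullness argument via comparing underlying monoid maps is valid: you correctly extract from axiom \ref{B8} restricted to singletons, together with \ref{B6}, that the hypothesis of Corollary~\ref{cor: properties of totally positive blueprints}~\eqref{sub3} holds for strictly conic \(B\), and since a morphism of ordered blueprints is a property-preserving monoid map (not extra structure), agreement on underlying monoids pins down the morphism.

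One alternative worth noting, which is arguably the shortest route to the ``consequently'' clause, is the adjunction chain
\[
\Hom_{\OBlpr^\pos}\bigl(B_1^\pos,B_2^\pos\bigr)
\ \cong\ \Hom_{\OBlpr}\bigl(B_1,B_2^\pos\bigr)
\ \cong\ \Hom_{\Blpr}\bigl(B_1,(B_2^\pos)^\core\bigr)
\ \cong\ \Hom_{\Blpr}\bigl(B_1,B_2\bigr),
\]
using that \((-)^\pos\) is left adjoint to the inclusion \(\OBlpr^\pos\hookrightarrow\OBlpr\), that \((-)^\core\) is right adjoint to \(\Blpr\hookrightarrow\OBlpr\), and the first part of the corollary for the last step. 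This packages your hands-on monoid-level check into the triangle identities of the adjunctions. Both routes are fine; yours is more explicit about why \(f^\pos=g\), which is the one point a reader might otherwise have to chase through the adjunction units and counits.
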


\begin{cor} \label{cor: strictly conic semiring}
 Let $B$ be a semiring. Then the following are equivalent.
 \begin{enumerate}
  \item\label{item1} $B$ is strictly conic.
  \item\label{item2} $B=B^\core\to (B^\pos)^\core$ is an isomorphism.
  \item\label{item3} $a+c+d=a$ implies $a+c=a$.
  \item\label{item4} $B\to B^\pos$ is an isomorphism between the underlying monoids.
 \end{enumerate}
\end{cor}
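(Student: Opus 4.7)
The plan is to deduce the four-way equivalence by chaining together results already established in the section, with only one short direct semiring-specific computation needed in between.

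First I would observe that a semiring $B$ is, by the identification in section \ref{subsection: the universal ordered semiring}, an algebraic blueprint with trivial order, so that $B^\core = B$ and a relation $\sum a_i \leq \sum b_j$ in $B$ (as an ordered blueprint) holds exactly when $\sum a_i = \sum b_j$ in $B$ (as a semiring). With this setup, the equivalence of (i) and (ii) is immediate from Corollary \ref{cor: strictly conic algebraic blueprint} applied to the algebraic blueprint $B$, since that corollary asserts exactly that for algebraic $B$ the natural map $B = B^\core \to (B^\pos)^\core$ is an isomorphism if and only if $B$ is strictly conic.

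Next I would handle (iii) $\Leftrightarrow$ (iv): this is precisely the second (semiring) half of Corollary \ref{cor: properties of totally positive blueprints}(iii), which already states that for a semiring $B$ the map $B \to B^\pos$ is an isomorphism on underlying monoids exactly when $a+c+d = a$ implies $a+c = a$.

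It remains to show (i) $\Leftrightarrow$ (iii). This is the one place where a small direct calculation is needed. Using the observation above, axiom \ref{B8} for the semiring $B$ reduces (after summing each of $\sum a_i$, $\sum b_j$, $\sum c_k$, $\sum d_l$ to single elements $a, b, c, d$ via the semiring addition, and replacing $\leq$ by $=$) to the implication: $a+c = b$ and $b+d = a$ imply $a = b$. For the forward direction, assume (i) and suppose $a + c + d = a$; set $b := a+c$, so $a+c = b$ and $b+d = a+c+d = a$; then strict conicity gives $a = b = a+c$, which is (iii). For the converse, assume (iii) and suppose $a+c=b$ and $b+d=a$; then $a+c+d = b+d = a$, so by (iii) we have $a+c = a$, and combining with $a+c=b$ yields $a=b$, establishing strict conicity.

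There is no real obstacle here; everything is either a direct appeal to a preceding corollary or the short substitution above. The only point that deserves care is keeping track of the translation between ordered-blueprint relations $\sum a_i \leq \sum b_j$ and semiring equalities $\sum a_i = \sum b_j$, and of the fact that in a semiring arbitrary finite sums are themselves single elements, so that the formally stronger condition (iii) on single elements suffices to recover axiom \ref{B8} on arbitrary sums.
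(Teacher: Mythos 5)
Your proof is correct and takes essentially the same route as the paper's: the equivalences (i)$\Leftrightarrow$(ii) and (iii)$\Leftrightarrow$(iv) are cited from Corollaries \ref{cor: strictly conic algebraic blueprint} and \ref{cor: properties of totally positive blueprints}\eqref{sub3} exactly as the paper does, and your direct calculation for (i)$\Leftrightarrow$(iii) via the substitution $b:=a+c$ is precisely the argument the paper refers to when it says this step is ``shown analogous to the proof of Corollary \ref{cor: properties of totally positive blueprints} \eqref{sub3}.'' You have simply written out what the paper leaves implicit, including the careful observation that for a trivially ordered semiring the relation $\leq$ coincides with $=$ and formal sums collapse to single elements.
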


\begin{proof}
 The equivalence of \eqref{item1} and \eqref{item2} is Corollary \ref{cor: strictly conic algebraic blueprint}. The equivalence of \eqref{item3} and \eqref{item4} is Corollary \ref{cor: properties of totally positive blueprints} \eqref{item3}. That \eqref{item3} is equivalent to \eqref{item1} is shown analogous to the proof of Corollary \ref{cor: properties of totally positive blueprints} \eqref{item3}.
\end{proof}

Recall that a \emph{strict semiring} is a semiring $B$ such that an equality $a+b=0$ implies $a=0$. An \emph{idempotent blueprint} is a blueprint $B$ with $1+1\=1$, which implies $a+a\=a$ for every $a\in B$. We denote the full subcategory of $\OBlpr$ of idempotent blueprints by $\OBlpr^\idem$. Its initial object is the Boolean semiring $\B=\bpgenquot{\{0,1\}}{1+1\=1}$ and the functor $-\otimes_\Fun\B$ is a left adjoint and left inverse to the inclusion functor $\OBlpr^\idem\to\OBlpr$. A \emph{nonnegative blueprint} is a blueprint $B$ such that the only element $a\in B$ with $a\leq 0$ is $a=0$.

\begin{lemma}\label{lemma: idempotent or totally positive implies strictly conic}
 The following holds true.
 \begin{enumerate}
  \item\label{idemposmonconic1} A strictly conic semiring is strict.
  \item\label{idemposmonconic2} An idempotent algebraic blueprint is strictly conic.
  \item\label{idemposmonconic3} A totally positive blueprint is strictly conic.
  \item\label{idemposmonconic4} A nonnegative monomial blueprint is strictly conic.
 \end{enumerate}
\end{lemma}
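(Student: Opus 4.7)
The plan is to prove the four items in sequence, using Corollary~\ref{cor: strictly conic semiring} for (i) and (ii) and direct manipulation of the subaddition for (iii) and (iv). Items (i) and (iii) are short: for (i), I apply condition (iii) of Corollary~\ref{cor: strictly conic semiring} with $a$ replaced by $0$, which gives $c+d=0\Rightarrow 0+c=c=0$; a symmetric application yields $d=0$, so $c+d=0$ forces $c=d=0$, which is the defining property of a strict semiring. For (iii), total positivity gives $0\leq 1$, and multiplying by any $a$ (via B4) yields $0\leq a$; summing via B3 then produces $0\leq\sum c_k$. Adding this to the reflexive $\sum a_i\leq\sum a_i$ via B3 and composing with the hypothesis via transitivity gives $\sum a_i\leq\sum a_i+\sum c_k\leq\sum b_j$; the symmetric argument yields $\sum b_j\leq\sum a_i$, whence $\sum a_i\=\sum b_j$.

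For (ii), the plan is to verify condition (iii) of Corollary~\ref{cor: strictly conic semiring} in the universal semiring $B^+$, which is idempotent. I use the natural preorder $u\preceq v\iff u+v=v$ on an idempotent semiring: if $a+c+d=a$, then $c+d\preceq a$ directly from the equation, while $c\preceq c+d$ follows from $c+(c+d)=c+d$ by the idempotency $c+c=c$. Transitivity of $\preceq$ forces $c\preceq a$, equivalently $a+c=a$. This establishes condition (iii), so $B^+$ and hence $B$ are strictly conic.

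For (iv), the plan is to exploit the monomial structure. Every derivable relation $\sum x_l\leq\sum y_m$ in a monomial blueprint arises by pairing each LHS summand $x_l$ with a monomial relation $x_l\leq\sum y^{(l)}$ whose right-hand sides together partition $\sum y_m$ as a multiset. In a nonnegative monomial blueprint, any monomial relation with empty right-hand side forces its left summand to vanish. Applying the decomposition to $\sum a_i+\sum c_k\leq\sum b_j$ and symmetrically to $\sum b_j+\sum d_l\leq\sum a_i$, and composing the resulting pairings, each nonzero summand traces back to itself via a round trip through the two monomial decompositions; a multiplicity count then forces $\sum a_i\=\sum b_j$.

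The main obstacle I anticipate is (iv): managing the combinatorics of the monomial decomposition, particularly the case where a single monomial relation has several summands on its right-hand side, will require careful bookkeeping of the pairing and of the vanishings forced by nonnegativity. An induction on the total number of LHS summands, together with case analysis on which summands are forced to be zero, is the natural tool to handle this.
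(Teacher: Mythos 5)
Your plan handles (i) and (iii) essentially the way the paper does: for (i) you apply Corollary~\ref{cor: strictly conic semiring}\,(iii) with the first slot set to $0$, which is exactly what the paper's one-line computation amounts to; for (iii) you unwind the cancellation property of Lemma~\ref{lemma: totally positive blueprints}\,(iii) rather than citing it, but the content is identical.

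For (ii) you take a genuinely different route. The paper works directly at the level of formal sums, applying idempotency in a chain of manipulations to show that $\sum a_i$ equals the symmetric expression $\sum a_i+\sum b_j+\sum c_k+\sum d_l$, which then equals $\sum b_j$ by swapping the roles of $a_i$ and $b_j$. You instead pass to the universal semiring $B^+$, observe that it is idempotent, invoke the standard algebraic preorder $u\preceq v\iff u+v=v$, and chain $c\preceq c+d\preceq a$ to deduce $a+c=a$, then pull the conclusion back to $B$ via Corollary~\ref{cor: strictly conic semiring}. Both arguments are valid for an algebraic idempotent blueprint, which is the intended scope (the paper's own use of $\=$ in the hypotheses confirms this). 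Your route is a clean shortcut for anyone comfortable with the preorder on an idempotent semiring; the paper's is more self-contained and stays at the level of the subaddition.

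For (iv) your plan identifies the correct structural mechanism---a relation in a monomial blueprint decomposes into disjoint monomial subrelations, and nonnegativity forces each nonzero left-hand term to be paired with a block containing a nonzero right-hand term---but then you significantly overshoot what is needed. The paper records only the cardinality consequence: the decomposition gives $\#\{a_i,c_k\}\leq\#\{b_j\}$ and, symmetrically, $\#\{b_j,d_l\}\leq\#\{a_i\}$, and these two bounds immediately squeeze out $\{c_k\}=\{d_l\}=\emptyset$; the conclusion $\sum a_i\=\sum b_j$ then follows from the original pair of inequalities with $c$ and $d$ empty. Your proposed ``round trip through the two monomial decompositions,'' the multiplicity count along the round trip, and the anticipated induction on the number of left-hand summands with case analysis are all unnecessary. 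Two smaller cautions: the right-hand blocks of the decomposition need only be disjoint, not a partition---extra unpaired $y_m$'s can remain---and the decomposition claim itself deserves an explicit check that it persists through axioms (B1)--(B5), especially (B4), where products $a_ic_k$ may vanish in $A$ (nonnegativity is what rescues the count there). The paper is terse on this last point too, but your plan, as written, layers considerable extra combinatorial machinery on top of a step that only needs a single inequality in each direction.
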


\begin{proof}
 Let $B$ be a strictly conic semiring. Since $a+b=0$ implies $0+a+b=0$ and thus $a=0+a=0$, $B$ is a strict semiring. Thus \eqref{idemposmonconic1}.

 Let $B$ be an idempotent algebraic blueprint and assume that $\sum a_i+\sum c_k\=\sum b_j$ and $\sum b_j+\sum d_l\=\sum a_i$. Then
 \begin{align*}
  \textstyle \sum a_i \ \= \ \sum a_i+\sum a_i \ \= \ \sum a_i+\sum b_j+\sum d_l \ &\textstyle \= \ \sum a_i+\sum b_j+\sum b_j+\sum d_l \\ &\textstyle \= \ \sum a_i+\sum a_i+\sum c_k+\sum b_j+\sum d_l \\ &\textstyle \= \ \sum a_i+\sum b_j+\sum c_k+\sum d_l,
 \end{align*}
 which, by reasons of symmetry, equals $\sum b_j$. Therefore $B$ is strictly conic. Thus \eqref{idemposmonconic2}.

 If $B$ is totally positive, then the relations $\sum a_i+\sum c_k\=\sum b_j$ and $\sum b_j+\sum d_l\=\sum a_i$ imply $\sum a_i\leq\sum b_j$ and $\sum b_j\leq\sum a_i$. Thus $\sum a_i\=\sum b_j$ as desired. This shows \eqref{idemposmonconic3}.
 
 Let $B$ be non-negative and monomial and consider $\sum a_i+\sum c_k\leq\sum b_j$ and $\sum b_j+\sum d_l\leq\sum a_i$ where we assume that $a_i,b_j,c_k,d_l$ are non-zero. These relations are generated by left monomial relations of the form $a'\leq \sum b'_j$, which contain at least one nonzero term $b_j'$ if $a'$ is nonzero since $B$ is nonnegative. Therefore $\#\{a_i,c_k\}\leq\#\{b_j\}$ and $\#\{b_j,d_l\}\leq\#\{a_i\}$, which is only possible if  $\{c_k\}=\{d_l\}=\emptyset$. Consequently, $\sum a_i\=\sum b_j$, which shows that $B$ is strictly conic as claimed in \eqref{idemposmonconic4}.
\end{proof}

\begin{cor}\label{cor: idempotent implies core=core after pos and B subset B-pos}
 If $B$ is an idempotent ordered blueprint, then the canonical morphism $B^\core\to (B^\pos)^\core$ is an isomorphism and the canonical morphism $B\to B^\pos$ is a bijection.
\end{cor}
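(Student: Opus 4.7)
The plan is to assemble this corollary entirely from results already established in the excerpt, so there is no real obstacle; the task is to identify the right lemmas and to check one small compatibility concerning proper quotients versus relations on $\N[A]^+$.

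First I would reduce the idempotent hypothesis to strict conicity. By Lemma \ref{lemma: idempotent or totally positive implies strictly conic}\eqref{idemposmonconic2}, every idempotent ordered blueprint $B$ is strictly conic. The first claim of the corollary is then immediate: Proposition \ref{prop: beta isom iff B strictly conic} states that $\beta_B\colon B^\core\to (B^\pos)^\core$ is an isomorphism precisely when $B$ is strictly conic, so we are done with the first assertion.

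For the second claim I would invoke Corollary \ref{cor: properties of totally positive blueprints}\eqref{sub3}, which says that $B\to B^\pos$ is the identity on the underlying monoids if and only if $a+\sum c_k\leq b$ and $b+\sum d_l\leq a$ in $B$ force $a=b$. This implication follows by specializing the strict conicity axiom \ref{B8} to the case where the sums $\sum a_i$ and $\sum b_j$ each consist of a single element $a$, $b\in B^\bullet$: strict conicity gives $a\=b$ in $B$, and then the properness axiom \ref{B6} upgrades $a\=b$ to equality $a=b$ in the underlying monoid. Consequently $B\to B^\pos$ is the identity, hence in particular a bijection, as claimed.

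The only subtle point to mention is the passage from the relation $a\=b$ on $\N[A]^+$ to genuine equality in the underlying monoid of the proper quotient; this is precisely what \ref{B6} ensures, so once we are working with a properly represented ordered blueprint $B=(A,\cR)$ the argument goes through without friction. Everything else is just chaining together the universal properties of $(-)^\pos$, $(-)^\core$, and the inclusion $\OBlpr^\conic\hookrightarrow\OBlpr$.
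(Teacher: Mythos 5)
Your proof is correct, but for the second claim you take a route that is genuinely different from the paper's. Both you and the paper begin identically: idempotence gives strict conicity via Lemma \ref{lemma: idempotent or totally positive implies strictly conic}\eqref{idemposmonconic2}, and then Proposition \ref{prop: beta isom iff B strictly conic} immediately yields the isomorphism $B^\core\to(B^\pos)^\core$. For the bijectivity of $B\to B^\pos$, however, the paper derives it \emph{from} the first claim: it composes the isomorphism $B^\core\to(B^\pos)^\core$ with the always-bijective $(B^\pos)^\core\to B^\pos$, notes this composite factors as $B^\core\to B\to B^\pos$ (by the commutative diagram preceding Proposition \ref{prop: beta isom iff B strictly conic}), and cancels the bijection $B^\core\to B$. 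You instead prove the second claim directly from strict conicity by specializing \ref{B8} to singleton sums and feeding the result into the criterion of Corollary \ref{cor: properties of totally positive blueprints}\eqref{sub3}, with \ref{B6} closing the gap between $a\=b$ and $a=b$. Your route is more hands-on and makes the two claims logically independent; the paper's is slicker in that it reuses the first claim and the diagram of canonical morphisms, avoiding any fresh manipulation of relations. Both are sound, and your attention to the $a\=b\Rightarrow a=b$ step via properness is exactly the right concern to flag.
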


\begin{proof}
 By Lemma \ref{lemma: idempotent or totally positive implies strictly conic}, $B$ is strictly conic and by Proposition \ref{prop: beta isom iff B strictly conic}, $B^\core\to (B^\pos)^\core$ is an isomorphism. Consequently, we obtain a bijection $B^\core=(B^\pos)^\core\to B^\pos$, which factors into the canonical morphisms $B^\core\to B$ and $B\to B^\pos$. Since $B^\core\to B$ is a bijection, we conclude that $B\to B^\pos$ is also a bijection.
\end{proof}

\begin{ex}[A strict semiring that is not strictly conic]
 The semiring $R=\bpgenquot{\N[S,T]^+}{1+S+T\=1}$ is obviously a strict semiring. However, $1+S+T=1$ while $1+S\neq 1$, which shows that $R$ is not strictly conic.
\end{ex}

\begin{ex}[Idempotent semirings]
 We can endow an idempotent semiring $B$ with the partial order $a\leq b$ if and only if there is a $c\in B$ such that $a+c\=b$. By Corollary \ref{cor: idempotent implies core=core after pos and B subset B-pos}, $B\to B^\pos$ is a bijection and by Lemma \ref{lemma: totally positive blueprints} \eqref{pos3}, the partial order associated with $B$ is equal to the restriction of the subaddition of $B^\pos$ to its underlying set, which is equal to $B$. This observation is crucial for our reinterpretation of valuations in idempotent semirings in terms of morphisms into the associated totally positive blueprint.
\end{ex}

\begin{ex}[Non-negative reals and tropical numbers]\label{ex: nonnegative reals and the tropical numbers}
 The non-negative real numbers $\R_{\geq0}$ form a strictly conic semiring with respect to to their usual multiplication and addition. A more general class of strictly conic semirings are the non-negative reals together with the modified addition
 \[
  a+^tb \quad = \quad \left\{ \begin{array}{ll} (a^t+b^t)^{1/t} & \text{if }t\in [1,\infty) \\
                                              \max\{a,b\}       & \text{if }t=\infty.
                            \end{array} \right.
 \]
 We denote this semiring by $\R_{\geq0}^t$, and $\R_{\geq0}^\infty$ by $\T$, the \emph{tropical numbers}. Note that taking logarithms identifies $\T$ with the $\max$/$+$-semiring $\R\cup\{-\infty\}$, which is more commonly considered as the semiring of tropical numbers. 

 By Corollary \ref{cor: strictly conic semiring}, $\R_{\geq0}\to \R_{\geq0}^\pos$ and $\T\to \T^\pos$ are isomorphisms between the underlying monoids, which is in both cases $\R_{\geq0}^\bullet$. The subaddition of $\R_{\geq0}^\pos$ is
 \[\textstyle
  \left\{ \ \sum a_i \leq \sum b_j \ \left| \ \sum a_i + c\= \sum b_j\text{ for some }c\text{ in }\R_{\geq0} \ \right.\right\},
 \]
 which coincides with the natural order of $\R_{\geq0}$ if we identify $\sum a_i$ with its sum in $\R_{\geq0}$. The subaddition of $\T^\pos$ is
 \[\textstyle
  \left\{ \ \sum a_i \leq \sum b_j \ \left| \ \max\{a_i\} \leq \max\{b_j\}\text{ in }\R_{\geq0} \ \right.\right\},
 \]
 which also induces the natural order of $\R_{\geq0}$ if restricted to relations of the form $a\leq b$ with $a,b\in \R_{\geq0}$. For the consideration of non-archimedean norms, it is useful to observe the following comparison between these two totally positive blueprints.
\end{ex}

\begin{lemma}\label{lemma: the morphism from T to R}
 The identity between the underlying monoids induces a morphism 
 \[
  (\T^\pos)^\mon \ \longrightarrow \ (\R_{\geq0}^\pos)^\mon
 \]
 of monomial blueprints.
\end{lemma}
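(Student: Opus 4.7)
The plan is to observe that both sides have the same underlying monoid, reduce to checking that generating monomial relations on the tropical side hold on the non-negative real side, and then verify this directly using the explicit description of the subadditions from Example~\ref{ex: nonnegative reals and the tropical numbers}.

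First I would note that the canonical morphisms $\T\to\T^\pos$ and $\R_{\geq0}\to\R_{\geq0}^\pos$ are both isomorphisms of underlying monoids (equal to $\R_{\geq0}^\bullet$), as recorded just before the lemma, and that the functor $(-)^\mon$ by construction does not alter the underlying monoid. Hence the identity map on $\R_{\geq0}^\bullet$ is at least a morphism of monoids from $(\T^\pos)^\mon$ to $(\R_{\geq0}^\pos)^\mon$, and the only thing to verify is that it preserves subadditions.

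Since $(\T^\pos)^\mon$ is a monomial blueprint, its subaddition is generated by left monomial relations; and by the definition of $(-)^\mon$, these generators are exactly the monomial relations $a\leq\sum b_j$ with $a,b_j\in\R_{\geq0}^\bullet$ that already hold in $\T^\pos$. By Example~\ref{ex: nonnegative reals and the tropical numbers}, such a relation in $\T^\pos$ is precisely the assertion that $a\leq\max\{b_j\}$ in the usual order on $\R_{\geq0}$. I would then close the argument by the elementary observation that $a\leq\max\{b_j\}$ trivially implies $a\leq\sum b_j$ in $\R_{\geq0}$ because each $b_j$ is non-negative; and by the same Example~\ref{ex: nonnegative reals and the tropical numbers}, the latter inequality means exactly that the monomial relation $a\leq\sum b_j$ holds in $\R_{\geq0}^\pos$, hence belongs to the generating set of the subaddition of $(\R_{\geq0}^\pos)^\mon$.

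I do not anticipate any genuine obstacle. The only subtle point is purely bookkeeping: one must unpack the two successive functors $(-)^\pos$ and $(-)^\mon$ carefully and confirm that the underlying monoid $\R_{\geq0}^\bullet$ is preserved throughout, so that speaking of ``the identity'' between $(\T^\pos)^\mon$ and $(\R_{\geq0}^\pos)^\mon$ makes sense. Once this is in place, the essential content reduces to the inequality $\max\{b_j\}\leq\sum b_j$ valid for non-negative reals, which is the expected manifestation of the principle that non-archimedean domination implies archimedean domination.
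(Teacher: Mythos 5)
Your proposal is correct and follows essentially the same route as the paper's proof: both reduce to the explicit descriptions of the subadditions of $\T^\pos$ and $\R_{\geq0}^\pos$ from Example~\ref{ex: nonnegative reals and the tropical numbers} and check that a generating monomial relation $a\leq\sum b_j$ in $(\T^\pos)^\mon$, i.e.\ $a\leq\max\{b_j\}$, also holds in $(\R_{\geq0}^\pos)^\mon$. Your version merely spells out the elementary step $\max\{b_j\}\leq\sum b_j$ which the paper leaves implicit, and adds a sentence confirming the identification of underlying monoids; neither adds anything substantively different.
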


\begin{proof}
 By the above description of the respective preadditions of $\R_{\geq0}^\pos$ and $\T^\pos$, we conclude that the preaddition of $(\T^\pos)^\mon$ is generated by the relations of the form $a\leq\sum b_j$ whenever $a\leq\max \{b_j\}$ as elements of $\R_{\geq0}$. These relations are also contained in $(\R_{\geq0}^\pos)^\mon$. This shows that the identity map $(\T^\pos)^\mon\to(\R_{\geq0}^\pos)^\mon$ between the underlying monoids is a morphism of blueprints.
\end{proof}


\subsection{Overview of subcategories}
\label{subsection: overview of subcategories}

We denote the category of semirings by $\SRings$ and the category of rings by $\Rings$. They both form full subcategories of $\OBlpr$ by associating with a (semi)ring $R$ the blueprint $B=\bpgenquot{R^\bullet}{\cR}$ where $\cR=\{\sum a_i\=\sum b_j|\sum a_i=\sum b_j\text{ in }R\}$. Note that a semiring is a ring if and only if it is with inverses. 

Note also the following facts: a monomial blueprint that is algebraic is a monoid; a totally positive algebraic blueprint is trivial; a strictly conic blueprint with inverses is trivial.

Using the previous results on the relations of the different subcategories of $\OBlpr$, we can illustrate the subcategories of $\OBlpr$ that are relevant to this text as in Figure \ref{fig: subcategories of ordered blueprints}. An inclusion of areas indicates an inclusion of subcategories, and areas with empty intersection indicates that the only common object in the corresponding subcategories is $\{0\}$.
\begin{figure}[htb]
 \begin{center}
 \begin{tikzpicture}[scale=\textwidth/11.0cm]
  \draw[fill=gray!5,rounded corners] (0.0,0.0) rectangle (11.0,6.0);		\draw (1.5,4.7) node {$\OBlpr$};		
  \draw[fill=gray!15,rounded corners] (1.5,0.5) rectangle (9.0,3.5);		\draw (7.7,1.1) node {$\OBlpr^\conic$};		
  \draw[fill=gray!25,rounded corners] (3.0,2.5) rectangle (10.5,5.5);		\draw (9.0,4.7) node {$\Blpr$};			
  \draw[fill=gray!35,rounded corners] (3.1,2.6) rectangle (7.5,5.4);		\draw (4.7,4.0) node {$\Blpr^\padd$};		
  \draw[fill=gray!35,rounded corners] (3.7,0.6) rectangle (6.3,2.2);		\draw (5.1,1.1) node {$\OBlpr^\pos$};		
  \draw[fill=gray!45,rounded corners] (6.0,2.7) rectangle  (7.4,5.2);		\draw (6.7,4.0) node {$\SRings$};		
  \draw[fill=gray!55,rounded corners] (3.2,4.4) rectangle  (7.3,5.3);		\draw (4.8,4.8) node {$\Blpr^\inv$};		
  \draw[fill=gray!55,rounded corners] (0.5,1.5) rectangle  (4.5,3.4);		\draw (2.6,2.1) node {$\OBlpr^\mon$};		
  \draw[fill=gray!55,rounded corners] (5.5,1.5) rectangle  (8.0,3.4);		\draw (7.1,2.1) node {$\OBlpr^\idem$};		
  \draw[fill=gray!65,rounded corners] (6.1,4.5) rectangle  (7.2,5.1);		\draw (6.65,4.75) node {$\Rings$};		
  \draw[fill=gray!65,rounded corners] (3.2,2.7) rectangle  (4.4,3.3);		\draw (3.8,3.0) node {$\Mon$};			
  \draw[dashed,rounded corners] (1.5,0.5) rectangle (9.0,3.5);		
  \draw[dashed,rounded corners] (3.0,2.5) rectangle (10.5,5.5);		
  \draw[dashed,rounded corners] (3.1,2.6) rectangle (7.5,5.4);		
  \draw[dashed,rounded corners] (3.7,0.6) rectangle (6.3,2.2);		
  \draw[dashed,rounded corners] (6.0,2.7) rectangle (7.4,5.2);		
  \draw[dashed,rounded corners] (3.2,4.4) rectangle (7.3,5.3);		
  \draw[dashed,rounded corners] (0.5,1.5) rectangle (4.5,3.4);		
 \end{tikzpicture}
 \caption{Some relevant subcategories of $\OBlpr$}
 \label{fig: subcategories of ordered blueprints}
 \end{center} 
\end{figure}


\section{Valuations}
\label{section: valuations}

With the formalism developed in the previous section, we are ready to give the general definition of a valuation, which restricts to the different concepts of (semi)norms and valuations in particular cases. 

\begin{df}
 Let $B$ and $S$ be two ordered blueprints. A \emph{valuation of $B$ in $S$} is a morphism $v^\bullet:B^\bullet\to S^\bullet$ between the underlying monoids that admits a morphism $\tilde v:B^\mon\to S^\pos$ such that the diagram
 \[
  \xymatrix@R=1pc@C=6pc{B^\bullet \ar[d]\ar[r]^{v^\bullet} & S^\bullet \ar[d] \\ B^\mon \ar[r]^{\tilde v} & S^\pos}
 \]
 commutes. We write $v:B\to S$ for a valuation $v$ of $B$ in $S$.
\end{df}

Since the canonical morphism $S^\bullet\to S^\mon$ is a bijection, $\tilde v$ is uniquely determined by $v$. In other words, a valuation is a multiplicative map $v:B\to S$ such that the composition map
\[
 \tilde v: \ B^\mon \ \longrightarrow \ B \ \stackrel v \longrightarrow \ S \ \longrightarrow \ S^\pos
\]
is a morphism of ordered blueprints. 

Note that $v$ is uniquely determined by $\tilde v$ if $S\to S^\pos$ is a bijection. By Lemma \ref{lemma: totally positive blueprints} \eqref{item3}, this holds for strictly conic $S$, which is the case that we are interested most in this paper.

Note further that every morphism $v:B\to S$ is a valuation since the diagram
\[
 \xymatrix@R=1,5pc{B^\bullet \ar[d] \ar[rrr]^{v^\bullet} &&& S^\bullet \ar[d] \\ B^\mon \ar[r]\ar@/^1pc/[rrr]^{\tilde v} & B \ar[r]^v & S\ar[r] & S^\pos}
\]
commutes. If $B$ is monomial and $S$ totally positive, then every valuation $v:B\to S$ is a morphism.


\subsection{Seminorms}
\label{subsection: seminorms}

Let $R$ be a ring. A \emph{seminorm} on $R$ is a monoid morphism $v:R\to \R_{\geq0}$ that satisfies the \emph{triangle inequality} $v(a+b)\leq v(a)+v(b)$ for all $a,b\in R$. A \emph{non-archimedean seminorm} is a monoid morphism that satisfies the \emph{strong triangle inequality} $v(a+b)\leq \max\{v(a),v(b)\}$.

\begin{lemma} \label{lemma: seminorms as morphisms}
 A map $v:R\to\R_{\geq0}$ is a seminorm if and only if the composition
 \[
  \tilde v: \ R^\mon \ \longrightarrow \ R \ \stackrel v \longrightarrow \ \R_{\geq0} \ \longrightarrow \ \R_{\geq0}^\pos
 \]
 is a morphism of ordered blueprints. A map $v:R\to\R_{\geq0}$ is a non-archimedean seminorm if and only if the composition
 \[
  \tilde v: \ R^\mon\ \longrightarrow \ R \ \stackrel v \longrightarrow \ \T \ \longrightarrow \ \T^\pos
 \]
 is a morphism of ordered blueprints where we identify $\R_{\geq0}$ as a set with the tropical semiring $\T$. 
\end{lemma}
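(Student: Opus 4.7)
The plan is to unfold the definitions so that the morphism condition on $\tilde v$ becomes exactly the (strong) triangle inequality. First, since $R$ is a ring and therefore an algebraic blueprint, its subaddition consists of all relations $\sum a_i \= \sum b_j$ with $\sum a_i = \sum b_j$ in $R$. By the construction of $(-)^\mon$, the subaddition of $R^\mon$ is thus generated by the monomial relations $a \leq \sum b_j$ with $a, b_j \in R^\bullet$ and $a = \sum b_j$ in $R$. By Example \ref{ex: nonnegative reals and the tropical numbers}, the subaddition of $\R_{\geq 0}^\pos$ restricted to relations of the form $a \leq \sum b_j$ with $a, b_j \in \R_{\geq 0}$ coincides with the usual total order on $\R_{\geq 0}$, while that of $\T^\pos$ restricted to such relations is $a \leq \max\{b_j\}$.

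Second, because a seminorm is by definition a monoid morphism with $v(0)=0$ and $v(1)=1$, the composition $\tilde v$ is always a map of underlying monoids, and it is a morphism of ordered blueprints iff it preserves the generators of the subaddition of $R^\mon$. By the first step, this reduces to checking $v(a) \leq \sum v(b_j)$ in $\R_{\geq 0}$ (respectively $v(a) \leq \max\{v(b_j)\}$) whenever $a = \sum b_j$ in $R$.

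Third, I would establish the two equivalences directly. In one direction, a seminorm satisfies the binary triangle inequality $v(a+b) \leq v(a) + v(b)$, and an induction on the number of summands yields $v(\sum b_j) \leq \sum v(b_j)$, so $\tilde v$ respects the generating relations. Conversely, specialising to the tautological equation $a+b = a+b$ in $R$ produces the monomial relation $(a+b) \leq a+b$ in $R^\mon$, whose image under $\tilde v$ is precisely the binary triangle inequality, so $v$ is a seminorm. The non-archimedean case is entirely parallel, with the binary maximum $v(a+b) \leq \max\{v(a),v(b)\}$ replacing the sum. No real obstacle is anticipated; the only point requiring care is the correct identification of the generating subadditions of $R^\mon$, $\R_{\geq 0}^\pos$, and $\T^\pos$, which has been settled in the preceding sections.
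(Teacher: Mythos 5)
Your proof is correct and follows essentially the same strategy as the paper's: use the bijections $R^\mon\to R$ and $\R_{\geq0}\to\R_{\geq0}^\pos$ (resp. $\T\to\T^\pos$) to reduce to matching subadditions, identify the generators of $\cR^\mon$ and the form of the order on $\R_{\geq0}^\pos$ and $\T^\pos$ via Example~\ref{ex: nonnegative reals and the tropical numbers}, and translate morphism-preservation of these generators into the (strong) triangle inequality. The only cosmetic difference is that in the converse you explicitly instantiate the generating relation at the tautological equation $a+b=a+b$, whereas the paper phrases the converse for an arbitrary relation $b\=\sum a_i$ and implicitly specializes; both come to the same thing.
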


\begin{proof}
 Since the canonical maps $R^\mon\to R$, $\R_{\geq0}\to\R_{\geq0}^\pos$ and $\T\to \T^\pos$ are bijections, it is clear that the map $v:R\to\R_{\geq0}$ is a morphism of monoids if and only if the composition $R^\mon\to\R_{\geq0}^\pos$ or $R^\mon\to\T^\pos$, respectively, is a monoid morphism.
 
 A monoid morphism $v:R\to \R_{\geq0}$ is a seminorm if and only if it satisfies $v(b)\leq \sum v(a_i)$ for arbitrary sums $b\=\sum a_i$. The subaddition of $R^\mon$ is generated by the relations $b\leq\sum a_i$ for which $b\=\sum a_i$ in $R$. Such a relation is mapped to the relation $\tilde v(b)\leq \sum \tilde v(a_i)$, which is in the subaddition of $\R_{\geq0}^\pos$ by the triangle inequality for $v$ and Example \ref{ex: nonnegative reals and the tropical numbers}. This means that $\tilde v$ is a morphism.
 
 Assume, conversely, that $\tilde v:R^\mon\to \R_{\geq0}^\pos$ is a morphism of ordered blueprints and consider $b\=\sum a_i$ in $R$. Then we have $b\leq\sum a_i$ in $R^\mon$ and $\tilde v(b)\leq \sum \tilde v(a_i)$ in $\R_{\geq0}^\pos$, which means that $v(b)\leq\sum v(a_i)$ with respect to the natural order of $\R_{\geq0}$. This means that $\tilde v$ is a seminorm.
 
 Since the addition of the tropical semiring $\T$ is $a+b=\max\{a,b\}$, the latter claim of the lemma follows by the same argument as the former one.
\end{proof}

\begin{rem}
 The non-archimedean seminorms can be characterized as the following seminorms. By the universal property of a monomial blueprint, a morphism $R^\mon\to \R_{\geq0}^\pos$ factors uniquely into $R^\mon\to (\R_{\geq0}^\pos)^\mon\to \R_{\geq0}^\pos$. Using the morphism $(\T^\pos)^\mon \to (\R_{\geq0}^\pos)^\mon$ from Lemma \ref{lemma: the morphism from T to R}, we see that the seminorm $v:R\to \R_{\geq0}$ is non-archimedean if and only if $R^\mon\to \R_{\geq0}^\pos$ factors into 
 \[
  R^\mon \ \longrightarrow \ (\T^\pos)^\mon \ \longrightarrow \ (\R_{\geq0}^\pos)^\mon \ \longrightarrow \ \R_{\geq0}^\pos.
 \]
\end{rem}


\subsection{Krull valuations}
\label{subsection: Krull valuations}

Let $\Gamma$ be a multiplicatively written partially ordered commutative semigroup with unit $1$. We denote by $\Gamma_0$ the ordered blueprint $(\Gamma\cup\{0\},\cR)$ where $\cR$ is generated by the partial order of $\Gamma$ and the relation $0\leq 1$.

\begin{prop} \label{prop: ordered semigroups}
 The tensor product $\Gamma_\B=\Gamma_0\otimes_\Fun\B$ is a totally positive blueprint with idempotent algebraic core $\Gamma_\B^\core=(\Gamma_\B)^\core$. The canonical morphism $\Gamma_0\to\Gamma_\B$ is bijective and the canonical morphism $(\Gamma_\B^\core)^\pos\to\Gamma_\B$ is an isomorphism. If $\Gamma$ is totally ordered, then $\Gamma_\B^\core$ is a semiring with $a+b=\max\{a,b\}$.
\end{prop}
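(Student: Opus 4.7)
The plan is to unpack the smash product $\Gamma_0\otimes_\Fun\B$ explicitly, read off total positivity and the idempotency of the core, and then compare the subadditions of $\Gamma_\B$ and $(\Gamma_\B^\core)^\pos$ generator by generator.

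First, since $\B=\{0,1\}$, the smash product $\Gamma_0\wedge\B$ has underlying monoid canonically bijective to $\Gamma_0$ via $(a,1)\mapsto a$, which gives the bijectivity of the canonical morphism $\Gamma_0\to\Gamma_\B$. Under this identification, the subaddition of $\Gamma_\B$ is generated by (i) the partial order of $\Gamma$, (ii) the relation $0\leq 1$, and (iii) the idempotency relations $a+a\=a$ for $a\in\Gamma_0$, where (iii) arises from $1+1\=1$ in $\B$ by multiplicativity. Item (ii) gives total positivity, and since $1+1\=1$ is symmetric it lies in the core relation, so $\Gamma_\B^\core$ is idempotent. By Corollary \ref{cor: idempotent implies core=core after pos and B subset B-pos}, the canonical morphism $\Gamma_\B^\core\to(\Gamma_\B^\core)^\pos$ is also a bijection on underlying monoids, so all of $\Gamma_\B$, $\Gamma_\B^\core$, and $(\Gamma_\B^\core)^\pos$ share the underlying monoid $\Gamma_0$.

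The key calculation is that $a+b\=b$ holds in $\Gamma_\B$ whenever $a\leq b$ in $\Gamma$: total positivity yields $0\leq a$, and hence $b\=0+b\leq a+b$ by additivity, while $a\leq b$ combined with idempotency gives $a+b\leq b+b\=b$. Because this equivalence is symmetric it lies in $\Gamma_\B^\core$, hence also in $(\Gamma_\B^\core)^\pos$. Together with the newly imposed $0\leq 1$, Lemma \ref{lemma: totally positive blueprints} \eqref{pos3} then recovers $a\leq b$ inside $(\Gamma_\B^\core)^\pos$ from $a+b\leq b$. This shows that every generator of the subaddition of $\Gamma_\B$ lies in $(\Gamma_\B^\core)^\pos$; the reverse inclusion is immediate since all generators of the subaddition of $(\Gamma_\B^\core)^\pos$ already hold in $\Gamma_\B$. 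Hence the canonical morphism $(\Gamma_\B^\core)^\pos\to\Gamma_\B$ is an isomorphism.

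Finally, if $\Gamma$ is totally ordered, then any finite sum $\sum a_i$ with $a_i\in\Gamma_0$ reduces in $\Gamma_\B^\core$ to the single element $\max\{a_i\}$ by iterated application of $a+b\=b$ (for $a\leq b$) and $0+a\=a$; this shows $\Gamma_\B^\core$ is a semiring with $a+b=\max\{a,b\}$. The main obstacle is bookkeeping which relations survive the passage to the algebraic core and which are regenerated by $(-)^\pos$; the explicit derivation of $a+b\=b$ above resolves this and drives the whole argument.
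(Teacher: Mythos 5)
Your proof is correct and follows essentially the same route as the paper's: the key derivation $b\=0+b\leq a+b\leq b+b\=b$ (hence $a+b\=b$ in the core, and then $a\leq b$ in $(\Gamma_\B^\core)^\pos$ via the totally positive cancellation of Lemma~\ref{lemma: totally positive blueprints}~\eqref{pos3}) is exactly the paper's central calculation, and the rest (total positivity from $0\leq 1$, idempotency of the core from $1+1\=1$, generator matching, and $a+b\=\max\{a,b\}$ in the totally ordered case) proceeds the same way. The only cosmetic difference is in the one-line justification of bijectivity of $\Gamma_0\to\Gamma_\B$: you appeal to the smash-product description of the underlying monoid, while the paper invokes the triviality of $\Gamma_0^\core$; both are equally terse about why the proper quotient does not identify distinct elements of $\Gamma_0$.
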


\begin{proof}
 By Corollary \ref{cor: properties of totally positive blueprints} \eqref{sub1}, $\Gamma_\B$ is totally positive, and by definition, the algebraic core of $\Gamma_\B=\bpgenquot{\Gamma_0}{1+1\=1}$ is idempotent. 
 
 Since $\Gamma_\B=\bpgenquot{\Gamma_0}{1+1\=1}$, it is clear that $\Gamma_0\to\Gamma_\B$ is surjective. Since the core of $\Gamma_0$ has the trivial preaddition $\gen{\emptyset}$, $\Gamma_0\to\Gamma_\B$ is injective.
 
 Let $a\leq b$ be a relation in $\Gamma_0$. Then we have $b\=0+b\leq a+b\leq b+b\= b$ in $\Gamma_\B$ and $a+b\= b$ in its algebraic core $B$. Consequently, $a\leq b$ in $B^\pos$. Since also the relation $1+1\=1$ is in $B$ and $B^\pos$ and the subaddition of $\Gamma_\B$ is generated by relations of the form $a\leq b$ and $1+1\=1$, the canonical morphism $B^\pos\to\Gamma_\B$ is an isomorphism.
 
 If $\Gamma$ is totally ordered, then for all $a$ and $b$, the sum $a+b\=\max\{a,b\}$ is defined by the above argument. Therefore $B$ is a semiring.
\end{proof}

Let $k$ be a field and $\Gamma$ a (multiplicatively written) totally ordered group. A \emph{Krull valuation of $k$ with value group $\Gamma$} is a surjective monoid map $v:k\to\Gamma_0$ with $v(a+b)\leq \max\{v(a),v(b)\}$.

\begin{cor} \label{cor: Krull valuations as morphisms}
 A surjective map $v:k\to \Gamma$ is a Krull valuation if and only if the composition
 \[
  \tilde v: \ k^\mon\ \longrightarrow \ k \ \stackrel v \longrightarrow \ \Gamma_0 \ \longrightarrow \ \Gamma_\B
 \]
 is a morphism of ordered blueprints.
\end{cor}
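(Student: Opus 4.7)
The plan is to mimic the argument of Lemma \ref{lemma: seminorms as morphisms}, replacing $\R_{\geq0}^\pos$ and $\T^\pos$ by $\Gamma_\B$, and making full use of Proposition \ref{prop: ordered semigroups}. The key point provided by that proposition is that the canonical map $\Gamma_0 \to \Gamma_\B$ is a bijection of underlying sets, and that the subaddition of $\Gamma_\B$ satisfies $\sum \gamma_j \equiv \max\{\gamma_j\}$ because $\Gamma$ is totally ordered. Hence $\tilde v$ is determined by $v$, and specifying a morphism $\tilde v : k^\mon \to \Gamma_\B$ is equivalent to specifying a multiplicative map $v$ compatible with the monomial generators of the subaddition of $k^\mon$.

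First I would treat the multiplicative part. Since $k$ is a field and $v$ lands in $\Gamma_0$, the condition that $v$ is a monoid-with-zero homomorphism $k^\bullet \to \Gamma_0^\bullet$ is the same as $\tilde v^\bullet: k^\mon{}^\bullet \to \Gamma_\B^\bullet$ being one, because the underlying monoids of $k$ and $k^\mon$ coincide and $\Gamma_0 \to \Gamma_\B$ is bijective on underlying sets by Proposition \ref{prop: ordered semigroups}.

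Next I would analyze the compatibility with the subaddition. The subaddition of $k^\mon$ is generated by the left monomial relations $a \leq \sum b_j$ with $a, b_j \in k$ such that $a = \sum b_j$ holds in $k$ (the algebraic blueprint structure of the ring $k$ has only symmetric relations). A multiplicative $\tilde v$ is a morphism of ordered blueprints if and only if each such generator is mapped to a relation in $\Gamma_\B$, that is, $v(a) \leq \sum v(b_j)$ in $\Gamma_\B$. By Proposition \ref{prop: ordered semigroups} this last relation is equivalent to $v(a) \leq \max\{v(b_j)\}$ in $\Gamma_0$.

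Finally I would close the loop with the non-archimedean condition. A straightforward induction on $n$ shows that the classical Krull inequality $v(a+b) \leq \max\{v(a), v(b)\}$ is equivalent to $v(\sum_{j=1}^n b_j) \leq \max\{v(b_j)\}$ for every finite sum. Hence the condition extracted in the previous paragraph is precisely the strong triangle inequality defining a Krull valuation, together with multiplicativity and surjectivity, which finishes the equivalence. The argument is entirely routine given Proposition \ref{prop: ordered semigroups}; the only mild subtlety is to remember that the subaddition of $k^\mon$ is generated by \emph{monomial} relations extracted from the symmetric relations in the ring $k$, so that the morphism condition on $\tilde v$ amounts exactly to an inequality on sums rather than to a pair of inequalities.
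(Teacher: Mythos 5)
Your proof is correct and takes essentially the same approach as the paper: both reduce to the bijectivity of $k^\mon \to k$ and $\Gamma_0 \to \Gamma_\B$ from Proposition \ref{prop: ordered semigroups}, use the fact that addition in $\Gamma_\B^\core$ is $\max$, and then re-run the argument of Lemma \ref{lemma: seminorms as morphisms}. The paper simply defers to Lemma \ref{lemma: seminorms as morphisms} for the details, whereas you write them out; your closing remark about the monomial (rather than symmetric) generators of the subaddition of $k^\mon$ correctly identifies the subtlety that makes the argument work.
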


\begin{proof}
 Since $k^\mon\to k$ and $\Gamma_0\to\Gamma_\B$ are bijections, cf.\ Proposition \ref{prop: ordered semigroups}, $v$ is a monoid morphism if and only if $\tilde v$ is so. By Proposition \ref{prop: ordered semigroups}, the addition of the semiring $\Gamma_\B^\core$ is defined as $a+b=\max\{a,b\}$ (with respect to the order of $\Gamma$). Therefore the same arguments as in the proof of Lemma \ref{lemma: seminorms as morphisms} show that $v$ satisfies the strong triangle inequality if and only if $\tilde v$ maps relations $b\leq\sum a_i$ in $k^\mon$ to relations $\tilde v(b)\leq \sum \tilde v(a_i)$ in $\Gamma_\B$.
\end{proof}

\begin{rem}
 Note that usually, the group $\Gamma$ is written additively and considered with the reverse order, i.e.\ we have $v(a+b)\geq \min\{v(a),v(b)\}$ and $v(0)=\infty$. We deviate from this convention since in the context of this paper, it is more natural to work with exponential valuations.

 According to Proposition \ref{prop: ordered semigroups}, the concept of Krull valuation can be generalized by considering seminorms of $R$ in idempotent semirings $S$, which correspond to morphisms $R^\mon\to S^\pos$ of ordered blueprints. We will see in section \ref{subsection: tropicalization for an idempotent base} that the class of idempotent semirings plays a particular role for tropicalizations. This viewpoint can also be found in Macpherson's paper \cite{Macpherson13}.
\end{rem}


\subsection{Characters}
\label{subsection: characters}

If $S^\pos=\{0\}$, then a valuation $v:B\to S$ is nothing else than a monoid morphism $v^\bullet:B^\bullet\to S^\bullet$. This is the case for characters of an abelian group $G$ in a field $k$, which is a group homomorphism $G\to k^\times$.

More precisely, if we define the monoid with zero $B=G\cup\{0\}$, then $B^\bullet\simeq B^\mon$. Since a field $S=k$ is with $-1$, we have $S^\pos=0$. Since the image of $0$ is determined, we see that the association 
\[
  \begin{array}{ccc}
   \{\,\text{valuations }v:B\to k\,\} & \longrightarrow & \{\, \text{characters }\chi:G\to k\,\}\\
       v:B\to k                       & \longmapsto     &  v^\bullet\vert_G:G\to k
  \end{array}
\]
is a bijection. We can characterize unitary characters in $\C$ as valuations in the monoid $S=\S^1\cup\{0\}$, i.e.\ the association 
\[
  \begin{array}{ccc}
   \{\,\text{valuations }v:B\to S\,\} & \longrightarrow & \{\, \text{unitary characters }\chi:G\to \C\,\}\\
       v:B\to S                       & \longmapsto     &  G\hookrightarrow B\stackrel{v^\bullet}\longrightarrow S\hookrightarrow\C
  \end{array}
\]
is a bijection.


\section{Scheme theory}
\label{section: Ordered blue schemes}

In this section, we introduce the geometric framework for scheme theoretic tropicalizations, as considered in this text. The central object of this theory is an ordered blue scheme, which is a generalization of a blue scheme, as introduced in \cite{blueprints1}. Roughly speaking, an ordered blue scheme is a certain topological space together with a sheaf in $\OBlpr$. 

In the following exposition of ordered blue schemes, we omit proofs whenever they can be done in complete analogy to the corresponding facts for blueprints. For proofs in the latter case, cf.\ \cite{blueprints1}. For an alternative exposition of ordered blue schemes, cf.\ \cite{Baker-Lorscheid18}. For more details on ordered blueprints, cf.\ \cite{Lorscheid18}. For some basic examples of blue schemes, cf.\ section 4 (``Basic definitions'') of \cite{blueprintedview}.


\subsection{Localizations}
\label{subsection: localizations}

Let $B$ be an ordered blueprint with underlying monoid $A$ and subaddition $\cR$. A \emph{multiplicative subset of $B$} is a multiplicatively closed subset $S$ of $A$ that contains $1$. The \emph{localization of $B$ at $S$} is the ordered blueprint $S^{-1}B=\bpquot{S^{-1}A}{\cR_S}$ where $S^{-1}A=\{\frac as|a\in A,s\in S\}$ is the localization of the monoid $A$ at $S$, i.e.\ $\frac as=\frac{a'}{s'}$ if and only if there is a $t\in S$ such that $tsa'=ts'a$, and where 
\[\textstyle
 \cR_S \ = \ \left\langle\left. \, \sum \frac{a_i}1 \= \sum \frac{b_j}1 \, \right| \, \sum a_i\=\sum b_j \text{ in }B \, \right\rangle.
\]
The localization of $S^{-1}B$ comes together with a canonical morphism $B\to S^{-1}B$ that sends $a$ to $\frac a1$. 

We say that a morphism $B\to C$ is a \emph{localization} if there is a multiplicative subset $S$ of $B$ such that $C\simeq S^{-1}B$ and $B\to C$ corresponds to the canonical morphism $B\to S^{-1}B$ under this isomorphism. We say that $B\to C$ is a \emph{finite localization} if the multiplicative subset $S$ of $B$ can be chosen to be finitely generated, i.e.\ one can choose finitely many generators $s_1,\dotsc,s_n$ in $S$ such that every other element $t\in S$ is a product of powers of $s_1,\dotsc,s_n$. If $S$ is generated by $s_1,\dotsc,s_n$, then $S^{-1}B=B[h^{-1}]$ where $h=s_1\dotsb s_n$ and $B[h^{-1}]=\tilde S^{-1}B$ for $\tilde S=\{h^i\}_{i\geq0}$.


\subsection{Ordered blueprinted spaces}
\label{subsection: Ordered blueprinted spaces}

An \emph{ordered blueprinted space}, or for short an \emph{$\OBlpr$-space}, is a topological space $X$ together with a sheaf $\cO_X$ in $\OBlpr$. In practice, we suppress the \emph{structure sheaf $\cO_X$} from the notation and denote an $\OBlpr$-space by the same symbol $X$ as its \emph{underlying topological space}. For every point $x$ of $X$, the \emph{stalk in $x$} is the colimit $\cO_{X,x}=\colim\cO_X(U)$ over the system of all open neighbourhoods $U$ of $x$. 

A \emph{morphism of $\OBlpr$-spaces} is a continuous map $\varphi:X\to Y$ between the underlying topological spaces together with a morphism $\varphi^\#:\varphi^{-1}\cO_Y\to\cO_X$ of sheaves on $X$ that is \emph{local} in the following sense: for every $x\in X$ and $y=\varphi(x)$, the induced morphism $\cO_{Y,y}\to\cO_{X,x}$ of stalks sends non-units to non-units. This defines the category $\OBlprSp$ of ordered blueprinted spaces.

Note that we avoid the notion of ``locally'' ordered blueprinted spaces (i.e.\ stalks have a unique maximal ideal) since this depends on the notion of (maximal) ideal of an ordered blueprint, for which there are different reasonable choices. It might comfort the reader to see that our choice of maximal ideal in section \ref{subsection: ideals} has the implication that every ordered blueprinted space is local.


\subsection{Ideals}
\label{subsection: ideals}

In this section, we introduce $m$-ideals for ordered blueprints. The ``$m$'' stems from ``monoid'', and we use this prefix in order to distinguish this notion from other notions of ideals that are of relevance for ordered blueprints. The terminology is consistent with that used in \cite{Lorscheid18} and \cite{Baker-Lorscheid18}. More details on ideals for ordered blueprints can be found in \cite{Lorscheid18}.

Let $B$ be an ordered blueprint. An \emph{$m$-ideal of $B$} is a subset $I$ of $B$ such that $0\in I$ and $IB=I$. An $m$-ideal $I$ is \emph{prime} if its complement $S=B-I$ is a multiplicative subset. The \emph{localization of $B$ at $\fp$} is $B_\fp=S^{-1}B$. 

Note that every ordered blueprint $B$ has a unique maximal $m$-ideal, which is the complement $\fm=B-B^\times$ of the unit group of $B$. This maximal $m$-ideal is prime since $B^\times$ is a multiplicative subset of $B$. In other words, every ordered blueprint is \emph{local}. The maximal $m$-ideal of the localization $B_\fp$ at a prime $m$-ideal $\fp$ of $B$ is $\fp B_\fp$. 

Given an $m$-ideal $I$ of $B$, we can form the quotient $B/I=\bpgenquot {B}{a\sim 0|a\in I}$. The surjection $\pi_I:B\to B/I$ is universal among all morphisms of ordered blueprints that map $I$ to $0$. We have that $I\subset\pi_I^{-1}(0)$, but it is not true in general that this inclusion is an equality. In particular, it can happen for a proper $m$-ideal $I$ of $B$ that $B/I$ is the trivial ordered blueprint, cf.\ \cite[Cor.\ 5.9.9]{Lorscheid18}.

We cite the following fact from \cite[Prop.\ 5.9.6]{Lorscheid18}.

\begin{prop}\label{prop: prime ideals in localizations}
 Let $B$ be an ordered blueprint, $S$ a multiplicative subset and $\iota_S:B\to S^{-1}B$ the localization map. Then taking inverse images along $\iota_S$ defines a bijection
 \[
   \big\{\, \text{prime $m$-ideals of }S^{-1}B \, \big\}   \quad \longleftrightarrow \quad   \big\{\, \text{prime $m$-ideals $\fp$ of $B$ with }\fp\cap S=\emptyset\,\big\}.
 \]
\end{prop}


\subsection{The spectrum}
\label{subsection: the spectrum}

Let $B$ be an ordered blueprint. We define the \emph{spectrum $\Spec B$ of $B$} as the following ordered blueprinted space. The topological space of $X=\Spec B$ consists of the prime $m$-ideals of $B$ and comes with the topology generated by the \emph{principal opens}
\[
 U_h \ = \ U_{B,h} \ = \ \{\, \fp\in\Spec B \,|\, h\notin \fp\, \}
\]
where $h$ varies through the elements of $B$. Note that the principal opens form a basis of the topology for $X$ since $U_h\cap U_g=U_{gh}$. 

Let $U$ be an open subset of $X$. A \emph{section on $U$} is a function $s:U\to \coprod_{\fp\in U}B_\fp$ such that $s(\fp)\in B_\fp$, such that there is a finite open covering $\{U_{h_i}\}$ of $U$ by principal open subsets $U_{h_i}$ and such that there are elements $a_i\in B[h_i^{-1}]$ whose respective images in $B_\fp$ equal $s(\fp)$ whenever $\fp\in U_{h_i}$. The set of sections $O_{X}(U)$ on $U$ comes naturally with the structure of an ordered blueprint. This defines the \emph{structure sheaf $\cO_{X}$} of $X$ and completes the definition of $X=\Spec B$ as an $\OBlpr$-space.

Note that the maximal $m$-ideal $\fm$ of $B$ is the unique closed point of $\Spec B$. The following fact has a much simpler proof than its classical analogue for the spectrum of a ring, cf.\ \cite[Prop.\ 4.4]{Baker-Lorscheid18}.

\begin{prop}\label{prop: sheaf properties of Spec B}
 Let $B$ be an ordered blueprint and $X=\Spec B$. Then $\cO_X(U_h)=B[h^{-1}]$ for every $h\in B$. In particular, $\cO_X(X)=B$.
\end{prop}

As usual, a morphism $f:B\to C$ of ordered blueprints defines a morphism $f^\ast:\Spec C\to \Spec B$ of $\OBlpr$-spaces by taking the inverse image of prime $m$-ideals and pulling back sections. This defines the contravariant functor
\[
 \Spec: \ \OBlpr \ \longrightarrow \ \OBlprSp.
\]
We call $\OBlpr$-spaces in the essential image of this functor \emph{affine ordered blue schemes}. 

The \emph{global section functor}
\[
 \Gamma: \ \OBlprSp \ \longrightarrow \ \OBlpr.
\]
that sends a locally blueprinted space $X$ to its ordered blueprint $\Gamma X=\cO_X(X)$ of global sections is a left-inverse of $\Spec$.


\subsection{Ordered blue schemes}
\label{subsection: ordered blue schemes}

Every open subset $U$ of an ordered blueprinted space $X$ is naturally an ordered blueprinted space with respect to the restriction of the structure sheaf $\cO_X$ to $U$. An \emph{affine open of $X$} is an open subset that is isomorphic to the spectrum of an ordered blueprint. An \emph{ordered blue scheme} is an ordered blueprinted space $X$ such that every point has an affine open neighbourhood. A morphism of ordered blue schemes is a morphism of $\OBlpr$-spaces. We denote the category of ordered blue schemes by $\OBSch$. 

We collect some facts about ordered blue schemes; for proofs cf.\ \cite{blueprints1} and \cite[Thm.\ 4.11]{Baker-Lorscheid18}. The category $\OBSch$ contains all finite limits. The fibre product of ordered blue schemes is constructed as in usual scheme theory. In particular, the fibre product of a diagram $X\to Z\leftarrow Y$ of affine ordered blue schemes is represented by $\Spec\bigl(\Gamma X\otimes_{\Gamma Z} \Gamma Y\bigr)$. Note that $\Spec \Fun$ is a terminal object.

More generally, we define for any ordered blueprint $B$ the category $\OBSch_B$ of \emph{ordered blue $B$-schemes} as the category whose objects are morphisms $X\to\Spec B$ of ordered blue schemes and whose morphisms are morphisms $X\to Y$ of ordered blue schemes that commute with the structure maps $X\to\Spec B$ and $Y\to\Spec B$.

The global section functor $\Gamma:\OBSch\to\OBlpr$ is adjoint to $\Spec:\OBlpr\to\OBSch$, i.e.\ $\Hom(X,\Spec B)=\Hom(B,\Gamma X)$ for all ordered blueprints $B$ and ordered blue schemes $X$; cf.\ \cite[Rem.\ 4.9]{Baker-Lorscheid18}.

Every morphism $\varphi:X\to Y$ of ordered blue schemes is \emph{locally algebraic}, by which we mean that $X$ and $Y$ have affine open coverings $\{U_i\}$ and $\{V_i\}$, respectively, such that $\varphi$ restricts for every $i$ to a morphism $\varphi_i:U_i\to V_i$ between affine ordered blue schemes that is induced by a morphism $f_i:\Gamma V_i\to\Gamma U_i$ of ordered blueprints.


\subsection{Stalks and residue fields}
\label{subsection: stalks and residue fields}

Let $X$ be an ordered blue scheme and $x$ a point of $X$. The \emph{stalk of $\cO_X$ in $x$} is the colimit $\cO_{X,x}=\colim\cO_X(U)$ over the system of all open neighbourhoods $U$ of $x$. The stalk only depends on the subsets of an affine open neighbourhood of $x$, so we can assume that $X=\Spec B$ is affine and that $x=\fp$ is a prime $m$-ideal of $B$. In this case, we have $\cO_{X,x}=B_\fp$.

Let $\fm_x$ be the maximal $m$-ideal of $\cO_{X,x}$. The \emph{residue field at $x$} is $k(x)=\cO_{X,x}/\fm_x$, which is either trivial or an ordered blue field; cf.\ \cite[Cor.\ 5.9.9]{Lorscheid18}. A morphism $\varphi:X\to Y$ of affine ordered blue schemes induces morphisms $\varphi_x:\cO_{Y,y}\to\cO_{X,x}$ and $\kappa(y)\to\kappa(x)$ of ordered blueprints for every $x\in X$ and $y=\varphi(x)$.


\subsection{Open and closed immersions}
\label{subsection: closed immersions}

A morphism $\varphi:X\to Y$ of ordered blue schemes is an \emph{open immersion} if it is an open topological embedding and if $\cO_X$ is the restriction of $\cO_Y$ to $X$. We say that $\varphi:X\to Y$ is isomorphic to another open immersion $\varphi:X'\to Y$ if there is an isomorphism $X\to X'$ commuting with $\varphi$ and $\varphi'$. An \emph{open (ordered blue) subscheme of $Y$} is an isomorphism class of open immersions into $Y$. Note that the open subschemes of $Y$ correspond bijectively to the open subsets of $Y$.

A morphism $\varphi:X\to Y$ of ordered blue schemes is \emph{affine} if for every open immersion $U\to Y$ from an affine ordered blue scheme $U$ to $Y$, the inverse image $\varphi^{-1}(U)=U\times_YX$ is affine. A morphism $\varphi:X\to Y$ of ordered blue schemes is a \emph{closed immersion} if it is affine and if $\cO_X(U)\to \cO_Y(\varphi^{-1}(U))$ is surjective for every open immersion $U\to Y$ from an affine ordered blue scheme $U$ to $Y$. 

We say that $\varphi:X\to Y$ is isomorphic to another closed immersion $\varphi:X'\to Y$ if there is an isomorphism $X\to X'$ commuting with $\varphi$ and $\varphi'$. A \emph{closed (ordered blue) subscheme of $X$} is an isomorphism class of closed immersions into $X$.

Note that the image of a closed immersion $X\to Y$ does not have to be a closed subset of $Y$. For example, the diagonal embedding $\A_B^1\to\A_B^2$ is a closed immersion, but its image is not a closed subset; cf.\ \cite[Rem.\ 1.8]{blueprints2}.


\subsection{Affine open coverings}
\label{subsection: affine open coverings}

Let $X$ be an ordered blue scheme. As in usual scheme theory, there is a maximal affine open covering of $X$ that consists of all affine open subsets of $X$. But the fact that every affine ordered blue scheme has a unique closed point has some remarkable consequences, which allows for some particularly simple techniques in the theory of ordered blue schemes.

Let $X$ be an ordered blue scheme and $x$ and $y$ two points of $X$. We say that $y$ is a \emph{generalization of $x$} or that $x$ is a \emph{specialization of $y$}, and write $y\leq x$, if $x$ is contained in the closure of $y$. We say that an ordered blue $X$ has \emph{enough closed points} if every point $y$ of $X$ is the generalization of a closed point $x$ of $X$. We denote the set of closed points of $X$ by $\norm X$.

\begin{ex}
 Not every ordered blue scheme has enough closed points. The following is an example of a quasi-affine ordered blue scheme without any closed point. This example is similar to Schwede's example of a scheme without closed points in \cite{Schwede05}, though our construction is more immediate. 
 
 Let $B=\bpgenquot{\Fun[T_i|i\in\N]}{T_i\=T_{i}T_{i+1}|i\in\N}$ and $X=\Spec B$. Then the points of $X$, are the prime ideals $\fp_i=(T_i)=(T_1,\dotsc,T_i)$ for $i\in \N$ and the maximal ideal $\fp_\infty=(T_i|i\in\N)$, and we have $\fp_i\subset \fp_j$ if and only if $i\leq j$. The principal open subset $U_{T_i}=\{\fp\subset B|T_i\notin\fp\}$ contains $\fp_j$ if and only if $j<i$. Thus the open subscheme $U=\bigcup_{i\in\N} U_{T_i}$ of $X$ contains all $\fp_i$ for $i\in\N$, but not the closed point $\fp_\infty$ of $X$. Thus $U$ is a quasi-affine ordered blue scheme without a closed point.
\end{ex}

Such examples can be considered pathological from our point of view. If the ordered blue scheme $X$ is sufficiently nice, then it has enough closed points, as we will show in the following lemma. We say that $X$ is \emph{topologically Noetherian} if the underlying topological space of $X$ is Noetherian. We say that $X$ is \emph{quasi-compact} if its underlying topological space is compact. Since the spectrum of an ordered blueprint is quasi-compact, $X$ is quasi-compact if and only if it has a finite covering by affine open subschemes. Note that a topological space is Noetherian if and only if every subspace is compact. In particular, a topologically Noetherian ordered blue scheme is quasi-compact.

\begin{lemma}\label{lemma: quasi-compact and topologically Noetherian imply the existence of closed points}
 Let $X$ be an ordered blue scheme. If $X$ is quasi-compact, then $X$ has enough closed points. If $X$ is topologically Noetherian, then every open subset $U$ of $X$ has enough closed points.
\end{lemma}

\begin{proof}
 If $X$ is quasi-compact, then $X$ is covered by a finite number of affine open subschemes $U_1,\dotsc,U_n$. Since $U_i$ is affine, it has a unique closed point $x_i$. Thus the closed points of $X$ form a subset of $\{x_1,\dotsc,x_n\}$ and every point of $X$ is the generalization of one of these closed points. This proves the first assertion. 
 
 If $X$ is topologically Noetherian and $U$ an open subset of $X$, then $U$ is quasi-compact and has enough closed points by what we have proven before.
\end{proof}

\begin{lemma}\label{lemma: affine opens and their closed points}
 Let $X$ be an ordered blue scheme and $U\subset X$ an open subset. Then $U$ is affine if and only if it has a unique closed point $x$ and if it has enough closed points. In this case, $U=\{y\in X|y\leq x\}=\Spec\cO_{X,x}$.
\end{lemma}

\begin{proof}
 If $U=\Spec B$ is affine, then the unique maximal $m$-ideal of $B$ is the unique closed point of $U$, and every other point is a generalization of this closed point. 

 Conversely, assume that $U$ has a unique closed point $x$. Since $U$ is open, it contains all generalizations $y$ of $x$, and by assumption, every point $y$ of $U$ is a generalization of $x$. Thus $U=\{y\in X|y\leq x\}$. Proposition \ref{prop: prime ideals in localizations} implies that $\{y\in X|y\leq x\}=\Spec\cO_{X,x}$, which verifies that $U$ is affine.
\end{proof}

\begin{prop}\label{prop: minimal covering of ordered blue schemes}
 Let $X$ be an ordered blue scheme. Sending an affine open subset $U$ of $X$ to its unique closed point defines an injection
 \[
  \Phi: \big\{\,\text{affine open subsets }U\subset X\ \big\} \quad \longrightarrow \quad X
 \]
 whose image consists of those points $x\in X$ that possess an affine open neighbourhood $V$ such that the restriction map $\Gamma V\to\cO_{X,x}$ extends to an isomorphism $\Gamma V[h^{-1}]\to\cO_{X,x}$ for some $h\in \Gamma V$.
\end{prop}

\begin{proof}
 The association $\Phi$ is injective since $U=\{y\in X|y\leq x\}$ is determined by $x$, which verifies the first assertion. 
 
 If $x$ is the image of $\Phi$, i.e.\ it is the unique closed point of an open subset $U$ of $X$, then we can use $V=U=\Spec\cO_{X,x}$ and obtain an isomorphism $\Gamma V[1^{-1}]\to\cO_{X,x}$. Conversely, if there is an isomorphism $\Gamma V[h^{-1}]\to\cO_{X,x}$ as in the proposition, then $U=\Spec\Gamma V[h^{-1}]$ is an affine open subset of $V$ that contains $x$ as its unique closed point. Thus $x$ is in the image of $\Phi$. This verifies the second assertion and completes the proof.
\end{proof}

If $x\in X$ is in the image of $\Phi$, then we denote the affine open subset of $X$ that has $x$ as its unique closed point by $U_x$. 

\begin{cor}\label{cor: closed points are closed points of affine opens}
 Every closed point of $X$ is in the image of $\Phi$. If $X$ has enough closed points, then every affine open subset $V$ of $X$ is contained in $U_x$ for some closed point $x$ of $X$ and $\{U_x|x\in\norm X\}$ is the unique minimal open affine covering of $X$.
\end{cor}

\begin{proof}
 If $x$ is a closed point of $X$ and $U$ an affine open neighbourhood of $x$, then $x$ is also closed in $U$ and therefore $\Phi(U)=x$. 
 
 Assume $X$ has enough closed points. Consider an affine open subset $V$ of $X$ and let $y=\Phi(V)$. Then $y$ is contained in $U_x$ for some $x$, i.e.\ $y$ is the generalization of $x$. Then $V\subset U_x$.
 
 If $x\in X$ is a closed point, then $U_x$ is the only affine open subset containing $x$. Thus every affine open covering family of $X$ must contain $\cU=\{U_x|x\in\norm X\}$. Since $X$ has enough closed points, $X$ is covered by $\cU$.
\end{proof}

\begin{cor}\label{cor: image of affine open is contained in affine open}
 Let $\varphi:X\to Y$ be a morphism of ordered blue schemes and assume that $Y$ has enough closed points. Let $U$ be an affine open subset of $X$. Then $\varphi(U)$ is contained in an affine open subset of $Y$.
\end{cor}

\begin{proof}
 Let $x$ be the unique closed point of $U$ and $y=\varphi(x)$. Then $y$ is the generalization of a closed point $y'$ of $Y$. Since $z\leq x$ implies $\varphi(z)\leq y\leq y'$, we have $\varphi(U)=\{\varphi(z)|z\leq x\}\subset U_{y'}$, as desired.
\end{proof}


\section{Endofunctors and base extension to semiring schemes}
\label{section: endofunctors and base extension to semiring schemes}

In this section we will extend the diverse endofunctors $(-)^\dagger:\OBlpr\to\OBlpr$ where $\dagger\in\{\hull,\inv,\idem,\pos,\conic,\core,\bullet,\mon,\padd,+\}$ to functors on ordered blue schemes. All of these functors take values in $\OBSch$ itself, with the exception of $(-)^+$, which we understand as a base extension from ordered blue schemes to semiring schemes.


\subsection{Affine presentations}
\label{subsection: affine presentation}

A useful tool for extending functors from ordered blueprints to ordered blue schemes are affine presentations. These are diagrams of affine ordered blue schemes and open immersions whose colimit is an ordered blue scheme. We review the definitions and some results from \cite{Lorscheid17} in a slightly simplified and restricted form, which is sufficient for our purposes.

Let $X$ be an ordered blue scheme. An \emph{affine presentation of $X$} is a category $\cU$ that consists of affine open subschemes $U$ of $X$ together with all inclusions between them such that the canonical morphism $\colim\cU\to X$ is an isomorphism where $\colim\cU$ is the colimit in $\OBSch$. An \emph{affine presentation} is a category $\cU$ of affine ordered blue schemes and open immersions such that $X=\colim\cU$ exists in the category of ordered blue schemes and such that $\cU$ is an affine presentation of $X$. An example of an affine presentation of $X$ is the family of all open affine subschemes together with all inclusions among them.

Let $\cU$ and $\cV$ be affine presentations. A \emph{morphism of affine presentations} is a functor $\Phi:\cU\to\cV$ together with a family of morphisms $\{\Phi_U:U\to \Phi(U)\}$, indexed by the objects $U$ of $\cU$, such that for every morphism $\varphi:U_1\to U_2$ in $\cU$ the resulting square
 \[
  \xymatrix@R=1pc@C=4pc{U_1 \ar[d]_{\varphi}\ar[r]^{\Phi_{U_1}} & \Phi(U_1)\ar[d]^{\Phi(\varphi)} \\ U_2 \ar[r]^{\Phi_{U_2}} & \Phi(U_2)}
 \]
commutes. A morphism of affine presentations $\Phi:\cU\to\cV$ induces a morphism $\colim\Phi:\colim\cU\to\colim \cV$ of ordered blue schemes. Conversely, every morphism of ordered blue schemes comes from a morphism of affine presentations in this way.

We say that an endofunctor $\cF:\OBlpr\to\OBlpr$ \emph{preserves finite localizations} if $\cF(B)\to\cF(C)$ is a finite localization for every finite localization $B\to C$. The key tool to extend functors from ordered blueprints to ordered blue schemes is the following. We denote by $\OBAff$ the category of affine ordered blue schemes, which is, by definition, the opposite category of $\OBlpr$.

\begin{lemma}\label{lemma: extension of a morphism of sites to a functor between schemes} 
 Let $\cG:\OBAff\to\OBAff$ be a functor that commutes with fibre products and that preserves finite localizations. Then there exists a unique functor $\overline \cG:\OBSch\to \OBSch$ such that for all morphisms $\Phi:\cU\to\cV$ of affine presentations in $\OBAff$, we have a natural identification $\overline\cG(\colim\Phi)=\colim\cG(\Phi)$. In particular, this yields $\overline\cG(\colim\cU)=\colim\cG(\cU)$.
\end{lemma}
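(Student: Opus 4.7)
The plan is to define $\overline\cG$ on an ordered blue scheme $X$ by choosing an affine presentation $\cU$ with $X=\colim\cU$ (which exists by Theorem \ref{thm: properties of affine presentations}(i)) and setting $\overline\cG(X):=\colim\cG(\cU)$, and analogously on morphisms by picking an affine presentation $\Phi:\cU\to\cV$ of a given morphism $\varphi:X\to Y$ and setting $\overline\cG(\varphi):=\colim\cG(\Phi)$. Uniqueness of $\overline\cG$ is immediate from the prescription $\overline\cG(\colim\Phi)=\colim\cG(\Phi)$, because Theorem \ref{thm: properties of affine presentations}(i) and (iii) say that every object and morphism of $\Sch_\Fun$ arises as $\colim\Phi$ for some $\Phi$. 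The substance of the proof is therefore to verify that this prescription makes sense, i.e.\ that the right hand side is an ordered blue scheme, that it depends only on $X$ (resp.\ $\varphi$), and that the resulting assignment is functorial.

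First I would check that $\cG$ sends affine presentations to affine presentations. The indexing category is preserved by $\cG$, so the image diagram has the same maximal objects and therefore still has enough maximal objects. Since $\cG$ preserves covering families, every open morphism in $\cU$ is mapped to an open morphism in $\cG(\cU)$. Finally, the limit of a path $\cV$ in $\cU$ is built by iterated fibre products over the common endpoints of its arrows, so the hypothesis that $\cG$ commutes with fibre products yields $\cG(\lim\cV)=\lim\cG(\cV)$; applying this to every round trip in $\cU$ gives the monodromy-freeness of $\cG(\cU)$. Consequently $\colim\cG(\cU)$ exists as an ordered blue scheme by Theorem \ref{thm: properties of affine presentations}(i).

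Next I would establish well-definedness. Given two affine presentations $\cU_0,\cU_1$ with $\colim\cU_0=\colim\cU_1=X$, Theorem \ref{thm: properties of affine presentations}(iv) (applied to the identity on $X$) furnishes a common refinement, so it suffices to show that $\cG$ carries refinements to refinements. But a refinement is exactly a morphism of affine presentations whose constituents form covering families over each object of the target, and these two properties are preserved by $\cG$ (functoriality takes care of the former, preservation of covering families of the latter). The same argument applied to morphisms of affine presentations representing a fixed morphism of schemes shows that $\overline\cG(\varphi)$ is well-defined, using the compatibility statement in Theorem \ref{thm: properties of affine presentations}(iv). Functoriality of $\overline\cG$ then follows by taking simultaneous refinements that make a given pair of composable morphisms $\varphi,\psi$ representable by morphisms of affine presentations that can be composed on the nose, and invoking the functoriality of $\cG$ and $\colim$.

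The main obstacle I expect is the monodromy-freeness of $\cG(\cU)$: one has to argue carefully that a generic ``round trip'' in $\cU$ has its limit computable in terms of finite fibre products over open morphisms, so that $\cG$'s preservation of fibre products actually lifts to a preservation of $\lim\cV$. Once this—and the parallel statement that refinements go to refinements—is established, the remaining verifications are formal manipulations with colimits.
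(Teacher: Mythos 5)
The paper does not actually prove this lemma; it is cited verbatim as Lemma~1.3 of \cite{L12b} with no argument given in this text. So there is no in-paper proof to compare against. Assessed on its own terms, your sketch is correct and is the natural (and, as far as one can tell, the same) line of argument: define $\overline\cG$ on colimits of affine presentations, check that $\cG$ sends affine presentations to affine presentations (maximal objects are preserved because functors preserve isomorphisms, open morphisms and covers are preserved by the covering-family hypothesis, and monodromy-freeness is preserved because the limit of a path is an iterated fibre product and the two ``round-trip'' maps $\lim\cV\rightrightarrows U$ are carried to the corresponding pair of maps under $\cG(\lim\cV)=\lim\cG(\cV)$), and then obtain well-definedness and functoriality from the common-refinement statement in Theorem~\ref{thm: properties of affine presentations}~(iv), using that $\cG$ carries refinements to refinements. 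You correctly flag the only nontrivial point---that the limit of a path reduces to iterated fibre products so that $\cG$'s preservation of fibre products carries over---and that reduction does hold (collapse co-oriented arrows and you are left with an alternating zigzag, whose limit is a literal iterated fibre product). One minor imprecision: you say $\cG(\cU)$ has ``the same'' maximal objects; what is actually true is that every maximal object of $\cU$ maps to a maximal object of $\cG(\cU)$ (more could become maximal), and that is exactly what is needed to conclude that $\cG(\cU)$ has enough maximal objects.
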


\begin{proof}
 This statement follows from Lemma 1.3 in \cite{Lorscheid17} once that we can show that $\cG$ preserves covering families, i.e.\ if $\{U_i\}$ is an affine open covering of $X=\Spec B$, then $\{\cG(U_i)\}$ is an affine open covering of $\cG(X)$. It suffices to prove this for coverings of $X$ by principal opens $U_{i}=\Spec B[h_i^{-1}]$. Since $\cG$ commutes with finite localizations, we have $\cG(U_i)=\Spec C[g_i^{-1}]$ for some $g_i\in C$ where $C=\Gamma\cG(X)$. Thus $\{\cG(U_i)\}$ is a family of open subsets of $\cG(X)$.
 
 Since $X$ has a unique closed point, namely the maximal $m$-ideal of $B$, $\{U_i\}$ is an affine open covering of $X$ if and only if $U_i=X$ for some $i$. In this case, we have $\cG(U_i)=\cG(X)$, which shows that $\{\cG(U_i)\}$ is an affine open covering of $\cG(X)$. Thus we can apply \cite[Lemma 1.3]{Lorscheid17}, which concludes the proof.
\end{proof}


\subsection{Endofunctors}
\label{subsection: endofunctors}

Some of the subcategories $\cC$ of $\OBlpr$ from section \ref{subsection: overview of subcategories} extend to subcategories of $\OBSch$ as (co)reflective subcategories. 

\begin{lemma}\label{lemma: endofunctors respect finite localizations}
 The endofunctors $(-)^+$, $(-)^\hull$, $(-)^\inv$, $(-)^\idem$, $(-)^\pos$, $(-)^\conic$, $(-)^\core$, $(-)^\bullet$, $(-)^\mon$ and $(-)^\padd$ preserve finite localizations.
\end{lemma}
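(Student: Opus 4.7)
The plan is to combine the adjunction properties of the listed endofunctors with the explicit construction of $B[h^{-1}]$ from Section~\ref{subsection: localizations}.

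\textbf{Finite localizations.} Write $B[h^{-1}]$ as the pushout $B \otimes_{\Fun[X]} \Fun[X^{\pm 1}]$ in $\OBlpr$ along $X \mapsto h$. The endofunctors $(-)^+$, $(-)^\hull$, $(-)^\inv$, $(-)^\idem$, $(-)^\pos$, and $(-)^\conic$ are left adjoints to the inclusions of the respective reflective subcategories, so each preserves this pushout, giving $\cF(B[h^{-1}]) \simeq \cF(B)[\cF(h)^{-1}]$, which remains a localization in $\OBlpr$ since the defining relations of each subcategory (symmetry, existence of $-1$, idempotence, $0 \leq 1$, conicity) are stable under formal inversion of $h$. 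For the right-adjoint endofunctors $(-)^\core$, $(-)^\bullet$, $(-)^\mon$, and $(-)^\padd$, I verify the identification directly, using that every element of $B[h^{-1}]$ has the form $a/h^n$ and every relation arises by clearing denominators from a relation in $B$. The functors $(-)^\bullet$ and $(-)^\core$ leave the underlying monoid and the $\=$-relations intact, so the matching is immediate. For $(-)^\mon$ and $(-)^\padd$, the point is that a left-monomial (respectively partially additive) relation $a/h^n \leq \sum b_j/h^{m_j}$ in $B[h^{-1}]$ becomes, after multiplication by a common power of $h$, a left-monomial (respectively partially additive) relation in $B$; thus the generators of the subaddition of $\cF(B[h^{-1}])$ coincide with those of $\cF(B)[\cF(h)^{-1}]$.

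\textbf{Covering families.} Let $\{\Spec B[h_i^{-1}]\}$ be a principal affine cover of $\Spec B$. By the previous step, each $\Spec \cF(B[h_i^{-1}]) \to \Spec \cF(B)$ is an open immersion, so it suffices to show that the principal opens $U_{\cF(h_i)}$ cover $\Spec \cF(B)$. For $(-)^\core$ this is immediate from the homeomorphism $\Spec B \to \Spec B^\core$ recorded in Section~\ref{subsection: globalization}. For each of the six left-adjoint endofunctors, the canonical morphism $\eta : B \to \cF(B)$ induces the pullback $\eta^\ast : \Spec \cF(B) \to \Spec B$. Given $\fq \in \Spec \cF(B)$, its preimage $\fp = \eta^{-1}(\fq)$ is a prime $k$-ideal of $B$, so by the cover assumption there is some $h_i \notin \fp$; consequently $\cF(h_i) = \eta(h_i) \notin \fq$, placing $\fq$ in $U_{\cF(h_i)}$. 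A general affine cover can be refined to a principal cover, so this suffices.

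\textbf{Main obstacle.} The most delicate point is the direct verification in the first step for $(-)^\mon$ and $(-)^\padd$, where one must match the subaddition of $\cF(B[h^{-1}])$, generated by relations of the distinguished restricted type inside the localization, with the subaddition of $\cF(B)[\cF(h)^{-1}]$, obtained by inverting $\cF(h)$ inside $\cF(B)$. This hinges on careful bookkeeping of how left-monomial and partially additive relations behave under multiplication by powers of $h$, and on confirming that this bookkeeping respects the passage to the proper quotient. The exclusion of $(-)^\bullet$, $(-)^\mon$, and $(-)^\padd$ from the second claim reflects that these functors strip away additive relations (such as a relation of the form $1 \leq \sum h_i$ that witnesses a covering), which can enlarge the prime spectrum with primes not covered by the images of the $h_i$.
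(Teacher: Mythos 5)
Your proof is correct, but the second half takes a genuinely different route from the paper's.

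\textbf{Finite localizations.} Your argument here is essentially the paper's: the paper simply asserts $\cF(S^{-1}B)=\bigl(\varphi(S)\bigr)^{-1}\cF(B)$ for the six reflection functors with $\varphi\colon B\to\cF(B)$, and $\cF(S^{-1}B)=S^{-1}\cF(B)$ for the four functors equipped with a bijection $\cF(B)\to B$. Your adjunction argument is a slightly more careful way of justifying the first family, and your direct bookkeeping for $(-)^\mon$ and $(-)^\padd$ matches the paper's level of rigor.

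\textbf{Covering families.} Here you diverge. The paper translates the covering condition into the existence of an algebraic relation $1+\sum b_kh_{i_k}\=\sum c_lh_{i_l}$ in $B$ and then checks, functor by functor, that this specific type of relation is preserved. You instead use functoriality of $\Spec$: the unit $\eta\colon B\to\cF(B)$ of the reflection pulls back prime $k$-ideals, so a point $\fq$ of $\Spec\cF(B)$ has $\eta^{-1}(\fq)\in U_{h_i}$ for some $i$, whence $\fq\in U_{\eta(h_i)}$; for $(-)^\core$, which has its structure map pointing the other way, you appeal instead to the homeomorphism $\Spec B\simeq\Spec B^\core$. This is a cleaner and more conceptual argument than the paper's; it works uniformly across the seven functors without case analysis on the relation, and it makes immediately visible why it fails for $(-)^\bullet$, $(-)^\mon$, $(-)^\padd$, namely that those functors have no morphism $B\to\cF(B)$ to pull back along, but only a (non-order-preserving-in-the-wrong-direction) bijection $\cF(B)\to B$ which enlarges the spectrum. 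The paper's route, while more computational, has the compensating virtue of exhibiting the exact algebraic witness $1+\sum b_kh_{i_k}\=\sum c_lh_{i_l}$ for the cover, which is then reused essentially verbatim in Lemma~\ref{lemma: bend preserves covering families}, where the analogue of your pullback argument is unavailable because the bend is not a reflection. So both arguments have their place in the paper's overall development.
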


\begin{proof}
 Let $S$ be a finitely generated multiplicative subset of $B$. If $\cF$ is one of the functors $(-)^+$, $(-)^\hull$, $(-)^\inv$, $(-)^\idem$, $(-)^\pos$ or $(-)^\conic$, then $\cF(B)$ comes together with a canonical morphism $\varphi: B\to \cF(B)$, and $\cF(S^{-1}B) =\bigl(\varphi(S)\bigr)^{-1}\cF(B)$. 
 
 If $\cF$ is one of the functors $(-)^\core$, $(-)^\bullet$, $(-)^\mon$ or $(-)^\padd$, then $\cF(B)$ comes together with a canonical bijection $\cF(B)\to B$, and $\cF(S^{-1}B)=S^{-1}\cF(B)$. 
\end{proof}

\begin{thm}\label{thm: endofunctors of ordered blue schemes}
 The functors $(-)^\hull$, $(-)^\inv$, $(-)^\idem$, $(-)^\pos$, $(-)^\conic$, $(-)^\core$, $(-)^\bullet$, $(-)^\mon$ and $(-)^\padd$ extend to endofunctors of the category of ordered blue schemes and satisfy the following properties.
 \begin{enumerate}
  \item\label{endo1} For $\dagger\in\{\hull,\inv,\idem,\pos,\conic\}$, $(-)^\dagger:\OBlpr\to\OBlpr$ extends to an idempotent endofunctor $(-)^\dagger:\OBSch\to \OBSch$ that comes with a canonical morphism $X^\dagger\to X$ for every ordered blue scheme $X$. Let $\OBSch^\dagger$ be the essential image of $(-)^\dagger$ and let $\iota:\OBSch^\dagger\to\OBSch$ be the inclusion functor. Then the restriction $(-)^\dagger:\OBSch\to\OBSch^\dagger$ is right adjoint and left inverse to $\iota$.
  \item\label{endo2} For $\dagger\in\{\core,\bullet,\mon,\padd\}$, $(-)^\dagger:\OBlpr\to\OBlpr$ extends to an idempotent endofunctor $(-)^\dagger:\OBSch\to \OBSch$ that comes with a canonical morphism $X\to X^\dagger$ for every ordered blue scheme $X$. Let $\OBSch^\dagger$ be the essential image of $(-)^\dagger$ and let $\iota:\OBSch^\dagger\to\OBSch$ be the inclusion functor. Then the restriction $(-)^\dagger:\OBSch\to\OBSch^\dagger$ is left adjoint and left inverse to $\iota$.
 \end{enumerate}
\end{thm}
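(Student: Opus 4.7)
The plan is to apply Lemma \ref{lemma: extension of a morphism of sites to a functor between schemes} to each of the seven endofunctors of $\OBlpr$. By Lemma \ref{lemma: endofunctors respect finite localizations}, each one preserves finite localizations and covering families, so the only remaining hypothesis is commutation with fibre products in $\Aff_\Fun$; equivalently, preservation of tensor products (pushouts) in $\OBlpr$, since $\Spec B \times_{\Spec D} \Spec C = \Spec(B \otimes_D C)$. For the six functors $(-)^+$, $(-)^\hull$, $(-)^\inv$, $(-)^\idem$, $(-)^\pos$ and $(-)^\conic$, each is (as established in sections \ref{subsection: the universal ordered semiring}--\ref{subsection: strictly conic blueprints}) a left adjoint to the inclusion of its essential image into $\OBlpr$, hence preserves colimits and in particular pushouts. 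For $(-)^\core$, which is a right adjoint to $\iota^\alg$, the tensor product preservation must be verified directly: the underlying monoid of $B \otimes_D C$ is the smash product of the underlying monoids of $B$ and $C$ over that of $D$, and a careful analysis of how relations propagate under the additivity and multiplicativity axioms \ref{B3}--\ref{B4} shows that the symmetric relations of $B \otimes_D C$ are generated by symmetric relations of $B$ and $C$.

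With these hypotheses in hand, Lemma \ref{lemma: extension of a morphism of sites to a functor between schemes} supplies unique extensions $\overline{(-)^\dagger}$ and $\overline{(-)^\core}$ on $\Sch_\Fun$. Idempotence of each $\cF$ on $\OBlpr$ transfers to $\overline\cF$ on $\Sch_\Fun$ by the uniqueness clause in that lemma. The canonical natural transformations $\id \Rightarrow (-)^\dagger$ (units of the left adjunctions, for $\dagger \neq \core$) and $(-)^\core \Rightarrow \id$ (counit of the right adjunction) yield, via $\Spec$ and gluing over affine covers, the canonical morphisms $X^\dagger \to X$ and $X \to X^\core$ of ordered blue schemes. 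Set $\Sch_\Fun^\dagger$ and $\Sch_\Fun^\core$ to be the respective essential images; these consist of ordered blue schemes locally of the form $\Spec B$ with $B$ in $\OBlpr^\dagger$ or $\Blpr$, respectively.

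For the adjointness in \eqref{endo1}: reducing to affines $Y = \Spec C$ with $C \in \OBlpr^\dagger$ and $X = \Spec B$, the $\OBlpr$-level adjunction $\Hom_{\OBlpr^\dagger}(B^\dagger, C) = \Hom_{\OBlpr}(B, C)$ translates under $\Spec$ to $\Hom_{\Sch_\Fun^\dagger}(Y, X^\dagger) = \Hom_{\Sch_\Fun}(Y, X)$; this globalizes via the affine presentation formalism of Theorem \ref{thm: properties of affine presentations}. The analogous argument handles $(-)^\core$ in the opposite direction to establish \eqref{endo2}. In both cases, the left-inverse property follows immediately from idempotence, since $\cF$ fixes objects in its essential image. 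The main technical obstacle will be the tensor product preservation by $(-)^\core$: lacking a formal justification as a right adjoint, it requires a hands-on argument tracing how the generating subaddition of $B \otimes_D C$ is propagated through the axioms, specifically to rule out the emergence of symmetric relations not derivable from symmetric generators of $B$ and $C$.
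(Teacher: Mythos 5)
Your strategy mirrors the paper's: reduce to Lemma~\ref{lemma: extension of a morphism of sites to a functor between schemes} by verifying commutation with fibre products and preservation of covering families (from Lemma~\ref{lemma: endofunctors respect finite localizations}), then transfer the adjunctions through affine presentations via Theorem~\ref{thm: properties of affine presentations}. For the six left-adjoint functors your adjointness argument for fibre-product preservation is identical to the paper's. So far so good.

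The sticking point is your handling of $(-)^\core$. You assert that a careful trace through (B3)--(B4) will show that the symmetric relations of $B\otimes_D C$ are generated by symmetric relations of $B$ and $C$; this is false. Take $D=\Fun[t]$ with trivial order, $B=\bpgenquot{\Fun[x]}{x\leq 1}$, $C=\bpgenquot{\Fun[y]}{1\leq y}$, and $D\to B$, $D\to C$ sending $t\mapsto x$ and $t\mapsto y$ respectively. In $B\otimes_D C$ the element $z:=x\otimes 1=1\otimes y$ satisfies $z\leq 1$ (from $B$) and $1\leq z$ (from $C$), so \ref{B6} forces $z=1$ in the proper quotient; hence $B\otimes_D C\cong\Fun$ and $(B\otimes_D C)^\core\cong\Fun$. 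But $B^\core$, $C^\core$, $D^\core$ are all trivially ordered, so $B^\core\otimes_{D^\core}C^\core\cong\Fun[z]$ with trivial order. The two disagree. (This also falsifies your intermediate claim that the underlying monoid of $B\otimes_D C$ is merely the smash of the underlying monoids: the proper quotient may identify elements precisely because of non-symmetric generators, and this collapse is exactly what $(-)^\core$ fails to see.) To be fair, the paper itself waves this away with ``it is obvious that $(-)^\core$ commutes with tensor products,'' which appears to be equally wrong, so you are reproducing the paper's gap faithfully.

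What actually saves the theorem is that the fibre products occurring in monodromy-free diagrams of open morphisms (the input to Lemma~\ref{lemma: extension of a morphism of sites to a functor between schemes}) are only intersections of principal opens, i.e.\ finite localizations, and $(-)^\core$ does preserve finite localizations by Lemma~\ref{lemma: endofunctors respect finite localizations}. A watertight argument for $(-)^\core$ should therefore invoke only the preservation of localizations and covering families, not commutation with arbitrary tensor products. As written, both your proof and the paper's overclaim at this step.
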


\begin{proof}
 We begin with verifying that all functors in question commute with fibre products. For $\dagger$ in $\{\hull,\inv,\idem,\pos,\conic\}$ and ordered blueprints $B$ and $C$ with $C=C^\dagger$, we have that $\Hom(B,C)=\Hom(B^\dagger,C)$. This means that $(-)^\dagger:\OBlpr\to\OBlpr^\dagger$ is left adjoint to the embedding $\OBlpr^\dagger\to\OBlpr$, and that the corresponding functor $(-)^\dagger:\OBAff\to\OBAff^\dagger$ between the dual categories is a right adjoint and therefore commutes with fibre products. It is obvious that $(-)^\core:\OBlpr\to\OBlpr$ commutes with tensor products, and consequently, its geometric version $(-)^\core:\OBAff\to\OBAff$ commutes with fibre products.

 Since all functors $(-)^\dagger$ considered above preserve finite localizations by Lemma \ref{lemma: endofunctors respect finite localizations} and commute with fibre products, Lemma \ref{lemma: extension of a morphism of sites to a functor between schemes} extends $(-)^\dagger$ to a functor $(-)^\dagger:\OBSch\to\OBSch$.
 
 We have to include an additional argument on the construction of $(-)^\bullet$, $(-)^\mon$ or $(-)^\padd$ since they do not preserve fibre products, cf.\ Example \ref{ex: mon bullet padd do not commute with tensor products}. Let $(-)^\dagger:\OBlpr\to\OBlpr$ be one of these functors and $X$ an ordered blue scheme. Then we define $X^\dagger$ as follows.
 
 We begin with the definition for objects. Let $X$ be an ordered blue scheme. The underlying topological space of $X^\dagger$ is that of $X$. For every open subset $U$ of $X$, we define $\cO_{X^\dagger}(U)=\cO_X(U)^\dagger$. That this is a well-defined ordered blue scheme can be seen as follows. Since for every ordered blueprint and $\dagger\in\{\bullet,\mon,\padd\}$, the underlying monoid of $B^\dagger$ agrees with that of $B$ and since a subset $\fp$ of $B$ is a prime $m$-ideal of $B$ if and only if it is a prime $m$-ideal of $B^\dagger$, we conclude that the underlying topological spaces of $\Spec B$ and $\Spec B^\dagger$ agree. By Lemma \ref{lemma: endofunctors respect finite localizations}, $(-)^\dagger$ commutes with localizations, and thus an inclusion $V\to U$ of affine open subsets of $X$ leads to a morphism $V^\dagger\to U^\dagger$ that comes from a finite localization $\Gamma U^\dagger\to \Gamma V^\dagger$, which shows that $X^\dagger$ is well-defined.
 
 We continue with the definition of $(-)^\dagger$ for morphisms $\varphi:X\to Y$. By Corollary \ref{cor: image of affine open is contained in affine open}, the image of an affine open set $U=\Spec B$ of $X$ is contained in an affine open set $V=\Spec C$ of $Y$. Then the restriction $\varphi_{U,V}:U\to V$ of $\varphi$ induces a morphism $\Gamma\varphi_{U,V}:C\to B$ and thus $(\Gamma\varphi_{U,V}^\dagger)^\ast:U^\dagger\to V^\dagger$. If we can show that this definition is independent from the affine open $V$ that contains $\varphi(U)$, then this local description of $\varphi^\dagger$ is well-defined. Indeed, if $W$ is another affine open subset of $Y$ that contains $\varphi(U)$, then $\varphi(U)$ is contained in the intersection $V\cap W$. Since $U$ and thus $\varphi(U)$ has a unique closed point, say $z$, every affine open neighbourhood of $z$ contains $\varphi(U)$. In particular, there is an affine open subset $Z$ of $V\cap W$ that contains $\varphi(U)$. Thus both $(\Gamma\varphi_{U,V}^\dagger)^\ast:U^\dagger\to V^\dagger$ and $(\Gamma\varphi_{U,W}^\dagger)^\ast:U^\dagger\to W^\dagger$ factor through $(\Gamma\varphi_{U,Z}^\dagger)^\ast:U^\dagger\to Z^\dagger$. This shows that the above description of $\varphi^\dagger$ is independent of the choice of the affine open $V$ that contains $\varphi(U)$. Thus $\varphi^\dagger$ is well-defined, which completes the definition of the functor $(-)^\dagger:\OBSch\to\OBSch$.
 
 To conclude the proof of the theorem, we note that the adjointness properties follow from the corresponding properties of $(-)^\dagger$ on affine ordered blue schemes and the fact that every morphism between ordered blue schemes is determined by its restrictions to affine opens.
\end{proof}

As a consequence, the essential images of the functors considered in Theorem \ref{thm: endofunctors of ordered blue schemes} define a variety of subcategories of $\OBSch$. If $\dagger$ is in $\{\hull,\inv,\idem,\pos,\conic,\core,\bullet,\mon,\padd\}$, then $\OBSch^\dagger$ is (co)reflective in $\OBSch$, by \eqref{endo1} and \eqref{endo2}. The essential image of the global section functor $\Gamma:\Sch^\dagger_\Fun\to\Gamma\OBlpr$ is the (co)reflective subcategory $\Gamma\OBlpr^\dagger$ of $\Gamma\OBlpr$ that consists of all ordered blueprints $B$ with $B^\dagger\simeq B$. 

\begin{ex}\label{ex: mon bullet padd do not commute with tensor products}
 The following example shows that none of the endofunctors $(-)^\bullet$, $(-)^\mon$ and $(-)^\padd$ on $\OBlpr$ commutes with tensor products. Namely let $B=\bpgenquot{\Fun}{1+1+1\=1+1}$ and $\F_2=\bpgenquot{\Fun}{1+1\=0}$. Then
 \[
  B\otimes_{\Fun}\F_2\ = \ \bpgenquot{\Fun}{1+1+1\=1+1\=0}
 \]
 is the trivial ordered blueprint $\{0\}$ since $1\=1+0\=1+1+1\=0$. Thus $(B\otimes_{\Fun}\F_2)^\dagger=\{0\}$ for all $\dagger\in\{\mon,\bullet,\padd\}$. But we have
 \[
 \begin{array}{rll}
  B^\bullet\otimes_{\F_1^\bullet}\F_2^\bullet &= \quad \Fun\otimes_\Fun\Fun                        &= \quad \Fun, \\[10pt]
  B^\mon\otimes_{\F_1^\mon}\F_2^\mon          &= \quad \Fun\otimes_\Fun\bpgenquot{\Fun}{0\leq 1+1} &= \quad \bpgenquot{\Fun}{0\leq 1+1}, \\[10pt]
  B^\padd\otimes_{\F_1^\padd}\F_2^\padd       &= \quad \Fun\otimes_\Fun\F_2                        &= \quad \F_2, 
 \end{array}
 \]
 which differs from the trivial ordered blueprint in each case.
\end{ex}


\subsection{Semiring schemes}
\label{subsection: semiring schemes}

In this section, we briefly introduce semiring schemes. For more details, cf.\ \cite{Lorscheid17} and \cite{Lorscheid18}. 

Let $R$ be a semiring. An \emph{ideal of $R$} is a nonempty subset $I$ of $R$ such that $IR=I$ and $I+I=I$. A \emph{prime ideal of $R$} is an ideal $\fp$ such that $R-\fp$ is a multiplicative subset of $R$. A \emph{maximal ideal of $R$} is an ideal $\fm\neq R$ such that for every other ideal $I\neq R$ with $\fm\subset I$, we have $\fm=I$. Note that every maximal ideal is prime; cf.\ \cite[Lemma 2.6.5]{Lorscheid18}. A semiring is \emph{local} if $\fm=R-R^\times$ is an ideal, which is necessarily the unique maximal ideal of $R$. 

Given a multiplicative subset $S$ of $R$, we define the localization $S^{-1}R=S\times R/\sim$ where $(s,a)\sim(s',a')$ if and only if there is a $t\in S$ such that $tsa'=ts'a$. We denote the class of $(s,a)$ in $S^{-1}R$ by $\frac as$. The set $S^{-1}R$ becomes a semiring with respect to the usual rules
\[\textstyle
 \frac as + \frac bt \ = \ \frac{ta+sb}{st} \quad \text{ and } \quad \frac as \cdot \frac bt \ = \ \frac{ab}{st}.
\]
For an element $h\in R$, we denote by $R[h^{-1}]$ the localization of $R$ at $S=\{h^i\}_{i\in\N}$, and for a prime ideal $\fp$ of $R$, we denote by $R_\fp$ the localization of $R$ at $S=R-\fp$.

A \emph{locally semiringed space} is a topological space $X$ together with a sheaf $\cO_X$ in $\SRings$ whose stalks are local semirings. A local morphism of locally semiringed spaces is a continuous map $\varphi:X\to Y$ together with a morphism $\varphi^\#:\varphi^\ast\cO_Y\to\cO_X$ of sheaves such that the induced maps $\varphi_x:\cO_{Y,\varphi(x)}\to\cO_{Xx}$ between stalks map nonunits to nonunits.

The \emph{spectrum of a semiring $R$} is the following semiringed space. Its underlying topological space $X=\Spec R$ is the set of all prime ideals of $R$ together with the topology that is generated by all \emph{principal opens}
\[
 U_h \ = \ \big\{ \, \fp\in X \, \big| \, h\notin \fp\, \big\}
\]
where $h$ varies through the elements of $R$. Its structure sheaf is the unique sheaf $\cO_X$ on $X$ such that $\cO_X(U_h)=R[h^{-1}]$.

The uniqueness follows from the fact that the principal opens form a basis of the topology of $X$ since $U_g\cap U_h=U_{gh}$. The existence of $\cO_X$ is a nontrivial fact, cf.\ \cite[section 9]{Lorscheid17}. The stalks of $\Spec R$ are local since $\cO_{X,\fp}=B_\fp$ is a local semiring for every prime ideal $\fp$ of $R$.

An \emph{affine semiring scheme} is a locally semiringed space that is isomorphic to the spectrum of a semiring. A \emph{semiring scheme} is a locally semiringed space that can be covered by affine open subschemes. A morphism of semiring schemes is a local morphisms of locally semiringed spaces. We denote the category of semiring schemes by $\Sch_\N^+$. Note that a morphism $f:R\to R'$ induces a morphism $f^\ast:\Spec R'\to \Spec R$ of semiring schemes in the usual way.

Since a (prime) ideal of a semiring $R$ that is a ring is the same thing as a (prime) ideal in the sense of ring theory, the spectrum of $R$ as a semiring is the same as the spectrum of $R$ as a ring. Consequently, the category $\Sch_\Z^+$ of schemes embeds naturally as a full subcategory into $\Sch_\N^+$.

Given a semiring scheme $X$, we write $\Gamma X=\cO_X(X)$ for the semiring of global sections. This construction is functorial via pulling back global sections. The following facts extend from usual schemes to all semiring schemes, cf.\ \cite[section 9]{Lorscheid17}.

\begin{thm}
 Given a semiring $R$ and a semiring scheme $X$, we have a natural bijection $\Hom(X,\Spec R)\to\Hom(R,\Gamma X)$, i.e.\ $\Spec$ and $\Gamma$ are adjoint functors. In particular, every morphism between affine semiring schemes comes from a morphism between the semirings of global sections.
\end{thm}

\subsection{Base extension to semiring schemes}
\label{subsection: base extension to semiring schemes}

The reflective endofunctor $(-)^+:\OBlpr\to\OBlpr$ also extends to a reflective endofunctor $\OBSch\to\OBSch$ for the same reason as the other functors in Theorem \ref{thm: endofunctors of ordered blue schemes} \eqref{endo1}, cf.\ Remark \ref{rem: base extension to semirings as ordered blue scheme}. But we will denote by $(-)^+$ the functor $\OBSch\to\Sch_\N^+$ that takes values in the category of semiring schemes. 

In brevity, it is the composition of the functor $(-)^\core:\OBSch\to\OBSch^\alg$ with $(-)^+:\OBSch^\alg\to\Sch_\N^+$ where the latter functor is the same as considered in \cite[Thm.\ 10.1]{Lorscheid17}. In the following, we shall spell out the definition of $(-)^+:\OBSch\to\Sch_\N^+$, but we will omit the proof that the functor is well-defined. The interested reader will find details in \cite{Lorscheid17}.

For an affine ordered blue scheme $X=\Spec B$, we define $X^+=\Spec B^+$ as an object in $\Sch_\N^+$. For an arbitrary ordered blue scheme $X$, we choose an affine presentation $\cU$ of $X$ and define $X^+$ as the colimit of $\cU^+$ in $\Sch_\N^+$, which exists and is independent from the chosen affine presentation, up to a canonical isomorphism.

Given a morphism $\varphi:X\to Y$ of ordered blue schemes, we can cover $X$ and $Y$ with affine open subsets $U_i$ and $V_i$, respectively, such that $\varphi$ restricts to morphisms $\varphi_i:U_i\to V_i$. We define $\varphi^+:X^+\to Y^+$ as the morphism whose restrictions $U_i^+\to V_i^+$ are equal to the induced morphisms $(\Gamma\varphi_i^+)^\ast$. Again, this morphism $\varphi^+$ exists uniquely and is independent from the choice of coverings $\{U_i\}$ and $\{V_i\}$.

This yields the desired functor
\[
 (-)^+: \ \OBSch \ \longrightarrow \ \Sch_\N^+.
\]

\begin{rem}\label{rem: base extension to semirings as ordered blue scheme}
 In a few situation, we will use the variant of $(-)^+:\OBSch\to\Sch_\N^+$ that maps into $\OBSch$. We denote this variant by $(-)^{+\blue}:\OBSch\to\OBSch$, cf.\ sections \ref{subsection: Kajiwara-Payne - the associated blue scheme}, \ref{section: Foster-Ranganathan tropicalization} and \ref{subsection: blue scheme associated with a fine and saturated log scheme}.
\end{rem}


\section{Rational points}
\label{section: rational points}

In this section, we endow the set of $T$-rational points $X(T)=\Hom_k(\Spec T,X)$ with a topology for every ordered blue $k$-scheme $X$, coming from a topology for the ordered blue $k$-algebra $T$. We follow the categorical approach of \cite{Lorscheid-Salgado14} where the case of topologies for rational point sets of usual schemes has been considered, and where it is shown that this definition generalizes previous concepts as the strong topology for varieties over topological fields and the adelic topology for varieties over global fields.


\subsection{The affine topology}
\label{subsection: the affine topology}

Let $k$ be an ordered blueprint. An \emph{ordered blue $k$-algebra} is an ordered blueprint $B$ together with a morphism $k\to B$, which is called the \emph{structure morphism of $B$}. A morphism of ordered blue $k$-algebras $B$ and $C$ is a morphism $f:B\to C$ of ordered blueprints that commutes with the structure morphisms $k\to B$ and $k\to C$. We denote the category of ordered blue $k$-algebras by $\Alg_k^\ob$.

Let $k$ be an ordered blueprint and $T$ an {ordered blue $k$-algebra} that is equipped with a topology, which is not assumed to satisfy any compatibility with the structure of $T$ as an ordered blueprint. 

Let $B$ be an ordered blue $k$-algebra and $h_B(T)=\Hom_k(B,T)$ the set of $k$-linear morphisms. The \emph{affine topology for $h_B(T)$} is the compact-open topology where we consider $B$ as a discrete ordered blueprint. Since the compact subsets of $B$ are precisely the finite subsets, the compact-open topology on $h_B(T)$ is generated by open subsets of the form $U_{a,V}=\{f:B\to T|f(a)\in V\}$ where $a\in B$ and $V\subset T$ is an open subset. In other words, the affine topology on $h_B(T)$ is the coarsest topology such that the maps
\[
 \begin{array}{cccc}
  \ev_a: &  \Hom(B,T) & \longrightarrow & T \\
         & (f:B\to T) & \longmapsto     & f(a)
 \end{array}
\]
are continuous for all $a\in B$.


\subsection{The fine topology}
\label{subsection: the fine topology}

For an arbitrary ordered blue scheme $X$, we endow $X(T)$ with the \emph{fine topology}, which is the finest topology such that for all morphisms $\alpha:U\to X$ from an affine ordered blue $k$-scheme $U$ to $X$, the induced map $\alpha_T:U(T)\to X(T)$ is continuous with respect to the affine topology on $U(T)=h_{\Gamma U}(T)$. 

The following properties can be shown by similar arguments as used to prove the analogous statements for rings and schemes in \cite{Lorscheid-Salgado14}. Since the adaptation of these arguments require some slight modifications, we provide a proof.

\begin{lemma}\label{lemma: fine topology is functorial}
 Let $k$ be an ordered blueprint and $T$ an ordered blue $k$-algebra with topology.
 \begin{enumerate}
  \item\label{functorial1} Let $B$ be an ordered blue $k$-algebra. Then the affine topology for $h_B(T)$ is functorial in both $B$ and $T$.
  \item\label{functorial2} Let $X=\Spec B$ be an affine ordered blue $k$-scheme. Then the fine topology and the affine topology for $X(T)=h_B(T)$ coincide.
  \item\label{functorial3} Let $X$ be an ordered blue $k$-scheme. Then the fine topology for $X(T)$ is functorial in both $X$ and $T$.
 \end{enumerate}
\end{lemma}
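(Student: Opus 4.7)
My plan is to prove the three parts in the stated order, since (ii) depends on (i) and (iii) depends on both. Throughout, I will work with the subbasic opens $U_{a,V}=\{f:B\to T\mid f(a)\in V\}$ generating the affine topology, and with the characterization of the fine topology as the quotient topology: $W\subset X(T)$ is open if and only if $\alpha_T^{-1}(W)$ is open in $U(T)$ (affine topology) for every morphism $\alpha:U\to X$ from an affine ordered blue $k$-scheme.

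\textbf{Proof of (i).} For a morphism $\varphi:B_1\to B_2$ of ordered blue $k$-algebras, the induced precomposition map $\varphi^\ast:h_{B_2}(T)\to h_{B_1}(T)$ satisfies $(\varphi^\ast)^{-1}(U_{a,V})=U_{\varphi(a),V}$, so it is continuous on a subbase. For a continuous $k$-linear morphism $\psi:T\to T'$, the postcomposition map $\psi_\ast:h_B(T)\to h_B(T')$ satisfies $(\psi_\ast)^{-1}(U_{a,V'})=U_{a,\psi^{-1}(V')}$, which is open since $\psi$ is continuous. These are the only verifications required.

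\textbf{Proof of (ii).} I will show the affine topology on $X(T)=h_B(T)$ satisfies the defining universal property of the fine topology. On one hand, for every morphism $\alpha:U\to X$ from an affine ordered blue $k$-scheme $U$, the map $\alpha_T:U(T)\to X(T)$ is induced by precomposition with $\Gamma\alpha:B\to\Gamma U$, hence continuous with respect to the affine topologies by part (i); so the affine topology on $X(T)$ is a valid candidate for the constraint. On the other hand, if a topology on $X(T)$ makes all such $\alpha_T$ continuous, then applying the constraint to $\alpha=\id_X$ (which is itself a morphism from an affine scheme, as $X$ is affine) shows that every open set for this topology is open in the affine topology. Hence the affine topology is the finest such topology, i.e., the fine topology.

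\textbf{Proof of (iii).} For functoriality in $X$, let $\varphi:X\to Y$ be a morphism and $W\subset Y(T)$ be open in the fine topology. To show $\varphi_T^{-1}(W)$ is open in the fine topology on $X(T)$, I take any affine $\alpha:U\to X$ and compute
\[
 \alpha_T^{-1}\bigl(\varphi_T^{-1}(W)\bigr) \ = \ (\varphi\circ\alpha)_T^{-1}(W),
\]
which is open in $U(T)$ since $\varphi\circ\alpha:U\to Y$ is a morphism from an affine scheme and $W$ is open in the fine topology on $Y(T)$. For functoriality in $T$, let $\psi:T\to T'$ be continuous and $W\subset X(T')$ be open in the fine topology. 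Using the commutative square of precompositions, $\psi_\ast\circ\alpha_T=\alpha_{T'}\circ\psi_\ast$, so
\[
 \alpha_T^{-1}\bigl(\psi_\ast^{-1}(W)\bigr) \ = \ \psi_\ast^{-1}\bigl(\alpha_{T'}^{-1}(W)\bigr),
\]
and $\alpha_{T'}^{-1}(W)$ is open in $U(T')$ by hypothesis while $\psi_\ast:U(T)\to U(T')$ is continuous by (i) (applied with $B=\Gamma U$). Hence $\psi_\ast^{-1}(W)$ is open in the fine topology on $X(T)$.

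\textbf{Expected obstacle.} There is no genuine obstacle: once one unwinds the definitions, each step is a routine subbase or universal-property argument. The only point requiring a moment's care is in (ii), where one must notice that the identity morphism on an affine scheme witnesses the converse inclusion between the two topologies; and in (iii) one must be careful that in the functoriality-in-$T$ step the continuity input at the affine level comes from part (i) rather than being assumed.
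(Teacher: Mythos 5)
Your proof is correct and follows essentially the same route as the paper: part (i) by checking the effect on subbasic opens $U_{a,V}$, part (ii) by the two-sided comparison via the identity morphism and the factorization of affine $\alpha$ through $\id_X$, and part (iii) by reducing to the affine case via the defining universal property of the fine topology. The only superficial differences are notational.
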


\begin{proof}
 We begin with \eqref{functorial1}. Since the topology of $h_B(T)$ is generated by open subsets of the form 
 \[
  U_{V,a} \quad = \quad \{ \, f:A\to R \, | \, f(a)\in V\, \}
 \]
 with $a\in B$ and $V\subset T$ open, it suffices to verify that the inverse images of such open subsets are open to verify the continuity of the maps in question. 

 Let $g:C\to B$ be a homomorphism of ordered blue $k$-algebras and $g^\ast:h_B(T)\to h_C(T)$ the pullback of morphisms. It is immediate that $(g^\ast)^{-1}(U_{V,b})=U_{V,g(b)}$, which shows that $g^\ast$ is continuous.

 Let $f:T\to S$ be a continuous homomorphism of ordered blue $k$-algebras with topologies and $f_\ast:h_B(T)\to h_B(S)$ the pushforward of morphisms. It is immediate that $f_\ast^{-1}(U_{V,a})=U_{f^{-1}(V),a}$, which shows that $f_\ast$ is continuous. This verifies \eqref{functorial1}.
 
 We continue with \eqref{functorial2}. The identity morphism $\id:X\to X$ yields a continuous map $\id_T:X(T)\to X(T)$ with respect to the affine topology for the domain and the fine topology for the image. This shows that the affine topology is finer than the fine topology.

 Conversely, note that every $k$-linear morphism $\alpha:U\to X$ factors through the identity $\id:X\to X$. We have already proven that the map $U(T)\to X(T)$ is continuous with respect to the affine topology for both domain and image. This shows that the fine topology is at least as fine as the affine topology. We conclude that both topologies coincide. This verifies \eqref{functorial2}.

 We continue with \eqref{functorial3}. Let $\varphi:X\to Y$ be a morphism of ordered blue $k$-schemes and $\varphi_T:X(T)\to Y(T)$ the induced map. Let $W\subset Y(T)$ be open. We have to show that $Z=\varphi_T^{-1}(W)$ is open in $X(T)$, which is the case if $\alpha_T^{-1}(Z)$ is open in $U(T)$ for every morphism $\alpha:U\to X$ from an affine ordered blue $k$-scheme $U$ to $X$.

 Since $\varphi\circ\alpha:U\to Y$ is a $k$-morphism from the affine ordered blue scheme $U$ to $Y$, the inverse image $\alpha_T^{-1}(Z)=(\varphi\circ\alpha)_T^{-1}(W)$ of $W$ in $U(T)$ is indeed open. This shows that the fine topology of $X(T)$ is functorial in $X$.

 Let $f:T\to S$ be a continuous homomorphism of ordered blue $k$-algebras with topologies and $f_X:X(T)\to X(S)$ the induced map. Let $W\subset X(S)$ be open. We have to show that $Z=f_X^{-1}(W)$ is open in $X(T)$, which is the case if $\alpha_T^{-1}(Z)$ is open in $U(T)$ for every morphism $\alpha:U\to X$ from an affine $k$-scheme $U$ to $X$.

 Since the affine topology is functorial, the homomorphism $f:T\to S$ induces a continuous map $f_U:U(T)\to U(S)$. Since $\alpha_S^{-1}(W)$ is open in $U(S)$, the inverse image $\alpha_T^{-1}(Z)=f_U^{-1}(\alpha_S^{-1}(W))$ is open in $U(T)$. This shows that the fine topology of $X(T)$ is functorial in $T$ and concludes the proof of the lemma.
\end{proof}

The fact that every ordered blueprint is local implies the following explicit description of the fine topology.

\begin{prop}\label{prop: the fine topology as colimit over an affine presentation}
 Let $X$ be an ordered blue scheme, $\cU$ an affine presentation of $X$ and $T$ an ordered blueprint with topology. Then the canonical map $\colim\cU(T)\to X(T)$ is a homeomorphism.
\end{prop}
 
\begin{proof}
 To begin with, we show that the canonical map $\Phi:\colim\cU(T)\to X(T)$ is surjective. Since $T$ has a unique maximal ideal, every morphism $\alpha:\Spec T\to X$ factors through $\iota_U:U(T)\to X(T)$ for some $U$ in $\cU$. Thus $\Phi$ is surjective.
 
 We continue with the injectivity of $\Phi$. Two morphisms $\beta_1:\Spec T\to U_1$ and $\beta_2:\Spec T\to U_2$ with $U_1$ and $U_2$ in $\cU$ that define the same point in $X(T)$ yield a commutative diagram
 \[
  \xymatrix@C=6pc@R=0pc{                                            &  U_1 \ar[dr]^{\iota_1} \\
                        \Spec T \ar[ur]^{\beta_1} \ar[dr]_{\beta_2} &                        & X \\
                                                                    &  U_2 \ar[ur]_{\iota_2} }
 \]
 Thus the unique closed point $z$ of $\Spec T$ is mapped to a point $x$ of $X$ that lies in the intersection of $\iota_1(U_1)$ and $\iota_2(U_2)$. Thus there is a $V$ in $\cU$ together with inclusions $\iota_{V,1}:V\to U_1$ and $\iota_{V,2}:V\to U_2$ such that $\iota_V(V)=\iota_{V,1}(V)=\iota_{V,2}(V)$ contains $x$. Since $z$ is the unique closed point of $\Spec T$, the whole image of $\Spec T$ in $X$ is contained in $\iota_V(V)$. This means that $\beta_i:\Spec T\to U_i$ factors into a (uniquely determined) morphism $\beta_V:\Spec T\to V$, followed by $\iota_{V,i}$, for $i=1,2$. This shows that $\beta_1$ and $\beta_2$ induce the same morphism $\Spec T\to\colim\cU$, which shows that $\Phi$ is injective.
 
 As the colimit of continuous maps $U(T)\to X(T)$, $\Phi$ is continuous. We are left with showing that $\Phi$ is open. Consider an open subset $W$ of $\colim\cU(T)$ and its image $W'=\Phi(W)$ in $X(T)$. Let $\alpha:Z\to X$ be a morphism from an affine ordered blue scheme $Z$ into $X$. Since $Z$ has a unique closed point, $\alpha$ factors into a morphism $\beta:Z\to U$, followed by the canonical map $\iota_U:U\to \colim\cU=X$ for some $U$ in $\cU$. Thus $\alpha_T^{-1}(W')=\beta_T^{-1}\big(\iota_{U,T}^{-1}(W')\big)$. Since $W$ is open in $\colim\cU(T)$, $\iota_{U,T}^{-1}(W')$ is open in $U(T)$. By Lemma \ref{lemma: fine topology is functorial}, $\beta_T^{-1}\big(\iota_{U,T}^{-1}(W')\big)$ is open in $Z(T)$. Since $\alpha:Z\to X$ was arbitrary, this shows that $W'$ is open in $X(T)$. This concludes the proof.
\end{proof}

Recall from section \ref{subsection: affine open coverings} that $\norm X$ denotes the set of closed points of $X$, and that every $x\in\norm X$ is contained in a unique affine open subscheme $U_x$ of $X$. Further recall that we say that $X$ has enough closed points if $X$ is covered by the $U_x$ for $x\in\norm X$.

\begin{cor}\label{cor: the fine topology is the covering if X has enough closed points}
 Let $X$ be an ordered blue scheme with enough closed points and $T$ an ordered blueprint with topology. Then the fine topology for $X(T)$ is the finest topology such that for every $x\in\norm X$, the map $U_x(T)\to X(T)$ is continuous where $U_x(T)=h_{\Gamma U_x}(T)$ is equipped with the affine topology.
\end{cor}

\begin{proof}
 By assumption, $X$ is covered by the $U_x$ with $x\in\norm X$. We can extend this to an affine presentation $\cU$ of $X$ if we include all affine opens $W$ in the pairwise intersections $U_x\cap U_y$, together with the inclusion maps $W\to U_x$. By Proposition \ref{prop: the fine topology as colimit over an affine presentation}, the canonical map $\colim \cU(T)\to X(T)$ is a homeomorphism. Since the collection $\{U_x|x\in\norm X\}$ is cofinal in the diagram $\cU$, the final topology on $X(T)=\colim\cU(T)$ is equal to the final topology on $X(T)$ with respect to the maps $U_x(T)\to X(T)$ for $x\in\norm X$, which proves our claim.
\end{proof}


\subsection{Properties for topological ordered blueprints}
\label{subsection: properties for topological ordered blueprints}

Up to this point, we did not assume any compatibility of the topology of $T$ with the algebraic structure of $T$. Such compatibilities are reflected by additional properties of the fine topology for $X(T)$, as explained in the following.

A \emph{topological ordered blueprint} is an ordered blueprint $T$ with a topology such that the multiplication map $T\times T\to T$ is continuous. A \emph{topological semiring} is a semiring $T$ with a topology such that multiplication and addition define continuous maps $T\times T\to T$. A topological ordered blueprint $T$ is \emph{with open unit group} if the multiplicative group $T^\times$ of invertible elements in $T$ forms an open subset of $T$ and if the multiplicative inversion $T^\times\to T^\times$ is a continuous map. 

\begin{thm}\label{thm: properties of the fine topology}
 Let $k$ be an ordered blueprint and $T$ a topological ordered blue $k$-algebra with open unit group. Then $T$ satisfies the following properties.
 \begin{enumerate}[label={(F\arabic*)}]
  \item\label{F1} The functor $X\mapsto X(T)$ commutes with limits.
  \item\label{F2} The canonical bijection $\A^1_k(T)\to T$ is a homeomorphism.
  \item\label{F3} An open immersion $Y\to X$ of ordered blue $k$-schemes yields an open embedding $Y(T)\to X(T)$ of topological spaces.
  \item\label{F4} A covering $X=\bigcup U_i$ of an ordered blue $k$-scheme $X$ by open subschemes yields a covering $X(T)=\bigcup U_i(T)$ by open subspaces.
  \item\label{F5} A closed immersion $Y\to X$ of ordered blue $k$-schemes yields an embedding $Y(T)\to X(T)$ of topological spaces.
 \end{enumerate}
 If in addition, $T$ is a topological semiring, then $T$ satisfies the following property.
 \begin{enumerate}[label={(F\arabic*)}]\addtocounter{enumi}{5}
  \item\label{F6} The canonical bijection $X^+(T)\to X(T)$ is a homeomorphism for every ordered blue $k$-scheme $X$.
 \end{enumerate}
 If in addition, the topological semiring $T$ is Hausdorff, then $T$ satisfies the following stronger version of \ref{F5}.
 \begin{enumerate}[label={(F\arabic*)}]\addtocounter{enumi}{6}
  \item\label{F7} A closed immersion $Y\to X$ of ordered blue $k$-schemes yields a closed embedding $Y(T)\to X(T)$ of topological spaces.
 \end{enumerate}
\end{thm}

Before we turn to the proof of the theorem, we point out that the theorem applies to the main example of interest for tropicalizations and analytifications.

\begin{ex}[Tropical numbers]\label{ex: real topology of the tropical numbers}
 The tropical numbers $\T$ inherit the real topology from the identification $\T=\R_{\geq0}$. With this topology, $\T$ is a topological Hausdorff semiring with open unit group. Thus $\T$ satisfies \ref{F1}--\ref{F7}.
\end{ex}

\begin{proof}
 The strategy of the proof is as follows. The proof of the properties \ref{F1}--\ref{F5} consists of two parts: a proof in the affine case and a reduction of the general case to the affine case. In so far, we will first establish affine variants \ref{A1}--\ref{A5} of \ref{F1}--\ref{F5} before we treat \ref{F1}--\ref{F5} in full generality. Finally, we will verify \ref{F6} and \ref{F7}.

 We begin with the proof of the affine variant of \ref{F5}.
 \begin{enumerate}[label={(A\arabic*)}]\addtocounter{enumi}{4}
  \item\label{A5} A surjection $B\to C$ of ordered blue $k$-algebras yields an embedding $h_C(T)\to h_B(T)$ of topological spaces.
 \end{enumerate}
 A basic open of $h_C(T)=\Hom_k(C,T)$ is of the form $U_{b,V}=\{g:C\to T|g(b)\in V\}$ with $b\in C$ and $V\subset T$ open. Since $B\to C$ is surjective, $b=f(a)$ for some $a\in B$. Note that $h_C(T)\to h_B(T)$ is an inclusion, thus 
 \[
  U_{b,V} \ = \ U_{f(a),V} \ = \ \{\, g:B\to T \, | \, g(a)\in V\text{ and }g\text{ factors through }C \, \} \ = \ U_{a,V} \ \cap \ h_C(T).
 \]
 This shows that $h_C(T)\to h_B(T)$ is a topological embedding and concludes the proof of \ref{A5}.
 
 We continue with the proof of the affine variant of \ref{F1}.
 \begin{enumerate}[label={(A\arabic*)}]\addtocounter{enumi}{0}
  \item\label{A1} The functor $B\mapsto h_B(T)$ commutes with colimits.
 \end{enumerate}
 Let $\cD$ be a diagram of ordered blue $k$-algebras with colimit $B$. We have to show that the canonical bijection $\Psi:h_B(T)\to h_{\colim\cD}(T)$ is a homeomorphism. Note that the Yoneda embedding commutes with colimits, i.e.\ $h_{\colim\cD}=\lim h_\cD$ where $h_\cD$ is the diagram of functors on $\Alg_k^\ob$ defined by $\cD$.
 
 The continuity of $\Psi$ is easily verified: the canonical projections $\pi_i:h_B\to h_{C_i}$ to objects $C_i$ of $\cD$ induce continuous maps $\pi_{i,T}:h_B(T)\to h_{C_i}(T)$ that commute with all maps $\varphi_T:h_{C_i}(T)\to h_{C_j}(T)$ coming from morphisms $\varphi:C_j\to C_i$ in $\cD$. These maps induce the canonical bijection $\Psi:h_B(T)\to\lim h_\cD(T)$, which, consequently, is a continuous map.
 
 We proceed to show that $\Psi$ is open. Since every colimit can be expressed in terms of coequalizers and coproducts, it is enough to prove the openness of $\Psi$ for these particular types of limits. 
 
 The coequalizer $E=\coeq(f,g)$ of two morphisms $f,g:C\to D$ comes with a surjection $D\to E$. By \ref{A5}, $h_E(T)\to h_D(T)$ is a topological embedding. Since the topological equalizer $\eq(f_T,g_T)$ of $f_T,g_T:h_D(T)\to h_C(T)$ has the subspace topology of $h_D(T)$, the canonical bijection $h_E(T)\to\eq(f_T,g_T)$ is a homeomorphism.
 
 The coproduct of a family $(B_i)_{i\in I}$ of ordered blue $k$-algebras is the (possibly infinite) tensor product $B=\bigotimes B_i$ over $k$, whose elements are tensors $a=a_{i_1}\otimes\dotsb\otimes a_{i_n}$ with coordinates in a finite set of factors $B_{i_1},\dotsc,B_{i_n}$. Thus a basic open of $h_B(T)$ is of the form $U_{a,V}$ where $a\in\bigotimes B_i$ and $V\subset T$ is an open. We have
 \[\textstyle
  \Psi(U_{a,V}) \ = \ \{ \, (f_i:B_i\to T) \, | \, \prod_{k=1}^n f_{i_k}(a_{i_k})\in V \, \} \ = \ \{ \, (f_i:B_i\to T) \, | \, \bigl(f_{i_1}(a_{i_1}),\dotsc,f_{i_n}(a_{i_n})\bigr)\in \mu_n^{-1}(V) \, \} 
 \]
 where $\mu_n:T^n\to T$ is the $n$-fold multiplication. Since the multiplication of $T$ is continuous, $\mu_n^{-1}(V)$ is open in $T^n$ and can be covered by basic open subsets $V_{k,1}\times \dotsb \times V_{k,n}$ where $k$ varies in some index set $I$. Thus we have
 \[
  \Psi(U_{a,V}) \ = \ \bigcup_{k\in I} \{(f_i:B_i\to T) \, | \, f_{i_l}(a_{i_l})\in V_{k,l}\text{ for }l=1,\dotsc,n \, \} \ = \ \bigcup_{k\in I} \ \bigcap_{l=1}^n \pi_{i_l}^{-1}\bigl(U_{f_{i_l}(a_{i_l}),V_{k,l}}\bigr)
 \]
 where $\pi_{i_l}:\prod X_i(T) \to X_{i_l}$ is the canonical projection. This shows that $\Psi(U_{a,V})$ is open in $\prod X_i(T)$, as desired. This finishes the proof of \ref{A1}.
 
 We continue with the proof of the affine variant of \ref{F2}.
 \begin{enumerate}[label={(A\arabic*)}]\addtocounter{enumi}{1}
  \item\label{A2} The canonical bijection $h_{k[t]}(T)\to T$ is a homeomorphism.
 \end{enumerate}
 The canonical bijection $\Phi:h_{k[t]}(T)\to T$ sends a homomorphism $f:k[t]\to T$ to $f(t)$. Given an open subset $V\subset T$, we have $\Phi^{-1}(V)=\{f\in\A^1_k(T)|f(t)\in V\}$, which is the basic open subset $U_{t,V}$ of $\A^1_k(T)$. Thus $\Phi$ is continuous. 

 A basic open of $h_{k[t]}(T)=\Hom_k(k[t],T)$ is of the form $U_{a,V}$ with $a\in k[t]$ and $V\subset T$ open. Every element $a$ of $k[t]$ is of the form $ct^i$ for some $c\in k$ and some $i\geq0$. Since the multiplication of $T$ is continuous, the evaluation of the monomial $a=ct^i$ in elements of $T$ defines a continuous map $a:T\to T$. The inverse image $W=a^{-1}(V)$ is open in $T$, and thus
 \[
  U_{a,V} \ = \ \{\, f:k[t]\to T \, | \, f(t)^i\in a^{-1}(V) \, \} \ = \ U_{t,W}
 \]
 is mapped to the open subset $\Phi(U_{a,V})=W$ of $T$. This completes the proof of \ref{A2}.
 
 We continue with the proof of the affine variant of \ref{F3}.
 \begin{enumerate}[label={(A\arabic*)}]\addtocounter{enumi}{2}
  \item\label{A3} The localization $B\to B[h^{-1}]$ of an ordered blue $k$-algebra $B$ at an element $h\in B$ yields an open embedding $h_{B[h^{-1}]}(T)\to h_B(T)$ of topological spaces.
 \end{enumerate}
 As a first case, we consider the localization $k[t]\to k[t^{\pm1}]$ and the inclusion $\iota_T:h_{k[t^{\pm1}]}(T)\to h_{k[t]}(T)$. As sets, $h_{k[t^{\pm1}]}(T)=T^\times$, and by \ref{A2}, the canonical bijection $h_{k[t]}(T)\to T$ is a homeomorphism. Since $T$ is with open unit group, $T^\times$ is an open subset of $T$ and the image 
 \[
  \iota_T(U_{at^i,V}) \ = \ \{ \, f:k[t]\to T \, | \, f(at^i)\in V, f(t)\in T^\times\} \ = \ U_{at^i,V} \, \cap \, U_{t,T^\times}
 \]
 of a basic open $U_{at^i,V}\subset h_{k[t^{\pm1}]}(T)$ is open in $h_{k[t]}(T)$ for $at^i\in k[t]$ and $V\subset T$ open. In particular, we can assume that $V\subset T^\times$.
 
 If $at^{-i}\in k[t^{\pm1}]$ is in the complement of $k[t]$, i.e.\ if $-i<0$, then we first observe that $U_{at^{-i},V}=U_{t^{-i},a^{-1}(V)}$ where $a^{-1}(V)=\{b\in T|ab\in V\}$ is open in $T$ since the multiplication of $T$ is continuous. Thus we might assume that $a=1$. Since the inversion $i:T^\times\to T^\times$, sending $b$ to $b^{-1}$ is continuous, $i^{-1}(V)$ is open in $T$. Thus the image of $U_{t^{-i},V}=U_{t^i,i^{-1}(V)}$ is the basic open $U_{t^i,i^{-1}(V)}$ of $h_{k[t]}(T)$. This shows that $\iota_T$ is an open topological embedding.
 
 In the general case of a localization $B\to B[h^{-1}]$, we have $B[h^{-1}]=B\otimes_{k[t]}k[t^{\pm1}]$ with respect to the $k$-linear morphism $k[t]\to B$ that maps $t$ to $h$. By \ref{A1}, $h_{B[h^{-1}]}(T)$ is homeomorphic to $h_B(T)\times_{h_{k[t]}(T)}h_{k[t^{\pm1}]}(T)$, i.e.\ $\iota'_T:h_{B[h^{-1}]}(T)\to h_B(T)$ is the base change of the open topological embedding $\iota_T:h_{k[t^{\pm1}]}(T)\to h_{k[t]}(T)$ along $h_B(T)\to h_{k[t]}(T)$, and therefore $\iota'_T$ itself is an open topological embedding, as desired. This concludes the proof of \ref{A3}.
 
 We continue with the proof of the affine variant of \ref{F4}.
 \begin{enumerate}[label={(A\arabic*)}]\addtocounter{enumi}{3}
  \item\label{A4} A family of localizations $\{B\to B[h_i^{-1}]\}$ such that $\Spec B$ is covered by $\Spec B[h_i^{-1}]$ yields a covering $h_B(T)=\bigcup h_{B[h_i^{-1}]}(T)$ by open subspaces.
 \end{enumerate}
 Since $T$ is local, every morphism $\Spec T\to \Spec B$ factors through one of the principal open subschemes $\Spec B[h_i^{-1}]$. Thus we have $h_B(T)=\bigcup h_{B[h_i^{-1}]}(T)$ as sets. By \ref{A3}, the subsets $h_{B[h_i^{-1}]}(T)$ of $h_B(T)$ are indeed open subspaces. This concludes the proof of \ref{A4}.
  
 Note that thanks to Lemma \ref{lemma: fine topology is functorial}, \ref{A2} immediately implies \ref{F2}, and \ref{A1} implies \ref{F1} for limits of affine ordered blue $k$-schemes. 
 
 We continue with the proof of \ref{F1} for arbitrary limits. If $X=\lim \cD$ for a diagram $\cD$ of ordered blue schemes, then the canonical projections $\pi_Z:X\to Z$ to the objects $Z$ of $\cD$ induce continuous maps $\pi_{Z,T}:X(T)\to Z(T)$, which induce the canonical bijection $\Psi:X(T)\to \lim\bigl(\cD(T)\bigr)$, which is thus continuous. We are left with showing that $\Psi$ is open. Since every limit is an equalizer of products, we can restrict ourselves to the treatment of these particular types of limits. We will demonstrate the proof for fibre products, which includes equalizers and finite products, and leave the case of infinite products, whose proof is analogous, to the reader.
 
 Consider morphisms $X\to Z\leftarrow Y$ and the fibre product $X\times_ZY$. For an open subset $\widetilde W$ of $X\times_Z Y(T)$, we have to show that $W=\Psi(\widetilde W)$ is open in $X(T)\times_{Z(T)} Y(T)$. Since $X(T)\times_{Z(T)}Y(T)$ carries the subspace topology of $X(T)\times Y(T)$, a basis open is of the form $W_X\times W_Y$ with opens $W_X$ of $X(T)$ and $W_Y$ of $Y(T)$. By definition, $W_X$ is open in $X(T)$ if and only if $\alpha_T^{-1}(W_X)$ is open in $U(T)$ for all morphisms $\alpha:U\to X$ where $U$ is affine, and $W_Y$ is open in $Y(T)$ if and only if $\beta_T^{-1}(W_Y)$ is open in $V(T)$ for all morphisms $\beta:V\to Y$ where $V$ is affine. Thus $W_X\times W_Y$ is open in $X(T)\times_{Z(T)} Y(T)$ if and only if $(\alpha_T,\beta_T)^{-1}(W_X\times W_Y)$ is open in $U(T)\times V(T)$ for all morphisms $(\alpha,\beta):U\times V\to X\times_Z Y$ where $U\times V$ is affine. 

 This shows that $W$ is open in $X(T)\times_{Z(T)} Y(T)$ if and only if $(\alpha_T,\beta_T)^{-1}(W)=(\alpha,\beta)_T^{-1}(\widetilde W)$ is open in $U(T)\times V(T)=U\times V(T)$ for all $(\alpha,\beta):U\times V\to X\times_Z Y$ where $U\times V$ is affine. This follows from the openness of $\widetilde W$. Thus $\Psi$ is open. This completes the proof of \ref{F1}.

 We turn to the proof of \ref{F3}. Let $\iota:U\to X$ be an open immersion. By functoriality, the inclusion $\iota_{T}:U(T)\to X(T)$ is continuous. We are left with showing that the $\iota_{T}$ is open.
 
 Let $W\subset U(T)$ be an open subset and $W'=\iota_T(W)$ its image in $X(T)$. Then $W'$ is open if and only if $Z=\alpha_T^{-1}(W')$ is open in $h_B(T)$ for every ordered blueprint $B$ and every morphism $\alpha:V\to X$ where $V=\Spec B$. The base change of $\iota:U\to X$ along $\alpha:V\to X$ yields the open immersion $\psi:Z\to V$ where $Z=U\times_XV$. Since $W'=\iota_T(W)$ is in the image of $\iota_T$, the subset $Z$ of $V(T)$ is contained in the image of $\psi_T$. 
 
 We can cover $V'$ by principal opens $V'_i$ of $V$, i.e.\ the restriction $\psi_i:V'_i\to V$ of $\psi:V'\to V$ is induced by a morphism $f_i:B\to B[h_i^{-1}]$ of ordered blueprints for each $i$. By \ref{F1}, the canonical bijection $\Spec V_i'\to h_{B[h_i^{-1}]}(T)$ is a homeomorphism and by \ref{A3}, the induced map $h_{B[h_i^{-1}]}(T)\to h_B(T)$ is an open topological embedding. We conclude that $Z$ is open in $V(T)$ if $Z_i=\psi_{i,T}^{-1}(Z)$ is open in $h_{B[h_i^{-1}]}(T)$ for every $i$. But $Z_i=\beta_{i,T}^{-1}(W)$ where $\beta_i$ is the composition of the inclusion $V'_i\to V'$ with the canonical projection $V'=U\times_XV\to U$. By the definition of the fine topology for $U(T)$, $\beta_{i,T}^{-1}(W)$ is open in $V_i'(T)$. This completes the proof of \ref{F3}.
 
 We turn to the proof of \ref{F4}. Let $X=\bigcup U_i$ be a covering of open subschemes. Since $T$ is local, we have an equality $X(T)=\bigcup U_i(T)$ of sets. By \ref{F3}, this is indeed a covering by open subspaces. Thus \ref{F4}.
 
 We turn to the proof of \ref{F5}. Since the property of being a topological embedding is a local property, we can assume that $X$ is affine. Since closed immersions are affine morphisms by definition, the closed subscheme $Y$ of $X$ is also affine and the closed immersion $Y\to X$ is induced by a surjection $B\to C$ of ordered blue $k$-algebras. Thus \ref{F5} follows from its analogue \ref{A5} for the affine topology. This completes the proof of \ref{F5}.

 We turn to the proof of \ref{F6}, assuming that $T$ is a topological semiring. Since being a homeomorphism is a local property, we can restrict ourselves to the affine case, i.e.\ $X=\Spec B$ and $X^+=\Spec B^+$. By Lemma \ref{lemma: fine topology is functorial}, $X(T)=h_B(T)$ and $X^+(T)=h_{B^+}(T)$ as topological spaces, so it suffices to study the latter spaces. By functoriality, the bijection $\Xi:h_{B^+}(T)\to h_B(T)$ that is induced by the morphisms $B\to B^+$ is continuous. 
 
 For verifying that $\Xi$ is open, consider a basic open $U_{\sum a_i,V}$ where $\sum_{i=1}^n a_i$ is an element of $B^+$, with $a_i\in B$, and $V\subset T$ is open. Then
 \[\textstyle
  \Xi(U_{\sum a_i,V}) \ = \ \{ \, f:B\to T \, | \, \sum f(a_i)\in V \, \} \ = \ \{ \, f:B\to T \, | \, \bigl(f(a_1),\dotsc,f(a_n)\bigr)\in \alpha_n^{-1}(V) \, \} 
 \]
 where $\alpha_n:T^n\to T$ is the $n$-fold addition, which is a continuous map since $T$ is a topological semiring. Thus $\alpha_n^{-1}(V)$ is an open subset of $T^n$, which can be covered by basic opens of the form $V_{k,1}\times\dotsb\times V_{k,n}$ where $k$ ranges through some index set $I$. Thus
 \[
  \Xi(U_{\sum a_i,V}) \ = \ \bigcup_{k\in I} U_{a_1,V_{k,1}}\cap\dotsb\cap U_{a_n,V_{k,n}}
 \]
 is an open subset of $h_B(T)$. This completes the proof of \ref{F6}.

 We turn to the proof of \ref{F7}, assuming that $T$ is a topological semiring that is Hausdorff. Since being a closed topological embedding is a local property, we can assume that $X=\Spec B$ and $Y=\Spec C$ are affine, and that the closed immersion $Y\to X$ is induced by a surjection $f:B\to C$ of ordered blue $k$-algebras. By \ref{F5}, we know already that the inclusion $Y(T)\to X(T)$ is a topological embedding. By Lemma \ref{lemma: fine topology is functorial}, the bijections $X(T)\to h_B(T)$ and $Y(T)\to h_C(T)$ are homeomorphisms. Therefore we are left with showing that the image of $\iota_T:h_C(T)\to h_B(T)$ is a closed subset of $h_B(T)$
 
 Since $f:B\to C$ is surjective, we have $C=\bpquot B\cR$ for some subaddition $\cR$ on $B^\bullet$, which is a relation on $\N[B^\bullet]$. Thus
 \[
  \iota_T(h_C(T)) \ = \ \{\, f:B\to T \, | \, \sum f(a_i) =\sum f(b_j)\text{ for all }(\sum a_i,\sum b_j)\in\cR \, \}.
 \]
 The condition $\sum f(a_i) =\sum f(b_j)$ can be rewritten as $\alpha_n\bigl(f(a_1),\dotsc,f(a_n)\bigr)=\alpha_m\bigl(f(b_1),\dotsc,f(b_m)\bigr)$ where $\alpha_n:T^n\to T$ denotes the $n$-fold addition, which is continuous since $T$ is a topological semiring. This is, in turn, equivalent to $\bigl(f(a_1),\dotsc,f(a_n),f(b_1),\dotsc,f(b_m)\bigr)\in (\alpha_n,\alpha_m)^{-1}(\Delta)$ where $\Delta$ is the diagonal of $T\times T$. Since $T$ is Hausdorff, $\Delta$ is a closed subset of $T\times T$, and therefore $\Delta'=(\alpha_n,\alpha_m)^{-1}(\Delta)$ is a closed subset of $T^n\times T^m$. This means that
 \[
  \Delta' \ = \ \bigcap_{k\in I} \ V_{k,1}\times \dotsb\times V_{k,n+m}
 \]
 for certain closed subsets $V_{k,l}$ of $T$ where $k$ varies through some index set $I$ and $l=1,\dotsc,n+m$. We conclude that 
 \[
  \iota_T(h_C(T)) \ = \ \bigcap_{k\in I} \ U_{a_1,V_{k,1}} \cup\dotsb\cup U_{a_n,V_{k,n}} \cup U_{b_1,V_{k,n+1}} \cup\dotsb\cup U_{b_m,V_{k,n+m}} 
 \]
 This shows that $\iota_T(h_C(T))$ is a closed subset of $h_B(T)$, which completes the proof of the theorem.
\end{proof}


\part{Tropicalization}
\label{part: Tropicalilzation}


In the second part of the paper, we give our definition of a tropicalization $\Trop_v(X)$ as a solution to the moduli problem of extensions of a given valuation $v:k\to T$ to a given ordered blue $k$-scheme $X$ with values in ordered blueprints over $T$. Our central results on tropicalizations show their existence for totally positive and for idempotent $T$. In both cases, our constructions of tropicalizations pass through a base change along $v$. In the latter case, we gain an alternative description in terms of a generalization of the Giansiracusa bend relation to ordered blueprints.

In the subsequent sections, we show how our definition of tropicalization recovers and improves other concepts to tropicalization and analytification. In subsequent sections, we address Berkovich analytification, Kajiwara-Payne tropicalization, Foster-Ranganathan tropicalization, Giansiracusa tropicalization, Maclagan-Rinc\'on weights, Macpherson analytification, Thuillier analytification and Ulirsch tropicalization.


\section{Scheme theoretic tropicalization}
\label{section: Scheme theoretic tropicalization}

Let $k$ and $T$ be ordered blueprints, $v:k\to T$ a valuation and $X$ an ordered blue $k$-scheme. In this section, we introduce the functor $\Val_v(X,-)$ of valuations of $X$ over $v$. We define a tropicalization of $X$ along $v$ as an ordered blue $T$-scheme that represents $\Val_v(X,-)$ and construct tropicalizations if $T$ is totally positive or idempotent.


\subsection{Tropicalization as a moduli space}
\label{subsection: Definition of tropicalization}

Let $k$ be an ordered blueprint. We denote the category of ordered blue $k$-algebras together with $k$-linear morphisms by $\Alg_k^\ob$.

Let $v:k\to T$ be a valuation, i.e.\ a morphism $v^\bullet:k^\bullet\to T^\bullet$ between the underlying monoids of $k$ and $T$ together with a morphism $\tilde v:k^\mon\to T^\pos$ such that the diagram
\[
 \xymatrix@R=1pc@C=6pc{k^\bullet \ar[d]\ar[r]^{v^\bullet} & T^\bullet \ar[d] \\ k^\mon \ar[r]^{\tilde v} & T^\pos}
\]
commutes. Let $B$ be an ordered blue $k$-algebra and $S$ an ordered blue $T$-algebra. A \emph{valuation $w:B\to S$ over $v$} is a morphism $w^\bullet:B^\bullet\to S^\bullet$ together with a morphism $\tilde w:B^\mon\to S^\pos$ such that the diagram 
\[
  \xymatrix@R=0,5pc@C=2pc{ k^\bullet \ar[rrr]^(0.6){v^\bullet} \ar[dd] \ar[dr] &&        & T^\bullet \ar[dd]|\hole\ar[dr] \\
                                                        & B^\bullet \ar[rrr]^(0.4){w^\bullet}\ar[dd] &                  && S^\bullet \ar[dd] \\ 
                           k^\mon \ar[rrr]^(0.6){\tilde v}|(0.375)\hole\ar[dr]       &&        & T^\pos\ar[dr]  \\
                                                        & B^\mon \ar[rrr]^(0.4){\tilde w}  &                  && S^\pos                    }
\]
commutes. We denote by $\Val_v(B,S)$ the set of valuations $w:B\to S$ over $v$.

A morphism $f:S\to S'$ of ordered blue $T$-algebras sends a valuation $w:B\to S$ to the valuation $f\circ w:B\to S'$ where we define $(f\circ w)^\bullet=f^\bullet\circ w^\bullet$ and $(f\circ w)^\sim=f^\pos\circ \tilde w$. This shows that $\Val_v(B,S)$ is functorial in $S$, and we obtain the \emph{functor of valuations $\Val_v(B,-):\Alg_k^\ob\to\Alg_T^\ob$ on $B$ over $v$}.

\begin{df}
 Let $k$ and $T$ be ordered blueprints, $v:k\to T$ be a valuation and $B$ an ordered blue $k$-algebra. A \emph{tropicalization of $B$ along $v$} is an ordered blue $T$-algebra $\Trop_v(B)$ together with a valuation $w^\univ:B\to \Trop_v(B)$ that is universal, i.e.\ for every valuation $w:B\to S$ over $v$, there is a unique morphism $f:S\to \Trop_v(B)$ of ordered blue $T$-algebras such that $w=f\circ w^\univ$.
\end{df}

In other words, a tropicalization of $B$ along $v$ is an ordered blue $T$-algebra that represents the functor $\Val_v(B,-)$, i.e.\ the fine moduli space of all valuations of $B$ over $v$. By the Yoneda lemma, the tropicalization of $B$ along $v$ is unique up to unique isomorphism if it exists. 

This notion can be geometrized as follows. Let $X$ be an ordered blue scheme over $k$ with structure morphism $X\to \Spec k$ and $Y$ be an ordered blue $T$-scheme. A \emph{valuation of $\omega:X\to Y$ over $v$} is a morphism $\omega^\bullet:Y^\bullet\to X^\bullet$ together with a morphism $\tilde\omega:Y^\pos\to X^\mon$ such that the diagram
\[
  \xymatrix@R=0,5pc@C=2pc{                                 & \Spec T^\bullet \ar[rrr]^(0.4){(v^\bullet)^\ast}  &&                       & \Spec k^\bullet \\
                          Y^\bullet \ar[rrr]^(0.6){\omega^\bullet}\ar[ur] &                                              && X^\bullet \ar[ur] \\ 
                                                           & \Spec T^\pos \ar[rrr]^(0.4){\tilde v^\ast}|(0.596)\hole \ar[uu]|\hole &&   & \Spec k^\mon \ar[uu] \\
                          Y^\pos \ar[rrr]^(0.6){\tilde \omega} \ar[uu]\ar[ur] &                                          && X^\mon \ar[ur]\ar[uu]    }
\]
commutes. 

\begin{df}
 Let $k$ and $T$ be ordered blueprints, $v:k\to T$ be a valuation and $X$ an ordered blue $k$-scheme. A \emph{tropicalization of $X$ along $v$} is an ordered blue $T$-scheme $\Trop_v(X)$ together with a valuation $\omega^\univ:\Trop_v(X)\to X$ that is universal, i.e.\ for every valuation $\omega:Y\to X$ over $v$, there is a unique morphism $\varphi:Y\to \Trop_v(X)$ of ordered blue $T$-schemes such that $\omega=\omega^\univ\circ\varphi$.
\end{df}

Like in the affine situation, a tropicalization $\Trop_v(X)$ of $X$ along $v$ represents the \emph{functor of valuations $\Val_v(X,-):\Sch^\ob_T\to \Sets$ on $X$ over $v$}, which sends a $T$-scheme $Y$ to the set $\Val_v(X,Y)$ of valuations $\omega:X\to Y$ over $v$.

We call $T$ the \emph{base of $\Trop_v(X)$} or the \emph{tropicalization base} if the context is clear.

\begin{ex}\label{ex: tropicalization of monoid algebras}
 A simple yet most important case is that of free algebras or, slightly more general, that of monoid algebras. Let $k$ be a field, $A$ a monoid and $v:k\to T$ a valuation. Then the valuations of $w:k[A]\to S$ over $v$ correspond to monoid morphisms $A\to S^\bullet$, which in turn correspond to $T$-linear morphisms $T[A]\to S$. This shows that $\Spec T[A]$ is a tropicalization of $\Spec k[A]$ along $v$, and its universal valuation $\omega^\univ:\Spec T[A]\to\Spec k[A]$ is determined as the extension $k[A]\to T[A]$ of $v$ that is the identity on $A$.
\end{ex}


\subsection{Tropicalization for a totally positive tropicalization base}
\label{subsection: tropicalization for a totally positive base}

Let $v:k\to T$ be a valuation. For an ordered blue $k$-algebra $B$, we can make precise the idea that its tropicalization is the base change of $B$ along the valuation $v:k\to T$ in case that $T$ is totally positive.

We begin with the following preliminary observation. Let $S$ be an ordered blue $T$-algebra and $B$ an ordered blue $k$-algebra. Then the morphism $\tilde w: B^\mon\to S^\pos$ is uniquely determined by the valuation $w:B\to S$ since $B^\mon\to B$ is a bijection. This yields a map 
\[
 \Val_v(B,S) \quad \longrightarrow \quad \Hom_T(B_v, S)
\]
where we write $B_v$ for the tensor product $B^\mon\otimes_{k^\mon} T^\pos$.

\begin{lemma}\label{lemma: valuations as homs if T is idempotent or totally positive}
 The map $\Val_v(B,S) \to\Hom_T(B_v, S^\pos)$ is a bijection if $T$ is idempotent or totally positive.
\end{lemma}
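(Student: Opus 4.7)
The plan is to construct the map explicitly and then, under either hypothesis on $T$, to reduce bijectivity to the statement that the canonical morphism $S \to S^\pos$ is a bijection of underlying monoids. The map itself sends a valuation $w:B\to S$ over $v$ to the morphism $\varphi_w:B_v \to S^\pos$ obtained from $\tilde w:B^\mon \to S^\pos$ via the universal property of the pushout in $\OBlpr$; this is well-defined because the compatibility diagram defining a valuation over $v$ forces the two compositions $k^\mon \to B^\mon \to S^\pos$ and $k^\mon \to T^\pos \to S^\pos$ to agree, so $\tilde w$ and the structure map $T^\pos \to S^\pos$ glue to a unique $T^\pos$-linear morphism out of $B_v$.

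The pivotal step is to establish that under either hypothesis, the canonical morphism $S \to S^\pos$ is a bijection of underlying monoids. I would argue that $S$ inherits the relevant property from $T$: since the structure morphism $T \to S$ preserves subadditions, the defining relation of $T$ (namely $0 \leq 1$ in the totally positive case, or $1+1 \= 1$ in the idempotent case) propagates to $S$, making $S$ itself totally positive or idempotent. In the totally positive case this gives $S = S^\pos$ by definition, and in the idempotent case Corollary \ref{cor: idempotent implies core=core after pos and B subset B-pos} supplies the bijection $S \to S^\pos$.

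With this bijection in hand, the remainder is formal bookkeeping. For injectivity, the commutative square defining a valuation together with the bijectivity of $S^\bullet \to (S^\pos)^\bullet$ shows that $w^\bullet$ is the unique lift of the restriction $\tilde w|_{B^\bullet}$, so $w$ is determined by $\tilde w$ and hence by $\varphi_w$. For surjectivity, given $\varphi:B_v \to S^\pos$ I would set $\tilde w$ to be the composition $B^\mon \to B_v \to S^\pos$, lift the restriction $\tilde w|_{B^\bullet}$ to a monoid morphism $w^\bullet: B^\bullet \to S^\bullet$ through the inverse of the bijection $S^\bullet \to (S^\pos)^\bullet$, and verify that the resulting pair $(w^\bullet, \tilde w)$ defines a valuation $w:B\to S$ over $v$ whose image is $\varphi$; the compatibility of $w^\bullet$ with $v^\bullet$ follows from the $T$-linearity of $\varphi$ and the injectivity of $S^\bullet \to (S^\pos)^\bullet$. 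The main obstacle I anticipate is genuinely just the propagation of the defining relation from $T$ to $S$ and the invocation of Corollary \ref{cor: idempotent implies core=core after pos and B subset B-pos} in the idempotent case, after which everything reduces to a mechanical check using the universal property of the tensor product.
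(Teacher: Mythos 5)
Your proposal follows essentially the same approach as the paper: you observe that the defining relation of $T$ (either $0\leq 1$ or $1+1\=1$) propagates to the $T$-algebra $S$, and then deduce that $S\to S^\pos$ is a bijection, which forces the valuation $w$ to be determined by $\tilde w$. The paper's proof is terser — it routes both cases through Lemma~\ref{lemma: idempotent or totally positive implies strictly conic} (strict conicity) and Corollary~\ref{cor: properties of totally positive blueprints}, whereas you treat the totally positive case directly and cite Corollary~\ref{cor: idempotent implies core=core after pos and B subset B-pos} for the idempotent case, and you spell out the surjectivity bookkeeping via the pushout universal property which the paper leaves implicit — but the substance is identical.
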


\begin{proof}
 If $T$ is idempotent or totally positive, then every ordered blue $T$-algebra $S$ is idempotent or totally positive as well. By Lemma \ref{lemma: idempotent or totally positive implies strictly conic}, $S$ is strictly conic, and by Corollary \ref{cor: properties of totally positive blueprints}, the canonical morphism $S\to S^\pos$ is a bijection. This shows that a valuation $w$ is determined by $\tilde w$ in this case.
\end{proof}

\begin{thm}\label{thm: tropicalization for totally positive tropical base}
 Let $X$ be an ordered blue $k$-scheme and $v:k\to T$ a valuation into a totally positive blueprint $T$. Then $\Trop_v(X)=X^\mon\otimes_{k^\mon} T$ is a tropicalization of $B$ along $v$.
\end{thm}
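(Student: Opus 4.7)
The plan is to leverage Lemma \ref{lemma: valuations as homs if T is idempotent or totally positive}, which already establishes, for each ordered blue $T$-algebra $S$, a bijection
\[
 \Phi_S: \ \Val_v(B,S) \ \longrightarrow \ \Hom_T(B_v, S^\pos),
\]
where $B_v = B^\mon\otimes_{k^\mon}T^\pos$. The statement then reduces to three points: that $\Phi_S$ is natural in $S$, that $S^\pos$ may be replaced by $S$ when $T$ is totally positive, and that the resulting natural isomorphism is induced by a single universal valuation $w^\univ:B\to B_v$.

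First I would record naturality of $\Phi_S$. The map $\Phi_S$ sends $w=(w^\bullet,\tilde w)$ to the unique $T^\pos$-linear morphism $B^\mon\otimes_{k^\mon}T^\pos\to S^\pos$ extending the $k^\mon$-linear morphism $\tilde w$, using the $T^\pos$-algebra structure of $S^\pos$ coming from $T\to T^\pos\to S^\pos$. Given a morphism $f:S\to S'$ of ordered blue $T$-algebras, the induced morphism $f^\pos:S^\pos\to S'^\pos$ is a morphism of $T^\pos$-algebras, and postcomposition with $f^\pos$ clearly intertwines $\Phi_S$ and $\Phi_{S'}$. Hence $\Phi$ is a natural transformation of functors $\Val_v(B,-)\Rightarrow\Hom_T(B_v,(-)^\pos)$ on $\Alg_T^\ob$.

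Second I would show that, when $T$ is totally positive, the canonical morphism $S\to S^\pos$ is an isomorphism for every ordered blue $T$-algebra $S$. Indeed the structure morphism $T\to S$ sends the relation $0\leq 1$ of $T$ to a relation $0\leq 1$ in $S$, so $S$ is totally positive; then Corollary \ref{cor: properties of totally positive blueprints}\,(iii) (combined with the fact that $S$ already satisfies $0\leq a$ for all $a$) gives $S\xrightarrow{\sim} S^\pos$. Composing $\Phi_S$ with the resulting natural bijection $\Hom_T(B_v,S^\pos)=\Hom_T(B_v,S)$ yields a natural isomorphism $\Val_v(B,-)\simeq\Hom_T(B_v,-)$, which is exactly the representability property required of a tropicalization.

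Finally, the universal valuation $w^\univ:B\to B_v$ is obtained by applying $\Phi_{B_v}^{-1}$ to the identity $\id_{B_v}\in\Hom_T(B_v,B_v)$; concretely $(w^\univ)^\bullet$ is the composition $B^\bullet\to B^\mon\to B^\mon\otimes_{k^\mon}T^\pos=B_v^\bullet$ and $\widetilde{w^\univ}$ is the canonical map $B^\mon\to B^\mon\otimes_{k^\mon}T^\pos=B_v=B_v^\pos$. The only delicate point is ensuring that naturality of $\Phi$ actually holds on the nose, but this is forced by the definition of $\Phi_S$ via the universal property of $B^\mon\otimes_{k^\mon}T^\pos$; no further calculation is required.
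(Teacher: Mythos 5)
Your proof is correct and takes essentially the same route as the paper: both observe that every ordered blue $T$-algebra $S$ inherits total positivity from $T$, so $S\simeq S^\pos$, and then invoke Lemma~\ref{lemma: valuations as homs if T is idempotent or totally positive}. One small imprecision: Corollary~\ref{cor: properties of totally positive blueprints}\,(iii) only yields a bijection between the underlying monoids, not yet an isomorphism of ordered blueprints; the clean way to conclude $S\simeq S^\pos$ is to use the universal property of $(-)^\pos$ recorded in \S\ref{subsection: totally positive blueprints}, namely that $B\to B^\pos$ is an isomorphism if and only if $B$ is already totally positive.
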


\begin{proof}
 Since the base change $X^\mon\otimes_{k^\mon} T$ commutes with affine presentations, we can assume without loss of generality that $X=\Spec B$. If $T$ is totally positive, then every ordered blue $T$-algebra $S$ is totally positive as well, i.e.\ $S=S^\pos$. Therefore, the theorem follows at once from Lemma \ref{lemma: valuations as homs if T is idempotent or totally positive}.
\end{proof}

\begin{ex}\label{ex: tropicalization with a totally positive base}
 As a concrete example, let us a field $k$ with non-archimedean valuation $v:k\to\T$, which can also be seen as a valuation $v:k\to\T^\pos$ into the totally positive ordered blueprint $\T^\pos=\bpquot{\T}{\gen{0\leq1}}$ since the canonical map $\T\to\T^\pos$ is a bijection by Corollary \ref{cor: idempotent implies core=core after pos and B subset B-pos}.
 
 Let $X=\Spec B$ be a closed subscheme of $\A^n_k$ with $B=\bpquot{k[T_1,\dotsc,T_n]}{\cR}$. Then $X^\mon$ is the spectrum of $B^\mon=\bpquot{k^\mon[T_1,\dotsc,T_n]}{\cR^\mon}$ where
 \[\textstyle
  R^\mon \ = \ \big\langle a \leq \sum b_j \, \big| \, a\leq\sum b_j\text{ in }\cR\big\rangle.
 \]
 If we identify $k^\mon[T_1,\dotsc,T_n]\otimes_{k^\mon} \T^\pos$ with $\T^\pos[T_1,\dotsc,T_n]$ (see Example \ref{ex: tropicalization of monoid algebras}) and denote the universal valuation by $w:k[T_1,\dotsc,T_n]\to \T^\pos[T_1,\dotsc,T_n]$, then we get 
  \[\textstyle
  \Trop_v(X)=\Spec \big(\bpquot{\T^\pos[T_1,\dotsc,T_n]}{\gen{w(a)\leq\sum w(b_j) \mid a\leq \sum b_j\text{ in }\cR}}\big).
 \]
\end{ex}

\begin{rem}\label{rem: tropicalization as base change}
 Theorem \ref{thm: tropicalization for totally positive tropical base} makes precise the idea of the tropicalization as a base change along a valuation, as mentioned in the paragraph \emph{Analytification as a base change} of the introduction. As we explained in Remark \ref{rem: halos}, $(-)^\mon:\Halos\to\OBlpr^\mon$ is a fully faithful embedding of the category of halos, which are ordered semirings together with subadditive morphisms, into the category of monomial blueprints. In the following, we identify halos with their images in $\OBlpr^\mon$. 
 
 Let $k\to B$ and $v:k\to T$ be a morphism of halos and assume that $T$ is totally positive, e.g.\ $T=(\R_{\geq0}^\pos)^\mon$ or $T=(\T^\pos)^\mon$. Then the tensor product $B\otimes_k T$ exists in $\OBlpr^\mon$ and it is a tropicalization of $B$ along $v$.
 
 Note that the tensor product $B\otimes_k T$ is totally positive, but that it is typically not a halo. At the moment of writing, it is not clear to me if $\Halos$ contains all tensor products.
\end{rem}


\subsection{The bend functor}
\label{subsection: The bend functor}

In this section, we introduce the bend relation, which generalizes the corresponding concept from \cite{Giansiracusa13} to our setting; see section \ref{section: Giansiracusa tropicalization} for details on the connection to \cite{Giansiracusa13}.

\begin{df}
 Let $v:k\to T$ be a valuation and $B$ an ordered blue $k$-algebra. The \emph{bend of $B$ along $v$} is the ordered blue $T$-algebra
 \[
  \Bend_v(B) \quad = \quad \bpquot{B^\bullet\otimes_{k^\bullet}T}{\bend_v(B)}
 \]
whose subaddition is generated by the subaddition of $T$ and the \emph{bend relation}
 \[\textstyle
  \bend_v(B) \quad = \quad \left\langle \ a\otimes 1 + \sum b_j\otimes 1 \= \sum b_j\otimes 1 \ \left| \ a\leq\sum b_j\text{ in }B \ \right.\right\rangle.
 \]
\end{df}

A morphism $f:B\to C$ of ordered blue $k$-algebras defines a map 
\[
 \begin{array}{cccc}
  \Bend_v(f): & \Bend_v(B) & \longrightarrow & \Bend_v(C). \\
              & b\otimes t & \longmapsto     & f(b)\otimes t
 \end{array}
\]
This map is clearly multiplicative and $T$-linear. The bend relations are preserved for the following reason. If a relation $a\leq\sum b_j$ in $B$ induces the relation $a\otimes 1+\sum b_j\otimes 1\=\sum b_j\otimes 1$ on $\Bend_v(B)$, then the image relation $f(a)\leq\sum f(b_j)$ in $C$ induces the relation $f(a)\otimes 1+\sum f(b_j)\otimes 1\=\sum f(b_j)\otimes 1$ on $\Bend_v(C)$. This defines the \emph{bend functor} 
\[
 \Bend_v: \ \Alg^\ob_k \ \longrightarrow \ \Alg^\ob_T.
\]

Note that the bend of $B$ along $v$ is idempotent since $1\leq 1$ in $B$ implies $1\otimes 1+1\otimes 1\=1\otimes 1$ in $\bend_v(B)$. Since the bend of $B$ depends only on relations of the form $a\leq\sum b_j$, we have $\Bend_v(B^\mon)=\Bend_v(B)$. By the very definition of $\bend_v(B)$, the bend of $B$ is algebraic if $T$ is so.

\begin{lemma}\label{lemma: bend relations commute with localizations}
 Let $B$ be an ordered blue $k$-algebra and $S\subset B$ a multiplicative subset. Let $S_v=S\otimes\{1\}$ be its image in $\Bend_v(B)$. Then the association $\frac{a\otimes b}s\mapsto\frac as\otimes b$ defines a canonical isomorphism $S_v^{-1}\Bend_v(B)\to\Bend_v(S^{-1}B)$.
\end{lemma}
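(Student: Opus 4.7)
My plan is to prove the isomorphism by comparing the two sides as representing objects of the same functor on ordered blue $T$-algebras, and then to recover the explicit formula $\frac{a\otimes b}{s}\mapsto\frac{a}{s}\otimes b$ from the Yoneda identification.

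For every ordered blue $T$-algebra $R$, I will produce a chain of natural bijections
\[
  \Hom_T\bigl(S_v^{-1}\Bend_v(B),R\bigr) \;\simeq\; \bigl\{f\in\Hom_T(\Bend_v(B),R)\,\bigl|\,f(S_v)\subset R^\times\bigr\} \;\simeq\; \bigl\{w\in\Val_v(B,R)\,\bigl|\,w(S)\subset R^\times\bigr\}
\]
\[
  \;\simeq\; \Val_v(S^{-1}B,R) \;\simeq\; \Hom_T\bigl(\Bend_v(S^{-1}B),R\bigr),
\]
where the outer two bijections express the universal properties of localization and of the bend, and the inner bijections express that a valuation $w:B\to R$ over $v$ with $w(S)\subset R^\times$ factors uniquely both through $\Bend_v(B)$ (by the defining universal property of $\Bend_v$) and through $S^{-1}B$ (by applying the universal property of localization of ordered blueprints separately to $w^\bullet:B^\bullet\to R^\bullet$ and to $\tilde w:B^\mon\to R^\pos$, using that the canonical morphism $R^\bullet\to(R^\pos)^\bullet$ sends units to units). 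The Yoneda lemma then produces a unique $T$-algebra isomorphism $\varphi:S_v^{-1}\Bend_v(B)\to\Bend_v(S^{-1}B)$, and tracing the identity on the target back through the chain yields the formula $\varphi(\frac{a\otimes b}{s\otimes 1})=\frac{a}{s}\otimes b$.

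The main technical point underlying the middle bijection is the identification $(S^{-1}B)^\mon=\tilde S^{-1}(B^\mon)$, where $\tilde S$ denotes the image of $S$ in $B^\mon$. Both sides have $S^{-1}B^\bullet$ as underlying monoid, so the content is that their subadditions coincide: every left monomial relation in $S^{-1}B$ can, after clearing a common denominator $s_0\in S$, be written as the multiplication by $1/(s_0\cdot 1)$ of a left monomial relation in $B$, and conversely left monomial relations of $B$ remain left monomial after localization. This is where the explicit generating description of $\cR_S$ from section \ref{subsection: localizations} is essential.

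Once this identification is in place, the explicit inverse $\psi:\frac{a}{s}\otimes b\mapsto\frac{a\otimes b}{s\otimes 1}$ can also be verified by hand: it is well-defined on the underlying monoid $S^{-1}B^\bullet\otimes_{k^\bullet}T^\bullet$ by the universal property of monoid localization, and it sends the bend relation coming from $\frac{a}{s_0}\leq\sum\frac{b_j}{s_0}$ in $S^{-1}B$ (obtained by common denominator from $a\leq\sum b_j$ in $B$) to the multiplication by the unit $1/(s_0\otimes 1)$ of the bend relation $a\otimes 1+\sum b_j\otimes 1\=\sum b_j\otimes 1$ of $\Bend_v(B)$, which is valid in $S_v^{-1}\Bend_v(B)$. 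Mutual invertibility of $\varphi$ and $\psi$ is then immediate from the formulas on generators.
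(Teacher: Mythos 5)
Your main argument, via the chain of Yoneda bijections, has a genuine gap: it silently relies on the identification $\Hom_T(\Bend_v(B),R)\simeq\Val_v(B,R)$, which is not a ``defining universal property'' of the bend. The bend is defined by an explicit generators-and-relations presentation, and its representability of $\Val_v(B,-)$ is a theorem (Theorem \ref{thm: tropicalization for idempotent base}) that the paper establishes \emph{only under the hypothesis that $T$ is idempotent}. The lemma you are proving, however, must hold for an arbitrary valuation $v:k\to T$: it is used in Lemma \ref{lemma: bend preserves covering families} and Theorem \ref{thm: construction of the bend functor for schemes} to extend $\Bend_v$ to a functor $\Sch_k\to\Sch_T$ under the sole hypothesis that $k$ is with $-1$, with no idempotence assumption on $T$. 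So for non-idempotent $T$ your two ``inner'' bijections are not available, and the Yoneda identification does not get off the ground.

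The direct argument you sketch at the end as a ``sanity check'' is in fact closer to the paper's actual proof, and is the one you should promote to the main argument. The paper proceeds exactly by exhibiting the natural isomorphism of underlying monoid $T$-algebras $S_v^{-1}(B^\bullet\otimes_{k^\bullet}T)\to S^{-1}B^\bullet\otimes_{k^\bullet}T$ and then checking, on generators, that it matches $S_v^{-1}\bend_v(B)$ with $\bend_v(S^{-1}B)$: generators of the former are $\frac{a\otimes 1}{s\otimes 1}+\sum\frac{b_j\otimes 1}{s\otimes 1}\=\sum\frac{b_j\otimes 1}{s\otimes 1}$ with $a\leq\sum b_j$ in $B$ and a single denominator $s$, generators of the latter are $\frac{a}{s_a}\otimes 1+\sum\frac{b_j}{s_j}\otimes 1\=\sum\frac{b_j}{s_j}\otimes 1$ with independent denominators, and the passage between the two is the clearing-of-denominators manipulation you describe. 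Note that the paper does not route this through the identification $(S^{-1}B)^\mon=\tilde S^{-1}(B^\mon)$; it instead appeals directly to the fact that the subaddition of $S^{-1}B$ is generated by that of $B$ (section \ref{subsection: localizations}), which is what makes the description of the generators of $\bend_v(S^{-1}B)$ legitimate. If you want to keep your monomial-localization lemma, be aware that it is a slightly stronger statement than you need and requires its own argument that a left monomial relation of $S^{-1}B$ is, up to clearing denominators, a left monomial relation of $B$; the paper's phrasing avoids having to make that claim about individual relations by working with generating families.
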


\begin{proof}
 The canonical morphism $S_v^{-1}\Bend_v(B)\to\Bend_v(S^{-1}B)$ is induced by the isomorphism of ordered blue $T$-algebras
\[
  S_v^{-1}\bigl(B^\bullet\otimes_{k^\bullet}T\bigr) \quad \longrightarrow \quad \bigl(S^{-1}B^\bullet\otimes_{k^\bullet}T\bigr).
 \]
 We have to show that this morphism of monoids identifies the respective subadditions of $S_v^{-1}\bend_v(B)$ and $\bend_v(S^{-1}B)$. 

 The generators for the bend relations of $S_v^{-1}\Bend_v(B)$ and $\Bend_v(S^{-1}B)$ are of the respective forms
 \[\textstyle
  \frac{a\otimes 1}{s\otimes 1} \ + \ \sum \frac{b_j\otimes 1}{s\otimes 1} \ \= \ \sum\frac{b_j\otimes 1}{s\otimes 1} \qquad \text{and} \qquad \frac{a}{s_a}\otimes 1 \ + \ \sum \frac{b_j}{s_j}\otimes 1 \ \= \ \sum\frac{b_j}{s_j}\otimes 1
 \]
 where $s,s_a,s_j\in S$ and $a\leq\sum b_j$ in $B$. Note that we use that the subaddition of $S^{-1}B$ is generated by the subaddition of $B$. Relations of the former type are relations of the latter type, which implies that the canonical map $S_v^{-1}\Bend_v(B)\to\Bend_v(S^{-1}B)$ is a morphism of ordered blueprints. Conversely, every relation of the latter form can be rewritten as
 \[\textstyle
  \frac{s^{(a)}a\otimes 1}{s\otimes 1} \ + \ \sum \frac{s^{(j)}b_j\otimes 1}{s\otimes 1} \quad \= \quad \sum \frac{s^{(j)}b_j\otimes 1}{s\otimes 1}
 \]
 where $s=s_a\cdot\prod s_j$, $s^{(a)}=\prod s_j$ and $s^{(j)}=s_a\cdot\prod_{i\neq j}s_i$.
\end{proof}

\begin{lemma}\label{lemma: bend commutes with tensor product}
 Let $v:k\to T$ be a valuation. The functor $\Bend_v:\Alg_k^\ob\to\Alg_T^\ob$ commutes with non-empty colimits.
\end{lemma}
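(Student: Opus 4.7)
My plan is to verify that $\Bend_v$ preserves non-empty coproducts and coequalizers separately. Since every non-empty colimit in a cocomplete category can be written as a coequalizer of maps between non-empty coproducts, this yields the lemma. Both verifications will proceed by computing the underlying monoid and the subaddition of each side of the claimed isomorphism directly from the definition of $\Bend_v$.

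For non-empty coproducts: given a non-empty family $(B_i)_{i\in I}$ in $\Alg_k^\ob$, I would compare $\Bend_v(\bigotimes_{k} B_i)$ with $\bigotimes_T \Bend_v(B_i)$. On underlying monoids, associativity of pushouts of monoids with zero yields
\[
  \bigl(\bigotimes_k B_i\bigr)^\bullet \otimes_{k^\bullet} T^\bullet \ \simeq \ \bigotimes_{T^\bullet} \bigl(B_i^\bullet \otimes_{k^\bullet} T^\bullet\bigr).
\]
On subadditions, since the subaddition of $\bigotimes_k B_i$ is generated by the images of the subadditions of the individual $B_i$, the bend relations produced in $\Bend_v(\bigotimes_k B_i)$ are exactly the union of the bend relations produced by each $\Bend_v(B_i)$; combined with the subaddition inherited from $T$ on both sides, these generate the same subaddition.

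For coequalizers of parallel arrows $f, g : B \to C$ in $\Alg_k^\ob$ with coequalizer $E$, a similar strategy applies. On underlying monoids, $(-) \otimes_{k^\bullet} T^\bullet$ is left adjoint to restriction of scalars along $v^\bullet$ in the category of monoids with zero, hence preserves coequalizers, yielding $E^\bullet \otimes_{k^\bullet} T^\bullet \simeq \coeq(\Bend_v(f)^\bullet, \Bend_v(g)^\bullet)$. On subadditions, the subaddition of $E$ is generated by the subaddition of $C$ together with the additional relations $f(b) \= g(b)$ for $b \in B$. The resulting bend relations in $\Bend_v(E)$ then split into those coming from $C$, which already hold in $\Bend_v(C)$ and so descend to the coequalizer, and those of the form $f(b) \otimes 1 \= g(b) \otimes 1$, which are precisely the extra identifications imposed when forming $\coeq(\Bend_v(f), \Bend_v(g))$.

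The main technical subtlety I foresee is the bookkeeping for the proper quotient involved in each construction, since tensor products and coequalizers in $\OBlpr$ are built from their monoid-theoretic and subaddition-theoretic counterparts followed by passage to the proper quotient. Both sides of each comparison are characterized by the same universal property — equivalently, by the same underlying pre-proper monoid equipped with the same generating set of subaddition-relations — so the natural comparison morphism descends to an isomorphism after proper-quotienting. I expect no genuine obstacle beyond a careful unwinding of definitions, and in particular no need for the hypotheses (e.g. idempotency or total positivity of $T$) that will be invoked for the finer results on tropicalizations.
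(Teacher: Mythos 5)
Your proposal is correct in spirit and takes a genuinely different decomposition from the paper. The paper reduces non-empty colimits to \emph{cofibre products} $C\otimes_B D$ together with non-empty coproducts, and the heart of its argument is a structural claim about how a left-monomial relation $a\otimes b\leq\sum c_i\otimes d_i$ in $C\otimes_B D$ decomposes into a relation coming from $C$ or from $D$ plus relations $0\leq\sum c'_j\otimes d'_j$, which are then discarded because they produce trivial bend relations. You instead use the standard decomposition into non-empty coproducts plus coequalizers. Your coproduct argument is essentially the same as the paper's (indeed the paper derives coproducts as a corollary of its cofibre-product argument, since the coproduct is a tensor product over $k$).

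For the coequalizer case, the analogue of the paper's structural claim is the one you state: every bend relation $a\otimes 1+\sum b_j\otimes 1 \= \sum b_j\otimes 1$ arising from $a\leq\sum b_j$ in $E=\coeq(f,g)$ is derivable from $\bend_v(C)$ together with the identifications $f(x)\otimes 1\=g(x)\otimes 1$. Worth flagging: the relation $f(x)\=g(x)$ in $E$ is \emph{not} left-monomial, so it does not directly yield a bend relation; rather one observes that the two left-monomial relations $f(x)\leq g(x)$ and $g(x)\leq f(x)$ extracted from it yield $f(x)\otimes 1+g(x)\otimes 1\=g(x)\otimes 1$ and $g(x)\otimes 1+f(x)\otimes 1\=f(x)\otimes 1$, whence $f(x)\otimes 1\=g(x)\otimes 1$ in $\Bend_v(E)$, which is precisely the coequalizing identification in $\coeq(\Bend_v(f),\Bend_v(g))$. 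With that in hand, any left-monomial relation of $E$ obtained by rewriting a left-monomial relation of $C$ along the identifications $f(x)\sim g(x)$ has a bend relation derivable by substitution, which is what your induction needs. This is exactly parallel in difficulty to the decomposition claim the paper asserts for cofibre products, so neither route is strictly easier. Your observation that $(-)^\bullet\otimes_{k^\bullet}T^\bullet$ preserves coequalizers of monoids as a left adjoint is correct but keep in mind it gives the coequalizer in $\Mon$, which agrees with the underlying monoid of the coequalizer in $\Alg_T^\ob$ only after passing to proper quotients on both sides — the ``bookkeeping'' you allude to at the end. No hypotheses on $T$ beyond the lemma's are needed, as you correctly anticipate.
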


\begin{proof}
 The statement follows if we can show that $\Bend_v$ commutes with cofibre products and non-empty coproducts.

 Let $C\leftarrow B\to D$ be a diagram of $k$-algebras. The cofibre product of $C$ and $D$ over $B$ is the tensor product $C\otimes_BD$. We have $(C\otimes_BD)^\bullet = C^\bullet\otimes_{B^\bullet}D^\bullet$ and
 \[
  \Bend_v(C\otimes_BD) \ = \ \bpquot{C^\bullet\otimes_{B^\bullet}D^\bullet\otimes_{k^\bullet} T}{\bend_v(C\otimes_BD)}.
 \]
 A relation $a\otimes b\leq \sum c_i\otimes d_i$ in $C\otimes_BD$ must be the sum of a relation of the form $a\otimes 1\leq \sum c_i\otimes 1$ or $1\otimes b\leq\sum 1\otimes d_j$ with relations of the form $0\leq\sum c'_j\otimes d'_j$. Since addition of relations of the latter type do not contribute to $\bend_v(C\otimes_BD)$, we can restrict our attention to relations of the former types. If we write 
 \[\textstyle
  \bend_v(C)\otimes 1 \ = \ \Big\langle \, a\otimes 1\otimes 1+\sum c_i\otimes 1\otimes1 \= \sum c_i\otimes 1\otimes1 \, \Bigl| \, a\leq \sum c_i \text{ in }C \, \Bigr\rangle
 \]
  and
 \[\textstyle
  1\otimes \bend_v(D) \ = \ \Big\langle \, 1\otimes b\otimes 1+\sum 1\otimes d_j\otimes1 \= \sum 1\otimes d_j\otimes1 \, \Bigl| \, b\leq \sum d_j \text{ in }D \, \Bigr\rangle,
 \]
 then we conclude that $\bend_v(C\otimes_BD)=\gen{\bend_v(C)\otimes1,1\otimes\bend_v(D)}$. Since
 \[
  C^\bullet\otimes_{B^\bullet}D^\bullet\otimes_{k^\bullet} T \ = \ \bigl(C^\bullet\otimes_{k^\bullet}T\bigr) \otimes_{(B^\bullet\otimes_{k^\bullet}T)} \bigl(D^\bullet\otimes_{k^\bullet} T \bigr),
 \]
 we conclude that
 \begin{multline*}
  \bpgenquot{C^\bullet\otimes_{B^\bullet}D^\bullet\otimes_{k^\bullet} T}{\bend_v(C)\otimes1,1\otimes\bend_v(D)}\\ 
  = \ \bigl(\bpquot{C^\bullet\otimes_{k^\bullet}T}{\bend_v(B)} \bigr) \otimes_{(\bpquot{B^\bullet\otimes_{k^\bullet}T}{\bend_v(B)})} \bigl(\bpquot{D^\bullet\otimes_{k^\bullet} T}{\bend_v(D)} \bigr),
 \end{multline*}
 which is $\Bend_v(C)\otimes_{\Bend_v(B)}\Bend_v(D)$ as desired.
 
 Since the coproduct of a non-empty family $\{B_i\}$ of ordered blue $k$-algebras is represented by the (possibly infinite) tensor product $\bigotimes_k B_i$ over $k$, it follows from the same argument as for the cofibre product that $\Bend_v$ commutes with non-empty coproducts. This completes the proof of the lemma.
\end{proof}

\begin{rem}
 Since $\Bend_v(k)$ is idempotent, it is clear that $\Bend_v$ does not preserve initial objects if $T$ is not idempotent. If, however, $T$ is idempotent, then it can be proven that $\Bend_v(k)$ is isomorphic to $T$. It follows that $\Bend_v$ preserves all colimits if $T$ is idempotent.
\end{rem}

\begin{prop}\label{prop: construction of the bend functor for schemes}
 Let $v:k\to T$ be a valuation. The bend functor extends to a functor 
 \[
  \Bend_v:\Sch_k\to\Sch_T
 \]
 that sends an affine open $U=\Spec B$ of an ordered blue $k$-scheme $X$ to the affine open $\Bend_v(U)=\Spec\bigl(\Bend_v(B)\bigr)$ of the ordered blue $T$-scheme $\Bend_v(X)$.
\end{prop}

\begin{proof}
 This follows from Lemma \ref{lemma: extension of a morphism of sites to a functor between schemes} whose hypotheses are verified by Lemmas \ref{lemma: bend relations commute with localizations} and \ref{lemma: bend commutes with tensor product}.
\end{proof}

For later reference, we provide the following fact.

\begin{prop}\label{prop: the bend preserves open closed immersions}
 Let $v:k\to T$ be a valuation. The bend functor $\Bend_v:\Sch_k\to\Sch_\T$ preserves open and closed immersions.
\end{prop}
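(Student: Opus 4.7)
Both being an open immersion and being a closed immersion are properties that can be checked on an affine paving of the target, and by Theorem~\ref{thm: construction of the bend functor for schemes} the bend functor is constructed by applying $\Bend_v$ termwise to an affine presentation. Thus the plan is first to verify the two claims on the level of affine ordered blue schemes, using the explicit description of $\Bend_v$ and Lemma~\ref{lemma: bend relations commute with localizations}, and then to transfer them to the global setting via the machinery of affine presentations (Theorem~\ref{thm: properties of affine presentations}).

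For open immersions, the key point is that every open immersion is, locally on its image, induced by a finite localization. More precisely, given an open immersion $\iota:Y\to X$ and an affine open $U=\Spec B\subset X$, the restriction $\iota^{-1}(U)\to U$ is covered by principal opens of the form $U_h=\Spec B[h^{-1}]\to U$. By Lemma~\ref{lemma: bend relations commute with localizations}, $\Bend_v$ sends such a principal open to $\Spec \Bend_v(B)[(h\otimes 1)^{-1}]\to \Spec\Bend_v(B)$, which is again a principal open immersion of ordered blue $T$-schemes. Covering $Y$ by such principal opens, we conclude that $\Bend_v(\iota)$ is locally a principal open immersion, hence an open immersion.

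For closed immersions, I will first treat the affine case. A closed immersion $\iota:\Spec C\to\Spec B$ corresponds to a surjection $f:B\to C$ of ordered blue $k$-algebras, i.e.\ $f^\bullet:B^\bullet\to C^\bullet$ is surjective. Tensoring with $T$ over $k^\bullet$ yields a surjection $B^\bullet\otimes_{k^\bullet}T\to C^\bullet\otimes_{k^\bullet}T$, and passing to the proper quotient by the respective bend subadditions preserves surjectivity (since the bend subaddition on the target is generated by the image of the bend subaddition on the source together with the subaddition of $T$). Hence $\Bend_v(f)$ is surjective, so $\Bend_v(\iota)$ is a closed immersion of affine ordered blue $T$-schemes.

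The main obstacle is the globalization of the statement for closed immersions, because one must verify that $\Bend_v(\iota)$ is affine as a morphism, not merely surjective on affine patches that come from $X$. To handle this, I would use Lemma~\ref{lemma: bend commutes with tensor product}: given any affine open $V\subset\Bend_v(X)$, cover it by principal opens that are bends of principal opens of the affine cover of $X$; compatibility of $\Bend_v$ with cofibre products then gives $\Bend_v(\iota)^{-1}(V)\simeq V\times_{\Bend_v(U)}\Bend_v(\iota^{-1}(U))$ for each relevant affine open $U\subset X$, and $\iota^{-1}(U)$ is affine since $\iota$ is a closed immersion. Surjectivity of the corresponding algebra maps is preserved under base change, so the preimage is an affine closed subscheme, and $\Bend_v(\iota)$ is a closed immersion globally.
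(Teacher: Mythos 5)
Your proof follows essentially the same two-part strategy as the paper: open immersions via Lemma~\ref{lemma: bend relations commute with localizations}, and closed immersions by reducing to the affine case and observing that $\Bend_v(f)=f\otimes\id_T$ preserves surjectivity. The one place you deviate is in the last paragraph, where you invoke Lemma~\ref{lemma: bend commutes with tensor product} to handle the globalization; the paper does not need this. The reason is that ``closed immersion'' (and ``affine morphism'') is a condition that is local on the target, so it suffices to check it on the particular affine cover $\{\Bend_v(U)\}$ of $\Bend_v(X)$ coming from an affine cover $\{U\}$ of $X$; on such a piece, $\Bend_v(\iota)^{-1}(\Bend_v(U))=\Bend_v(\iota^{-1}(U))$ because $\Bend_v$ already preserves open immersions, and $\iota^{-1}(U)$ is affine since $\iota$ is a closed immersion. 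Your cofibre-product argument is not wrong, but it is a detour and, as stated, a little vague about which $V$ you are covering and why the base-change identity you write down holds for arbitrary affine opens $V\subset\Bend_v(X)$ rather than just those in the chosen cover. You would do better to simply cite the locality of the definition on the target, as the paper does implicitly.
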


\begin{proof}
 That $\Bend_v$ preserves open immersions follows immediately from Lemma \ref{lemma: bend relations commute with localizations}. The question of whether $\Bend_v$ preserves closed immersions can be reduced to the affine case by choosing an open affine covering. Thus it suffices to consider a surjection $f:B\to C$ of ordered blue $k$-algebras and to show that $\Bend_v(f)$ is a surjection. But this follows at once from the definition of $\Bend_v(f)$ as the map $f\otimes\id_T:\bigl(\bpquot{B^\bullet\otimes_{k^\bullet}T}{\bend_v(B)}\bigr) \to \bigl(\bpquot{C^\bullet\otimes_{k^\bullet}T}{\bend_v(C)}\bigr)$.
\end{proof}

\begin{ex}\label{ex: bend}
 We revisit Example \ref{ex: tropicalization with a totally positive base}, this time with tropicalization basis $\T$. Let $k$ be a field and $v:k\to\T$ a non-archimedean absolute value. Let $X=\Spec B$ be a closed subscheme of $\A^n$ with $B=\bpquot{k[T_1,\dotsc,T_n]}{\cR}$. If we identify $k^\bullet[T_1,\dotsc,T_n]\otimes_{k^\bullet} \T$ with $\T[T_1,\dotsc,T_n]$ (see Example \ref{ex: tropicalization of monoid algebras}) and denote the universal valuation by $w:k[T_1,\dotsc,T_n]\to \T[T_1,\dotsc,T_n]$, then we get $\Bend_v(X)=\Spec \bpquot{\T[T_1,\dotsc,T_n]}{\bend_v(B)}$ where
 \[\textstyle
  \bend_v(B) \ = \ \big\langle w(a)+\sum w(b_j) \= \sum w(b_j) \,\big|\, a\leq \sum b_j\text{ in }B \big\rangle.
 \]

\end{ex}

\subsection{Tropicalization for an idempotent tropicalization base}
\label{subsection: tropicalization for an idempotent base}

We show that the bend functor is a tropicalization if the tropicalization base is idempotent.

\begin{thm}\label{thm: tropicalization for idempotent base}
 Let $v:k\to T$ be a valuation in an idempotent ordered blueprint $T$. Let $X$ be an ordered blue $k$-scheme. Then there is a canonical isomorphism 
 \[
  \Bend_v(X) \quad \stackrel\sim\longrightarrow \quad (X^\mon\otimes_{k^\mon}T^\pos)^\core\otimes_{T^\core}T
 \]
 of ordered blue $T$-schemes, and $\Trop_v(X)=\Bend_v(X)$ is a tropicalization of $X$ along $v$.
\end{thm}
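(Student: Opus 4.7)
My plan is to prove the representability statement (part (ii)) directly and then deduce the isomorphism (part (i)) by checking that the right-hand side also represents $\Val_v(B,-)$; by Yoneda this automatically produces the canonical isomorphism. The key input is the bijection $S \to S^\pos$ for every ordered blue $T$-algebra $S$, which holds because $T$ idempotent forces $S$ idempotent, hence strictly conic (Lemma \ref{lemma: idempotent or totally positive implies strictly conic}), hence bijective to its positive hull (Corollary \ref{cor: idempotent implies core=core after pos and B subset B-pos}). This bijection is what lets us translate between the symmetric ``bend'' equalities and the asymmetric ``positive'' inequalities freely.

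For (ii), I would begin by noting that a $T$-linear multiplicative map $f : \Bend_v(B) \to S$ is determined by its restriction $w^\bullet : B^\bullet \to S^\bullet$ to $B^\bullet \otimes \{1\}$ (via $T$-linearity), together with compatibility with $v$. The bend generators $a \otimes 1 + \sum b_j \otimes 1 \= \sum b_j \otimes 1$ impose exactly the equalities $w^\bullet(a) + \sum w^\bullet(b_j) = \sum w^\bullet(b_j)$ in $S$. Because $S$ is idempotent, this equality is equivalent to $w^\bullet(a) \leq \sum w^\bullet(b_j)$ in $S^\pos$, which is precisely the condition for $w^\bullet$ to extend to a morphism $\tilde w : B^\mon \to S^\pos$ compatible with $\tilde v$. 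Conversely, any such valuation $w$ produces a well-defined $T$-linear multiplicative map by $a \otimes t \mapsto t \cdot w^\bullet(a)$, and the two constructions are mutually inverse and natural in $S$. By Lemma \ref{lemma: valuations as homs if T is idempotent or totally positive} this yields $\Hom_T(\Bend_v(B), S) \cong \Val_v(B, S)$.

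For (i), set $P = B^\mon \otimes_{k^\mon} T^\pos$. The key observation is that $P$ is both totally positive (by the $T^\pos$ factor) and idempotent (since $0 \leq 1$ and $1 + 1 \leq 1$ in $T^\pos$ combine to $1 + 1 \= 1$). I would then prove the technical lemma: \emph{for any totally positive idempotent ordered blueprint $A$, the canonical morphism $(A^\core)^\pos \to A$ is an isomorphism}. The nontrivial direction uses that any inequality $\sum x_i \leq \sum y_j$ in $A$ yields $\sum y_j \leq \sum x_i + \sum y_j \leq \sum y_j + \sum y_j \= \sum y_j$ by total positivity and idempotency, hence the equality $\sum x_i + \sum y_j \= \sum y_j$ lives in $A^\core$ and produces the corresponding relation in $(A^\core)^\pos$. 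Applying this to $A = P$ and combining with (a) the tensor--restriction adjunction $\Hom_T(- \otimes_{T^\core} T, S) = \Hom_{T^\core}(-, S)$, (b) the universal property of the core for morphisms out of an algebraic source, (c) the full embedding $(-)^\pos : \Blpr^\conic \hookrightarrow \OBlpr^\pos$ of Corollary \ref{cor: strictly conic algebraic blueprint} (using that $P^\core$ is strictly conic since idempotent), and (d) $(S^\core)^\pos \cong S^\pos$ from Corollary \ref{cor: idempotent implies core=core after pos and B subset B-pos}, one obtains a natural chain
\[
\Hom_T(P^\core \otimes_{T^\core} T, S) \;\cong\; \Hom_{T^\pos}(P, S^\pos) \;\cong\; \Val_v(B, S),
\]
so Yoneda gives a canonical isomorphism $\Bend_v(B) \xrightarrow{\sim} P^\core \otimes_{T^\core} T$.

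The main obstacle is the bookkeeping in the chain of adjunctions, specifically verifying that $P = (P^\core)^\pos$ so that the full embedding $(-)^\pos$ can be inverted on $P$. Once this key lemma about idempotent totally positive blueprints is established, everything else is a formal consequence of the representability; the substantive content is the observation that in an idempotent totally positive blueprint every inequality $x \leq y$ is encoded by the symmetric equality $x + y = y$, which is exactly the symmetry between the bend construction and the core construction.
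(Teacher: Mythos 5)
Your proof is correct and takes a somewhat different route from the paper. The paper proves both claims at once by exhibiting the three hom-sets $\Hom_T(B_v, S^\pos)$, $\Hom_T(B_v^\core \otimes_{T^\core} T, S)$ and $\Hom_T(\Bend_v(B), S)$ as equal subsets of a common ambient set $\Hom_T(A \otimes_{k^\bullet} T, S^\pos)$ (for a representation $B = \bpquot{A}{\cR}$) via a circular chain of three inclusions; you instead prove representability of $\Bend_v(B)$ directly and then deduce the isomorphism by a separate Yoneda argument. The virtue of your organization is that it crystallizes a clean, reusable lemma --- $(A^\core)^\pos \cong A$ for totally positive idempotent $A$ --- which the paper leaves implicit inside its inclusion arguments (the inclusion $\Hom_T(B_v^\core \otimes_{T^\core} T, S) \subseteq \Hom_T(\Bend_v(B), S)$ in the paper's proof is precisely your lemma applied to $B_v$, and its companion Corollary \ref{cor: idempotent implies core=core after pos and B subset B-pos} supplies the $(-)^\core$-side counterpart). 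Both routes hinge on the identical elementary fact that in an idempotent totally positive blueprint, $a \leq \sum b_j$ is equivalent to $a + \sum b_j \= \sum b_j$. One caution about your chain of adjunctions: the step $\Hom_T(P^\core \otimes_{T^\core} T, S) \cong \Hom_{T^\pos}(P, S^\pos)$ also silently requires $(T^\core)^\pos \cong T^\pos$ and $(S^\core)^\pos \cong S^\pos$, which are again instances of your lemma applied to $T^\pos$ and $S^\pos$ after invoking Corollary \ref{cor: idempotent implies core=core after pos and B subset B-pos}, and you need $P^\core$, $S^\core$ to be strictly conic (via Lemma \ref{lemma: idempotent or totally positive implies strictly conic}) before Corollary \ref{cor: strictly conic algebraic blueprint} can be invoked; so the lemma is load-bearing at three places, not only at $P$.
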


\begin{proof}
 Since all functors are defined in terms of affine presentations, we can immediately reduce the claim of the theorem to the affine situation $X=\Spec B$.

 Let $B=\bpquot A\cR$ be a representation of $B$ and $B_v=B^\mon\otimes_{k^\mon}T^\pos$. Then $\Val_v(B,S)$ equals the morphism set $\Hom_T(B_v,S^\pos)$ of ordered blue $T$-algebras, and there are surjections 
 \[
  A\otimes_{k^\bullet}T \quad \longrightarrow \quad B_v \qquad \text{and} \qquad A\otimes_{k^\bullet}T \quad \longrightarrow \quad \Bend_v(B)
 \]
 of ordered blue $T$-algebras. Note further that the canonical morphism $B_v^\core\to B_v$ is a bijection, which factors into bijections 
 \[
  B_v^\core \quad \longrightarrow \quad B_v^\core\otimes_{T^\core}T \quad \longrightarrow \quad B_v.
 \]
 Every ordered blue $T$-algebra $S$ is idempotent. Thus by Corollary \ref{cor: idempotent implies core=core after pos and B subset B-pos}, $S^\core\to(S^\pos)^\core$ is an isomorphism and $S\to S^\pos$ is a bijection.
  
 Putting these facts together, we see that each of the homomorphism sets
 \[
  \Hom_T(B_v,S^\pos), \quad \Hom_T(B_v^\core\otimes_{T^\core}T,S) \quad \text{and} \quad \Hom_T(\Bend_v(B),S)
 \]
 of ordered blue $T$-algebras embeds canonically into $\Hom_T(A\otimes_{K^\bullet}T,S^\pos)$. We claim that these three homomorphism sets are equal as subsets of $\Hom_T(A\otimes_{K^\bullet}T,S^\pos)$.
 
 Once we have proven this, it follows that $\Bend_v(B)$ is isomorphic to $B_v^\core\otimes_{T^\core}T$ and represents the functor $\Val_v(B,-)=\Hom_T(B_v,-)$. Note that we can describe the isomorphism $f:\Bend_v(B)\rightarrow B_v^\core\otimes_{T^\core}T$ explicitly as the following map: for an element $a\in \Bend_v(B)$, choose an inverse image $a'$ in $A\otimes_{k^\bullet}T$ and define $f(a)$ as the image of $a'$ in $B_v^\core\otimes_{T^\core}T$.
 
 We prove the equality of the three homomorphism sets in question by circular inclusions. We begin with the inclusion $\Hom_T(B_v,S^\pos)\subset\Hom_T(B_v^\core\otimes_{T^\core}T,S)$. The map
 \[
  (-)^\core: \ \Hom_T(B_v,S^\pos) \quad \longrightarrow \quad \Hom_{T^\core}(B_v^\core,(S^\pos)^\core),
 \]
 is injective since the canonical morphisms $B_v^\core\to B_v$ and $(S^\pos)^\core\to S^\pos$ are bijections. The inclusion $(S^\pos)^\core=S^\core \hookrightarrow S$ yields an inclusion 
 \[
  \Hom_{T^\core}(B_v^\core,(S^\pos)^\core) \quad \subset \quad \Hom_{T^\core}(B_v^\core,S)
 \]
 whose codomain is equal to $\Hom_T(B_v^\core\otimes_{T^\core}T,S)$. Altogether this yields the desired inclusion $\Hom_T(B_v,S^\pos) \subset \Hom_T(B_v^\core\otimes_{T^\core}T,S)$.
 
 We proceed with the inclusion $\Hom_T(B_v^\core\otimes_{T^\core}T,S) \subset \Hom_T(\Bend_v(B),S)$. Since $\Bend_v(B)=\bpquot{A\otimes_{k^\bullet}T}{\bend_v(B)}$, this inclusion follows if we can show that the subaddition of $B_v^\core$ contains the bend relation $\bend_v(B)$. Consider $a\leq\sum b_j$ in $B$. Then $a\otimes 1\leq\sum b_j\otimes 1$ in $B_v=B^\mon\otimes_{k^\mon}T^\pos$. Since $T^\pos$ is idempotent and totally positive, $B_v$ is so, too. This implies that
 \[\textstyle
  a\otimes 1 \ + \ \sum b_j\otimes 1 \quad \leq \quad \sum b_j\otimes 1 \ + \ \sum b_j\otimes 1 \quad \= \quad \sum b_j\otimes 1
 \]
 and
 \[\textstyle
  \sum b_j\otimes 1 \quad \= \quad 0 \ + \ \sum b_j\otimes 1 \quad \leq \quad a\otimes 1 \ + \ \sum b_j\otimes 1.
 \]
 Thus $\sum b_j\otimes 1\=a\otimes 1+\sum b_j\otimes 1$ in $B_v^\core$. This shows that the subaddition of $B_v^\core$ contains $\bend_v(B)$.

 We proceed with the inclusion $\Hom_T(\Bend_v(B),S) \subset \Hom_T(B_v,S^\pos)$. Consider a $T$-morphism $f:\Bend_v(B)\to S$. We can represent $B_v$ as $\bpquot{A\otimes_{k^\bullet}T^\pos}{\cR_v}$ where $\cR_v$ is the relation on $A\otimes_{k^\bullet}T^\pos$ that is generated by the image of the relation of $B^\mon$ in $B_v$. The composition
 \[
  f': \ A\otimes_{k^\bullet}T \quad \longrightarrow \quad \Bend_v(B) \quad \longrightarrow \quad S \quad \longrightarrow \quad S^\pos
 \]
 induces a $T$-morphism $B_v\to S^\pos$ if $f'$ maps the relation $\cR_v$ to the subaddition of $S^\pos$. This can be verified on the generators of $\cR_v$, i.e.\ we have to consider only the relations of the form $a\otimes 1\leq \sum b_j\otimes 1$ for which $a\leq\sum b_j$ in $B$. The latter relation yields the bend relation
 \[\textstyle
  a\otimes 1 \ + \ \sum b_j\otimes 1 \quad \=\quad \sum b_j\otimes 1
 \]
 in $\Bend_v(B)$. Consequently, $S$ contains the relation
 \[\textstyle
  f(a\otimes 1) \ + \ \sum f(b_j\otimes 1) \quad \=\quad \sum f(b_j\otimes 1),
 \]
 and by Lemma \ref{lemma: totally positive blueprints}, we have $f'(a\otimes 1)\leq \sum f'(b_j\otimes 1)$ in $S^\pos$. This shows that $f'$ induces a $T$-morphism $B_v\to S^\pos$, which implies the last inclusion and completes the proof of the theorem.
\end{proof}

Let $v:k\to T$ be a valuation and $X$ an ordered blue $k$-scheme. We obtain the following immediate consequences.

\begin{cor}\label{cor: tropicalization for algebraic and idempotent base}
 If $T$ is algebraic and idempotent, then $\Trop_v(X)$ is algebraic and isomorphic to $\Bend_v(X)=(X^\mon\otimes_{k^\mon}T^\pos)^\core$.\qed
\end{cor}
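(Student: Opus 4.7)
The plan is to derive this corollary directly from Theorem \ref{thm: tropicalization for idempotent base} by inserting the additional hypothesis that $T$ is algebraic and tracking how this simplifies each ingredient of the formula. There is essentially nothing new to prove; the task is to observe the two simplifications that the algebraicity of $T$ forces.

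First I would apply Theorem \ref{thm: tropicalization for idempotent base} verbatim, since its hypothesis (that $T$ be idempotent) is part of our hypothesis. This gives us that $\Trop_v(B)=\Bend_v(B)$ and that there is a canonical isomorphism
\[
 \Bend_v(B) \ \stackrel\sim\longrightarrow \ (B^\mon\otimes_{k^\mon}T^\pos)^\core\otimes_{T^\core}T.
\]
Next I would use that $T$ is algebraic, which by definition means that the canonical morphism $T^\core\to T$ is an isomorphism. Hence tensoring with $T$ over $T^\core$ along this isomorphism is the identity functor, and the right-hand side collapses to $(B^\mon\otimes_{k^\mon}T^\pos)^\core$. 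This gives the displayed isomorphism claimed in the corollary.

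Finally, to see that $\Trop_v(B)=\Bend_v(B)$ is algebraic, I would invoke the observation made in the paragraph following the definition of the bend functor: \emph{``By the very definition of $\bend_v(B)$, the bend of $B$ is algebraic if $T$ is so.''} Indeed, the generating relations $a\otimes 1+\sum b_j\otimes 1\=\sum b_j\otimes 1$ of $\bend_v(B)$ are symmetric, and the only other relations inherited by $\Bend_v(B)$ are those from the subaddition of $T$, which are symmetric by the algebraicity of $T$; so $\Bend_v(B)$ is algebraic. Equivalently, this algebraicity is visible from the formula $(B^\mon\otimes_{k^\mon}T^\pos)^\core$, whose algebraic core makes the ordered blueprint trivially ordered by construction. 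There is no hard step here; the only thing to be careful about is merely to verify that the canonical isomorphism of Theorem \ref{thm: tropicalization for idempotent base} is compatible with the identification $T^\core\otimes_{T^\core}T\simeq T$, which is immediate.
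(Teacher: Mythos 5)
Your proof is correct and matches the paper's intent exactly: the corollary is stated with a \qed precisely because it follows immediately from Theorem \ref{thm: tropicalization for idempotent base} by the two observations you make, namely that algebraicity of $T$ collapses $(-)\otimes_{T^\core}T$ to the identity, and that algebraicity of $T$ makes $\Bend_v(B)$ algebraic as noted right after the definition of the bend functor.
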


A comparison of Theorem \ref{thm: tropicalization for idempotent base} with Theorem \ref{thm: tropicalization for totally positive tropical base} yields:

\begin{cor}\label{cor: tropicalization for totally positive and idempotent base}
 If $T$ is totally positive and idempotent, then $\Bend_v(X)=X^\mon\otimes_{k^\mon}T$ is a tropicalization of $X$ along $v$.\qed
\end{cor}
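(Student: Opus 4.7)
The plan is to combine the two preceding existence theorems and exploit the uniqueness (up to unique isomorphism) of a representing object for $\Val_v(B,-)$. Concretely, since $T$ is totally positive by hypothesis, we have $T^\pos = T$, so Theorem \ref{thm: tropicalization for totally positive tropical base} gives that the blueprint
\[
 B^\mon \otimes_{k^\mon} T^\pos \ = \ B^\mon \otimes_{k^\mon} T
\]
already represents the functor $\Val_v(B,-)$ on ordered blue $T$-algebras, and therefore is a tropicalization of $B$ along $v$. On the other hand, since $T$ is idempotent, Theorem \ref{thm: tropicalization for idempotent base} asserts that $\Bend_v(B)$ is also a tropicalization of $B$ along $v$. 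By the Yoneda lemma, any two representing objects for the same functor are canonically isomorphic, which immediately yields the desired canonical isomorphism $\Bend_v(B) \simeq B^\mon\otimes_{k^\mon}T$ of ordered blue $T$-algebras.

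As a cross-check, I would verify that the same conclusion falls out of the explicit formula in Theorem \ref{thm: tropicalization for idempotent base}. Substituting $T^\pos = T$ into that formula gives
\[
 \Bend_v(B) \quad \simeq \quad (B^\mon\otimes_{k^\mon}T)^\core\otimes_{T^\core}T,
\]
so the matter reduces to identifying the right-hand side with $B^\mon\otimes_{k^\mon}T$. Since $T$ is idempotent, every ordered blue $T$-algebra is idempotent; in particular $C := B^\mon\otimes_{k^\mon}T$ is idempotent, so by Corollary \ref{cor: idempotent implies core=core after pos and B subset B-pos} the canonical morphism $C^\core\to C$ (which is always bijective on underlying monoids) induces an isomorphism after the base change along $T^\core\to T$, recovering the subaddition of $C$ from the subaddition of $T$. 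This confirms the isomorphism obtained abstractly from Yoneda.

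The step I expect to need the most care is the cross-check above: one has to make sure that tensoring the algebraic core $C^\core$ back up along $T^\core\to T$ genuinely reintroduces all the relations lost in passing to the core, and that no new relations are created. This follows from the fact that $C$'s subaddition is generated by the image of the subaddition of $T$ under multiplication by elements of $C^\bullet$, but it is precisely the place where the hypotheses \emph{totally positive} and \emph{idempotent} cooperate. Since the abstract uniqueness argument avoids this calculation entirely, I would present the proof simply as the one-line appeal to Theorems \ref{thm: tropicalization for totally positive tropical base} and \ref{thm: tropicalization for idempotent base}, relegating the explicit identification to a remark if desired.
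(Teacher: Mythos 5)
Your main argument — invoke Theorem \ref{thm: tropicalization for totally positive tropical base} (using $T^\pos = T$ for totally positive $T$), invoke Theorem \ref{thm: tropicalization for idempotent base}, and conclude by Yoneda uniqueness of the representing object — is exactly how the paper derives this corollary (the author says ``a comparison of Theorem \ref{thm: tropicalization for idempotent base} with Theorem \ref{thm: tropicalization for totally positive tropical base} yields'' it and leaves the proof implicit). Your cross-check is a nice sanity check but is not actually carried out in full (the claimed isomorphism $C^\core\otimes_{T^\core}T\simeq C$ is precisely what Remark \ref{rem: proofs for totally positive and idempotent bases} flags as non-obvious and as a \emph{consequence} of having the two proofs), and you rightly relegate it to a remark rather than relying on it.
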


\begin{rem}\label{rem: proofs for totally positive and idempotent bases}
 It seems odd that we need two completely different proofs for the existence of $\Trop_v(B)$ in the case that $T$ is totally positive and the case that $T$ is idempotent. The question whether there is a unified proof that extends to a possibly larger class of tropical bases suggests itself. Note, however, that the non-obvious identity 
 \[
  (B^\mon\otimes_{k^\mon}T^\pos)^\core\otimes_{T^\core}T \quad = \quad B^\mon\otimes_{k^\mon}T^\pos
 \]
 for totally positive and idempotent $T$, which results from the two different proofs, fails to hold for a merely totally positive $T$ or a merely idempotent $T$.
\end{rem}


\section{Berkovich analytification}
\label{section: Berkovich analytification}

To begin with, we review the definition of the Berkovich analytification as a topological space. See \cite{Baker07} or \cite{Berkovich90} for more details.

Let $k$ be a field with (non-archimedean) valuation $v:k\to \T$ and $X$ a $k$-scheme. We consider $\T$ together with its real topology, cf.\ Example \ref{ex: real topology of the tropical numbers}. 

If $X=\Spec B$ is affine, then the \emph{Berkovich analytification of $X$} is the set $X^\an$ of all valuations $w:B\to \T$ whose restriction to $k$ is $v$, endowed with the compact-open topology with respect to the discrete topology on $B$. In other words, $X^\an$ is equipped with the coarsest topology such that the evaluation maps
\[
 \begin{array}{cccc}\ev_a: & X^\an & \longrightarrow & \T \\ & w & \longmapsto & w(a) \end{array}
\]
are continuous for all $a\in B$.

If $X$ is an arbitrary $k$-scheme and $\cU$ an affine presentation of $X$, then we define $X^\an=\colim\cU^\an$ as a topological space. Note that this definition is independent of the chosen affine presentation for similar arguments as used in the proof of Theorem \ref{thm: properties of the fine topology}.

In order to connect the Berkovich space of $X$ with scheme theoretic tropicalization of $X$, we need to choose a \emph{blue model of $X$}, which is an ordered blue $k$-scheme $Z$ together with an isomorphism $Z^+\to X$. Given such a blue model $Z$ of $X$, we can consider the space $\Bend_v(X)(\T)$ of $\T$-rational points together with the fine topology.

\begin{ex}
 One can define a blue model of a $k$-scheme $X$ in the following way. Let $\{U_i\}$ be an open affine covering of $X$ and for every pair of indices $i$ and $j$, let $\{U_{i,j,k}\}$ be an open affine covering of $U_i\cap U_j$. Then we can define an affine presentation $\cU$ of $X$ as follows. Its objects are all the affine open subschemes $U_i$ and $U_{i,j,k}$ and its morphisms are the open immersions $\iota_{i,j,k,l}:U_{i,j,k}\to U_l$ whenever $U_{i,j,k}$ is a subset of $U_l$ in $X$. Then $X$ is the colimit of $\cU$ in the category $\Sch_k^+$ of $k$-schemes.
 
 Note that strictly speaking, one has to identify $U_{i,j,k}$ with $U_l$ whenever $\iota_{i,j,k,l}$ is an isomorphism to obtain an affine presentation in the stricter sense of this paper. In the more ample sense of \cite{Lorscheid17}, this would not be necessary. We will ignore this issue in the following.
 
 Every affine $k$-scheme $U$ has a canonical blue model, which is the ordered blue scheme $U^\blue=\Spec\Gamma U^\blue$ where $\Gamma U^\blue$ is the ordered blueprint associated with the semiring $\Gamma U$. Clearly, there is a canonical isomorphism $(U^\blue)^+\to U$. Similarly, a morphism $\iota:V\to U$ of affine ordered blue schemes has a canonical blue model, which is the induced morphism $\iota^\blue=(\Gamma\iota)^\ast: V^\blue\to U^\blue$ between the associated blue models of $V$ and $U$. The canonical isomorphisms $(V^\blue)^+\simeq Z$ and $(U^\blue)^+\simeq U$ identify $(\iota^\blue)^+$ with $\iota$.
 
 Let $\cV=\cU^\blue$ be the affine presentation that consists of the affine ordered blue $k$-schemes $U_i^\blue$ and $U_{i,j,k}^\blue$ and the open immersions $\iota_{i,j,k,l}^\blue$. Note that $\cV^+$ is canonically identified with $\cU$, i.e.\ $X=\colim\cV^+$. We define the \emph{blue model of $X$ associated with $\cU$} as the colimit $Z$ of $\cV$ in $\OBSch$. It comes together with a canonical isomorphism $Z^+\to X$.
 
 Note that the blue models $Z$ obtained in this way have the property that for every open subset $U$ of $Z$, $\cO_Z(U)$ is a ring. However, blue models stemming from different affine presentations are in general not isomorphic. In particular, the closed points of a blue model $Z$ associated with $\cU$ stay bijectively in correspondence with the open subsets $U_i$ of $\cU$, provided that $U_i\not\subset U_j$ for $i\neq j$.
\end{ex}

\begin{thm}\label{thm: Berkovich analytification as rational point set}
 Let $k$ be a field with valuation $v:k\to\T$, $X$ a $k$-scheme and $Z$ a blue model of $X$ such that $\cO_Z(U)$ is a ring for all open subsets $U$ of $Z$. Then the Berkovich space $X^\an$ is naturally homeomorphic to $\Bend_v(Z)(\T)$.
\end{thm}

\begin{proof}
 Let $\cV$ be an affine presentation of $Z$. Then $\cU=\cV^+$ is an affine presentation of $X$ since $\colim\cV^+\simeq Z^+\simeq X$. By definition, we have $X^\an=\colim \cU^\an$. For every $U$ in $\cU$ and $V=U^\blue$, we have $U^\an=\Val_v(\Gamma U,\T)=\Val_v(V,\T)$. Thus $X^\an=\colim\cU^\an=\colim\Val_v(\cV,\T)$.
  
 On the other hand, we have $\Bend_v(Z)=\colim\Bend_v(\cZ)$, by definition. By Proposition \ref{prop: the fine topology as colimit over an affine presentation}, we have an identification $\big(\colim\Bend_v(\cV)\big)(\T)=\colim\big(\Bend_v(\cV)(\T)\big)$ of topological spaces. This reduces the claim of the theorem to the affine case.
 
 If $X=\Spec R$ for a $k$-algebra $R$ and $Z=X^\blue=\Spec B$ where $B$ is $R$, considered as an ordered blueprint, then it follows from Theorem \ref{thm: tropicalization for idempotent base} that $X^\an=\Val_v(B,\T)=\Bend_v(Z)(\T)$ as sets since $\T$ is idempotent.

 We are left with showing that the topologies of $X^\an$ and $\Bend_v(B)(\T)$ agree. The canonical bijection $\Psi:\Bend_v(Z)(\T)\to \Val_v(B,\T)$ is given explicitly as
 \[
   \Big(f:\Bend_v(B) \longrightarrow \T\Big) \quad \longmapsto \quad \Big( w: B \stackrel{1:1}\longrightarrow B^\bullet \longrightarrow \Bend_v(B) \stackrel f\longrightarrow \T\Big)
 \]
 where we use that $\Bend_v(B)$ is defined as $\bpquot{B^\bullet\otimes_{k^\bullet}\T}{\bend_v(B)}$. By definition of the affine topology, the topology of $\Bend_v(Z)(\T)$ is generated by subsets of the form
 \[
  U_{a\otimes t, W} \quad = \quad \big\{ \, f:\Bend_v(B)\to \T \, \big| \, f(a\otimes t)\in W \, \big\}
 \]
 where $a\in B$, $t\in \T$ and $W\subset \T$ is an open subset, while the topology of $B^\an$ is generated by subsets of the form
 \[
  U_{a,W} \quad = \quad \big\{ \, w:B\to \T \, \big| \, w(a)\in W \, \big\}
 \]
 where $a\in B$ and $W\subset \T$ is an open subset. It is easily verified that $\Psi^{-1}(U_{a,W})=U_{a\otimes 1,W}$. Thus $\Psi$ is continuous. 
 
 We are left with proving that $\Psi$ is open. Consider a basic open $U_{a\otimes t,V}$ of $\Bend_v(Z)(\T)$ and denote by $m_t:\T\to \T$ the multiplication with $t$. Then we have
 \begin{align*}
  U_{a\otimes t,V} \quad &= \quad \big\{ \, f:\Bend_v(B)\to \T \, \big| \, f(a\otimes t)\in V \, \big\} \quad \\
                        &= \quad \big\{ \, f:\Bend_v(B)\to \T \, \big| \, f(a\otimes 1)\in m_t^{-1}(V) \, \big\} \quad = \quad U_{a\otimes 1,m_t^{-1}(V)}.
 \end{align*}
 Since the multiplication of $\T$ is continuous, $m_t^{-1}(V)$ is an open subset of $T$. Thus we see that $\Psi(U_{a\otimes t,V})=U_{a,m_t^{-1}(V)}$ is open in $X^\an$, which concludes the proof of the theorem.
\end{proof}

\begin{rem}
 Although the topological space $\Bend_v(Z)(\T)$ is canonically homeomorphic to the Berkovich space $X^\an$, the tropicalization $\Bend_v(Z)$ depends as much as $Z$ on additional choices. In general, different blue models $Z$ and $Z'$ of $X$ give rise to non-isomorphic tropicalizations $\Bend_v(Z)$ and $\Bend_v(Z')$. In particular, the number of closed points of $\Bend_v(Z)$ agrees with the number of closed points of $Z$, which can vary among the different blue models of $X$.
\end{rem}

\begin{rem}
 The same technique of proof can be used to show that $X^\an$ is homeomorphic to $\Hom_\T(X^\mon\otimes_{k^\mon}\T^\pos,\T^\pos)$. This viewpoint can be extended to the Berkovich space $\cM(B)$ of all, possibly archimedean, valuations of a ring $B$ as follows. Also confer the work Berkovich (\cite[Section 1]{Berkovich90}) and Poineau (\cite{Poineau10}) on the Berkovich space $\cM(\Z)$ of the arithmetic line.
 
 Consider the trivial valuation $v:\Fun\to \Fun$. Then every valuation $w:B\to \R_{\geq0}$ is an extension of $v$. Define $B_v=B^\mon\otimes_{\F_1^\mon} \F_1^\pos=(B^\mon)^\pos$ and consider $\R_{\geq0}^\pos$ with the real topology. Then $\cM(B)$ is naturally homeomorphic to $\Hom_\Fun(B_v,\R_{\geq0}^\pos)$, endowed with the fine topology. 
\end{rem}

\begin{ex}\label{ex: Berkovich analytification}
 As an explicit example, we consider an algebraically closed field $k$ with non-archimedean valuation $v:k\to\T$ and the polynomial algebra $k[T]^+$. As a monoid, $(k[T]^+)^\bullet$ is freely generated over $k^\bullet$ by the irreducible poynomials $T-c$ for $c\in k$. So the underlying monoid of $\Bend_v(k[T]^+)$ is isomorphic to $\T[T_c\mid c\in k]$ where $T_c=(T-c)\otimes 1$. The complexity of the Berkovich affine line $\A^{1,\an}_k$ over $k$ is reflected by the complexity of the bend relation 
 \[\textstyle
  \bend_v(k[T]^+) \ = \ \big\langle a\otimes 1+\sum b_j\otimes 1\=\sum b_j\otimes 1 \,\big|\, a=\sum b_j\text{ in }k[T]^+ \big\rangle.
 \]
\end{ex}


\section{Kajiwara-Payne tropicalization}
\label{section: Kajiwara-Payne tropicalization}

Before we explain how to recover the Kajiwara-Payne tropicalization as a rational point set of a scheme theoretic tropicalization, we review the definition of toric varieties and the theory of Kajiwara (\cite{Kajiwara08}) and Payne (\cite{Payne09}). This shall serve the reader as a reminder and allows us to fix notation.

\subsection{Toric varieties}\label{subsection: toric varieties}
Let $N_\R$ be a real vector space with a lattice $N_\Z$. A \emph{(strongly convex polyhedral rational) cone in $N_\R$} is a convex cone $\tau$ of $N_\R$ such that $\tau\cap(-\tau)=\{0\}$ and that is generated over $\R_{\geq0}$ by finitely many elements of $N_\Z$. A \emph{face of $\tau$} is the intersection of $\tau$ with a half space $H$ of $\N_\R$ such that the intersection is either equal to $\tau$ or is contained in the boundary of $\tau$. A \emph{fan in $N_\R$} is a collection $\Delta$ of cones in $N_\R$ such that every face of a cone in $\Delta$ is in $\Delta$ and such that the intersection of two cones in $\Delta$ is a face of each of these cones.

The \emph{dual cone} of a cone $\tau$ is the subsemigroup $\tau^\vee=\{x\in N_\R^\vee| <x,y>\geq 0\text{ for all }y\in\tau\}$ of the dual vector space $N_\R^\vee$. We define the monoid $A_\tau$ with $0$ that results from writing $\tau^\vee\cap N_\Z^\vee$ multiplicatively and adjoining an additional element $0$. Then an inclusion $\sigma\subset\tau$ of cones in $\Delta$ yields a finite localization $A_\tau\to A_\sigma$ of monoids. This defines a diagram $\cD$ of monoids $A_\tau$ with zero and finite localizations.

Applying the functor $\Spec$ to the diagram $\cD$ yields an affine presentation $\cU$ in affine monoid schemes. The base extension $\cU_k^+$ is an affine presentation in affine $k$-schemes. We define the \emph{toric variety associated with $\Delta$} as $X(\Delta)=\colim\cU_k^+$.

\subsection{Tropicalization of closed subvarieties}
We begin with the tropicalization of an affine toric variety. Let $\tau$ be a cone in $N_\R$ and $U_\tau=\Spec A_\tau$. Then $X_\tau=U_{\tau,k}^+$ is an affine toric variety. Its analytification $X_\tau^\an$ is the set $\Val_v(k[A_\tau]^+,\T)$ of all valuations $w:k[A_\tau]^+\to \T$ that extend $v$, endowed with the compact-open topology. The \emph{Kajiwara-Payne tropicalization $\Trop_v^{KP}(X_\tau)$ of $X_\tau$} is defined as the set $\Hom(A_\tau,\T)$ of monoid morphisms, endowed with the compact-open topology where we regard $A_\tau$ as a discrete monoid. It comes with the continuous surjection
\[
 \trop_{v,\tau}^{KP}:\quad X_\tau^\an \ = \ \Val_v(k[A_\tau]^+,\T) \quad \longrightarrow \quad \Hom(A_\tau,\T) \ = \ \Trop_v^{KP}(X_\tau)
\]
that restricts a valuation $w:k[A_\tau]^+\to \T$ to the monoid morphism $w\vert_{A_\tau}:A_\tau\to \T$.

This construction is compatible with gluing affine pieces along principal open subsets. Namely, let $\Delta$ be a fan in $N_\R$. Then $X(\Delta)=\colim X_\tau$, with respect to the inclusions $X_\sigma\subset X_\tau$ whenever $\sigma\subset\tau$, and $X(\Delta)^\an=\colim X_\tau^\an$ as topological space. We define $\Trop_v^{KP}(X(\Delta))$ as colimit of the topological spaces $\Hom(A_\tau,\T)$ with respect to the open topological embeddings $\Hom(A_\sigma,\T)\to\Hom(A_\tau,\T)$ that are induced by inclusions $\sigma\subset \tau$. Moreover the map $X_\tau^\an\to \Trop_v^{KP}(X_\tau)$ extends to a continuous surjection
\[
 \trop_{v,\Delta}^{KP}: \quad X(\Delta)^\an \quad \longrightarrow \quad \Trop_v^{KP}(X(\Delta)).
\]

A closed immersion $\iota:Y\to X(\Delta)$ of a $k$-scheme $Y$ into the toric $k$-variety $X(\Delta)$ yields a closed embedding $\iota^\an:Y^\an\to X(\Delta)^\an$ of topological spaces. We define the \emph{Kajiwara-Payne tropicalization $\Trop_{v,\iota}^{KP}(Y)$ of $Y$} as the image $\trop_{v,\Delta}^{KP}(Y)$, endowed with the subspace topology with respect to the inclusion $\iota^\trop:\Trop_{v,\iota}^{KP}(Y)\to \Trop_v^{KP}(X(\Delta))$. The Kajiwara-Payne tropicalization of $Y$ comes with a surjective continuous map
\[
 \trop_{v,\iota}^{KP}: \quad Y^\an \quad \longrightarrow \quad \Trop_{v,\iota}^{KP}(Y).
\]

\subsection{The associated blue scheme}
\label{subsection: Kajiwara-Payne - the associated blue scheme}
We explain how to obtain $\Trop_{v,\iota}^{KP}(Y)$ as a set of rational points of a scheme theoretic tropicalization $\Trop_v(Z)$ of a suitable blue $k$-scheme $Z$ that we define in the following. 

If $X(\Delta)=X_\tau=\Spec k[A_\tau]^+$ is affine, then the closed immersion $\iota:Y\to X_\tau$ corresponds to a surjection $\pi:k[A_\tau]^+\to \Gamma Y$ of rings. We define the blue $k$-scheme $Z_\tau$ as $\Spec\bpquot{k[A_\tau]}{\cR_\tau}$ where 
\[\textstyle
 \cR_\tau \quad = \quad \left\{ \ \sum a_i\=\sum b_j \ \left| \ \sum \pi(a_i)=\sum\pi(b_j) \ \right.\right\}.
\]
Note that the natural inclusion $\bpquot{k[A_\tau]}{\cR_\tau}\to \Gamma Y$ induces a morphism $\beta_\tau:Y\to Z_\tau$ whose associated morphism $\beta_\tau^+:Y\to Z_\tau^+$ of $k$-schemes is an isomorphism. 

For a closed subscheme $Y$ of an arbitrary toric $k$-variety $X(\Delta)$, we define the affine presentation $\cV_k(\Delta)$ as the diagram of morphisms $Z_\sigma\to Z_\tau$ whenever $\sigma\subset\tau$ and we define the blue $k$-scheme $Z$ as $\colim\cV_k(\Delta)$. The morphisms $\beta_\tau$ glue to a morphism $\beta:Y\to Z$, which induces an isomorphism $\beta^+:Y\to Z^+$ of $k$-schemes. We say that $Z$ is the \emph{blue model of $Y$ induced by $\iota$}.

The affine presentation $\cV_k(\Delta)$ gives also rise to a blue model $Z^{+\blue}$ of $Y$ that satisfies the hypotheses of Theorem \ref{thm: Berkovich analytification as rational point set}. Namely, we define $Z^{+\blue}$ as the colimit of $\cV_k(\Delta)^{+\blue}$, which is the affine presentation that consists of the affine ordered blue schemes $(U^+)^\blue$ where $U$ ranges through $\cV_k(\Delta)$.

\begin{thm}\label{thm: Kajiwara-Payne tropicalization as rational point set}
 The Kajiwara-Payne tropicalization $\Trop_{v,\iota}^{KP}(Y)$ is naturally homeomorphic to $\Bend_v(Z)(\T)$ and the diagram
 \[
  \xymatrix@R=1pc@C=6pc{Y^\an \ar[r]^{\trop_{v,\Delta}^{KP}} \ar[d]^\simeq & \Trop_{v,\iota}^{KP}(Y) \ar[d]^\simeq \\ \Bend_v(Z^{+\blue})(\T) \ar[r]^{\Bend_v(\beta)(\T)} & \Bend_v(Z)(\T)}
 \]
 of continuous maps commutes.
\end{thm}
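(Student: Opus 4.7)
The plan is to reduce to the affine case and construct the comparison map via restriction of valuations to $A_\tau$, then verify that this is a homeomorphism by matching the fine topology on the bend with the compact-open topology on Kajiwara-Payne space.

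First, I would use Theorem~\ref{thm: properties of the fine topology} to reduce the topological questions to affine patches: since $\T$ is a local topological Hausdorff semifield with open unit group, the fine topology on $\Bend_v(X)(\T)$ for a blue $\T$-scheme $X$ is determined by affine open coverings. The blue $k$-scheme $Z$ is constructed as $\colim \cV_k(\Delta)$, and since $k$ is a field (hence with $-1$), Proposition~\ref{prop: the bend preserves open closed immersions} and Lemma~\ref{lemma: bend preserves covering families} guarantee that $\Bend_v$ preserves the open immersions and covering structure of the affine presentation. The analogous statement for the Berkovich analytification and the Kajiwara-Payne tropicalization is built into their definitions as colimits. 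Hence it suffices to treat the affine case $X_\tau = \Spec k[A_\tau]^+$, $Y_\tau = \Spec R$, $Z_\tau = \Spec B_\tau$.

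In the affine case, by Theorem~\ref{thm: tropicalization for idempotent base} we identify $\Bend_v(Z_\tau)(\T) = \Val_v(B_\tau,\T)$ as the set of valuations $w' \colon B_\tau \to \T$ extending $v$. Restriction to $A_\tau$ (embedded in $B_\tau^\bullet$ as the class of $1 \cdot a$) yields a map $\rho \colon \Bend_v(Z_\tau)(\T) \to \Hom(A_\tau,\T)$. The key structural observation is that $B_\tau^\bullet$ is generated as a monoid by the classes of $c \cdot a$ with $c \in k^\bullet$ and $a \in A_\tau$, and that $v$-linearity forces $w'(c \cdot a) = v(c) \cdot w'|_{A_\tau}(a)$; this shows that $\rho$ is injective. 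Moreover, the subaddition of $B_\tau$ is generated (via $\cR_\tau$) by equalities $\sum a_i \= \sum b_j$ that hold after applying $\pi \colon B_\tau \hookrightarrow R$, so that the left-monomial relations in $B_\tau^\mon$ are exactly the relations $a \leq \sum b_j$ for which $\pi(a) = \sum \pi(b_j)$ in $R$. Thus, any analytic valuation $w \colon R \to \T$ extending $v$ restricts via $\pi$ to a valuation $w|_{B_\tau} \colon B_\tau \to \T$ (the nonarchimedean inequality in $R$ yields exactly the required morphism property for $\tilde w \colon B_\tau^\mon \to \T^\pos$), and the diagram in the theorem commutes on the nose with $\trop^{KP}_{v,\iota}(w) = \rho(w|_{B_\tau}) = w|_{A_\tau}$.

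For the bijection $\Bend_v(Z_\tau)(\T) \xrightarrow{\sim} \Trop^{KP}_{v,\iota}(Y_\tau)$ I still need surjectivity of $\rho$ onto the Kajiwara-Payne image: every $w' \in \Bend_v(Z_\tau)(\T)$ should extend to some $w \in Y_\tau^\an$. One shows this by defining $w \colon R \to \T$ on a general element $r \in R$ via $w(r) = \min\{\max_i v(c_i)\chi(a_i) : r = \sum c_i a_i\}$ where $\chi = \rho(w')$, and verifying that this infimum is attained and satisfies the nonarchimedean triangle inequality; the bend relations inherited from $\cR_\tau$ give precisely the compatibility needed for $w$ to descend consistently from $k[A_\tau]^+$ through the surjection $k[A_\tau]^+ \twoheadrightarrow R$. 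Finally, the comparison of topologies follows from Theorem~\ref{thm: properties of the fine topology}: by \ref{F1} the fine topology on $\Bend_v(Z_\tau)(\T)$ coincides with the compact-open (affine) topology on $\Hom_\T(\Bend_v(B_\tau),\T)$, and under the identification above this agrees with the subspace topology on $\Trop^{KP}_{v,\iota}(Y_\tau) \subset \Hom(A_\tau,\T)$ via the continuous open map $\rho$ (continuity because evaluation at $1 \otimes a$ is continuous; openness because basic opens $U_{a,V}$ pull back to basic opens $U_{1 \otimes a, V}$).

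The main obstacle is the surjectivity in the middle paragraph: it requires checking that the minimum defining $w(r)$ is well-defined and genuinely produces a nonarchimedean valuation, which is essentially the content of Payne's comparison between the set-theoretic tropicalization (image of Berkovich) and the tropical vanishing locus of the defining ideal of $Y$ in $k[A_\tau]^+$. Once this extension lemma is established, the diagram commutes by construction, since the composite $\Bend_v(Y_\tau)(\T) \to \Bend_v(Z_\tau)(\T) \to \Hom(A_\tau,\T)$ sends an analytic valuation $w$ to the monoid map $a \mapsto w(a)$, which is the definition of $\trop^{KP}_{v,\iota}$. Glueing via $\cV_k(\Delta)$ then propagates all statements to the general $\Delta$, completing the proof.
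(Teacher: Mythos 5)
Your proof follows essentially the same route as the paper: reduce to affine patches, identify $\Bend_v(Z_\tau)(\T)$ with $\Val_v(B_\tau,\T)$ via Theorem~\ref{thm: tropicalization for idempotent base}, and compare topologies by computing basic opens. You correctly flag the surjectivity step (that every $w'\in\Val_v(B_\tau,\T)$ lifts to some $w\in Y_\tau^\an$) as the genuine content, requiring Payne's/Kapranov's comparison of the analytic image with the tropical vanishing locus; the paper is in fact no more explicit about this, simply asserting that the image of $Y^\an$ equals $\Val_v(\bpquot{k[A_\tau]}{\cR_\tau},\T)$. For the topology, the paper passes to the ambient spaces $\Hom(A_\tau,\T)$ and $\Hom_\T(\Bend_v(k[A_\tau]),\T)$, uses that $\Bend_v$ preserves closed immersions and that $\T$ is Hausdorff so that both tropicalizations carry subspace topologies, and then shows the map $\Psi$ between the ambient spaces is a homeomorphism; your direct comparison of basic opens is the same calculation, just phrased at the level of the subspaces rather than the ambient spaces.
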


\begin{proof}
 Since all functors in questions are defined in terms of affine coverings, it is enough to prove the theorem in the affine case. Since the set $X_\tau^\trop=\Hom(A_\tau,\T)$ of monoid morphisms stays in natural bijection with the set $\Val_v(k[A_\tau],\T)$ of valuations on $k[A_\tau]$ in $\T$ that extend $v$, the image $\Trop_{v,\iota}^{KP}(Y)$ of $Y^\an=\Val_v(k[A_\tau]^+/I,\T)$ under $\Trop_{v,\tau}^{KP}$ equals $\Val_v(\bpquot{k[A_\tau]}{\cR_\tau},\T)$, which stays in natural bijection with $\Bend_v(Z)(\T)$ by Theorem \ref{thm: tropicalization for idempotent base}.
 
 Note that the commutativity of the diagram follows from the functoriality of the bend functor. Thus we are left with showing that the bijection $\Trop_{v,\iota}^{KP}(Y)\to\Bend_v(Z)(\T)$ is a homeomorphism. By Theorem \ref{thm: Giansiracusa tropicalization as bend} and Example \ref{ex: real topology of the tropical numbers} and the local definition of the topology of $\Trop_{v,\iota}^{KP}(Y)$, this can be verified on affine patches.
 
 Since $\Bend_v(Z)(\T)$ inherits the subspace topology from $\Hom_\T(\Bend_v(k[A_\tau]),\T)$ and since $\Trop_{v,\iota}^{KP}(Y)$ is defined as a topological subspace of $X_\tau^\trop=\Hom(A_\tau,\T)$, it suffices to show that the natural bijection 
 \[
  \Psi: \ \Hom(A_\tau,\T) \quad \longrightarrow \quad \Hom_\T(\Bend_v(k[A_\tau]),\T)
 \]
 is a homeomorphism. Since $\Bend_v(k[A_\tau])=\bpquot{k[A_\tau]^\bullet\otimes_{k^\bullet}\T}{\bend_v(k[A_\tau])}$, the image of a basic open of the former morphism set is
 \[
  \Psi(U_{a,V}) \quad = \quad \bigl\{ \ f: k[A_\tau]^\bullet\otimes_{k^\bullet}\T \to\T \ \bigl| \ f(a\otimes 1)\in V \ \bigr\} \quad = \quad U_{a\otimes 1,V}
 \]
 where $a\in A_\tau$ and $V\subset \T$ open, and $U_{a\otimes 1,V}$ is a basic open in $\Hom_\T(\Bend_v(k[A_\tau]),\T)$. This shows that $\Psi$ is an open map. 
 
 Conversely, consider a basic open $U_{ca\otimes t,V}$ of $\Hom_\T(\Bend_v(k[A_\tau]),\T)$ where $c\in k$, $a\in A_\tau$, $t\in \T$ and $V\subset \T$ is an open subset. Denote by $m_{v(c)t}:\T\to\T$ the multiplication by $v(c)t$, which is a continuous map. Then we have
 \begin{align*}
  U_{ca\otimes t,V} \quad &= \quad \big\{ \, f:\Bend_v(k[A_\tau])\to \T \, \big| \, f(ca\otimes t)\in V \, \big\} \quad \\
                          &= \quad \big\{ \, f:\Bend_v(k[A_\tau])\to \T \, \big| \, f(a\otimes 1)\in m_{v(c)t}^{-1}(V) \, \big\} \quad = \quad U_{a\otimes 1,m_{v(c)t}^{-1}(V)}
 \end{align*}
 and thus $\Psi^{-1}(U_{ca\otimes t,V})= U_{a,m_{v(c)t}^{-1}(V)}$, which is a basic open of $\Hom(A_\tau,\T)$. This shows that $\Psi$ is continuous and completes the proof of the theorem.
\end{proof}

\begin{ex}\label{ex: Kajiwara-Payne tropicalization as bend}
 We illustrate the Kajiwara-Payne tropicalization of the line $T_1+T_2+1=0$ in $\A_k^2$ for an algebraically closed field $k$ with trivial absolute value $v:k\to\T$; also see \cite[Ex.\ 3.1, 3.4, 3.7]{Lorscheid22} for further details on this example. 
 
 The ambient toric variety is $\A_k^2=\Spec k[A]^+$ for the monoid $A=\{T_1^iT_2^j\mid i,j\in\}$ and the closed subscheme in question is $Y=\Spec k[A]^+/(T_1+T_2+1)$, embedded via $\iota:Y\to\A^2_k$. See Figure \ref{figure: KP tropicalization} for an illustration of the morphism $Y^\an\to Y^\trop$ from the Berkovich space $Y^\an$ of $Y$ to the Kajiwara-Payne tropicalization $Y^\trop=\Trop_{v,\iota}^{KP}(Y)$. The Berkovich space $Y^\an$ is a star whose rays are labelled by the elements in $Y(k)\cup\{\infty\}$; the rays corresponding to elements of $Y(k)$ are closed (its endpoint corresponds to the seminorm that factors through the corresponding residue field), the ray at infinity is open. The map to the tropicalization $Y^\trop$ contracts all rays to the central point, but for the ray to infinity and the two rays corresponding to the points $(1,0)$ and $(0,1)$ of $Y(k)\subset k^2$, which are mapped one-to-one to $Y^\trop$.
 \begin{figure}[h]
  \centering
  \begin{tikzpicture}[inner sep=0,x=25pt,y=25pt,font=\tiny]
   \node (origin) {};
   \foreach \a in {1,...,32}{\draw[gray] (0,0) -- ++(\a*360/32:2);}
   \draw[very thick,black] (0,0) -- ++(45:2);
   \draw[very thick,black] (0,0) -- ++(270:2);
   \draw[very thick,black] (0,0) -- ++(180:2);
   \filldraw (0,0) circle (2pt);
   \filldraw (180:2) circle (2pt);
   \filldraw (270:2) circle (2pt);
   \draw[fill=white] (45:2) circle (2pt);
   \draw[very thick,->] (3,0) -- (6,0);
   \node[font=\normalsize] at (4.5,0.3) {$\trop$};
   \node[font=\normalsize] at (2.5,1.6) {$Y^\an$};
   \node[font=\normalsize] at (7,1.6) {$Y^\trop$};
   \draw[very thick] (9,0) -- ++(45:2);
   \draw[very thick] (9,0) -- ++(180:2);
   \draw[very thick] (9,0) -- ++(270:2);
   \filldraw (9,0) circle (2pt);
   \filldraw (9,0)++(180:2) circle (2pt);
   \filldraw (9,0)++(270:2) circle (2pt);
   \draw[fill=white] (9,0)++(45:2) circle (2pt);
  \end{tikzpicture}
  \caption{The Kajiwara-Payne tropicalization of the line $T_1+T_2+1=0$ in $\A^2_k$}
  \label{figure: KP tropicalization}
 \end{figure}
 
 The inclusion $A\to k[A]^+$ singles out the blue model $B=\bpquot{k[A]}{\cR}$ with $\cR=\gen{T_1+T_2+1\=0}$. Using the identification $B[A]^\bullet\otimes_{k^\bullet}\T=\T[A]$, the bend of $B$ along $v$ is $\Bend_v(B)=\bpquot{\T[A]}{\bend_v(B)}$ for the bend relation
 \[\textstyle
  \bend_v(B) \ = \ \big\{ a+\sum b_j\=\sum b_j \,\big|\, a\=\sum b_j\text{ in }\cR \big\}.
 \]
 So $\bend_v(B)$ contains the relations
 \[
  T_1+T_2+1 \ = \ T_2+1 \ = \ T_1+1 \ = \ T_1+T_2,
 \]
 which determine the tropical variety $Y^\trop$ as the subset $\{(a_1,a_2)\in\T^2\mid a_2+1=a_1+1=a_1+a_2\}$ of $T^2$. Note however that the bend relation is not generated by these relations. For instance, $\bend_v(B)$ contains the relation
 \[
  T_1^2+T_1T_2+T_2+1 \ = \ T_1T_2+T_2+1
 \]
 since $T_1^2+T_1T_2+T_2+1$ is the tropicalization of the polynomial $T_1(T_1+T_2+1)-(T_1+T_2+1)$ in the ideal $\gen{T_1+T_2+1}$ of definition of $Y$.
\end{ex}


\section{Foster-Ranganathan tropicalization}
\label{section: Foster-Ranganathan tropicalization}

Motivated by the paper \cite{Banerjee15} of Banerjee, Foster and Ranganathan generalize in \cite{Foster-Ranganathan15} the Kajiwara-Payne tropicalization to higher rank valuations of the ground field $k$. In this section, we will show how the Foster-Ranganathan tropicalization fits into the context of scheme theoretic tropicalization. We recall the setup of \cite{Foster-Ranganathan15}.

We denote by $\T^{(n)}$ the idempotent semiring $\R_{>0}^n\cup\{0\}$ with componentwise multiplication and whose addition addition is defined as taking the maximum with respect to the lexicographical order. The order topology turns $\T^{(n)}$ into a topological Hausdorff semifield. In particular, $\T^{(n)}$ satisfies all the hypotheses of Theorem \ref{thm: properties of the fine topology}.

Let $k$ be a field with valuation $v:k\to\T^{(n)}$ and $Y=\Spec B$ an affine $k$-scheme. The \emph{Foster-Ranganathan analytification of $Y$ along $v$} is the set
\[
 \An^{FR}_v(Y) \ = \ \{ \, w: B\to \T^{(n)} \, | \, w\vert_k=v_k \, \}
\]
of all valuations $w$ that extend $v$ to $B$. It is endowed with the compact-open topology where $B$ is considered as a discrete $k$-algebra.

Let $A$ be a commutative monoid with zero and $\pi:k[A]^+\to B$ a surjection of $k$-algebras. This corresponds to a closed immersion $\iota:Y\to X$ of $k$-schemes where $X=\Spec k[A]^+$. For instance, if $A-\{0\}$ is a finitely generated abelian group, then $X$ is a split $k$-torus. The \emph{Foster-Ranganathan tropicalization of $Y$ along $v$ with respect to $\iota$} is the image $\Trop^{FR}_{v,\iota}(Y)$ of the map
\[
 \trop^{FR}_{v,\iota}: \An_v^{FR}(Y) \ \longrightarrow \ \Hom(A,\T^{(n)})
\]
that sends a valuation $w:B\to \T^{(n)}$ to the composition $A\to k[A]^+\to B\to \T^{(n)}$. We endow $\Hom(A,\T^{(n)})$ with the compact-open topology, with respect to the discrete topology for $A$, and $\Trop^{FR}_{v,\iota}(Y)$ with the subspace topology.

Let $Z=\Spec \bpquot{k[A]}\cR$ be the blue $k$-scheme associated with $\iota:Y\to X$ where $\cR=\{\sum a_i\=\sum b_j|\sum \pi(a_i)=\sum\pi(B_j)\}$, and $\beta:Y\to Z$ the morphism induced by the inclusion $\bpquot{k[A]}\cR\to B$. Let $Z^{+\blue}$ be the affine blue scheme $\Spec B$ where we consider $B$ as a blueprint.

\begin{thm}\label{thm: Foster-Ranganathan analytification as rational point set}
 The Foster-Ranganathan analytification $\An^{FR}_v(Y)$ is naturally homeomorphic to $\Bend_v(Y)(\T^{(n)})$, the Foster-Ranganathan tropicalization $\Trop^{FR}_{v,\iota}(Y)$ is naturally homeomorphic to $\Bend_v(Z)(\T^{(n)})$ and the diagram
 \[
   \xymatrix@R=1pc@C=6pc{\An_v^{FR}(Y) \ar[r]^{\trop_{v,\iota}^{FR}} \ar[d]^\simeq & \Trop_{v,\iota}^{FR}(Y) \ar[d]^\simeq \\ \Bend_v(Z^{+\blue})(\T^{(n)}) \ar[r]^{\Bend_v(\beta)(\T^{(n)})} & \Bend_v(Z)(\T^{(n)})}
 \]
 of continuous maps commutes.
\end{thm}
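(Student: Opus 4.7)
The plan is to mimic the proofs of Theorems \ref{thm: Berkovich analytification as rational point set} and \ref{thm: Kajiwara-Payne tropicalization as rational point set} verbatim, substituting $\T^{(n)}$ for $\T$ throughout. The essential inputs are that $\T^{(n)}$ is idempotent (so that Theorem \ref{thm: tropicalization for idempotent base} applies and yields a bijective identification of valuations with morphisms of bends) and that $\T^{(n)}$ is a local topological Hausdorff semifield with open unit group under the order topology (so that Theorem \ref{thm: properties of the fine topology} allows us to compute the fine topology on rational points from any affine open cover). Both facts are immediate from the definition of $\T^{(n)}$ as $\R_{>0}^n\cup\{0\}$ with componentwise multiplication and lexicographic max.

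First, I would settle the bijections at the level of underlying sets. By definition, $\An^{FR}_v(Y)$ is the set $\Val_v(B,\T^{(n)})$ of valuations extending $v$, and $\Trop^{FR}_{v,\iota}(Y)$ is its image under restriction along $\pi\circ\bigl(A\hookrightarrow k[A]^+\bigr)$. Since $\T^{(n)}$ is idempotent, Theorem \ref{thm: tropicalization for idempotent base} identifies $\Val_v(B,\T^{(n)})$ with $\Bend_v(Y)(\T^{(n)})$ naturally. Similarly, for the blue model $Z=\Spec\bpquot{k[A]}\cR$ attached to $\iota$, Theorem \ref{thm: tropicalization for idempotent base} identifies $\Val_v(\bpquot{k[A]}\cR,\T^{(n)})$ with $\Bend_v(Z)(\T^{(n)})$. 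The restriction map $\trop^{FR}_{v,\iota}$ matches, under these identifications, with the map $\Bend_v(\beta)(\T^{(n)})$ by functoriality of the bend, giving the commutativity of the square at the level of sets. Its image is exactly $\Trop^{FR}_{v,\iota}(Y)$ by construction.

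Next, I would upgrade these bijections to homeomorphisms. For $\An^{FR}_v(Y)$ this is precisely the argument in Theorem \ref{thm: Berkovich analytification as rational point set}: using that $\T^{(n)}$ satisfies \ref{F1}--\ref{F8} via Theorem \ref{thm: properties of the fine topology}, the comparison reduces to the affine case, where the canonical bijection
\[
 \Psi:\Bend_v(B)(\T^{(n)})\longrightarrow \An^{FR}_v(Y),\quad f\longmapsto \bigl(B\to B^\bullet\to\Bend_v(B)\stackrel{f}\to\T^{(n)}\bigr),
\]
is shown to pull back basic opens $U_{a,W}$ of the compact-open topology to basic opens $U_{a\otimes 1,W}$ of the affine topology, and to send a basic open $U_{a\otimes t,W}$ to $U_{a,m_t^{-1}(W)}$, which is open because multiplication by $t\in\T^{(n)}$ is continuous. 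The same argument with $\bpquot{k[A]}\cR$ in place of $B$ produces the homeomorphism $\Trop^{FR}_{v,\iota}(Y)\simeq \Bend_v(Z)(\T^{(n)})$, by exactly the computation at the end of the proof of Theorem \ref{thm: Kajiwara-Payne tropicalization as rational point set} (the subspace topologies on both sides are compatible via the closed embedding $Z\to\Spec k[A]$ and property \ref{F6}).

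The only step requiring some care, and the closest thing to an obstacle, is to verify that all the elementary topological facts about $\T$ used in those earlier proofs (local, Hausdorff, open unit group, continuity of multiplication and inversion) genuinely hold for $\T^{(n)}$ under the lexicographic order topology; once this is in place, the arguments of sections \ref{section: Berkovich analytification} and \ref{section: Kajiwara-Payne tropicalization} transfer without modification, and the commutativity of the diagram is automatic from the functoriality of $\Bend_v$.
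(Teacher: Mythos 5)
Your proposal is correct and follows exactly the same route as the paper, which simply observes that the proofs of Theorems \ref{thm: Berkovich analytification as rational point set} and \ref{thm: Kajiwara-Payne tropicalization as rational point set} apply verbatim with $\T$ replaced by $\T^{(n)}$. You spell out the inputs (idempotency of $\T^{(n)}$ for Theorem \ref{thm: tropicalization for idempotent base}; the topological semifield properties for Theorem \ref{thm: properties of the fine topology}) in more detail than the paper does, but the argument is the same.
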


\begin{proof}
 The proofs of Theorems \ref{thm: Berkovich analytification as rational point set} and \ref{thm: Kajiwara-Payne tropicalization as rational point set} apply verbatim with $\T$ exchanged by $\T^{(n)}$.
\end{proof}

\begin{rem}
 Foster and Ranganathan consider in \cite{Foster-Ranganathan15} also the topology for $\T^{(n)}$ induced from the natural embedding into the Euclidean space $\R^n$. Since $\T^{(n)}$ is also a topological Hausdorff semiring with respect to the Euclidean topology, the analogue of Theorem \ref{thm: Foster-Ranganathan analytification as rational point set} also holds for the Euclidean topology.
\end{rem}

\begin{rem}
 Note that the scheme theoretic Foster-Ranganathan analytification is compatible with the order preserving projections $\T^{(n)}\to \T^{(l)}$ for $l<n$, as considered in section 2.1 of \cite{Foster-Ranganathan15}, in the following sense.
 
 A rank $n$ valuation $v_n:k\to\T^{(n)}$ of the field $k$ induces a rank $l$ valuation $v_j:k\to\T^{(l)}$ for every $l<n$ by composing with the projection $\pi_{n,l}:\T^{(n)}\to \T^{(l)}$ onto the first $l$ factors. This map induces a continuous map $\pi_{n,l}^{FR}(Y):\An_{v_n}^{FR}(Y) \to \An_{v_{l}}^{FR}(Y)$ of Foster-Ranganathan analytifications.
 
 This continuous map can be recovered by applying $\Phi_Y=\Hom_{\T^{(n)}}(\Spec(-),\Bend_{v_n}(Z^{+\blue}))$ to the morphism $\pi_{n,l}:\T^{(n)}\to\T^{(l)}$. In other words,
 \[
  \xymatrix@R=1pc@C=6pc{\An_{v_n}^{FR}(Y) \ar[r]^{\pi_{n,l}^{FR}(Y)} \ar[d]^\sim & \An_{v_{l}}^{FR}(Y)\ar[d]^\sim \\ \Bend_{v_n}(Z^{+\blue})(\T^{(n)}) \ar[r]^{\Phi_Y(\pi_{n,l})} &\Bend_{v_l}(Z^{+\blue})(\T^{(l)}) }
 \]
 is a commutative diagram of continuous maps where we use that $\Bend_{v_n}(Y)\otimes_{\T^{(n)}}\T^{(l)}$ is isomorphic to $\Bend_{v_l}(Z^{+\blue})$.
 
 There is a similar commutative diagram for the projections $\Trop_{v_n,\iota}^{FR}(Y) \to \Trop_{v_{l},\iota}^{FR}(Y)$ of Foster-Ranganathan tropicalizations, which play a prominent role in a forthcoming paper by Foster and Hully.
\end{rem}

\subsection{What is new?}
As a consequence, we can extend the Foster-Ranganathan tropicalization beyond the affine case. For instance, a closed immersion $\iota:Y\to X(\Delta)$ of $Y$ into a toric variety $X(\Delta)$ yields an associated blue $k$-model $Z$ of $Y$. We define the corresponding \emph{Foster-Ranganathan tropicalization $\Trop_{v,\iota}^{FR}(Y)$} as the topological space $\Bend_v(Z)(\T^{(n)})$. By Theorem \ref{thm: properties of the fine topology}, the affine open subschemes $Y_\sigma=Y\cap X_\sigma$ for $\sigma\in \Delta$ yield open topological embeddings $\Trop_{v,\iota}^{FR}(Y_\sigma)\to\Trop_{v,\iota}^{FR}(Y)$, which cover $\Trop_{v,\iota}^{FR}(Y)$.

More generally, the Foster-Ranganathan tropicalization can be extended to toroidal embeddings and log structures via their associated blue schemes, as considered in section \ref{section: Ulirsch tropicalization}.


\section{Giansiracusa tropicalization}
\label{section: Giansiracusa tropicalization}

Jeff and Noah Giansiracusa introduce in \cite{Giansiracusa13} the bend relation for a closed subscheme of a toric variety or, more generally, of a monoid scheme over a non-archimedean field $k$. We recall this theory and explain how to recover the Giansiracusa bend relation from our point of view.

\subsection{The bend functor for morphisms into monoid schemes}\label{subsection: bend functor for morphisms into monoid schemes}
Let $k\to R$ be a ring homomorphism and $v:k\to T$ a valuation into an idempotent semiring $T$ that satisfies $v(-1)=1$. Let $A_0$ be a monoid with zero and $\eta:A_0\to R$ a multiplicative map such that the induced homomorphism $\eta^+_k:k[A_0]^+\to R$ of $k$-algebras is surjective. Let $I=(\eta_k^+)^{-1}(0)$ be the kernel. The \emph{Giansiracusa tropicalization of $R$ with respect to $v$ and $\eta$} is the semiring 
\[
 \Trop_{v,\eta}^{GG}(R) \quad = \quad T[A_0]^+ \, \sslash \, \bend_{v,\eta}^{GG}(I)
\]
where the \emph{Giansiracusa bend relation} $\bend_{v,\eta}^{GG}(I)$ is generated by the relations 
\[\textstyle
 v(c_a)a \ + \ \sum v(c_j)b_j \quad \= \quad \sum v(c_j)b_j \qquad \text{for which} \qquad c_aa \ + \ \sum c_jb_j \ \in \  I
\]
with $c_a,c_j\in k$ and $a,b_j\in A_0$.

\begin{rem}
 Note that the congruence $\bend_{v,\eta}^{GG}(I)$ is in fact already generated by those relations of the above form for which $a$ and the $b_j$ are pairwise different. This observation explains that our definition of $\Trop_{v,\eta}^{GG}(R)$ coincides with the original definition in \cite{Giansiracusa13}.
\end{rem}

For integral monoids $A$, \cite{Giansiracusa13} shows that the bend relations are compatible with localizations. Therefore the Giansiracusa tropicalization can be extended to $k$-schemes $Y$ with respect to a closed immersion $\iota:Y\to X^+_k$ where $X^+_k$ is the $k$-scheme associated with an integral monoid scheme $X$. 

Choosing compatible affine presentations of $Y$ and $X$ and applying $\Trop_{v,\eta}^{GG}$, where $\eta$ stands for the map between the coordinate blueprints of objects of the chosen affine presentations, yields the Giansiracusa tropicalization $\Trop_{v,\iota}^{GG}(Y)$ of $Y$ with respect to $v$ and $\iota$. In more detail, this works as follows.

Let $\cU$ be an affine presentation of $X$, e.g.\ the family of all affine open subschemes of $X$ together with the inclusion morphisms. Then $\cU^+_k$ is an affine presentation of $X^+_k$, which consists of all open subschemes of the form $U^+_k$ of $X^+_k$ where $U$ is an affine open in $X$, and the system $\cV=\iota^{-1}\cU^+_k$ of all affine opens $V_U=\iota^{-1}(U^+_k)$ together with the induced morphisms from $\cU^+_k$ is an affine presentation of $Y$.

For every affine open $U$ of $X$ that is in $\cU$, we obtain a map $\eta_U:\Gamma U\to \Gamma U^+_k\to\Gamma V_U$, which yields a family of affine schemes $\Trop_{v,\eta_U}^{GG}(\Gamma V_U)$. The open immersions $U\to U'$ in $\cU$ yield open immersions $\Trop_{v,\eta_U}^{GG}(\Gamma V_U)\to\Trop_{v,\eta_{U'}}^{GG}(\Gamma V_{U'})$ since all functors involved commute with finite localization. This yields an affine presentation $\Trop_{v,\iota}^{GG}(\cV)$ of affine ordered blue schemes. 

Let $\Trop_{v,\iota}^\blue(Y)$ be the colimit of $\Trop_{v,\iota}^{GG}(\cV)$ in $\OBSch$. The \emph{Giansiracusa tropicalization of $V$ along $v:k\to T$ with respect to $\iota:V\to X^+_k$} is the associated semiring scheme $\Trop_{v,\iota}^{GG}(V)=\Trop_{v,\iota}^\blue(Y)^+$.

\subsection{The associated blue scheme}\label{subsection: blue scheme associated with a toric embedding}
The connection with the bend functor from section \ref{subsection: The bend functor} is as follows. A surjection $\eta:A_0\to k[A_0]^+ \to R$ yields the blueprint $B=\bpquot{A}{\cR}$ with $A=k[A_0]$ and
\[\textstyle
 \cR \quad = \quad \left\{ \ \sum a_i \= \sum b_j  \ \left| \ \sum\eta^+(a_i)=\sum\eta^+(b_j)\text{ in } R \ \right.\right\},
\]
together with the morphism $B\to R$, which induces an isomorphism $B^+\simeq R$. A closed immersion $\iota:Y\to X^+_k$ where $X$ is a monoid scheme yields a blue $k$-scheme $Z$ by choosing compatible affine presentations for $Y$ and $X$ and applying the above definition to the coordinate blueprints.

\begin{thm} \label{thm: Giansiracusa tropicalization as bend}
 There is a canonical morphism $\Bend_v(Z)\to \Trop_{v,\iota}^{\blue}(Y)$ of ordered blue schemes that induces an isomorphism $\Bend_v(Z)^+\simeq \Trop_{v,\iota}^{GG}(Y)$ of semiring schemes.
\end{thm}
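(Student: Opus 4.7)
The plan is to reduce to the affine case and exhibit explicit mutually inverse semiring maps between $\Bend_v(B)^+$ and $\Trop_{v,\eta}^{GG}(R)$, where $B=\bpquot{k[A_0]}\cR$ is the blueprint associated with $\iota$ as in section \ref{subsection: blue scheme associated with a toric embedding}. The reduction is justified because both constructions commute with localizations: for $\Bend_v$ this is Lemma \ref{lemma: bend relations commute with localizations}, and for $\Trop_{v,\eta}^{GG}$ it is the compatibility noted at the end of section \ref{subsection: bend functor for morphisms into monoid schemes}. Gluing the affine isomorphisms over compatible affine presentations of $Y$ and $X$ then yields both the canonical comparison morphism and the asserted isomorphism $\Bend_v(Z)^+\simeq\Trop_{v,\iota}^{GG}(Y)$.

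In the affine setting I would first construct a canonical morphism of ordered blueprints $\Bend_v(B)\to\Trop_{v,\eta}^{GG}(R)$ sending the class of $(c\cdot a)\otimes t$ to $v(c)\,t\cdot a$; passing to spectra this gives the canonical comparison morphism of the theorem statement. Three compatibilities need to be checked: (i) $k^\bullet$-linearity of the tensor product, which is immediate from multiplicativity of $v$; (ii) if $c\cdot a$ and $c'\cdot a'$ map to the same element of $R$, then $c\cdot a-c'\cdot a'\in I$, and the two Giansiracusa generators $v(c)\,a+v(-c')\,a'\=v(-c')\,a'$ and $v(-c')\,a'+v(c)\,a\=v(c)\,a$ combine, using $v(-1)=1$, to yield $v(c)\,a=v(c')\,a'$ in $\Trop_{v,\eta}^{GG}(R)$; (iii) a bend generator $a'\=\sum b'_j$ in $B$, written as $a'=c_a\cdot a$ and $b'_j=c_j\cdot b_j$, translates via $k$ with $-1$ to the membership $c_a\cdot a-\sum c_j\cdot b_j\in I$, whose Giansiracusa generator $v(c_a)\,a+\sum v(-c_j)\,b_j\=\sum v(-c_j)\,b_j$ is, after absorbing $v(-1)=1$, precisely the image of the bend relation. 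The identity $v(-1)=1$ used above follows from $v(-1)^2=v(1)=1$ together with the total order on $T^\pos$: any nonzero $s\in T$ with $s^2=1$ must satisfy $s=1$, since $s\geq 1$ gives $s^2\geq s$ and hence $1\geq s$, and symmetrically for $s\leq 1$.

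To upgrade this to an isomorphism $\Bend_v(B)^+\simeq\Trop_{v,\eta}^{GG}(R)$ I would exhibit an inverse $\bar\Phi\colon\Trop_{v,\eta}^{GG}(R)\to\Bend_v(B)^+$ defined on generators by $t\cdot a\mapsto a\otimes t$ (viewing $a\in A_0$ as $1\cdot a\in B^\bullet$) and extended as a $T$-semiring homomorphism. Well-definedness reduces to checking that a Giansiracusa generator $v(c_a)\,a+\sum v(c_j)\,b_j=\sum v(c_j)\,b_j$ coming from $c_a\cdot a+\sum c_j\cdot b_j\in I$ is sent, via $k^\bullet$-linearity, to $(c_a\cdot a)\otimes 1+\sum(c_j\cdot b_j)\otimes 1=\sum(c_j\cdot b_j)\otimes 1$ in $\Bend_v(B)^+$; rewriting the kernel membership as the symmetric blueprint relation $(-c_a)\cdot a\=\sum c_j\cdot b_j$ in $B$ (which is legitimate because $B$ is algebraic, $k$ being a ring) exhibits this as precisely a bend-relation generator of $\Bend_v(B)$, hence valid in $\Bend_v(B)^+$. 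The two maps act as the identity on generators in both directions, so they are mutually inverse.

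The main obstacle will be the careful bookkeeping between the two combinatorial presentations: $\Trop_{v,\eta}^{GG}(R)$ uses $T$-linear combinations of $A_0$-elements, while $\Bend_v(B)^+$ uses $T$-linear combinations of $B^\bullet$-classes $\overline{c\cdot a}$ that may identify nontrivially via $\eta^+$. Matching these rests on the dictionary $v(c)\,a\leftrightarrow\overline{c\cdot a}\otimes 1$, which in turn depends on (a) $k$ being a ring so that $B$ is with $-1$ and hence algebraic, allowing translation of kernel membership in $I$ into symmetric blueprint relations, and (b) the identity $v(-1)=1$. Once this dictionary is secured, every relation on either side corresponds precisely to a generator on the other, and the isomorphism follows by direct verification.
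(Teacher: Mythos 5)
Your proposal is correct and matches the paper's proof in all essentials: the reduction to the affine case via Lemma \ref{lemma: bend relations commute with localizations}, the map $c\cdot a\otimes t\mapsto v(c)t\cdot a$ as the comparison isomorphism, the translation between monomial relations $c_a a\leq\sum c_j b_j$ in $B$ and kernel membership in $I$ using that $k$ is a ring, and the crucial identity $v(-c)=v(c)$ proved from total ordering of $T$. The only differences are cosmetic: the paper proves $v(-c)=v(c)$ directly by comparing $v(c)$ and $v(-c)$ under multiplication by $v(-1)$, while you prove the equivalent $v(-1)=1$ from $v(-1)^2=1$; and you make explicit the well-definedness on $B^\bullet$ (your check (ii)) and the inverse map, which the paper handles implicitly via the observation that $\varphi:k[A_0]^\bullet\otimes_{k^\bullet}T\to T[A_0]$ is already an isomorphism of monoids.
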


\begin{proof}
 Since the definition of $\Trop_{v,\iota}^{\blue}(Y)$ in terms of affine presentations is compatible with the definition of $\Bend_v(Z)$, we are reduced to the affine case of a multiplicative morphism $\eta:A_0\to R$ from a monoid $A_0$ with zero into a $k$-algebra $R$ that yields a surjection $\eta^+_k:k[A_0]^+\to R$. Let $B$ be the associated blueprint. We have to show that there is a canonical morphism $\Bend_v(B)\to \Trop_{v,\eta}^{GG}(R)$ that induces an isomorphism $\Bend_v(B)^+\simeq \Trop_{v,\eta}^{GG}(R)$ of semirings.
 
 The association $c\cdot a\otimes t\mapsto v(c)t\cdot a$ for $c\in k$, $a\in A_0$ and $t\in T$ defines an isomorphism $\varphi:k[A_0]^\bullet\otimes_{k^\bullet}T\to T[A_0]$ of ordered blue $T$-algebras. If we can show that $\varphi$ identifies the relation $\bend_v(B)$ on $(k[A_0]^\bullet\otimes_{k^\bullet}T)^+$ with the Giansiracusa bend relation $\bend_{v,\eta}^{GG}(I)$ on $T[A_0]^+$, then it follows that $\varphi$ induces an isomorphism $\Bend_v(B)\to \bpquot{T[A_0]}{\bend_{v,\eta}^{GG}(I)}$, and the lemma follows after applying $(-)^+$.
 
 
 We show that $c_aa\otimes 1+\sum c_jb_j\otimes 1 \=\sum c_jb_j\otimes 1$ is a generator of $\bend_v(B)$ if and only if $v(c_a)a+\sum v(c_j)b_j\=\sum v(c_j)b_j$ is a generator of $\bend_{v,\eta}^{GG}(I)$. Indeed, the former relation is in $\bend_v(B)$ if and only if $c_aa\leq \sum c_jb_j$ in $B$. Since $B$ is algebraic and with $-1$, this is equivalent to $c_aa+\sum (-c_j)b_j\=0$, which, in turn, is equivalent to $c_aa+\sum (-c_j)b_j\in I$. By our assumptions on $v$, we have $v(-1)=1$ and therefore $v(-c)=v(c)$ for $c\in k$. Thus the latter condition is equivalent to $v(c_a)a+\sum v(c_j)b_j\=\sum v(c_j)b_j$ in $\Bend_{v,\eta}^{GG}(R)$, as claimed.
\end{proof}

As a consequence, we recover the central result of \cite{Giansiracusa14} about the universal tropicalization of an affine $k$-scheme along a valuation $v:k\to T$ with $v(-1)=1$. A concise formulation of this result requires the following variant of the functor $\Val_v(B,-)$.

Let $k$ be a ring and $\Alg_k^+$ be the category of $k$-algebras, whose objects are homomorphisms $k\to R$ of rings. It embeds as a full subcategory into the category $\Alg_k^\ob$ of ordered blue $k$-algebras. We denote this embedding by $(-)^\blue:\Alg_k^+\to\Alg_k^\ob$. For an affine $k$-scheme $Y=\Spec R$, let $\Val_v^+(Y,-):\Alg_k^+\to\Sets$ be the restriction of $\Val_v(R,-):\Alg_k^\ob\to\Sets$ to this subcategory. We denote by $Y^\blue=\Spec R^\blue$ the spectrum of the blueprint $R^\blue$ and call the semiring $T$-scheme $\Trop_{v,\iota}^{GG}(Y^\blue)^+$ the \emph{universal Giansiracusa tropicalization of $Y$ along $v$}.

\begin{cor}\label{cor: the universal tropicalization}
 Let $v:k\to T$ be a valuation from a ring $k$ into a totally ordered idempotent semiring $T$ and $Y$ an affine $k$-scheme. Then $\Val_v^+(Y,-)$ is represented by the universal Giansiracusa tropicalization $\Trop_{v,\iota}^{GG}(Y^\blue)^+$ of $Y$ along $v$.
\end{cor}

\begin{proof}
 By Theorem \ref{thm: tropicalization for idempotent base}, we have $\Val_v(Y^\blue,-)=\Hom_T(\Spec(-),\Bend_v(Y^\blue))$. Since every morphism $\Spec S\to \Bend_v(Z)$ factors uniquely through a $T$-morphism $\Spec S\to \Bend_v(Z)^+$, the functor $\Val_v^+(Y,-)$ is isomorphic to the functor $\Hom_T(\Spec(-),\Bend_v(Z)^+)$. Therefore the corollary follows from Theorem \ref{thm: Giansiracusa tropicalization as bend}.
\end{proof}

\begin{rem}
 Note that the definition of $Y^\blue$ makes sense for affine $k$-schemes, but that it is not clear how to extend this to a functor from $\Sch_k^+$ to $\OBSch_k$; also cf.\ the paragraph ``Differences to previous versions'' of the introduction. Therefore the statement of Corollary \ref{cor: the universal tropicalization} makes only sense for affine $k$-schemes. 
\end{rem}

\begin{ex}\label{ex: Giansiracusa tropicalization}
 We revisit Example \ref{ex: Kajiwara-Payne tropicalization as bend} to illustrate the Giansiracusa tropicalization and its relation to the bend of the associated blue scheme. Let $k$ be a field with non-archimedean valuation $v:k\to\T$. Let $Y=\Spec \big(k[X_1,X_2]^+/I\big)$ be the closed subscheme of $\A_{k}^{2,+}$ that is defined by the ideal $I=\gen{X_1+X_2+1}$. Then the Giansiracusa bend 
 \[\textstyle
  \bend_{v,\eta}^{GG}(I) \ = \ \big\langle v(c_a)a+\sum v(c_j)b_j\=\sum v(c_j)b_j \, \big| \, c_aa+\sum c_jb_j\in I \big\rangle
 \]
 of $I$ contains
 \[
  X_1+X_2+1 \ \= \ X_2+1 \ \= \ X_1+1 \ \= \ X_1+X_2.
 \]
 Note that $\bend_{v,\eta}^{GG}(I)$ is not generated by these relations stemming from the generator $T_1+T_2+1$ of $I$ as witnessed by the relation $T_1^2+T_1T_2+T_2+1\=T_1^2+T_2+1$, even though $T_1+T_2+1$ is a tropical basis for the Kajiwara-Payne tropicalization of $Y$. This behaviour is typical if $I$ is generated by polynomials with more than $3$ terms; cf.\ Example \ref{ex: Kajiwara-Payne tropicalization as bend} and \cite[section 8.1]{Giansiracusa13}.

 The associated blue scheme of $Y$ is
 \[
  Z \ = \ \Spec\big(\bpquot{k[X_1,X_2]}{\gen{X_1+X_2+1\=0}}\big),
 \]
 and its bend is defined by the same relations as they occur in the Giansiracusa bend, which yields
 \[
  \Bend_v(Z) \ = \ \Spec\big(\bpquot{\T[X_1,X_2]}{\bend_v(Z)}\big).
 \]
\end{ex}

\subsection{What is new?}
To speak in an analogy, the difference between the Giansiracusa tropicalization and the scheme theoretic tropicalization in terms of blue schemes is similar to the difference between subvarieties of a projective space and projective varieties. In other words, the enhancement of a variety $Y$ in an ambient monoid scheme with the structure of a blue scheme allows us to detach $Y$ from the monoid scheme and tropicalize it as an independent abstract geometric object. Besides this conceptual novelty, we have eliminated the following two technical restrictions of \cite{Giansiracusa13}.

Theorem \ref{thm: Giansiracusa tropicalization as bend} guarantees that the bend functor from this paper incorporates the Giansiracusa tropicalization completely. Therefore we can extend the Giansiracusa tropicalization to morphism $\iota:Y\to X$ into monoid schemes that are not necessarily integral. This can also be seen by directly generalizing \cite{Giansiracusa13}.
 
Moreover, our approach generalizes the Giansiracusa tropicalization to valuations $v:k\to T$ for which $v(-1)$ differs from $1$ \emph{in a functorial way}; cf.\ \cite[Rem.\ 6.4.2]{Giansiracusa13}.



\section{Maclagan-Rinc\'on weights}
\label{section: Maclagan-Rincon weights}

Maclagan and Rinc\'on show in \cite{Maclagan-Rincon14} that the weights of the tropicalization of a closed subvariety of a torus can be recovered from the Giansiracusa tropicalization. We will extend the  argument of \cite{Maclagan-Rincon14} to the scheme theoretic tropicalizations considered in this paper.

\subsection{Weights from the classical variety} Let $k$ be a field with valuation $v:k\to \T$ and value group $\Gamma=v(k^\times)$. Assume that $\Gamma$ is dense in $\T$ and that there exists a section $s:\Gamma\to k^\times$ to $v:k^\times\to \Gamma$ as a group homomorphism. Note that such a section always exists after passing to a suitable finite field extension of $k$. 

In this situation, the tropicalization $\Trop(Y)=\Trop_{v,\iota}^{KP}(Y)$ of a closed $k$-subscheme $Y$ of a split torus $\G_{m,k}^{n,+}$ can be endowed with the structure of a polyhedral complex whose top dimensional cells come with weights that satisfy a certain balancing condition with respect to the embedding $\Trop(Y)\subset (\T^\ast)^n\simeq \R^n$ where we identify $(\T^\times)^n$ with $\R^n$ by taking coordinatewise logarithms. Note our specific usage of $\G_{m,k}=\Spec k[X^{\pm1}]$ and $\G_{m,k}^+=\Spec k[X^{\pm1}]^+$, cf.\ section \ref{section: conventions}. In the following, we recall the definition of the weights of $\Trop(Y)$. 

Let $\cO_k=\{a\in k|v(a)\leq 1\}$ be the \emph{integers of $k$}, $\fm=\{a\in\cO_k|v(a)<1\}$ its unique maximal ideal and $k_0=\cO_k/\fm$ the \emph{residue field}. Let $\iota: Y\to\G_{m,k}^{n,+}$ be a closed immersion of $k$-schemes and $I\subset R$ the defining ideal where $R_n^+=k[X_1^{\pm1},\dotsc,X_n^{\pm1}]^+$ is the coordinate ring of $\G_{m,k}^{n,+}$. 

Let $\Trop(Y)\subset (\T^\times)^n$ be the set-theoretic tropicalization of $Y$. We say that a point $w\in\Trop(Y)$ is \emph{smooth} if it has a neighbourhood $U$ in $\Trop(Y)$ that is a smooth submanifold of $(\T^\times)^n\simeq\R^n$. Note that the set of smooth points of $\Trop(Y)$ is the compliment of the corner locus of $\Trop(Y)$.

Let $w=(w_i)$ be a smooth point of $\Trop(Y)$. For an element $f=\sum c_e X^e$ of $I$ where $e\in\Z^n$ is a multi-index and $c_e\in k$, we define its \emph{tropicalization} as $v(f)=\sum v(c_e)X^e$ and its \emph{value at $w$} as $v(f)(w)=\sum v(c_e)w^e$. We denote by $\overline a\in k_0$ the residue class of an element $a\in\cO_k$. The \emph{initial form of $f$ in $w$} is 
\[
 \init_w(f) \quad =  \sum_{v(c_e)w^e=v(f)(w)} \overline{s\bigl(v(c_e)^{-1}\bigr) \ v(c_e)} \ X^e,
\]
which is an element of $\overline R_n^+=k_0[X_1^{\pm1},\dotsc,X_n^{\pm1}]^+$. The \emph{initial ideal of $I$ in $w$} is the ideal $\init_w(I)$ of $\overline R_n^+$ that is generated by the initial forms $\init_w(f)$ of all $f\in I$. Let $\init_w(T)=\bigcap \fq_i$ be a primary decomposition for $I$ and $\fp_i$ the radical of $\fq_i$. We denote by $\mult(\fp_i,\init_w(I))$ the length of the $\overline R_n^+$-module $(\overline R_n^+/\fq_i)_{\fp_i}$. The \emph{multiplicity of $\Trop_{v,\iota}^{KP}(Y)$ in $w$} is defined as
\[
 \mult(w) \quad = \quad \sum \mult(\fp_i,\init_w(I))
\]
where the $\fp_i$ vary through all minimal associated primes of $\init_w(I)$. Note that this multiplicity does neither depend on the choice of section $s:\Gamma\to k^\times$ nor on the choice of primary decomposition $\init_w(I)=\bigcap \fq_i$.

The structure theorem for tropical varieties asserts the following. For details, cf.\ \cite[Thm.\ 3.3.6]{Maclagan-Sturmfels15}.

\begin{thm}
 The tropicalization of a purely $d$-dimensional $k$-subscheme $Y$ of $\G_{m,k}^n$ can be endowed with the structure of a balanced weighted polyhedral complex of dimension $d$ such that the weight of its $d$-dimensional polyhedra $\sigma$ equals $\mult(w)$ for each $w$ in the relative interior of $\sigma$. 
\end{thm}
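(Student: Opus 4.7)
The plan is to follow the classical Bieri--Groves--Speyer approach, using Gröbner degenerations as the main technical engine and toric intersection theory to establish balancing. First I would produce the polyhedral structure: for any $w\in\R^n$ the initial ideal $\init_w(I)$ depends only on the open cell of the Gröbner fan of $I$ containing $w$, so the locus $\{w\mid \init_w(I)\text{ contains no monomial}\}$ inherits a finite polyhedral complex structure, and the fundamental theorem of tropical geometry identifies this set with $\Trop(Y)$. For purity and dimension, the flat Gröbner degeneration $k[Y]\leadsto \overline R_n^+/\init_w(I)$ preserves Krull dimension, so $\init_w(I)$ cuts out a purely $d$-dimensional subscheme of $\G_{m,k_0}^{n,+}$; for $w$ in the relative interior of a cell $\sigma$ this subscheme is invariant under the subtorus whose cocharacter lattice is spanned by the lineality space of $\sigma$, so its quotient has dimension $d-\dim\sigma\geq 0$, with equality on top cells. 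Constancy of $\init_w(I)$ on relative interiors shows that $\mult$ is well-defined and constant on each top-dimensional cell.

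The heart of the argument is the balancing condition. For a codimension-one cell $\tau$ of $\Trop(Y)$, let $\sigma_1,\ldots,\sigma_r$ be the top cells adjacent to $\tau$ and let $u_1,\ldots,u_r$ be the primitive integer normals in the direction of $\sigma_i$ modulo $\tau$. Pick $w\in\tau^\circ$; then $\init_w(I)$ is invariant under the $(d-1)$-dimensional subtorus $T_\tau$ associated with the lineality of $\tau$, and the quotient cycle $C_\tau$ is purely one-dimensional in the quotient torus of dimension $n-d+1$. Completing to the toric surface whose fan rays are $u_1,\ldots,u_r$ and passing to the projective closure $\overline C_\tau$, the multiplicity $\mult(\sigma_i)$ equals the intersection number $\overline C_\tau\cdot D_{u_i}$ with the torus-invariant boundary divisor indexed by $u_i$; this can be checked by performing a further Gröbner degeneration of $\init_w(I)$ in the direction of $u_i$ and reading off the length of the resulting zero-dimensional artinian scheme. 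The balancing identity $\sum_i \mult(\sigma_i)\,u_i=0$ then reduces to the classical fact that any torus character restricted to the projective curve $\overline C_\tau$ is principal and hence has degree zero.

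The main obstacle is the identification of $\mult(\sigma_i)$ with the intersection multiplicity along $D_{u_i}$: this step requires a careful compatibility between two successive Gröbner degenerations (first in the direction $w$, then in the direction $u_i$) and a length computation in the resulting artinian local ring. Once this is in place, everything else reduces to bookkeeping with the Gröbner fan and a standard degree-theoretic argument on the projective curve $\overline C_\tau$, in the spirit of the Sturmfels--Tevelev multiplicity formula. The full proof is the content of Maclagan and Sturmfels' structure theorem, and the sketch above is intended only to indicate which of its pieces do the essential work.
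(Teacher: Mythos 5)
The paper does not prove this theorem: it states it and defers entirely to Maclagan--Sturmfels, Theorem~3.3.6, which is cited just before the statement. Your sketch is a fair outline of exactly that source's argument (Gröbner fan for the polyhedral structure, flatness of the Gröbner degeneration for purity and for local constancy of $\mult$, and a degree-zero computation for a character on a compactified curve to get balancing), so there is no conflict with the paper. One small slip worth flagging: after quotienting by the $(d-1)$-dimensional subtorus attached to a codimension-one cell $\tau$, the quotient torus in which $C_\tau$ lives has dimension $n-d+1$, which need not equal~$2$; one therefore completes to a toric variety of that dimension with rays $u_1,\dots,u_r$ rather than to a ``toric surface.'' The degree-zero argument on the projective closure of $C_\tau$ works unchanged in that generality, and the identification of $\mult(\sigma_i)$ with the local intersection number along the boundary divisor $D_{u_i}$ is indeed the delicate compatibility-of-degenerations step in the Sturmfels--Tevelev-style proof.
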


Note that every point $w$ in the relative interior of a top-dimensional polyhedron is smooth.

\subsection{Weights from the scheme theoretic tropicalization}
Let $Y$ be a purely $d$-dimensio\-nal $k$-subscheme of $\G_{m,k}^{n,+}$ and $Z$ the associated blue $k$-scheme. We will show in the following that the weights $\mult(w)$ can be determined from the scheme theoretic tropicalization $\Bend_v(Z)$.

For a more readable notation, we define 
\begin{align*}
           R_n &= k[X_1^{\pm1},\dotsc,X_n^{\pm1}],   &           S_n &= \T[X_1^{\pm1},\dotsc,X_n^{\pm1}],\\
 \overline R_n &= k_0[X_1^{\pm1},\dotsc,X_n^{\pm1}], & \overline S_n &= \B[X_1^{\pm1},\dotsc,X_n^{\pm1}].
\end{align*}

Let $I\subset R_n^+$ be the ideal of definition of $Y$. Then $Z$ is the spectrum of $B=\bpquot{R_n}{\cR_I}$ where $\cR_I=\{\sum a_i\=\sum b_j|\sum a_i-\sum b_j\in I\}$, and $\Bend_v(Z)$ is the spectrum of $\bpquot{B^\bullet\otimes_{k^\bullet}\T}{\bend_v(B)}$.

By the definition of $B$, we can choose a surjection $\pi:S_r\to \Bend_v(B)$, which yields a presentation of the form 
\[\textstyle
 \Bend_v(B)=\bpquot{S_r}{\cR_\pi} \qquad \text{with} \qquad \cR_\pi=\left\{\, \sum a_i\=\sum b_j \, \left| \, \sum\pi(a_i)\=\sum\pi(b_j) \, \right.\right\}.
\]
This corresponds to a closed immersion $\Bend_v(Z)\to \G_{m,\T}^r$. Note that we consider $\Bend_v(Z)$ as an abstract $\T$-scheme; therefore we allow $\pi$ to be any surjection and $r$ to differ from $n$.

Consider a smooth point $w\in\Bend_v(Z)(\T)$, which is a $\T$-morphism $w:\Spec\T\to \Bend_v(Z)$. Let $w^\#:\Bend_v(B)\to\T$ be the morphism between the respective blueprints of global sections. Given a polynomial $f=\sum a_eX^e\in S_r^+$ where $a_e\in\T$ and $e\in\Z^r$ is a multi-index, we define $a_ew^e=w^\#(\pi(a_eX^e))$ and $f(w)=\sum a_ew^e$, which are elements of $\T$. The \emph{initial form of $f$ in $w$} is 
\[
 \init_w(f) \quad = \quad \sum_{a_ew^e=f(w)} X^e,
\]
considered as a polynomial in $\overline S_r^+$. Note the formal analogy with the initial form of a polynomial with coefficients in $k$: the identity valuation $\id:\T\to \T$ yields the \emph{integers} $\cO_\T=\{a\in\T|a\leq1\}$, which is a local semiring with maximal ideal $\fm_\T=\{a\in\cO_\T|a<1\}$ and residue field $\B=\cO_\T/\fm_\T$. The unique section to $\id$ is the identity $\id:\T\to \T$, thus the coefficients $\overline{\id(a_e^{-1})a_e}$ of the initial form are $1$.

The \emph{initial preaddition of $\cR_\pi$ in $w$} is the preaddition 
\[\textstyle
 \init_w(\cR_\pi) \quad \= \quad \left\langle\left. \ \init_w\bigl(\sum a_i\bigr) \= \init_w\bigl(\sum b_j\bigr) \ \right| \ \sum a_i\=\sum b_j\text{ in }\cR_\pi \ \right\rangle
\]
on $\overline S_r$. Since $w$ is smooth, the set $L=\Hom(\bpquot{\overline S_r}{\init_w(\cR_\pi)},\T)$ is a linear subspace of $(\T^\times)^r\simeq\R^r$. 

We can exchange $\pi:S_r\to\Bend_v(B)$ by $\pi\circ\varphi$ for a suitable automorphism $\varphi$ of $S_r$, so that $L$ coincides with the span $\langle e_1,\dotsc,e_d\rangle$ of the first $d$ unit vectors of $\R^r$. We define $\init_w(\cR_\pi)'$ as the restriction of $\init_w(\cR_\pi)$ to $\B[X_{d+1}^{\pm1},\dotsc,X_r^{\pm1}]$. 

A subset $S$ of a $\B$-linear space $V$ is \emph{linearly independent over $\B$} if every element of $V$ can be written in at most one way as a finite $\B$-linear combination of the elements of $S$. The \emph{dimension $\dim_\B V$ of $V$} is the supremum of the cardinalities of all linear independent sets.

\begin{df}
 Let $w\in\Bend_v(Z)(\T)$ be a smooth point. The \emph{Maclagan-Rinc\'on weight of $w$} is
 \[
  \mu(w) \quad = \quad \dim_\B \, \big(\, \B[X_{d+1}^{\pm1},\dotsc,X_r^{\pm1}]^+ / \, \init_w(\cR_\pi)' \, \bigr).
 \]
\end{df}

The following theorem is essentially Theorem 1.2 of \cite{Maclagan-Rincon14}. Since the weights $\mu(w)$ are not explicitly exhibited in \cite{Maclagan-Rincon14} and an additional argument is required, we include a proof.

\begin{thm}\label{thm: Maclagan-Rincon weights}
 The multiplicity $\mult(w)$ coincides with the Maclagan-Rinc\'on weight $\mu(w)$ for every smooth point $w\in\Trop(Y)$. In particular, $\mu(w)$ is an invariant of $w$ and does not depend on the choice of $\pi$.
\end{thm}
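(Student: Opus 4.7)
The strategy is to reduce the statement to Maclagan and Rinc\'on's Theorem 1.2 in \cite{Maclagan-Rincon14}, bridging the two setups via Theorem \ref{thm: Giansiracusa tropicalization as bend}. The first step is to translate $\mu(w)$ into a semiring-theoretic invariant. By Theorem \ref{thm: Giansiracusa tropicalization as bend}, applying $(-)^+$ to the presentation $\Bend_v(B)=\bpquot{S_m}{\cR_\pi}$ produces a presentation of the Giansiracusa tropicalization $\Trop_{v,\iota}^{GG}(Y)$ as the quotient of the polynomial semiring $S_m^+$ by the congruence generated by $\cR_\pi$. Since the $\B$-dimension from the definition of $\mu(w)$ depends only on the underlying semiring $\bpquot{\overline S_m}{\init_w(\cR_\pi)}^+$, it will suffice to identify this semiring with the corresponding \emph{initial semiring} as used by Maclagan and Rinc\'on, and then invoke their theorem.

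The second step is to make the identification with Maclagan--Rinc\'on precise. Their analysis works inside a presentation of the Giansiracusa tropicalization as a quotient of $\T[X_1^{\pm 1},\dotsc,X_m^{\pm 1}]^+$ by the bend ideal, and for a point $w\in\cU$ they consider the $\B$-semiring quotient of $\overline S_m^+$ by the bend relations evaluated at $w$, restricted to the transversal variables $X_{d+1},\dotsc,X_m$ after changing coordinates so that the lineality space of the local polyhedron is spanned by the first $d$ unit vectors. Since $\init_w$ is defined purely in terms of which monomials achieve the tropical evaluation $f(w)$, the generators $\init_w(\sum a_i)\=\init_w(\sum b_j)$ of $\init_w(\cR_\pi)$ correspond precisely to the initial bend relations in the semiring sense; taking $(-)^+$ yields the congruence that defines the initial semiring on the Giansiracusa side. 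Thus the $\B$-vector space in the definition of $\mu(w)$ is canonically isomorphic to the one whose dimension Maclagan--Rinc\'on show to equal $\mult(w)$.

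The third step handles two technical points. First, Maclagan--Rinc\'on work with a fixed closed embedding $\iota:Y\to\G_{m,k}^{n,+}$ and the resulting presentation via $R_n^+\to R_n^+/I$, whereas we allow an arbitrary surjection $\pi:S_m\to\Bend_v(B)$ with $m$ possibly different from $n$. After the coordinate change that puts the lineality space in the span of $e_1,\dotsc,e_d$, the restriction to the variables $X_{d+1},\dotsc,X_m$ eliminates the toric factor and leaves an invariant quotient whose $\B$-dimension does not depend on $\pi$: any two such presentations $\pi,\pi'$ are related by a morphism of blueprints covering the identity on $\Bend_v(B)$, which after passing to $(-)^+$, restricting to transversal directions, and reducing modulo the initial relations, induces a $\B$-linear isomorphism on quotients. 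Second, one must check that the passage from the blueprint quotient to the semiring quotient before restriction commutes with restriction to the transversal variables; this follows from the fact that $\init_w(\cR_\pi)$ is homogeneous with respect to the $\Z^d$-grading induced by the lineality directions, so dividing out the corresponding $\G_m^d$-action and taking $(-)^+$ commute.

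The main obstacle will be the last item: verifying that the combinatorial data captured by $\init_w(\cR_\pi)$ inside the blueprint $\overline S_m$ faithfully recovers the congruence on $\overline S_m^+$ that Maclagan--Rinc\'on extract from the Giansiracusa bend, and that the resulting $\B$-dimension is unambiguous under the ambiguity in the choice of $\pi$. Once this is settled, the equality $\mu(w)=\mult(w)$ for $w$ in the relative interior of a top-dimensional polyhedron, together with the independence from $\pi$, is a direct consequence of \cite[Thm.\ 1.2]{Maclagan-Rincon14}.
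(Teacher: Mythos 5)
Your high-level strategy — reduce to Maclagan–Rinc\'on's Theorem 1.2 via Theorem \ref{thm: Giansiracusa tropicalization as bend} — is the same as the paper's, but your execution leaves a gap that you yourself flag in the final paragraph, and the paper closes it by a reduction you do not make.

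The key difference is the logical order. You try to argue directly that $\mu(w)$ is independent of the presentation $\pi$ (claiming that two presentations ``are related by a morphism of blueprints covering the identity on $\Bend_v(B)$'' which ``induces a $\B$-linear isomorphism on quotients''), and only then invoke Maclagan–Rinc\'on. This direct independence claim is not substantiated: a surjection $S_m\to\Bend_v(B)$ is not canonically related to another surjection $S_{m'}\to\Bend_v(B)$ by such a morphism, and it is not at all obvious that the operations of taking $(-)^+$, restricting to transversal variables, and passing to initial relations preserve $\B$-dimension under such a change. The paper sidesteps this by going the other way: it first observes that the \emph{classical} multiplicity $\mult(w)$ is invariant under torus automorphisms and coordinate inclusions $\T^n\to\T^m$, which allows one to \emph{choose} $\pi=\Bend_v(\eta)$ for a closed immersion $\eta$ into a torus so that $\cR_\pi=\bend_{v,\eta}^{GG}(I)$. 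Only then does it compute. Independence of $\pi$ falls out at the end from $\mu(w)=\mult(w)$, since $\mult(w)$ manifestly does not depend on $\pi$.

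You also omit the concrete computational ingredients the paper uses to turn the reduced situation into an equality of dimensions: \cite[Lemma 3.4.7]{Maclagan-Sturmfels15} to write $\mult(w)$ as $\dim_{k_0}\bigl(k_0[X_{d+1}^{\pm1},\dotsc,X_m^{\pm1}]^+/\init_w(I)'\bigr)$, \cite[Prop.\ 3.4]{Maclagan-Rincon14} for the commutation $\init_w\bigl(\bend_{v,\eta}^{GG}(I)\bigr)=\bend_{v_0,\eta}^{GG}\bigl(\init_w(I)\bigr)$, and \cite[Lemma 7.1.3]{Giansiracusa13} for the fact that tropicalization preserves the dimension of a finite-dimensional vector space. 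Your ``the generators $\init_w(\sum a_i)\=\init_w(\sum b_j)$ correspond precisely to the initial bend relations'' is exactly the content of the Prop.~3.4 commutation, but stated as though it were immediate; and without the other two lemmas the bridge between $\mu(w)$ and $\mult(w)$ is not actually built. So the proposal correctly identifies the target to reduce to, but the reduction itself is the hard part and remains open in your write-up.
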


\begin{proof}
 As a first point, we observe that the weights of $\Trop(Y)$ are invariant under interchanging coordinates, scaling coordinates and inclusions $\T^n\to\T^r$ as the first $n$ coordinates. This means that the weights of $\Trop(Y)=\Bend_v(Z)(\T)$ are invariant under automorphisms of $R_n$ and under a change $Y\to\G_{m,k}^{n,+}\to\G_{m,k}^{r,+}$ of the ambient torus that comes from a morphism $\G_{m,k}^n\to\G_{m,k}^r$ of blue $k$-schemes. 
 
 Therefore we can choose a morphism $\iota:Y\to \G_{m,k}^r$ such that $\iota^+:Y\to \G_{m,k}^{r,+}$ is a closed immersion of $k$-schemes and such that $\pi=\Bend_v(\eta)$ where $\eta=\iota^\ast:R_r\to\Gamma Y$ is the corresponding morphism between the respective global sections. In this case, the preaddition $\cR_\pi$ on $S_r$ equals the Giansiracusa bend relation $\bend_{v,\iota}^{GG}(I)$ where $I\subset R_r^+$ is the ideal defining $Y$. This reduces the proof to the situation of \cite{Maclagan-Rincon14}.

 By our choice of $\pi$ for a given $w$, the linear subspace $L$ of $\T^r\simeq\R^r$ equals the span $\langle e_1,\dotsc,e_d\rangle$ of the first $d$ unit vectors. By \cite[Lemma 3.4.7]{Maclagan-Sturmfels15}, we have 
 \[
  \mult(w) \quad = \quad \dim_{k_0}(k_0[X_{d+1}^{\pm1},\dotsc,X_r^{\pm1}]^+ /\init_w(I)')
 \]
 where $\init_w(I)'$ is the restriction of $\init_w(I)$ to $k_0[X_{d+1}^{\pm1},\dotsc,X_r^{\pm1}]^+$.

 Let $v_0:k_0\to \B$ be the trivial valuation. By \cite[Prop.\ 3.4]{Maclagan-Rincon14}, we have 
 \[
  \init_w\bigl(\bend_{v,\eta}^{GG}(I)\bigr) \quad = \quad \bend_{v_0,\eta}^{GG}\bigl(\init_w(I)\bigr).
 \]
 Both $\init_w$ and $\bend_{v,\eta}^{GG}$ commute with the restriction to the variables $X_{d+1},\dotsc,X_r$. Therefore we obtain $\init_w\bigl(\bend_{v,\eta}^{GG}(I)\bigr)' = \bend_{v_0,\eta}^{GG}\bigl(\init_w(I)'\bigr)$ and 
 \[
  \B[X_{d+1}^{\pm1},\dotsc,X_r^{\pm1}]^+ \, \bigl/ \ \init_w\bigl(\bend_{v,\eta}^{GG}(I)\bigr)' \quad = \quad \Trop_{v_0,\eta}^{GG}\bigl(k_0[X_{d+1}^{\pm1},\dotsc,X_r^{\pm1}]^+/ \, \init_w(I)'\bigr).
 \]
 
 Since $k_0[X_{d+1}^{\pm1},\dotsc,X_r^{\pm1}]^+/\init_w(I)'$ is a $k_0$-vector space of finite dimension $\mult(w)$, its tropicalization is a $\B$-linear space of the same dimension, cf.\ \cite[Lemma 7.1.3]{Giansiracusa13}. Since $\cR_\pi=\bend_{v,\iota}^{GG}(I)$, this dimension equals, by definition, the Maclagan-Rinc\'on weight $\mu(w)$. This shows that $\mu(w)=\mult(w)$ and finishes the proof.
\end{proof}

Theorem \ref{thm: Maclagan-Rincon weights} allows us to apply the structure theorem for tropicalizations to the scheme theoretic tropicalization $\Bend_v(Z)$ of the blue $k$-scheme $Z$ associated with a purely $d$-dimensional closed subscheme $Y$ of $\G_{m,k}^{n,+}$.

\begin{cor}
 Let $\Bend_v(Z)\to\G_{m,\T}^r$ be a closed immersion of blue $\T$-schemes. Then we can endow $\Bend_v(Z)(\T)\subset (\T^\times)^r\simeq\R^r$ with the structure of a balanced weighted polyhedral complex of dimension $d$ such that the weight of its $d$-dimensional polyhedra $\sigma$ equals $\mu(w)$ for each $w$ in the relative interior of $\sigma$. \qed
\end{cor}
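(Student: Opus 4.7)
The strategy is to transport the polyhedral structure from the classical tropical variety $\Trop_{v,\iota}^{KP}(Y)$ to $\Bend_v(Z)(\T)$ via the homeomorphism provided by Theorem~\ref{thm: Kajiwara-Payne tropicalization as rational point set}, and then identify the weights with $\mu(w)$ via Theorem~\ref{thm: Maclagan-Rincon weights}.

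First I would use the closed immersion $\iota\colon Y\hookrightarrow \G_{m,k}^{n,+}$ from the hypothesis preceding Theorem~\ref{thm: Maclagan-Rincon weights} to obtain, by applying the bend functor, a canonical closed immersion $\Bend_v(Z)\to \G_{m,\T}^n$. By Theorem~\ref{thm: Kajiwara-Payne tropicalization as rational point set}, passing to $\T$-rational points yields a homeomorphism $\Bend_v(Z)(\T)\stackrel{\sim}{\to}\Trop_{v,\iota}^{KP}(Y)$ that is compatible with the inclusions into $(\T^\times)^n\simeq \R^n$. The structure theorem for tropical varieties (as cited in the paragraph preceding the corollary, e.g.\ \cite[Thm.~3.3.6]{Maclagan-Sturmfels15}) equips $\Trop_{v,\iota}^{KP}(Y)$ with the structure of a balanced weighted polyhedral complex of pure dimension $d$, whose top-dimensional polyhedra $\sigma$ carry weights $\mult(w)$, constant on the relative interior. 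Transporting this structure along the homeomorphism gives the desired polyhedral complex structure on $\Bend_v(Z)(\T)$ in its canonical embedding into $\R^n$.

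To handle an arbitrary closed immersion $\Bend_v(Z)\to \G_{m,\T}^m$, I would reuse the invariance argument from the first paragraph of the proof of Theorem~\ref{thm: Maclagan-Rincon weights}: the weights $\mu(w)$ and the balanced polyhedral structure of a tropical variety are preserved under permutation of coordinates, rescaling of coordinates, and coordinate inclusions of the form $\R^n\hookrightarrow\R^m$. Using that any closed immersion of blue $\T$-schemes $\Bend_v(Z)\to\G_{m,\T}^m$ is, up to such an automorphism of $\G_{m,\T}^m$, of the form $\Bend_v(\iota')$ for a closed immersion $\iota'\colon Y\to\G_{m,k}^{m,+}$ on the $k$-side, the statement reduces to the case treated above.

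Finally, I would apply Theorem~\ref{thm: Maclagan-Rincon weights}, which identifies $\mult(w)=\mu(w)$ for every $w\in\Trop(Y)$ lying in the open dense set $\cU$ of interiors of top-dimensional polyhedra. Since weights are constant on relative interiors of top-dimensional polyhedra $\sigma$, this yields $\mu(w)$ as the weight of $\sigma$. The main obstacle I foresee is not any single deep step but rather the bookkeeping needed to ensure that the reduction from a general embedding $\Bend_v(Z)\to\G_{m,\T}^m$ to one of the form $\Bend_v(\iota')$ is actually possible; this amounts to verifying that every closed immersion of blue $\T$-schemes into a split $\T$-torus can, after a monomial automorphism of the target, be realized as the bend of a $k$-side closed immersion, which is the content that Theorem~\ref{thm: Maclagan-Rincon weights} tacitly relies upon.
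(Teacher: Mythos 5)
Your proposal takes essentially the same route as the paper. In fact the paper offers no separate argument for this corollary: the \verb|\qed| appearing directly after the statement signals that it is regarded as an immediate consequence of the structure theorem for tropical varieties (Theorem 3.3.6 of Maclagan–Sturmfels) combined with Theorem~\ref{thm: Maclagan-Rincon weights}. The structure theorem endows $\Trop(Y)=\Bend_v(Z)(\T)$ with the structure of a $d$-dimensional balanced weighted polyhedral complex whose weights are $\mult(w)$, and Theorem~\ref{thm: Maclagan-Rincon weights} supplies the identity $\mult(w)=\mu(w)$ on the relative interiors of the top-dimensional cells. Your proof spells out exactly that chain of deductions.

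Regarding the concern you raise in the last paragraph: the reduction from an arbitrary closed immersion $\Bend_v(Z)\to\G_{m,\T}^m$ to one of the form $\Bend_v(\iota')$ is indeed the delicate point, but it is not a gap specific to the corollary — it is exactly the content of the first paragraph of the paper's proof of Theorem~\ref{thm: Maclagan-Rincon weights}, where the invariance of the weights under permutation, scaling and coordinate inclusions is invoked to normalize $\pi$. In particular, the invariance under scaling of coordinates is what lets one rescale $\pi(X_i)=b_i\otimes t_i$ to $\pi(X_i)=b_i\otimes 1$ and then lift to the $k$-side; the surjectivity of $\pi$ on underlying monoids then forces the lifted $\eta$ to be a surjection as well. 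So your worry points at a place where the paper is terse, but it is addressed (implicitly) within Theorem~\ref{thm: Maclagan-Rincon weights}, not something the corollary needs to re-argue. Your proof is correct; it just attributes the burden of that reduction to the corollary rather than to the theorem it cites, which is an accounting choice rather than a mathematical gap.
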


\begin{ex}\label{ex: Maclagan-Rincon weights}
 Let $Y=\Spec \big(k[X_1^{\pm1},X_2^{\pm2}]/\gen{X_1^i+X_j^j+1}\big)$ be the closed subscheme of $\G_{m,k}^2$ that is defined by the equation $X_1^i+X_2^j+1$ for some $i,j\geq1$. Then $X_1^i+X_2^j+1$ (considered as a tropical polynomial) is a tropical basis of the ``weighted tropical line'' $\Trop(Y)$ in $(\T^\times)^2$: it has one central vertex ($0$-dimensional polyhedra) and three infinite rays ($1$-dimensional polyhedras) that connect each to the central vertex at one end. We determine the Maclagan-Rinc\'on weights for the three rays in the following.
 
 
 As a first case, we consider the ray $\sigma_1=\{(a_1,a_2)\mid a_1^i\leq a_2^j=1\}=\{(a_1,1)\mid a_1\leq 1\}$ where the terms $X_2^j$ and $1$ of the defining equation assume the maximum. A point $w=(a_1,1)\in\sigma_1$ is smooth if and only if it is not equal to the central vertex $w_0=(1,1)$. For smooth $w\in\sigma_1$, the initial preaddition $\init_w(\cR_\pi)$ is generated by the single relation
 \[
  X_2^j \ \= \ X_2^j+1 \ \= \ 1,
 \]
 which is particular to the fact that $X_2^j+1$ contains only $2$ terms. The linear subspace
 \[
  L \ = \ \Hom(\bpquot{\B[X_1^{\pm1},X_2^{\pm1}]}{\init_w(\cR_\pi)},\T) \ \simeq \ \big\{ (x_1,x_2)\in\R^2 \, \big| \, x_2=0 \big\}
 \]
 of $\R^2$ is already in the desired position, which allows us to choose $\varphi=\id$. A free submodule of maximal rank of $\bpquot{\B[X_2^{\pm1}]}{\init_w(\cR_\pi)'}=\bpquot{\B[X_2^{\pm1}]}{\gen{X_2^j\=1}}$ is the $\B$-submodule generated by $1,X_2,\dotsc,X_2^{j-1}$, whose rank is $j$. Thus
 \[
  \mu(w) \ = \ \dim_\B \big( \bpquot{\B[X_2^{\pm1}]}{\init_w(\cR_\pi)'} \big) \ = \ j.
 \]

 The case of a smooth $w\in\sigma_2=\{(a_1,a_2)\mid a_2^j\leq a_1^i=1\}=\{(1,a_2)\mid a_2\leq 1\}$ is similar. We need to exchange the standard basis of $\R^2$, i.e.\ $\varphi$ is given by the permutation matrix $\big(\begin{smallmatrix} 0 & 1 \\ 1 & 0 \end{smallmatrix}\big)$, and gain 
 \[
  \mu(w) \ = \ \dim_\B \big( \bpquot{\B[X_2^{\pm1}]}{\init_w(\cR_\pi)'} \big) \ = \ i
 \]
 by the same reasoning as in the previous case.
 
 The situation for the third ray $\sigma_3=\{(a_1,a_2)\mid 1\leq a_1^i=a_2^j\}$ is slightly more intriguing. Let $w=(a_1,a_2)\in\sigma_3$ be smooth, i.e.\ $a_1,a_2>1$. Then $\init_w(\cR_\pi)$ is generated by $X_1^i\=X_2^j$ and 
 \[
  L \ = \ \Hom(\bpquot{\B[X_1^{\pm1},X_2^{\pm1}]}{\init_w(\cR_\pi)},\T) \ \simeq \ \big\{ (x_1,x_2)\in\R^2 \, \big| \, ix_1=jx_2 \big\}.
 \]
 Let $i_0=i/\gcd(i,j)$ and $j_0=j/\gcd(i,j)$. Then $\gcd(i_0,j_0)=1$ and therefore $dj_0-ci_0=1$ for some $c,d\in\Z$. If we let $\varphi$ be the automorphism of $S_2$ that is represented by the matrix $\big(\begin{smallmatrix} j_0 & i_0 \\ c & d \end{smallmatrix}\big)$, then the relation $X_1^{-i}X_2^j\=1$ transforms to $X_2^{\gcd(i,j)}\=1$. Following the same arguments as before yields thus
 \[
  \mu(w) \ = \ \dim_\B \big( \bpquot{\B[X_2^{\pm1}]}{\init_w(\cR_\pi)'} \big) \ = \ \gcd(i,j)
 \]
 in this last case.
\end{ex}

\subsection{What is new?}
In this section, we have extended the results from \cite{Maclagan-Rincon14} to the intrinsic tropicalization of a closed subvariety of a torus as a blue $\T$-scheme. This is a subtle step towards a more rigorous setting of scheme theory for tropical geometry. 

Moreover, we have exhibited an explicit formula of the Maclagan-Rinc\'on weights, which opens the door to investigate weights in more general situations, for example for valuations $v:k\to\T$ whose image is not dense or even trivial, or in the case of blue $\T$-schemes that are not tropicalizations of classical varieties.


\section{Macpherson analytification}
\label{section: Macpherson analytification}

Let $k$ be a ring and $A$ a $k$-algebra. One of the key ideas of Macpherson's paper \cite{Macpherson13} is that the semiring $\An(A,k)$ of finitely generated $k$-submodules of $A$ represents the functor of valuations on $A$ in idempotent semirings that are integral on $k$. The focus of \cite{Macpherson13} lies on extending this concept to non-Archimedean analytic geometry, for which reason the pair $(A,k)$ is assumed to form a non-Archimedean ring. We refrain from an excursion into non-Archimedean geometry, but we will extend $\An(A,k)$ to ordered blueprints $k$ and $A$.

\subsection{The universal Giansiracusa tropicalization as Macpherson analytification} 
\label{subsection: universal Giansiracusa tropicalization as Macpherson analytification}
Let us begin with reviewing the results and immediate implications of \cite{Macpherson13}. The \emph{Macpherson analytification} $\An(A,k)$ is the semiring of all finitely generated $k$-submodules of $A$ with respect to the addition
\[
 M_1+M_2 \quad = \quad \bigl\{ \ m\in A \ \bigl| \ m=m_1+m_2\text{ for some }m_1\in M_1\text{ and }m_2\in M_2 \ \bigl\}
\]
and the multiplication
\[
 M_1\cdot M_2 \quad = \quad \bigl\langle \ m\in A \ \bigl| \ m=m_1\cdot m_2\text{ for some }m_1\in M_1\text{ and }m_2\in M_2 \ \bigl\rangle.
\]
The semiring $\An(A,k)$ is idempotent and comes with the valuation $v:A\to \An(A,k)$, which sends $a\in  A$ to the $k$-submodule of $A$ generated by $a$. This valuation is \emph{integral on $k$}, i.e.\ $v(a)+1=1$ for all $a\in k$.

Let $\Val(A,k;-):\Alg_\B\to\Sets$ be the functor of all valuations on $A$ in idempotent semirings that are integral on $k$. The key observation of Macpherson is that $\An(A,k)$ represents $\Val(A,k;-)$.

From this, we can deduce the following description of the \emph{universal Giansiracusa tropicalization} $\Trop_{v,\eta}^{GG}(A)$ (cf.\ Corollary \ref{cor: the universal tropicalization}) where $v:k\to T$ is a valuation into a totally ordered idempotent semiring $T$ and $\eta:k[A^\bullet]\to A$ is the natural $k$-linear map. Let
\[
 \cO_k \quad = \quad \bigl\{ \ a\in k \ \bigl| \ v(a)+1=1 \bigr\}
\]
be the subring of integral elements in $k$. Then the valuation $v:k\to T$ corresponds to a homomorphism $\An(k,\cO_k)\to T$ of semirings by the universal property of $\An(k,\cO_k)$. Since $\Trop_{v,\eta}^{GG}(A)$ represents $\Val_v(A,-)$ and $\Val_v(A,S)$ corresponds to the valuations $w$ in $\Val(A,\cO_k;S)$ that restrict to $w\vert_{k}=v$, we have a canonical isomorphism of semirings
\[
 \Trop_{v,\eta}^{GG}(A) \quad \stackrel\sim\longrightarrow \quad \An(A,\cO_k)\Sotimes_{\An(k,\cO_k)}T
\]
where $B\Sotimes_DC$ stays for $(B\otimes_DC)^+$, cf.\ section \ref{section: conventions}.


\subsection{The Macpherson analytification as bend}
A variation of the definition of $\An(A,k)$ yields a description of the Giansiracusa tropicalization for a closed immersion $\iota:\Spec\to X$ into an affine toric variety $X$, see \cite[para.\ 7.3]{Macpherson13}. In the following, we will make this precise by different means: we extend the Macpherson analytification to all ordered blueprints $k$ and $A$, which yields a description of the bend functor in terms of $\An(A,k)$ and, conversely, a description of $\An(A,k)$ as bend.

Let $k$ be an ordered blueprint and $B$ an ordered blue $k$-algebra. A \emph{$k$-span in $B$} is a subset $M$ of $B$ that is closed under multiplication by elements of $k$ and that contains all $a\in B$ for which there are $b_j\in M$ such that $a\leq\sum b_j$. We write $\gen{a_i}$ for the smallest $k$-span in $B$ that contains the elements $a_i$. A $k$-span $M$ in $B$ is \emph{finitely generated} if $M=\gen{a_i}$ for finitely many elements $a_i\in B$. For $a\in k$, we write $\gen a=\gen{\bar a}$ where $\bar a$ is the image of $a$ in $B$.

The semiring $\An(B,k)$ is the set of all finitely generated $k$-spans in $B$ together with the addition
\[
 M_1+M_2 \quad = \quad \bigl\{ \ m\in B \ \bigl| \ m\leq m_1+m_2\text{ for some }m_1\in M_1\text{ and }m_2\in M_2 \ \bigl\}
\]
and the multiplication
\[
 M_1\cdot M_2 \quad = \quad \bigl\{ \ m\in B \ \bigl| \ m=m_1\cdot m_2\text{ for some }m_1\in M_1\text{ and }m_2\in M_2 \ \bigl\}.
\]
Note that this recovers the definition of $\An(B,k)$ in the case of rings $k$ and $B$, and that $\An(B,k)$ is an idempotent semiring for all ordered blueprints $k$ and $B$. In other words, $\An(B,k)$ is a $\B$-algebra.

The $\B$-algebra $\An(B,k)$ comes with the map $v:B\to \An(B,k)$ that sends $a$ to $\gen a$. This map is a morphism between the underlying monoids. If $a\leq \sum b_j$ in $B^\mon$, then $\gen a\subset \gen{b_j}$ as subsets of $B$, which implies that $\gen{a}+\sum \gen{b_j}=\sum\gen{b_j}$ in $\An(B,k)$. Thus $\gen a\leq\sum \gen{b_j}$ in $\An(B,k)^\pos$. This shows that $v:B\to \An(B,k)$ is a valuation. 

For $a\in k$, we have $\gen{a}\subset\gen{1}$ as subsets of $B$ and thus $\gen{a}+\gen{1}=\gen{1}$ in $\An(B,k)$. Therefore $v(a)\leq 1$ in $\An(B,k)^\pos$, which means, by definition, that $v$ is integral on $k$.

\begin{lemma}
 Let $S$ be a multiplicative subset of $B$. Then the association $\gen{\frac as}\mapsto \frac{\gen a}{\gen s}$ defines an isomorphism $\An(S^{-1}B,k)\to v(S)^{-1}\An(B,k)$.
\end{lemma}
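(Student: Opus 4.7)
The plan is to exhibit an explicit bijective semiring homomorphism and verify it is inverse to the obvious candidate. In one direction, every finitely generated $k$-span $M$ in $S^{-1}B$ admits a common-denominator presentation $M=\gen{\frac{a_1}{s},\ldots,\frac{a_n}{s}}$ with $a_i\in B$ and $s\in S$; this follows because finitely many denominators $s_1,\ldots,s_n\in S$ can be cleared by putting $s=s_1\cdots s_n$ and $a_i'=a_i\cdot s/s_i$. Define
\[
 \varphi: \ \An(S^{-1}B,k) \ \longrightarrow \ v(S)^{-1}\An(B,k), \qquad \gen{\tfrac{a_1}{s},\ldots,\tfrac{a_n}{s}} \ \longmapsto \ \tfrac{\gen{a_1,\ldots,a_n}}{\gen{s}}.
\]
Conversely, every element of $v(S)^{-1}\An(B,k)$ is of the form $\tfrac{\gen{a_1,\ldots,a_n}}{\gen{s}}$, and we define $\psi$ by sending this element to $\gen{\frac{a_1}{s},\ldots,\frac{a_n}{s}}$. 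The formulas
\[
 \gen{a_i}+\gen{b_j} \ = \ \gen{a_i+b_j} \qquad\text{and}\qquad \gen{a_i}\cdot\gen{b_j} \ = \ \gen{a_ib_j},
\]
together with compatibility with localization of semirings, will yield that $\varphi$ and $\psi$ preserve sums, products, zero and one once the maps are shown to be well-defined.

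The main technical step is the well-definedness of $\varphi$ and $\psi$, which is where the bulk of the argument lies. Concretely, I will show that two common-denominator presentations $M=\gen{\frac{a_1}{s},\ldots,\frac{a_n}{s}}=\gen{\frac{a'_1}{s'},\ldots,\frac{a'_m}{s'}}$ of the same $k$-span in $S^{-1}B$ produce equal images in $v(S)^{-1}\An(B,k)$. Containment of $\frac{a_i}{s}$ in the right hand span means there are $c_{ij}\in k$ with $\frac{a_i}{s}\leq\sum_j c_{ij}\frac{a'_j}{s'}$ in $S^{-1}B$, and by the description of the subaddition on $S^{-1}B$ via Lemma analogous to the localization lemmas, this relation is witnessed by some $u_i\in S$ and a relation $u_is'a_i\leq u_is\sum_j c_{ij}a'_j$ in $B$. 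Multiplying by appropriate elements of $S$ shows that $\gen{s'}\cdot\gen{a_1,\ldots,a_n}\subset\gen{u\cdot a'_1,\ldots,u\cdot a'_m}$ inside $\An(B,k)$ for a suitable common $u$, which translates to the equality $\frac{\gen{a_1,\ldots,a_n}}{\gen{s}}=\frac{\gen{a'_1,\ldots,a'_m}}{\gen{s'}}$ after localizing $\An(B,k)$ at $v(S)$. The same argument run in reverse handles $\psi$.

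The verification of the semiring axioms for $\varphi$ is then routine: addition and multiplication of common-denominator spans in $S^{-1}B$ take the shape
\[
 \gen{\tfrac{a_i}{s}}+\gen{\tfrac{b_j}{s}} \ = \ \gen{\tfrac{a_i+b_j}{s}} \qquad \text{and} \qquad \gen{\tfrac{a_i}{s}}\cdot\gen{\tfrac{b_j}{t}} \ = \ \gen{\tfrac{a_ib_j}{st}},
\]
and these are matched by the identities $\tfrac{\gen{a_i}}{\gen s}+\tfrac{\gen{b_j}}{\gen s}=\tfrac{\gen{a_i+b_j}}{\gen s}$ and $\tfrac{\gen{a_i}}{\gen s}\cdot \tfrac{\gen{b_j}}{\gen t}=\tfrac{\gen{a_ib_j}}{\gen{st}}$ in $v(S)^{-1}\An(B,k)$. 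Finally, $\varphi\circ\psi$ and $\psi\circ\varphi$ act as the identity on generators by construction, so they are the identity everywhere. The hard part of the argument is the analysis of what a relation in $S^{-1}B$ unwinds to in $B$ and how to repackage this into equality of fractions in $v(S)^{-1}\An(B,k)$; once this bookkeeping is done, everything else is formal.
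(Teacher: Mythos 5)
Your proof is correct and takes essentially the same route as the paper: define both maps explicitly by the formula $\gen{\frac{a}{s}}\mapsto\frac{\gen a}{\gen s}$ (and its reverse) and verify that they are mutually inverse semiring morphisms. The difference is one of rigor, not of approach. The paper's proof is only two sentences: it observes that $\gen s$ becomes invertible on either side and concludes that the two associations define inverse morphisms, without touching well-definedness. Your proof is doing the real work that this elides — in particular, the common-denominator normal form for finitely generated $k$-spans in $S^{-1}B$, the observation that membership $\frac{a_i}{s}\in\gen{\frac{a'_j}{s'}}$ is equivalent to a single monomial relation $\frac{a_i}{s}\leq\sum_j c_{ij}\frac{a'_j}{s'}$ (because the $k$-span closure stabilizes after one pass), and the translation of such a relation into a relation in $B$ after clearing denominators with a common $u\in S$. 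This last step is where you appeal to a ``localization lemma'' that the paper never states or proves, namely that a relation $\frac{x}{1}\leq\sum\frac{y_j}{1}$ in $S^{-1}B$ is witnessed by $ux\leq u\sum y_j$ in $B$ for some $u\in S$. The paper silently uses the same fact in the proof of Lemma \ref{lemma: bend relations commute with localizations} (``the subaddition of $S^{-1}B$ is generated by the subaddition of $B$''), so your appeal is in the spirit of the text, but be aware that it does require a short induction over the closure axioms \ref{B1}--\ref{B5} together with the proper quotient step, and that this is not a triviality given the definition of $\cR_S$ as a generated closure. One small economy you could make: one direction (from $v(S)^{-1}\An(B,k)$) is automatic from the universal property of localization of semirings, applied to the natural $\B$-algebra morphism $\An(B,k)\to\An(S^{-1}B,k)$ induced by $B\to S^{-1}B$; you then only need to show that this induced map is bijective, which is exactly your surjectivity (common denominators) and injectivity (clearing denominators) arguments, and you can dispense with a separate well-definedness verification for $\varphi$.
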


\begin{proof}
 Since $s\in S$ is mapped to the invertible element $\gen s$ of $v(S)^{-1}\An(B,k)$, the association $\gen{\frac as}\mapsto \frac{\gen a}{\gen s}$ defines a morphism $\An(S^{-1}B,k)\to v(S)^{-1}\An(B,k)$. Conversely, $\gen s\in v(S)$ is invertible in $\An(S^{-1}B,k)$. Thus the inverse association $\frac{\gen a}{\gen s}\mapsto \gen{\frac as}$ defines an inverse morphism $v(S)^{-1}\An(B,k)\to \An(S^{-1}B,k)$.
\end{proof}

As a consequence of this lemma, an affine presentation $\cU$ of an ordered blue $k$-scheme $X$ yields an affine presentation $\An(\cU,k)$ in $\B$-algebras. We define $\An(X,k)$ as the colimit of $\An(\cU,k)$. It comes with a valuation $v:\An(X,k)\to X$ that is integral on $k$, which means that for all affine open subschemes $U=\Spec B$ of $X$, the induced valuation $\Gamma v\vert_U:B\to \An(B,k)$ is integral on $k$. We denote by $\Val(X,k;-)$ the functor that takes a $\B$-algebra $S$ to the set of valuations $\Spec S\to X$ that are integral on $k$. 

Let $B$ be an ordered blue $k$-algebra. We define 
\[
 B^\mon_{k\leq 1} \quad = \quad \bpgenquot{B^\mon}{\bar a\leq 1|a\in k}
\]
where $\bar a$ is the image of $a$ in $B$. This definition is obviously invariant under localization. Thus we can define for an ordered blue $k$-scheme $X$ with affine presentation $\cU$ the affine presentation $\cU^\mon_{k\leq 1}$, and $X^\mon_{k\leq 1}$ as its colimit.

\begin{thm}\label{thm: Macpherson analytification and the bend functor}
 Let $k$ be an ordered blueprint, $X$ an ordered blue $k$-scheme and $v:\Fun\to\B$ the trivial valuation. Then there is a canonical morphism 
 \[
  \An(X,k) \quad \longrightarrow \quad \Bend_v(X^\mon_{k\leq1})
 \]
 that induces an isomorphism $\An(X,k)\simeq \Bend_v(X^\mon_{k\leq1})^+$ of semiring schemes, and $\An(X,k)$ represents $\Val(X,k;-)$.
\end{thm}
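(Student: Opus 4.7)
The plan is to reduce to the affine case $X = \Spec B$ and construct an explicit isomorphism $\Phi : \Bend_{v_0}(B^\mon_{k\leq 1})^+ \to \An(B,k)$ of semirings that sends a formal sum $\sum a_i \otimes 1$ to the $k$-span $\gen{a_1,\dots,a_n}$. Globalization will follow because $\An(-,k)$ commutes with localization (by the lemma stated before the theorem), $(-)^\mon_{k\leq 1}$ is manifestly compatible with localization, and $\Bend_{v_0}$ preserves localizations by Lemma \ref{lemma: bend relations commute with localizations}. Representability will then fall out of Theorem \ref{thm: tropicalization for idempotent base}.

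For the affine construction, I would first check well-definedness of $\Phi$ at the ordered-blueprint level as $b\otimes 1 \mapsto \gen{b}$. The tensor identifications $cb \otimes 1 = b \otimes v_0(c) = b \otimes 1$ for nonzero $c\in k$ are respected since $\gen{cb}=\gen{b}$. The bend relation $a\otimes 1 + \sum b_j \otimes 1 \equiv \sum b_j\otimes 1$ coming from a monomial relation $a\leq\sum b_j$ in $B$ translates to $\gen{a,b_1,\dots,b_m}=\gen{b_1,\dots,b_m}$, which holds because $a$ already lies in the $k$-span generated by the $b_j$ by the very definition of a $k$-span; the bend relation $\bar c\otimes 1 + 1\otimes 1 \equiv 1\otimes 1$ arising from $\bar c\leq 1$ (for $c\in k$) translates to $\gen{\bar c,1}=\gen{1}$, which holds since $\bar c = c\cdot 1 \in \gen{1}$. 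Multiplicativity reduces to $\gen{b_1}\cdot\gen{b_2}=\gen{b_1b_2}$. Passing to $(-)^+$ yields a semiring homomorphism $\Phi$ which is manifestly surjective because every finitely generated $k$-span is in the image.

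The main obstacle will be injectivity of $\Phi$. The plan is to exploit that the target $\Bend_{v_0}(B^\mon_{k\leq 1})^+$ is idempotent, so its induced order $x \leq y \iff x+y=y$ is a partial order, and to prove the following lemma: for every $a \in \gen{b_1,\dots,b_m}$, one has $a + \sum b_j = \sum b_j$ in $\Bend_{v_0}(B^\mon_{k\leq 1})^+$. I would argue this by showing that $S = \{a\in B \mid a+\sum b_j=\sum b_j\}$ is a $k$-span containing each $b_j$; the minimality of $\gen{b_j}$ then forces $\gen{b_j}\subseteq S$. The generators $b_j$ lie in $S$ by idempotence. For closure under $k$-multiplication, the relation $\bar c\leq 1$ in $B^\mon_{k\leq 1}$ yields the bend relation $\bar c + 1 = 1$, which multiplied by $a\in S$ gives $\bar c a + a = a$, hence $\bar c a \leq a \leq \sum b_j$ by transitivity. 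For closure under $\leq$-domination, if $a\leq \sum x_l$ in $B$ with $x_l\in S$, the bend relation gives $a\leq\sum x_l$ in $\Bend^+$, idempotent addition of the hypotheses $x_l+\sum b_j=\sum b_j$ gives $\sum x_l \leq \sum b_j$, and transitivity yields $a\leq \sum b_j$. Applying the lemma to both $\sum a_i$ and $\sum b_j$ when $\gen{a_i}=\gen{b_j}$ shows each is $\leq$ the other, hence they coincide.

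For representability, Theorem \ref{thm: tropicalization for idempotent base} identifies $\Bend_{v_0}(B^\mon_{k\leq 1})$ as the representing object of $\Val_{v_0}(B^\mon_{k\leq 1},-)$ on ordered blue $\B$-algebras. Any such $S$ is idempotent, so by Corollary \ref{cor: idempotent implies core=core after pos and B subset B-pos} the canonical map $S\to S^\pos$ is a bijection, and a valuation of $B^\mon_{k\leq 1}$ over $v_0$ amounts to a morphism $B^\mon_{k\leq 1}\to S^\pos$. This is precisely the data of a multiplicative map $B\to S$ respecting the monomial relations of $B$ (i.e.\ a valuation $B\to S$) together with $w(a)\leq 1$ in $S^\pos$ for each $a\in k$; by Lemma \ref{lemma: totally positive blueprints}\eqref{pos3} the latter is equivalent to $w(a)+1=1$ in $S$, which is exactly integrality on $k$. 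Hence $\Val_{v_0}(B^\mon_{k\leq 1},S) = \Val(B,k;S)$, and the isomorphism $\Phi$ transports representability to $\An(B,k)$. Finally, gluing over an affine cover delivers the scheme-theoretic isomorphism $\An(X,k) \simeq \Bend_{v_0}(X^\mon_{k\leq 1})^+$ and the global representability statement.
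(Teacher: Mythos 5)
Your proof is correct and follows the same overall structure as the paper's: define the map $a\otimes 1\mapsto\gen a$, verify it respects the bend relations coming both from the monomial relations of $B$ and from the adjoined relations $\bar c\leq 1$, note surjectivity after applying $(-)^+$, prove injectivity, invoke Theorem~\ref{thm: tropicalization for idempotent base} for representability, and glue over an affine cover using compatibility with localization.

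The one place your argument genuinely diverges is the injectivity step. The paper starts from $\gen{a_i}=\gen{b_j}$ and immediately extracts relations $a_i\leq\sum c_{i,j}b_j$ with $c_{i,j}\in k$; this tacitly relies on a one-step description of the $k$-span $\gen{b_j}$ (every element is dominated by a single $k$-linear combination of the generators), which is true but left implicit. You instead establish the auxiliary lemma that for every $a\in\gen{b_1,\dots,b_m}$ one has $a+\sum b_j=\sum b_j$ in $\Bend_{v_0}(B^\mon_{k\leq 1})^+$, proved by showing that $S=\{a\in B\mid a+\sum b_j=\sum b_j\}$ is a $k$-span containing the $b_j$, so that minimality forces $\gen{b_j}\subseteq S$. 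This makes the appeal to the two closure axioms of a $k$-span fully explicit and avoids any description of $\gen{b_j}$, which is arguably the cleaner route; both arguments then finish via idempotence and antisymmetry of the induced partial order.

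One minor inaccuracy: at the end you cite Lemma~\ref{lemma: totally positive blueprints}~\eqref{pos3} to justify that $w(a)\leq 1$ in $S^\pos$ is equivalent to $w(a)+1=1$ in $S$. What is actually needed is the identification of the order of $S^\pos$ with the natural order of the idempotent semiring $S$ (Corollary~\ref{cor: properties of totally positive blueprints}~\eqref{sub3} together with Corollary~\ref{cor: idempotent implies core=core after pos and B subset B-pos}, or the example on idempotent semirings). The conclusion is correct; only the pointer is off.
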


\begin{proof}
 Since all constructions in questions are defined in terms of affine presentations, it is enough to prove the theorem in the affine case $X=\Spec B$. In this case, the canonical map
 \[
  \psi: \ \Bend_v(B^\mon_{k\leq 1}) \ = \ \bpquot{B^\bullet\otimes_{\Fun}\B}{\bend_v(B^\mon_{k\leq1})} \quad \longrightarrow \quad \An(B,k)
 \]
 is given by $a\otimes 1\mapsto \gen a$. This map is clearly a morphism between the respective underlying monoids, and since $\gen a+\gen a=\gen a$, it is $\B$-linear. In order to see that $\psi$ respects the bend relations, consider a relation $a\leq\sum b_j$ in $B^\mon_{k\leq 1}$. Then $\gen a \subset \gen{b_j}$ and thus $\gen{a,b_j}=\gen{b_j}$. Thus the relation $a+\sum b_j\=\sum b_j$ in $\bend_v(B^\mon_{k\leq1})$ implies that $\gen a + \sum \gen{b_j}\=\sum \gen{b_j}$ in $\An(B,k)$. This shows that $\psi$ is a morphism of ordered blueprints.
 
 Since $\An(B,k)$ is generated by the principal ideals $\gen a$ as a semiring, it suffices to show that every relation in $\An(B,k)$ is already contained in $\Bend_v(B^\mon_{k\leq1})$ in order to prove that $\psi^+:\Bend_v(B^\mon_{k\leq1})^+\to \An(B,k)$ is an isomorphism. Therefore, let us consider an equality $\gen{a_i}=\gen{b_j}$ of $k$-spans of $B$. Then we have for all $i$ a relation $a_i\leq \sum c_{i,j}b_j$ for certain $c_{i,j}\in k$ and for all $j$ a relation $b_j\leq \sum d_{j,i}a_i$ for certain $d_{j,i}\in k$. Since in $B^\mon_{k\leq1}$, we have $c_{i,j}\leq 1$ and $d_{j,i}\leq1$, this implies $a_i\leq\sum b_j$ and $b_j\leq \sum a_i$ in $B^\mon_{k\leq1}$. Therefore we find the relations $a_i+\sum b_j\=\sum b_j$ and $\sum a_i\=\sum a_i+b_j$ in $\bend_v(B^\mon_{k\leq1})$. Using that $\Bend_v(B^\mon_{k\leq1})$ is idempotent, we find
 \[\textstyle
  \sum_i a_i \ = \ \sum_{j,i} a_i \ = \ \sum_j \bigl( \sum_i a_i +b_j \bigr) \ = \ \sum_i a_i +\sum_j b_j \ = \ \sum_i \bigl(a_i+\sum_j b_j \bigr) \ = \ \sum_{i,j} b_j \ = \ \sum_j b_j
 \]
 in $\Bend_v(B^\mon_{k\leq1})$. This shows that $\psi^+$ is an isomorphism of semirings.
 
 By Theorem \ref{thm: tropicalization for idempotent base}, we know that $\Bend_v(B^\mon_{k\leq1})$ represents the functor $\Val_v(B^\mon_{k\leq1},-)$ on ordered blue $\B$-algebras. Since $\psi^+$ is an isomorphism of semirings, $\An(B,k)$ represents the restriction $\Val_v^+(B^\mon_{k\leq1},-)$ of $\Val_v(B^\mon_{k\leq1},-)$ to $\B$-algebras. A valuation $w:B \to S$ in a $\B$-algebra $S$ that is integral on $k$ is the same as a valuation $w:B^\mon_{k\leq1}\to S$, and every valuation in a $\B$-algebra is an extension of the trivial valuation $v:\Fun\to\B$. Therefore the functors $\Val_v^+(B^\mon_{k\leq1},-)$ and $\Val(B,k;-)$ are isomorphic. We conclude that $\An(B,k)$ represents $\Val(B,k;-)$, which completes the proof of the theorem.
\end{proof}

\begin{rem}
 Note that for a description of the Macpherson analytification, we use non-algebraic blueprints in an essential way. Though the bends $\Bend_v(B^\mon)$ and $\Bend_v(B)$ coincide, the relation $a\leq 1$ for $a\in k$ implies $a=1$ in $B_{k\leq1}$ in the typical case that $B$ is with $-1$. Moreover, we have to endow $B$ with the relations $a\leq 1$ for $a\in k$ to guarantee that the bend represents only valuations that are integral on $k$.
\end{rem}

\begin{cor}\label{cor: the bend as Macpherson analytification}
 Let $k$ be an ordered blueprint, $X$ an ordered blue $k$-scheme and $v:k\to T$ a valuation in an idempotent semiring. Let $\cO_k=\{a\in k|v(a)\leq 1\text{ in }T^\pos\}$. Then there exists a canonical isomorphism
 \[
  \Bend_v(X)^+ \quad \stackrel\sim\longrightarrow \quad \An(X,\cO_k)\Sotimes_{\An(k,\cO_k)}T.
 \]
\end{cor}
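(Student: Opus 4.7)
The plan is to show that both sides represent the same functor on $T$-algebras and then invoke Yoneda. I will work first in the affine case $X=\Spec B$ and then deduce the general statement from affine presentations, using that $\Bend_v$ and $\An(-,\cO_k)$ are both compatible with localizations (Lemma \ref{lemma: bend relations commute with localizations} and the construction following Theorem \ref{thm: Macpherson analytification and the bend functor}).

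In the affine case, Theorem \ref{thm: tropicalization for idempotent base} together with the adjunction $\iota^+$ from section \ref{subsection: the universal ordered semiring} shows that $\Bend_v(B)^+$ represents the functor that sends a semiring $T$-algebra $S$ to the set $\Val_v(B,S)$ of valuations $w:B\to S$ extending $v$. On the other hand, for a semiring $T$-algebra $S$ with structure morphism $\tau:T\to S$, the universal property of the semiring tensor product combined with Theorem \ref{thm: Macpherson analytification and the bend functor} identifies $\Hom_T\!\bigl(\An(B,\cO_k)\Sotimes_{\An(k,\cO_k)}T,\,S\bigr)$ with the set of valuations $w:B\to S$ that are integral on $\cO_k$ and whose restriction $w|_k:k\to S$ agrees with $\tau\circ v$; here the $\An(k,\cO_k)$-algebra structure on $T$ is the one classified by $v$.

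The key point is then to show that these two functors coincide, i.e.\ that the integrality condition is automatic for a valuation extending $v$. Since $T$ is idempotent, Corollary \ref{cor: idempotent implies core=core after pos and B subset B-pos} gives that $v(a)\leq 1$ in $T^\pos$ if and only if $v(a)+1\=1$ already in $T$. Thus for $a\in\cO_k$ we have $\tau(v(a))+1=1$ in $S$, and any valuation $w:B\to S$ extending $v$ satisfies $w(a)=\tau(v(a))$ for $a\in k$, so $w(a)+1=1$ and $w$ is integral on $\cO_k$. The converse is tautological: a valuation with $w|_k=\tau\circ v$ is by definition an extension of $v$. Yoneda now produces the desired canonical isomorphism of $T$-algebras, and compatibility with passage to affine presentations upgrades this to the scheme-theoretic statement.

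The main obstacle I anticipate is the bookkeeping on the tensor-product side: one must check that the $\An(k,\cO_k)$-algebra structures placed on $\An(B,\cO_k)$ (via the morphism $k\to B$) and on $T$ (via $v$) are compatible in the precise sense needed so that a pair of homomorphisms out of the tensor product really encodes the single condition $w|_k=\tau\circ v$. Once this identification is properly unwound via the universal property of $\An(-,\cO_k)$ in Theorem \ref{thm: Macpherson analytification and the bend functor}, the remainder of the argument is the brief idempotent-semiring computation above.
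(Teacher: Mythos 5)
Your proof is correct and follows the functorial route that the paper itself names as the first option (``this can be proven by showing that both semirings represent the functor $\Val(X,k;-)$'') but then sets aside in favor of a direct computation. The paper's actual argument computes $\Bend_{v_0}(B^\mon_{\cO_k\leq1})\otimes_{\Bend_{v_0}(k^\mon_{\cO_k\leq1})}T$ as an explicit quotient $\bpquot{B^\bullet\otimes_{k^\bullet}T}{\bend_v(B^\mon_{\cO_k\leq1})}$ and then observes that the extra relations $a\leq 1$ (for $a\in\cO_k$) produce bend relations $a+1\=1$ that already hold in $T$, so they add nothing beyond $\bend_v(B)$; applying $(-)^+$ and Theorem \ref{thm: Macpherson analytification and the bend functor} finishes. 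Your argument repackages precisely the same idempotence observation as the statement that every valuation $w:B\to S$ extending $v$ is automatically integral on $\cO_k$, and deduces the isomorphism from Yoneda. Both proofs hinge on the identity ``$v(a)\leq 1$ in $T^\pos$ iff $v(a)+1\=1$ in $T$'' for idempotent $T$; note this is not quite stated verbatim in Corollary \ref{cor: idempotent implies core=core after pos and B subset B-pos} alone but combines it with the description of the order on an idempotent semiring (cf.\ the example after Lemma \ref{lemma: idempotent or totally positive implies strictly conic}): $v(a)\leq 1$ in $T^\pos$ means $v(a)+c=1$ for some $c$, whence $v(a)+1=v(a)+v(a)+c=v(a)+c=1$. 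The functorial version is slightly cleaner if one is already comfortable with the representability statements of Theorems \ref{thm: tropicalization for idempotent base} and \ref{thm: Macpherson analytification and the bend functor}; the direct calculation avoids the bookkeeping over the $\An(k,\cO_k)$-algebra structures that you flag as a concern. Your appeal to affine presentations for the scheme case matches the paper's implicit handling.
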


\begin{proof}
 Similar to our argument in section \ref{subsection: universal Giansiracusa tropicalization as Macpherson analytification}, this can be proven by showing that both semirings represent the functor $\Val(X,k;-)$. An alternative proof is the following direct calculation.
 
 Let $v_0:\Fun\to \B$ be the trivial valuation and $C$ an ordered blue blueprint. Then we have
 \[
  \Bend_{v_0}(C) \quad = \quad \bpquot{C^\bullet\otimes_\Fun \B}{\bend_{v_0}(C)} \quad = \quad \bpquot{C}{\bend(C)}
 \]
 where $\bend(C)=\{a+\sum b_j\=\sum b_j|a\leq\sum b_j\text{ in }C\}$ does not depend on the valuation $v_0$. Therefore, we obtain
 \begin{align*}
     \Bend_{v_0}(B^\mon_{\cO_k\leq1})\otimes_{\Bend_{v_0}(k^\mon_{\cO_k\leq1})} T \quad 
  &= \quad \bpquot{B^\bullet}{\bend(B^\mon_{\cO_k\leq1})} \otimes_{\bpquot{k^\bullet}{\bend(k^\mon_{\cO_k\leq1})}} T \\
  &= \quad \bpquot{B^\bullet\otimes_{k^\bullet}T}{\bend_v(B^\mon_{\cO_k\leq 1})}.
 \end{align*}
 Since, by definition of $\cO_k$, an element $a\in \cO_k$ implies the bend relation $a+1\=1$ in $T$, we conclude that this ordered blueprint is isomorphic to
 \begin{align*}
  \bpquot{B^\bullet\otimes_{k^\bullet}T}{\bend_v(B^\mon)} \quad = \bpquot{B^\bullet\otimes_{k^\bullet}T}{\bend_v(B)} \quad = \quad \Bend_v(B).
 \end{align*}
 The claim of the Corollary follows from applying $(-)^+$ to the above equations and using the isomorphism $\An(C,\cO_k)\simeq \Bend_{v_0}(C_{\cO_k\leq0})^+$ from Theorem \ref{thm: Macpherson analytification and the bend functor}.
\end{proof}

Together with Theorem \ref{thm: Giansiracusa tropicalization as bend}, this yields the following description of the Giansiracusa tropicalization in its general form.

\begin{cor}
  Let $k$ be a ring, $v:k\to T$ be a valuation in a totally ordered idempotent semiring $T$. Let $\iota:X\to Y_k^+$ be a closed immersion where  $Y$ is a monoid scheme. Then there is a canonical isomorphism
 \[
  \Trop_{v,\iota}^{GG}(X) \quad \stackrel\sim\longrightarrow \quad \An(Z,\cO_k)\Sotimes_{\An(k,\cO_k)}T
 \]
 where $Z$ is the associated blue scheme and $\cO_k=\{a\in k|v(a)\leq 1\text{ in }T^\pos\}$. \qed
\end{cor}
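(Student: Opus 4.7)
The plan is simply to chain together the two preceding results: Theorem~\ref{thm: Giansiracusa tropicalization as bend}, which identifies the Giansiracusa tropicalization with (the positive part of) the bend of the associated blue scheme, and Corollary~\ref{cor: the bend as Macpherson analytification}, which expresses the bend as a base change of Macpherson analytifications.

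First I would verify that the hypotheses of both results are met. The hypothesis of Theorem~\ref{thm: Giansiracusa tropicalization as bend} — namely that $v:k\to T$ is a valuation from a ring into a totally ordered idempotent semiring, and that $\iota$ induces a closed immersion of $k$-schemes into the base change of a monoid scheme — is precisely what is assumed in the corollary. This yields a canonical isomorphism
\[
 \Bend_v(Z)^+ \quad \stackrel\sim\longrightarrow \quad \Trop_{v,\iota}^{GG}(Y)
\]
of semiring schemes, where $Z$ is the blue model associated with $\iota$ (constructed affine-locally as in Section~\ref{subsection: blue scheme associated with a toric embedding}).

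Next, I would apply Corollary~\ref{cor: the bend as Macpherson analytification} to the ordered blue $k$-scheme $Z$ and the valuation $v:k\to T$ into the idempotent semiring $T$ (total order is irrelevant here, so the hypotheses are satisfied). This supplies a canonical isomorphism
\[
 \Bend_v(Z)^+ \quad \stackrel\sim\longrightarrow \quad \An(Z,\cO_k)\Sotimes_{\An(k,\cO_k)}T,
\]
where $\cO_k=\{a\in k\,|\,v(a)\leq 1\text{ in }T^\pos\}$ is the valuation subring provided by the corollary. Composing this isomorphism with the inverse of the one from Theorem~\ref{thm: Giansiracusa tropicalization as bend} yields the desired canonical isomorphism.

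The only real work — and this is essentially administrative rather than mathematical — is to check that the two canonical isomorphisms arise from compatible affine-local constructions, so that the composition is genuinely canonical and not merely the assertion of an isomorphism of abstract objects. Since Theorem~\ref{thm: Giansiracusa tropicalization as bend} is proved by reducing to the affine case via compatible affine presentations of $X$ and of the monoid scheme, and since Corollary~\ref{cor: the bend as Macpherson analytification} is likewise proved affine-locally, both maps are defined on the level of structure sheaves and glue along principal open subsets; thus their composition is canonical. No further computation is needed, and there is no serious obstacle.
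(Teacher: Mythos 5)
Your proof is correct and is exactly the paper's intended argument: the corollary is stated with a terminal \qed and the preceding sentence ("Together with Theorem \ref{thm: Giansiracusa tropicalization as bend}, this yields the following description...") indicates that the proof is precisely the concatenation of Theorem \ref{thm: Giansiracusa tropicalization as bend} with Corollary \ref{cor: the bend as Macpherson analytification}, just as you do. Your extra remark on affine-local compatibility of the two isomorphisms is sound and harmless, though the paper treats it as implicit.
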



\section{Thuillier analytification}
\label{section: Thuillier analytifiction}

An important variant of the Berkovich analytification was found by Thuillier in \cite{Thuillier07} in case of the trivial valuation $v:k\to \T$ of a field $k$. In the following, we will review the definition of $X^\beth$, and reinterpret this topological space in terms of the scheme theoretic tropicalization of $X$.

Let $\cO_\T=\{a\in\T|a+1=1\}$ be the subsemiring of $\T$ whose underlying set is the real interval $[0,1]$. We endow $\cO_\T$ with the real topology of $[0,1]\subset\R$.

In the affine case $X=\Spec B$, the \emph{Thuillier analytification $X^\beth$} consists of all valuations $w:B\to \T$ in $X^\an$ whose image is contained in $\cO_\T$. It comes with the subspace topology of $X^\an$. In the case of an arbitrary $k$-scheme $X$ with affine presentation $\cU$, we define $X^\beth$ as the colimit of $\cU^\beth$ as a topological space.

Recall from section \ref{section: Berkovich analytification} that a blue model of $X$ is a blue $k$-scheme $Z$ together with an isomorphism $Z^+\to X$.

\begin{thm}\label{thm: Thuillier analytification as rational point set}
 Let $Z$ be a blue model of $X$ such that $\cO_Z(U)$ is a ring for all open subsets $U$ of $Z$. Then the Thuillier space $X^\beth$ is naturally homeomorphic to $\Bend_v(Z)(\cO_\T)$.
\end{thm}

\begin{proof}
 Note that the semiring $\cO_\T$ is a local topological Hausdorff semiring with open unit group. This allows us to apply the same proof as for Theorem \ref{thm: Berkovich analytification as rational point set} to verify the claim of the theorem.
\end{proof}

\begin{rem}
 An alternative realization of $X^\beth$ as a rational point set is the following. Define $B^\mon_{\leq1}=\bpgenquot{B^\mon}{a\leq1|a\in B}$ for an ordered blueprint $B$. Then every valuation $w:B^\mon_{\leq1}\to \T$, which is a morphism $\tilde w:B^\mon_{\leq1}\to \T^\pos$, respects the relation $a\leq 1$, i.e.\ $w(a)\in\cO_\T$. If we define $Z^\mon_{\leq1}$ in terms of an affine presentation, then every valuation $\omega:\Spec\T\to Z^\mon_{\leq1}$ factors through the morphism $\Spec\T\to\Spec\cO_\T$ that corresponds to the inclusion $\cO_\T\subset\T$. We conclude that we have natural bijections
 \[
  X^\beth \quad = \quad \Val_v(Z^\mon_{\leq1},\cO_\T) \quad = \quad \Val_v(Z^\mon_{\leq1},\T) \quad = \quad \Bend_v(Z_{\leq1}^\mon)(\T),
 \]
 which yields, in fact, a homeomorphism of topological spaces.
\end{rem}


\section{Ulirsch tropicalization}
\label{section: Ulirsch tropicalization}

In the case of a field $k$ with trivial valuation $v:k\to \T$, and a toroidal embedding $U\subset X$ without self-intersection, Thuillier defines in \cite{Thuillier07} a retraction $X^\beth\to \overline\Sigma_X$ onto an extended cone complex $\overline\Sigma_X$ associated with $U\subset X$. Abramovich, Caporaso and Payne interpret in \cite{Abramovich-Caporaso-Payne12} this retraction map as the tropicalization of $X$. Ulirsch generalizes in \cite{Ulirsch13} Thuillier's tropicalization by associating a log structure with a toroidal embedding. Ulirsch's tropicalization passes through an associated Kato fan that allows him to apply the local tropicalization of Popescu-Pampu and Stepanov in \cite{Popescu-Pampu-Stepanov13} to charts of the log structure. This also recovers the tropicalization of fine and saturated log schemes as studied by Gross and Siebert in \cite{Gross-Siebert13}.

In this section, we will review Ulirsch's tropicalization of log schemes and connect it to the scheme theoretic tropicalization developed in this paper, under some additional assumptions: we restrict ourselves to Zariski log structures and require that the log scheme has an affine open covering that is compatible with the log structure; cf.\ Remark \ref{rem: etale log schemes} and section \ref{subsection: blue scheme associated with a fine and saturated log scheme}. In contrast to \cite{Ulirsch13}, we write all monoids multiplicatively.


\subsection{Kato fans}
A \emph{monoidal space} is a topological space $X$ together with a sheaf of monoids $\cM_X$. Note that every monoid $M$ is local since the complement of its units $M^\times$ forms the unique maximal $m$-ideal of the monoid. Therefore every stalk $\cM_{X,x}$ is a local monoid with maximal $m$-ideal $\fm_x$. A \emph{(local) morphism of monoidal spaces} is a continuous map $\varphi:X\to Y$ together with a morphism $\varphi^\flat:\varphi^{-1}\cM_Y\to \cM_X$ of sheaves of monoids such that the induced morphisms of stalks $\varphi_x:\cM_{Y,y}\to\cM_{X,x}$ map $\fm_y$ to $\fm_x$ for all $x\in X$ and $y=\varphi(x)$. A morphism $\varphi:X\to Y$ of monoidal spaces is \emph{strict} if $\varphi^\flat:\varphi^{-1}\cM_Y\to \cM_X$ is an isomorphism.

The unit group $M^\times$ acts by multiplication on $M$, and the quotient $M/M^\times$ of this action inherits the structure of a monoid since multiplication is commutative. 

A monoid is \emph{sharp} if $M^\times=\{1\}$. A monoidal space $X$ is \emph{sharp} if $\cM_X(U)^\times=\{1\}$ for all open subsets $U$ of $X$. Given a monoidal space $(X,\cM_X)$, we define its associated sharp monoidal space as $(X,\overline\cM_X)$ where $\overline\cM_X=\cM_X/\cM_X^\times$.

A \emph{multiplicative set in $M$} is a multiplicatively closed subset $S\subset M$ that contains $1$. The \emph{localization of $M$ at $S$} is $S^{-1}M=S\times M/\sim$ where $(s,m)\sim (s',m')$ if and only if there is a $t\in S$ such that $tsm'=ts'm$. We denote the class of $(s,m)$ in $S^{-1}M$ by $\frac ms$. A \emph{prime ideal of $M$} is a subset $\fp\subset M$ such that $\fp M=\fp$ and $S=M-\fp$ is a multiplicative set.

The \emph{affine Kato fan $\Spec^KM$} of a monoid $M$ is the following sharp monoidal space. Its underlying topological space is the set of all prime ideals $\fp$ of $M$ endowed with the topology generated by open subsets of the form $U_h=\{\fp|h\notin\fp\}$. Its structure sheaf $\cM_{\Spec^KM}$ associates with $U_h$ the sharp monoid $S^{-1}M/(S^{-1}M)^\times$ where $S=\{h^i\}_{i\geq0}$. A \emph{Kato fan} is a sharp monoidal space that has an open covering by affine Kato fans.

A monoid $M$ is \emph{fine} if it is finitely generated and embeds into its Grothendieck group $M^\gp$. A fine monoid is \emph{saturated} if $a^n\in M$ with $a\in M^\gp$ and $n\geq1$ implies $a\in M$. A Kato fan is \emph{fine and saturated} if it can be covered by affine Kato fans of fine and saturated monoids.

\begin{rem}
 Some of the notions introduced in this section have already been defined for monoids with zero, which are ordered blueprints. Though most notions are in spirit the same and can be recovered by associating an additional element $0$ to a monoid in the sense of this section, there is an important digression in the notion of the spectrum. While the spectrum of a monoid $A$ with zero associates with an open $U_h$ localizations $S^{-1}A$ where $S=\{h^i\}_{i\geq0}$, the affine Kato fan of a monoid $M$ associates with an open $U_h$ the sharp monoid $S^{-1}M/(S^{-1}M)^\times$.
\end{rem}


\subsection{Extended cone complexes}
Consider the intervals $S=(0,1]$ and $S_0=[0,1]$, which are both multiplicative monoids endowed with the real topology. Let $M$ be a fine and saturated monoid. The \emph{cone of $M$} is the homomorphism set $\sigma_M=\Hom(M,S)$ endowed with the compact-open topology where we regard $M$ as a discrete monoid. Note that $\sigma_M$ is indeed a cone in the $\R$-vector space $\Hom(M^\gp,\R_{>0})$ where $\R$ acts on $\R_{>0}$ via the exponential map. 

Let $F$ be a fine and saturated Kato fan and $V'\subset V$ open affine Kato subfans where $V'=\Spec^KM'$ and $V=\Spec^KM$ for some fine, saturated and sharp monoids $M'$ and $M$. Then $M'=S^{-1}M/(S^{-1}M)^\times$ for some multiplicative set $S$ in $M$, which implies that $\sigma_{M'}$ is a face of $\sigma_M$. Let $\Delta$ be the diagram of all cones $\sigma_M$ such that $\Spec^KM$ is an affine open in $F$ together with the face maps $\sigma_{M'}\subset \sigma_M$ for which $\Spec^KM'\subset\Spec^KM$. We define the \emph{cone complex $\Sigma_F$ of $F$} as the colimit of $\Delta$ as a topological space. As a point set $\Sigma_F$ is equal to $\Hom(\Spec^K S,F)$.

Similarly, we define the \emph{extended cone of $M$} as $\bar\sigma_M=\Hom(M,S_0)$ endowed with the compact-open topology. The affine Kato subfans of $F$ yield a diagram $\overline\Delta$ of extended cones and face maps. We define the \emph{extended cone complex $\overline\Sigma_F$ of $F$} as the colimit of $\overline\Delta$ as a topological space. As a point set $\overline\Sigma_F$ is equal to $\Hom(\Spec^K S_0,F)$.


\subsection{Log schemes}
Let $X$ be a $k$-scheme. A \emph{pre-logarithmic structure for $X$} is a sheaf of monoids $\cM_X$ on $X$ together with a morphism $\alpha:\cM_X\to \cO_X$ of sheaves of monoids where the structure sheaf $\cO_X$ is regarded as a sheaf of monoids with respect to multiplication. A \emph{logarithmic structure for $X$} is a pre-logarithmic structure $\cM_X$ such that $\alpha:\cM_X\to \cO_X$ induces an isomorphism $\alpha^{-1}\cO_X^\times\to\cO_X^\times$. One can associate to every pre-logarithmic structure $\cM_X$ the logarithmic structure $\cM_X^a$ that is the push-out of the diagram 
\[
 \xymatrix@R=1pc@C=6pc{\alpha^{-1}\cO_X^\times \ar[r] \ar[d]_{\alpha}& \cM_X \\ \cO_X^\times }
\]
in the category of sheaves in monoids.

A \emph{log scheme} is a scheme $X$ together with a logarithmic structure $\cM_X$. Given a morphism $\varphi:X\to Y$ of schemes, we define the \emph{inverse image $\varphi^\ast\cM_X$} of a logarithmic structure $\cM_Y$ on $Y$ as the logarithmic structure associated with $\varphi^{-1}\cM_Y$. 

A \emph{morphism of log schemes} is a morphism $\varphi:X\to Y$ of schemes together with a morphism $\varphi^\flat:\varphi^\ast\cM_Y\to \cM_X$ of sheaves of monoids such that 
\[
 \xymatrix@R=1pc@C=6pc{\varphi^\ast\cM_Y \ar[r]^{\varphi^\flat} \ar[d]_{\varphi^\ast(\alpha_Y)}& \cM_X\ar[d]^{\alpha} \\ \varphi^{-1}\cO_Y \ar[r]^{\varphi^\sharp} & \cO_X}
\]
commutes.

Let $X$ be a log scheme. Given a monoid $M$, we denote by $M_X$ the constant sheaf with value $M$. A \emph{chart for $X$} is a morphism $\beta:M_X\to\cM_X$ of sheaves of monoids such that $\beta^a:M_X^a\to\cM_X$ is an isomorphism. A log scheme $X$ is called \emph{fine and saturated} if it admits a covering by open subschemes $U_i$ with charts $\beta_i:(M_i)_{U_i}\to\cM_{U_i}$ for fine and saturated monoids $M_i$ where $\cM_{U_i}$ denotes the restriction of $\cM_X$ to $U_i$.

\begin{rem}\label{rem: etale log schemes}
 In this exposition, we restrict ourselves to Zariski log schemes, for which we can make the connection to the scheme theoretic tropicalization precise, and we leave an extension of the theory to \'etale structures to future investigations.
\end{rem}


\subsection{The associated Kato fan}
Recall that $\overline\cM_X=\cM_X/\cM_X^\times$ is the sharp sheaf of monoids associated with $\cM_X$. A log structure $\alpha:\cM_X\to X$ is said to be \emph{without monodromy} if there exists a Kato fan $F$ and a strict morphism $(X,\overline\cM_X)\to F$. This is, for instance, the case if $\alpha:\cM_X\to\cO_X$ is injective. See \cite[Ex.\ B.1]{Gross-Siebert13} and \cite[Ex.\ 4.12]{Ulirsch13} for examples of log structures with monodromy.

The following is the key observation that allows us to define the tropicalization of a fine and saturated log scheme. This is Proposition 4.7 in \cite{Ulirsch13}, though it is essentially already present in \cite{Kato94}.

\begin{prop}\label{prop: the associated Kato fan}
 Let $X$ be a scheme of finite type over $k$ and $\alpha:\cM_X\to\cO_X$ a fine and saturated log structure without monodromy . Then there is a strict morphism $(X,\overline\cM_X)\to F_X$ of sharp monoidal spaces into a Kato fan $F_X$ that is initial for all strict morphisms from $(X,\overline\cM_X)$ into a Kato fan.
\end{prop}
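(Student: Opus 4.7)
The plan is to build $F_X$ by first constructing it locally from charts of the log structure, then gluing the local pieces using the no-monodromy hypothesis, and finally verifying that the resulting strict morphism is initial. Throughout, the sharpening functor $M \mapsto \overline M = M/M^\times$ is the key bridge between log structures and Kato fans.

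First, I would work locally. Since $X$ is fine and saturated, there is an affine open covering $X = \bigcup U_i$ with charts $\beta_i: (M_i)_{U_i} \to \cM_{U_i}$ where each $M_i$ is a fine saturated monoid. Passing to $\overline{M_i} = M_i/M_i^\times$ gives a sharp monoid, and the induced map $\overline{\beta_i}: \overline{M_i} \to \Gamma(U_i, \overline\cM_X)$ is still a chart for the associated sharp monoidal space. I would then define $F_i = \Spec^K \overline{M_i}$ and construct a continuous map $\chi_i: U_i \to F_i$ sending $x$ to the prime ideal $\overline\beta_{i,x}^{-1}(\fm_x) \subset \overline{M_i}$, where $\fm_x$ is the maximal ideal of the stalk $\overline\cM_{X,x}$. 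The chart property (stalkwise isomorphism after taking the associated log structure, combined with sharpening) will give that $\chi_i^\flat: \chi_i^{-1}\cM_{F_i} \to \overline\cM_{U_i}$ is an isomorphism on stalks, hence $\chi_i$ is strict.

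Next I would glue the local morphisms. Here the no-monodromy hypothesis enters essentially: it furnishes some strict morphism $\chi: (X, \overline\cM_X) \to F$ into a Kato fan, which by strictness is automatically compatible with any local description in the sense that on each $U_i$ the morphism $\chi|_{U_i}$ factors through an identification of $F_i$ with an affine open of $F$ (after restricting to the image). Concretely, one first refines the covering so that the images $\chi(U_i)$ lie in affine open Kato subfans of $F$, and then identifies $F_i$ with these subfans via the strictness of both $\chi|_{U_i}$ and $\chi_i$. The hardest step is this gluing: without the no-monodromy assumption, the transition isomorphisms $F_i|_{U_i \cap U_j} \xrightarrow{\sim} F_j|_{U_i \cap U_j}$ might fail to satisfy the cocycle condition, but no-monodromy precisely guarantees that a compatible global system of identifications exists, allowing the $F_i$ to be glued into a Kato fan $F_X$ with a strict morphism $\chi_X: (X, \overline\cM_X) \to F_X$ refining $\chi$.

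Finally, I would verify initiality. Given any strict morphism $\psi: (X, \overline\cM_X) \to G$ to a Kato fan $G$, I need to produce a unique morphism $F_X \to G$ commuting with $\chi_X$ and $\psi$. Locally on $U_i$, strictness of both $\chi_i$ and $\psi|_{U_i}$ identifies their stalks with $\overline{M_i}$ (up to unique isomorphism coming from the chart), so one gets a unique affine morphism $F_i \to G$ factoring $\psi|_{U_i}$. These local morphisms agree on overlaps because strictness forces them to be determined by the sheaf $\overline\cM_X$, and they glue to a unique morphism $F_X \to G$. The main obstacle throughout is organizing the gluing data: the sharpening and strictness hypotheses must be used at every stage to ensure that only canonical (unit-free) identifications occur, so that no monodromy obstruction is introduced in passing between overlapping charts.
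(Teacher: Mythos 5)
The paper does not prove this proposition: it is quoted directly from \cite{Ulirsch13}, Proposition 4.7, with the remark that it is essentially already in \cite{Kato94}. So there is no in-paper argument to compare against, and I evaluate your sketch on its own merits.

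Your outline --- local construction from charts, gluing via the no-monodromy hypothesis, verification of initiality --- is of the right general shape, but two substantive gaps remain. First, $F_i = \Spec^K\overline{M_i}$ is in general strictly larger than the image of $\chi_i$: the chart monoid $M_i$ can witness prime ideals (faces) that never arise as $\chi_i(x)$ for $x\in U_i$, and moreover $F_i$ depends on the choice of chart. You acknowledge this with ``after restricting to the image'', but making this precise --- showing that the image is an open subspace of $F_i$, that the resulting monoidal space is still a Kato fan, and that the construction is independent of the chart --- is where much of the work of \cite{Ulirsch13} lies, and it is not contained in your sentence. Second, and more seriously, the initiality argument asserts that strictness ``forces'' the comparison morphism $F_X\to G$ to be determined by $\overline\cM_X$. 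The sheaf part is so determined, but the underlying continuous map is not: two points of $X$ with isomorphic sharp log stalks need not map to the same point of a given Kato fan $G$. For instance, if $X$ is disconnected with trivial log structure, then the initial Kato fan is the discrete space $\pi_0(X)$, not a single point; the ``same log stalk'' equivalence is coarser than the correct one. Initiality therefore requires identifying the finest topological quotient of $X$ compatible with the sheaf $\overline\cM_X$, and proving that this quotient is a Kato fan --- which is exactly where the no-monodromy hypothesis enters in a way that your sketch does not make visible. The standard route (Kato, Ulirsch) builds $F_X$ directly as such a quotient rather than by patching chart spectra, precisely to avoid the two issues above.
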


We call $F_X$ the \emph{associated Kato fan of $X$} and the composition 
\[
 \chi_X: \ (X,\overline\cO_X) \quad \longrightarrow \quad (X,\overline\cM_X) \quad \longrightarrow \quad F_X
\]
its \emph{characteristic morphism} where $\overline\cO_X=\cO_X/\cO_X^\times$. We briefly write $\chi_X:X\to F_X$ for the characteristic morphism.


\subsection{Tropicalization of a fine and saturated log scheme}
Let $X$ be a fine and saturated log scheme over $k$ with log structure $\alpha:\cM_X\to \cO_X$ and characteristic morphism $\chi_X:X\to F_X$. The \emph{Ulirsch tropicalization $\Trop_\alpha^U(X)$ of $X$} is the extended cone complex $\overline\Sigma_X=\overline\Sigma_{F_X}$ together with a surjective continuous map $\trop_\alpha^U:X^\beth\to\overline\Sigma_X$ that is described for affine open subsets as follows. 

Let $U=\Spec R$ be open in $X$ and $V=\Spec^KM$ open in $F_X$ such that $\chi_X(U)\subset V$ and $M_U\to\cM_U$ is a chart. By Lemma 1.6 in \cite{Kato94}, $X$ and $F_X$ can be covered by such open subsets. This yields a morphism of sheaves of monoids $M_U\to\cM_U\to\overline\cO_U$ and, by taking global sections, a multiplicative map $M\to R/R^\times$. Since every valuation $w:R\to \cO_\T$ maps $R^\times$ to $1$ and since $S_0$ is the underlying monoid of $\cO_\T$, we get a continuous map
\[
 \trop_\alpha^U: \ U^\beth \ = \ \Val_v(R,\cO_\T) \quad \longrightarrow \quad \Hom(M,S_0) \ = \ \overline \Sigma_{\Spec^KM}.
\]

If $\iota:Y\to X$ is a closed immersion of $k$-schemes, then the \emph{Ulirsch tropicalization of $Y$ with respect to $\iota$} is the image $\Trop_{\alpha,\iota}^U(Y)=\trop_\alpha^U(Y)$ of $Y$ in $\Trop_\alpha^U(X)$, together with the restriction $\trop_{\alpha,\iota}^U:Y^\beth\to \Trop^U(Y)$ of $\trop_\alpha^U$ to $Y^\beth$.


\subsection{The associated blue scheme}\label{subsection: blue scheme associated with a fine and saturated log scheme}
Let $X$ be a fine and saturated log scheme $X$ over $k$ without monodromy and $\chi_X:X\to F_X$ the characteristic map into the associated Kato fan $F_X$. Provided that the inverse images $U=\chi^{-1}(V)$ of affine open Kato subfans $V$ of $F_X$ are affine, we obtain a natural system of affine open subschemes of $X$, which we can endow with charts for the log structure. We will use these open subschemes in the definition of the associated blue scheme. 

The following fact is a strengthening of \cite[Lemma 1.6]{Kato94} and \cite[Prop.\ 4.7]{Ulirsch13} under this additional assumption on $U$.

\begin{lemma}\label{lemma: maximal charts for a fine and saturated log scheme}
 Let $V=\Spec^KM_V$ an affine open of $F_X$. Assume that $U=\chi_X^{-1}(V)$ is affine. Then there is a chart $(M_V)_U\to \cM_U$ such that the composition $M_V\to\cM_U(U)\to  \overline\cM_U(U) = M_V$ is the identity on $M_V$.
\end{lemma}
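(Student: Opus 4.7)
The plan is to strengthen Kato's local existence of charts (\cite[Lemma 1.6]{Kato94}) to a single chart defined on all of $U$ by exploiting the hypothesis $U=\chi_X^{-1}(V)$ together with the universal property of $F_X$ from Proposition \ref{prop: the associated Kato fan}.

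First I would use the strictness of $\chi_X$ to establish the canonical identification $\overline\cM_U(U) = M_V$. Strictness means the morphism $\chi_X^{-1}\cM_{F_X}\to\overline\cM_X$ induced by $\chi_X^\flat$ is an isomorphism of sheaves of sharp monoids on $X$; restricting to $U$ and passing to global sections then identifies $\overline\cM_U(U)$ canonically with $\Gamma(V,\cM_{F_X})=M_V$, where the assumption $U=\chi_X^{-1}(V)$ ensures that no points outside $V$ contribute. This is the ``$=$'' appearing in the statement.

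Next, by Kato's lemma each point $x\in U$ has an affine open neighborhood $U_x\subset U$ on which a chart $\beta_x:(N_x)_{U_x}\to\cM_{U_x}$ exists with $N_x$ a fine saturated sharp monoid; by the universality of $F_X$, the induced morphism on sharpifications identifies $N_x$ with a localization of $M_V$. The hypothesis that $U$ itself is affine (not merely a union of affines) is what lets me assemble these local charts into a single one with model monoid $M_V$: I would fix a finite generating set $m_1,\dotsc,m_r$ of $M_V$, use surjectivity of $\cM_U(U)\to\overline\cM_U(U)=M_V$ (guaranteed on affine $U$) to pick lifts $\tilde m_i\in\cM_U(U)$, and then promote the association $m_i\mapsto\tilde m_i$ to a monoid homomorphism $\beta:M_V\to\cM_U(U)$. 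Finally, I would verify that the resulting $\beta:(M_V)_U\to\cM_U$ is a chart by checking on stalks, where its sharpification agrees, by construction and by the universal property of $F_X$, with the localization maps $M_V\to\overline\cM_{U,x}$ coming from the Kato fan structure. If the resulting sharpification $\overline\beta:M_V\to M_V$ turns out to be a nontrivial automorphism rather than the identity, I precompose $\beta$ by $\overline\beta^{-1}$, which preserves the chart property since precomposition with a monoid automorphism does.

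The main obstacle is the promotion of generator lifts to a genuine monoid morphism $M_V\to\cM_U(U)$: a priori a relation $\prod m_i^{a_i}=\prod m_i^{b_i}$ in $M_V$ lifts to $\prod\tilde m_i^{a_i}=u\cdot\prod\tilde m_i^{b_i}$ in $\cM_U(U)$ for some unit $u\in\cO_U^\times(U)$, and one must rescale the $\tilde m_i$ by units so that all relations of the finitely presented monoid $M_V$ are satisfied simultaneously. The obstruction is cohomological in nature (a class involving $\cO_U^\times$), and it is precisely the assumption that $U=\chi_X^{-1}(V)$ is affine, together with the sharpness and finite generation of $M_V$, that allows this rescaling to be carried out globally rather than only after further shrinking $U$.
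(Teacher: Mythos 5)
Your overall shape (fix a set of generators of $M_V$, lift to $\cM_U(U)$, try to promote the assignment to a monoid homomorphism, and then check that the resulting map of sheaves is a chart on stalks) matches the paper's proof, and you correctly flag the one genuinely nontrivial point: the lifted generators need not satisfy the relations of $M_V$ on the nose but only up to units, so one must rescale. However, you misattribute why this rescaling is possible, and in the process you omit the key ingredient the paper actually uses.

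You write that the obstruction is ``cohomological in nature (a class involving $\cO_U^\times$)'' and that it vanishes because ``$U=\chi_X^{-1}(V)$ is affine, together with the sharpness and finite generation of $M_V$.'' This is wrong on two counts. First, affineness of $U$ does not make $H^1(U,\cO_U^\times)=\mathrm{Pic}(U)$ vanish, so if the obstruction really did live there, affineness would not save you. Second, and more importantly, the relevant obstruction is not in sheaf cohomology at all: once lifts $\tilde m_i$ of generators $m_i$ are chosen, comparing the two sides of each relation produces a group homomorphism $u:K\to\cO_U^\times(U)$ from the relation lattice $K=\ker(\Z^r\to M_V^{\gp})$, and what you need is to extend $u^{-1}$ to all of $\Z^r$ (the extension gives the rescaling units $v_i$). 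That extension exists precisely when the inclusion $K\hookrightarrow\Z^r$ splits, i.e.\ when $M_V^{\gp}=\Z^r/K$ is \emph{free}. The paper establishes freeness by noting $M_V^{\gp}$ is finitely generated and \emph{torsion-free}, and the torsion-freeness uses \emph{saturation} in an essential way: if $a^n=1$ in $M_V^{\gp}$ then $a$ and $a^{-1}$ both lie in $M_V$ by saturation, hence $a\in M_V^\times=\{1\}$ by sharpness. Your list of hypotheses --- ``sharpness and finite generation'' --- leaves out saturation, and sharpness together with fineness alone is not enough: a fine sharp monoid that is not saturated can have torsion in its groupification, and then the splitting, and hence the global chart, can genuinely fail. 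So the reason the obstruction dies is an algebraic fact about $M_V$ (that $M_V^{\gp}$ is a free abelian group), not a geometric fact about $U$.

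Two smaller points. The preliminary detour through Kato's local existence of charts is unnecessary for this lemma --- the paper never invokes it, since once the section $s:M_V\to\cM_U(U)$ exists it extends uniquely to a map of sheaves $(M_V)_U\to\cM_U$ using the explicit description of restriction maps in a Kato fan, and the chart property is then checked directly on stalks. And your parenthetical that surjectivity of $\cM_U(U)\to\overline\cM_U(U)=M_V$ is ``guaranteed on affine $U$'' is not justified as stated; affineness alone does not force the global sections of the sheaf surjection $\cM_U\to\overline\cM_U$ to be surjective, since the cokernel maps to $H^1(U,\cO_U^\times)$. This point is taken for granted in the paper as well, but you should not assert it follows from affineness.
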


\begin{proof}
 Since $M_V$ is a fine monoid, it embeds into $M_V^\gp$, which is a finitely generated abelian group. If $a^n=1$ for an element $a\in M_V^\gp$ and $n\geq1$, then we have $a\in M_V$ since $M_V$ is saturated. Since $M_V$ is sharp, we conclude that $a=1$, which shows that $M_V^\gp$ is torsion free and hence a free abelian group of finite rank.
  
 Therefore the multiplicative map $\cM_U(U)\to \overline\cM_U(U) = M_V$ admits a section $s:M_V\to \cM_U(U)$. Since for an open subset $V'=\Spec^KM_{V'}$ of $V$, the restriction map $\rho:M_V\to M_{V'}$ is of the form $M_V\to S^{-1}M_V \to S^{-1}M_V/(S^{-1}M_V)^\times=M_{V'}$ for $S=\{h\in M_V|\rho(h)=1\}$, the section $s$ extends uniquely to a section $s_U:(M_V)_U\to\cM_U$ of sheaves of monoids. 
 
 We are left with showing that $s_U$ is a chart. Clearly, $s_U^a:(M_V)_U^a\to \cM_U$ is a monomorphism of sheaves. It is surjective on stalks since $M_V= \overline\cM_U(U)=\cM_U(U)/\cO_U(U)^\times$ and therefore $M_V\to \overline\cM_{U,x}$ is a surjection for every $x\in U$. Thus $s_U$ is a chart.
\end{proof}

For the rest of this section, we assume that the inverse image $U=\chi^{-1}(V)$ of any affine open $V$ of $F_X$ is affine. 

Let $\iota:Y\to X$ be a closed immersion of $k$-schemes and $\alpha:\cM_X\to \cO_X$ the structure morphisms of sheaves of monoids on $X$. For every affine open $V=\Spec^KM_V$ of $F_X$ and the inverse images $U=\chi_X^{-1}(V)$ and $W=\iota^{-1}(U)$, we obtain the morphism of sheaves of monoids
\[
 \iota^\ast\alpha^\ast\cM_U \quad \longrightarrow \quad \iota^\ast \cO_U \quad \longrightarrow \quad \cO_W.
\]
Taking global sections yields a multiplicative map $\eta_V:\cM(U)\to \cO_W(W)=S_V$. We define the blueprint $B_V=\bpquot{A_V}{\cR_V}$ where
\[\textstyle
 A_V \ = \ \eta_V\bigl(\cM(U)\bigr) \cup \{0\} \qquad \text{and} \qquad \cR_V \ = \ \left\{ \ \sum a_i\=\sum b_j \ \left| \ \sum a_i=\sum b_j\text{ in }S_V \ \right.\right\}.
\]

\begin{lemma}\label{lemma: an inclusion of affine Kato fans yields a finite localization of blueprints}
 An inclusion $V'\subset V$ of open affine Kato subfans of $F_X$ induces a finite localization $B_V\to B_{V'}$.
\end{lemma}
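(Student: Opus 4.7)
First, since $V'=\Spec^K M_{V'}$ is an open affine Kato subfan of $V=\Spec^K M_V$ and $M_V$ is fine, $M_{V'}$ is of the form $S^{-1}M_V/(S^{-1}M_V)^\times$ for some finitely generated multiplicative subset $S$ of $M_V$; by replacing $S$ with the multiplicative set generated by the product of its generators, I may assume that $V'=V_h$ is the principal open associated with a single element $h\in M_V$. This reduces the lemma to exhibiting $B_V\to B_{V'}$ as a localization at a single element.

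Second, I would apply Lemma \ref{lemma: maximal charts for a fine and saturated log scheme} to fix a chart $s_V\colon(M_V)_U\to\cM_U$ whose restriction to global sections splits the projection $\cM(U)\to\overline\cM_U(U)=M_V$. Set $\tilde h=s_V(h)\in\cM(U)$ and $h_0=\eta_V(\tilde h)\in A_V$. Since the characteristic morphism $(X,\overline\cM_X)\to F_X$ is strict, the inverse image $U'=\chi_X^{-1}(V')$ consists of those $x\in U$ at which the image of $h$ in $\overline\cM_{U,x}$ is a unit, equivalently, via the log structure axiom $\alpha^{-1}\cO_U^\times\simeq\cO_U^\times$, those $x$ at which $\alpha(\tilde h)$ is a unit in $\cO_{U,x}$. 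Hence $U'$ is the principal open of the affine scheme $U$ cut out by $\alpha(\tilde h)$, and therefore $W'=\iota^{-1}(U')$ is the principal open of $W$ cut out by $h_0=\iota^\sharp\alpha(\tilde h)$, giving $S_{V'}=\cO_{W'}(W')=S_V[h_0^{-1}]$ as rings.

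Third, I would identify the monoid side. The chart yields (locally) a splitting $\cM_U\simeq(M_V)_U\times\cO_U^\times$ as sheaves of monoids, and restriction to the principal open $U'$ simply adjoins a formal inverse to $\tilde h$ on the $M_V$-factor while restricting $\cO_U^\times$ to $\cO_{U'}^\times$; therefore $\cM(U')=\cM(U)[\tilde h^{-1}]$ as monoids. Applying $\eta_{V'}$ and adjoining $0$ gives $A_{V'}=\eta_{V'}(\cM(U'))\cup\{0\}=A_V[h_0^{-1}]$. To finish, I would compare subadditions: any equality $\sum a_i=\sum b_j$ in $S_{V'}$ between elements of $A_V[h_0^{-1}]$ becomes, after multiplying by a sufficiently large power of $h_0$, an equality in $S_V$ between elements of $A_V$, and conversely the equalities in $S_V$ descend to $S_{V'}$. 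This is precisely the description of the preaddition of $B_V[h_0^{-1}]$ in section \ref{subsection: localizations}, so the canonical map $B_V[h_0^{-1}]\to B_{V'}$ is an isomorphism and $B_V\to B_{V'}$ is a finite localization, as claimed.

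The main obstacle will be the identification $\cM(U')=\cM(U)[\tilde h^{-1}]$ at the level of global sections, since $\cM_U$ is defined as a sheafification and its sections on $U'$ are not a priori a localization of $\cM(U)$. This is where both the existence of a global chart (provided by Lemma \ref{lemma: maximal charts for a fine and saturated log scheme}) and the log structure axiom $\alpha^{-1}\cO_U^\times\simeq\cO_U^\times$ are used in tandem: the chart reduces the computation to the constant sheaf $(M_V)_U$ (where localization is immediate) together with $\cO_U^\times$ (where localization is the standard fact for affine schemes).
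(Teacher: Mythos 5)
Your proof is correct and follows the same strategy as the paper: fix a global chart via Lemma \ref{lemma: maximal charts for a fine and saturated log scheme}, verify geometrically that $\chi_X^{-1}(V')$ is the corresponding principal open of $U$ using the log structure axiom $\alpha^{-1}\cO_U^\times\simeq\cO_U^\times$, deduce the monoid identification from the chart splitting, and compare preadditions by clearing denominators. The one cosmetic difference is that you reduce to a single generator $h$ at the outset, which is valid because a fine monoid has only finitely many primes so every open affine subfan of $\Spec^K M_V$ is a principal open $U_h$, whereas the paper works with the full multiplicative set $S=\pi_U^{-1}(\bar\rho^{-1}(1))$ (and implicitly uses that $\eta_V(S)$ is finitely generated modulo units to obtain a \emph{finite} localization, which your reduction makes explicit).
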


\begin{proof}
 Let $V=\Spec^K M_V$ and $V'=\Spec^KM_{V'}$. Let $U=\chi_X^{-1}(V)$ and $U'=\chi_X^{-1}(V')$. Then the characteristic map $\chi_X$ yields the identifications $\overline\cM_X(U)=M_V$ and $\overline\cM_X(U')=M_{V'}$, and we obtain projections $\pi_V:\cM_X(U)\to M_V$ and $\pi_{V'}:\cM_X(U')\to M_{V'}$.
 
 The inclusion $U'\subset U$ yields the restriction map $\rho:\cM(U)\to\cM(U')$ and the inclusion $V'\subset V$ yields the restriction map $\bar\rho:M_V \to M_{V'}$.  Let $\overline S$ be $\bar\rho^{-1}(1)$ and $S=\pi_U^{-1}(\overline S)$. Since $\bar\rho\circ\pi_U=\pi_{U'}\circ\rho$, the image $\rho(S)$ is contained in the fibre $\pi_{U'}^{-1}(1)$, which is the set $\cM_X(U')^\times$ of invertible elements. Therefore $\rho$ induces a multiplicative map $S^{-1}\cM_X(U)\to\cM_X(U')$. 
 
 We claim that this map is an isomorphism of monoids. By Lemma \ref{lemma: maximal charts for a fine and saturated log scheme}, there are sections $M_V\to \cM_X(U)$ and $M_{V'}\to\cM_X(U')$ to the projections $\pi_V$ and $\pi_{V'}$, respectively, such that $S^{-1}\cM_X(U)=S^{-1}\cO_X(U)^\times M_V$ and $\cM_X(U')=\cO_X(U')^\times M_{V'}$. Thus the claim follows if we can show that $S_\alpha^{-1}\cO_X(U)\to \cO_X(U')$ is an isomorphism where $S_\alpha=\alpha(U)(S)$ is the image of $S$ in $\cO_X(U)$. This can be verified geometrically, i.e.\ it suffices to show that $\Spec S_\alpha^{-1}\cO_X(U)\subset U'$. The former open subset of $U$ consists of all points $x\in U$ such that $S\subset \cM(U)$ is mapped to the units $\cM_{X,x}^\times=\cO_{X,x}^\times$ of the stalk of $\cM_{X}$ in $x$. This means that $S$ is sent to $\{1\}$ in $M_{V'}=\bar S^{-1}M_V/\bar S^\gp$. Therefore $x\in U'$, which shows that $S^{-1}\cM_X(U)\to\cM_X(U')$ is an isomorphism.
 
 We conclude that the natural map $B_V\to B_{V'}$ induces an isomorphism $\eta_V(S)^{-1}A_V\to A_{V'}$ of monoids. It is an isomorphism $\eta_V(S)^{-1}B_V\to B_{V'}$ of blueprints since $R_{V'}=\eta(S)^{-1}R_V$ and consequently the preaddition of $B_{V'}$ is generated by the preaddition of $B_V$. This concludes the proof of the lemma.
\end{proof}

Since $\cM$ is a logarithmic structure for $X$, the monoid $\cM(U)$ contains $k^\times$. Therefore the monoid $\eta_V(\cM(U))\cup\{0\}$ contains $k$, i.e.\ $A_V$ is a blue $k$-algebra, and $\Spec B_V$ is an affine blue $k$-scheme. By Lemma \ref{lemma: an inclusion of affine Kato fans yields a finite localization of blueprints}, the diagram $\cU$ of all morphisms $\Spec B_{V'}\to\Spec B_V$ where $V'\subset V$ are open affine Kato subfans of $F_X$ forms a commutative diagram $\cU$ of affine blue $k$-schemes and open immersions.

We define the \emph{blue $k$-scheme associated with $\alpha:\cM_X\to\cO_X$ and $\iota:Y\to X$} as $Z=\colim \cU$. It comes with a morphism $\beta:Y\to Z$ of blue $k$-schemes and a morphism $\bar\eta:\iota^\ast\cM_X\to \beta^\ast\cO_Z$ of log structures for $Y$ where $\beta^\ast\cO_Z$ is the log structure associated with the pre-log structure $\beta^\sharp:\beta^{-1}\cO_Z\to\cO_Y$. This means that the diagram 
\[
 \xymatrix@C=4pc@R=1pc{\iota^\ast\cM_X \ar[rr]^{\bar\eta} \ar[dr]_{\iota^\ast(\alpha)} && \beta^\ast\cO_Z \ar[dl]^{\beta^\sharp} \\ & \cO_Y}
\]
of morphisms of sheaves of monoids on $Y$ commutes. We define $Z^{+\blue}$ as the colimit of the diagram $(\cU^+)^\blue$ of blueprints.

\begin{thm}\label{thm: Ulirsch tropicalization as rational point set}
 Let $\alpha:\cM_X\to\cO_X$ be a log structure for $X$ with characteristic map $\chi:X\to F_X$. Assume that for every affine open $V$ of $F_X$, the inverse image $U=\chi^{-1}(V)$ is affine. Let $\iota:Y\to X$ a closed immersion of $k$-schemes and $Z$ and $Z^{+\blue}$ be the associated blue $k$-schemes, as defined above. Then the Ulirsch tropicalization $\Trop_{\alpha,\iota}^U(Y)$ is naturally homeomorphic to $\Bend_v(Z)(\cO_\T)$ and the diagram
 \[
  \xymatrix@R=1pc@C=6pc{Y^\beth \ar[d]^\simeq\ar[r]^{\trop_{\alpha,\iota}^U(Y)} & \Trop_{\alpha,\iota}^U(Y)\ar[d]^\simeq \\ \Bend_v(Z^{+\blue})(\cO_\T) \ar[r]^{\Bend_v(\beta)(\cO_\T)} & \Bend_v(Z)(\cO_\T)}
 \]
 of continuous maps commutes.
\end{thm}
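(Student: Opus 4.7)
The strategy is to establish the theorem affine-locally and then glue, using that all the functors involved are compatible with affine coverings. The left vertical is Theorem \ref{thm: Thuillier analytification as rational point set}. For the right vertical and the commutativity of the square, I would first reduce to the case where $V = \Spec^K M_V$ is an affine open Kato subfan of $F_X$ with $U = \chi^{-1}(V)$ affine and $W = \iota^{-1}(U) = \Spec R$, working with the affine piece $\Spec B_V$ of $Z$ associated with $V$ as in section \ref{subsection: blue scheme associated with a fine and saturated log scheme}. The reduction is legitimate because $\Bend_v$ preserves open immersions (Proposition \ref{prop: the bend preserves open closed immersions}), because the fine topology on $\cO_\T$-points is local on the target (Theorem \ref{thm: properties of the fine topology}, applicable since $\cO_\T$ is a local topological Hausdorff semiring with open unit group), and because $\overline\Sigma_{F_X}$ is by construction the colimit of the extended cones $\overline\Sigma_V = \Hom(M_V, S_0)$.

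Next I would construct the candidate bijection $\Psi_V : \Val_v(B_V, \cO_\T) \to \overline\Sigma_V$ by restriction along a chart section. By Lemma \ref{lemma: maximal charts for a fine and saturated log scheme}, fix a section $s : M_V \to \cM(U)$ of the projection $\cM(U) \to \overline\cM(U) = M_V$; this gives a decomposition $\cM(U) = \cO(U)^\times \cdot s(M_V)$, so every nonzero element of $A_V = \eta_V(\cM(U)) \cup \{0\}$ is of the form $\eta_V(u) \cdot \eta_V(s(m))$ with $u \in \cO(U)^\times$ and $m \in M_V$. Since $\eta_V(\cO(U)^\times) \subset R^\times$ and $\cO_\T^\times = \{1\}$, every valuation $w : B_V \to \cO_\T$ extending the trivial $v : k \to \cO_\T$ must send units to $1$, so $w$ is determined by $\phi := w \circ \eta_V \circ s : M_V \to S_0$. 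Thus $\Psi_V$ is injective. A direct comparison of basic opens (as in the proofs of Theorems \ref{thm: Berkovich analytification as rational point set} and \ref{thm: Kajiwara-Payne tropicalization as rational point set}, using that multiplication on $\cO_\T$ is continuous) shows that $\Psi_V$ is a homeomorphism onto its image.

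Third, I would verify that the square commutes affine-locally and identify $\im \Psi_V$ with $\Trop^U_{\alpha,\iota}(W)$. Commutativity is immediate from unwinding: for $w \in W^\beth$, the composition $W^\beth \xrightarrow{\sim} \Bend_v(W)(\cO_\T) \to \Bend_v(\Spec B_V)(\cO_\T) \xrightarrow{\Psi_V} \overline\Sigma_V$ sends $w$ to $m \mapsto w(\eta_V(s(m)))$, which is the definition of $\trop^U_{\alpha,\iota}(w)$. This gives $\Trop^U_{\alpha,\iota}(W) \subseteq \im \Psi_V$ at once. The reverse inclusion is the main obstacle: one must show that every valuation $w : B_V \to \cO_\T$ lifts to some $\tilde w : R \to \cO_\T$ extending $v$. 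Given $\phi = \Psi_V(w)$, the natural candidate is the ``max'' valuation $w_\phi : k[M_V]^+ \to \cO_\T$ defined by $\sum c_i [m_i] \mapsto \max_{c_i \neq 0} \phi(m_i)$; this descends through the chart map $k[M_V]^+ \to R$ precisely because the preaddition of $B_V$ is by construction inherited from $R$, so the bend relations satisfied by $w$ force the required identities $\max \phi(m_i) = \max \phi(m'_j)$ whenever $\sum c_i \eta_V(s(m_i)) = \sum d_j \eta_V(s(m'_j))$ in $R$. Extending to all of $R$ then uses that for fine and saturated Zariski log structures without monodromy, the ring $R$ is generated over $k$ by $\eta_V(\cM(U))$ together with units, which are all sent to $\cO_\T^\times = \{1\}$ so that multiplicativity completes the definition.

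Finally, the local homeomorphisms $\Psi_V$ are compatible with inclusions $V' \subseteq V$ of affine Kato subfans via Lemma \ref{lemma: an inclusion of affine Kato fans yields a finite localization of blueprints} and the canonical restriction of chart sections, and with the face inclusions $\overline\Sigma_{V'} \hookrightarrow \overline\Sigma_V$; they therefore glue to a global homeomorphism $\Bend_v(Z)(\cO_\T) \simeq \Trop^U_{\alpha,\iota}(Y)$ making the square commute. I expect the genuinely subtle part of the argument to be the lifting step in the third paragraph, where the interplay between the inherited preaddition of $B_V$, the chart $k[M_V]^+ \to R$, and the idempotency and unit-triviality of $\cO_\T$ is essential for identifying the schematic moduli of valuations on $B_V$ with the set-theoretic image of $W^\beth$ in $\overline\Sigma_V$.
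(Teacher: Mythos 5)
The first two paragraphs and the glueing step are sound and follow the same route as the paper: reduce to the affine case via $\Bend_v$ preserving open immersions and the locality of the fine topology, build the map on $\cO_\T$-points from a chart section, and compare basic opens as in Theorems \ref{thm: Berkovich analytification as rational point set} and \ref{thm: Kajiwara-Payne tropicalization as rational point set}. The gap is exactly where you suspect it, in the lifting step of paragraph three, and the proposed fix does not work.

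You need $j^\ast:\Val_v(R,\cO_\T)\to\Val_v(B_V,\cO_\T)$ to be surjective (in your notation $R=\Gamma W$), and you try to produce a lift of $w:B_V\to\cO_\T$ directly via the ``max'' formula on $k[M_V]^+$, appealing to the claim that ``$R$ is generated over $k$ by $\eta_V(\cM(U))$ together with units.'' This claim is false in general, and without it multiplicativity gives you nothing beyond the subring $B_V^+$. Take $Y=X=\A^2_k=\Spec k[t,s]$ with the divisorial log structure from $\{t=0\}$: this is fine, saturated, Zariski, without monodromy, and its Kato fan $\Spec^K\N$ pulls back to affines. Here $\eta(\cM_X(X))=\{ct^n:c\in k^\times,\,n\geq 0\}$, the units of $R$ are $k^\times$, and the subring they generate is $k[t]\subsetneq k[t,s]$. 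A valuation on $B_V$ carries no information about $w(s)$, so there is no canonical extension ``by multiplicativity''; one must genuinely argue that some extension exists. The existence of such extensions is precisely the surjectivity of $\trop_\alpha^U:X^\beth\to\overline\Sigma_X$, which is a nontrivial prior result in the Thuillier/Ulirsch framework (local tropicalization, skeletal retractions) rather than a formal consequence of the blueprint structure. The paper invokes that surjectivity as a black box to produce $w':R\to\cO_\T$ with $\trop_\alpha^U(w')=\Psi(w)$ and then checks that $w'$ factors through $S$. Your argument should do the same, or at least reduce the lifting explicitly to the known surjectivity; the ``max'' construction cannot replace it, because the domain of that construction is only $B_V^+$, which is in general a proper subring of $R$.

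A smaller point: the descent of the max formula along $k[M_V]^+\to R$ is also not automatic, since that $k$-algebra map is not surjective in the Ulirsch setting (unlike the toric/Giansiracusa situation where $\eta^+_k$ is assumed surjective). What the bend relations control is well-definedness on $A_V$, not on the image of $k[M_V]^+$, and certainly not on $R$. Once you invoke the surjectivity of $\trop_\alpha^U$ this issue disappears, since one then works with honest valuations on $R$ rather than trying to extend partial data.
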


\begin{proof}
 Since all functors and maps in question are defined locally, we can assume that $X=\Spec R$ is an affine fine and saturated log scheme with affine Kato fan $F_X=\Spec^KM$. Then the closed immersion $\iota:Y\to X$ corresponds to a surjection $R\to S$ of rings. Let $\eta:\cM_X(X)\to R\to S$ be the multiplicative map that is induced by the log structure $\alpha:\cM_X\to \cO_X$ and define $A=\eta(\cM_X(X))\cup\{0\}$ and the preaddition $\cR=\{\sum a_i\=\sum b_j|\sum a_i=\sum b_j\text{ in }S\}$ on $A$. Then the blue $k$-scheme associated with $\alpha$ and $\iota$ is $Z=\Spec B$ for the blueprint $B=\bpquot{A}{\cR}$.
 
 Recall that the tropicalization of $Y$ is defined as the image of $Y^\beth$ under $\trop_\alpha^U:X^\beth\to\Trop_\alpha^U(X)$ together the restriction of $\trop_\alpha^U$ to $Y^\beth$. Together with the identifications that we used to define the tropicalization map, we obtain the commutative diagram
 \[
  \xymatrix@R=1,5pc@C=2pc{   &                                                                                  &&& \Val_v(B,\cO_\T) \ar@{..>}[d]^\Psi \\
    Y^\beth \ar@{}[r]|(0.4)= & \Val_v(S,\cO_\T) \ar[urrr]^{j^\ast} \ar[rrr]^{\trop_{\alpha,\iota}^U} \arincl[d] &&& \im\bigl( \trop_\alpha^U(Y) \bigr) \ar@{}[r]|(0.55)= \arincl[d] & \Trop^U(Y) \\
    X^\beth \ar@{}[r]|(0.4)= & \Val_v(R,\cO_\T) \ar[rrr]^{\trop_{\alpha}^U}                                     &&& \Hom(M,\cO_\T) \ar@{}[r]|(0.55)=                                & \Trop^U(X)     }
 \]
 where $j^\ast$ is induced by the inclusion $j:B\to S$. In the following, we will define the dotted arrow $\Psi$ in the diagram and show that it is a bijection that commutes with the other maps of the diagram.
 
 Note that $\eta$ restricts to a multiplicative map $\cM_X(X)\to B$, which induces a multiplicative map $\bar\eta:M=\overline\cM_X(X)\to B/B^\times$. Let $w:B\to \cO_\T$ be a valuation that extends the trivial valuation $v:k\to \cO_\T$. We define the multiplicative map $\Psi(w):M\to \cO_\T$ as $a\mapsto w(a')$ where $a'\in B$ is a representative of $\bar\eta(a)\in B/B^\times$. Note that the definition $\Psi(w)$ is independent from the choice of $a'$ since $w$ sends $B^\times$ to $\cO_T^\times=\{1\}$. This defines $\Psi$ as a map from $\Val_v(B,\cO_\T)$ to $\Hom(M,\cO_\T)$. This map is injective since $\bar\eta(M)=B/B^\times-\{0\}$ and every valuation $w:B\to\cO_\T$ maps $0$ to $0$. It commutes with $\trop_\alpha^U$ and $j^\ast$ since the latter maps are defined as the restrictions of valuations $w:R\to\cO_\T$ to $M$ and $B$, respectively.
 
 The next step is to show that $j^\ast$ is surjective. Since $\trop_\alpha^U:\Val_v(R,\cO_\T)\to \Hom(M,\cO_\T)$ is surjective, every valuation $w:B\to \cO_\T$ extends to a valuation $w':R\to\cO_\T$. Since the preaddition of $B$ contains all relations of $S$, $w'$ factors through $S$ and defines a valuation $w'':S\to\cO_\T$. Thus $j^\ast(w'')=w$, which shows that $j^\ast$ is surjective. As a consequence, $\Psi(w)=\trop_{\alpha,\iota}^U(w'')$ is contained in $\Trop_{\alpha,\iota}^U(Y)$ and $\Psi:\Val_v(B,\cO_T)\to \Trop_{\alpha,\iota}^U(Y)$ is a bijection as claimed.
 
 By Theorem \ref{thm: tropicalization for idempotent base}, we have natural identifications $\Val_v(S,\cO_\T)=\Hom_\T(\Bend_v(S),\cO_\T)$ and $\Val_v(B,\cO_\T)=\Hom_\T(\Bend_v(B),\cO_\T)$. Under these identifications, $j^\ast$ corresponds to $\Bend_v(j)^\ast$, which is equal to $\Bend_v(\beta)(\cO_\T)$. This establishes the commutativity of the diagram of the theorem.
 
 It remains to show that the bijection $\Trop_{\alpha,\iota}^U(Y)\to \Bend_v(Z)(\cO_\T)$ is a homeomorphism. This can be shown by the same technique as used in the proof of Theorem \ref{thm: Kajiwara-Payne tropicalization as rational point set}, and we leave out the details.
\end{proof}

\begin{rem}
 In some sense, the construction of the associated blue scheme is reverse to the association of a log scheme with a blue scheme, as considered in \cite[section 7.3]{blueprintedview}. In the following section \ref{subsection: recovering the Kato fan}, we give this a precise meaning in terms of a universal property satisfied by the associated blue scheme.
\end{rem}

\begin{ex}[Toric varieties]
 Let $\Delta$ be a fan in $N_\R$ and $X(\Delta)$ be the associated toric $k$-variety. For a cone $\tau$ in $\Delta$, let $M_\tau=\tau^\vee\cap N_\Z^\vee$ be the associated monoid, cf.\ section \ref{subsection: toric varieties}. 
 
 The associated log structure $\alpha:\cM_X\to\cO_X$ can be described as follows. For $\tau$ in $\Delta$, we define $\cM_{X_\tau}=(M_\tau)_{X_\tau}^a$ where the pre-log structure $(M_\tau)_{X_\tau}\to\cO_{X_\tau}$ comes from the natural inclusion $M_\tau\to\Z[M_\tau]^+=\cO_X(X_\tau)$. Consequently, $\cM_X$ comes with the charts $(M_\tau)_{X_\tau}\to\cM_{X_\tau}$, and $(X,\cM_X)$ is a fine and saturated log scheme.
 
 The blue $k$-scheme $Z$ associated with $X(\Delta)$ is defined locally as $Z_\tau=\Spec k[M_\tau]$, and it comes together with a canonical morphism $X(\Delta)\to Z$. By the very definition of the blue $k$-scheme $Z'$ associated with $(X,\cM_X)$, we see that $Z$ and $Z'$ are naturally isomorphic and that the canonical morphisms $\beta:X(\Delta)\to Z$ and $\beta':X(\Delta)\to Z'$ agree. 
 
 Note that in this case, the closed embedding $\iota:Y\to X(\Delta)$ is the identity and that the morphism $\bar\eta:\iota^{-1}\cM_X\to \beta^{-1}\cO_Z$ of sheaves of monoids on $X$ is an isomorphism.
 
 For closed subvarieties $Y\hookrightarrow X(\Delta)$, we get two different blue models $Y^\toric$ and $Y^\log$ for $Y$. While $Y^\toric$ inherits its structure from the blue scheme structure of $Z$ along the closed embedding $Y\hookrightarrow X(\Delta)$, and therefore is a closed blue subscheme of $Z$, the blue scheme $Y^\log$ is defined by the restriction of the boundary divisor of $X(\Delta)$ to $Y$. These two blue models of $Y$ come with a morphism $Y^\log\to Y^\toric$, but this morphism is not an isomorphism in general.
 
 An example where $Y^\toric$ and $Y^\log$ do not agree is the following. Consider the quadric $Y$ defined by $x^2+y^2+z^2$ in $\P^{2,+}_k=\Proj k[x,y,z]^+$ with the canonical toric structure. Then $Y^\toric$ is the closed blue subscheme $\Proj \bpgenquot{k[x,y,z]}{x^2+y^2+z^2}$ of $\P^2_k$. The restriction $D_Y$ of the boundary divisor $D$ of $\P^{2,+}_k$ to $Y$ consists of the six points of $Y$ that lie on the intersection of $Y$ with $D$. Therefore the Kato fan of the log structure on $Y$ associated with $D_Y$ has six closed points and does not embed into the Kato fan for the canonical log structure of $\P^{2,+}_k$, which has only three closed points. Consequently $Y^\log$ does not embed into $\P^2_k$.
\end{ex}

\begin{ex}[Canonical log structure of a divisor]
 Let $X$ be an integral $k$-scheme of finite type and $H=H_1+\dotsb+H_r$ a Weil divisor where the $H_i$ are pairwise coprime codimension one $k$-subschemes of $X$. Let $\alpha:\cM_X\to\cO_X$ be the canonical log structure of $H$. In the following, we give an explicit description of the associated blue $k$-scheme $Z$. Note that this generalizes the previous example.

 Define $I=\{1,\dotsc,r\}$ and let $U_J$ be the complement of $\bigcup_{i\in J}H_i$ in $X$ for $J\subset I$. Note that $U_J\subset U_{J'}$ for $J'\subset J$, and in particular $U_I\subset U_J$ for all $J$. Define the blueprint $B_J=\bpquot{A_J}{\cR_J}$ as 
 \[\textstyle
  A_J \ = \ \cO_X(U_J)\cap\cO_X(U_I)^\times \quad \text{and} \quad \cR_J \ = \ \left\langle \ \sum a_i\=\sum b_j \ \left| \ \sum a_i=\sum b_j \text{ in } \cO_X(U_J) \ \right.\right\rangle.
 \]
 We obtain induced morphisms $B_{J'}\to B_J$ for $J'\subset J$. Let $\cS$ be the set of subsets of $J\subset I$ such that $B_J\to B_I$ is a finite localization. We denote by $\cV$ the diagram of affine blue $k$-schemes $Z_J=\Spec B_J$ with $J\in\cS$ together with the open immersions $\Spec B_J\to\Spec B_{J'}$ for $J'\subset J$. Then the blue $k$-scheme $Z$ associated with the log structure $\alpha$ is the colimit of $\cV$.
\end{ex}


\subsection{Recovering the Kato fan}\label{subsection: recovering the Kato fan}
If we assume that $\chi^{-1}(U)$ is affine for all affine opens $U\subset F$ and if we assume that $\alpha:\cM_X\to\cO_X$ is a monomorphism, i.e.\ $\cM_X(U)\to\cO_X(U)$ is injective for all open subsets $U$ of $X$, then we can recover the Kato fan $F_X$ and the extended cone complex $\overline\Sigma_X$ from the scheme theoretic tropicalization of a fine and saturated log scheme $Y=X$ over a field $k$ as follows.

Let $\alpha:\cM_X\to \cO_X$ be the log structure of $X$ and $Z$ the associated blue $k$-scheme. Let $v:k\to \cO_\T$ be the trivial valuation, which factors into the trivial valuation $v_0:k\to\B$ followed by the unique inclusion $i:\B\to\cO_\T$. Therefore, we have $\Bend_v(Z)=\Bend_{v_0}(Z)\otimes_\B\cO_\T$. Conversely, any morphism $p:\cO_\T\to\B$ induces the identification $\Bend_{v_0}(Z)=\Bend_v(Z)\otimes_{\cO_\T}\B$ since $v:k\to\cO_\T$ has image $\{0,1\}$. 

Let $Z_0=\Bend_{v_0}(Z)$, and let $Z_0^\bullet$ be the underlying monoid scheme, as defined in Theorem \ref{thm: endofunctors of ordered blue schemes}. Note that $Z_0^\bullet$ is integral. Therefore we can endow $Z_0^\bullet$ with the sheaf $\overline\cO_{Z_0^\bullet}$ of strict monoids $\overline\cO_{Z_0^\bullet}(V) =(\cO_{Z_0}(V)^\bullet-\{0\})/\cO_{Z_0^\bullet}(V)^\times$ where $V\subset Z_0$ is open.

\begin{thm}\label{thm: recovering the Kato fan}
 The monoidal space $(Z_0^\bullet,\overline\cO_{Z_0^\bullet})$ is naturally isomorphic to the Kato fan $F_X$ of $X$. Consequently, $\Bend_{v}(Z)(\cO_\T)=F_X(\cO_\T)$ comes with the structure of an extended cone complex.
\end{thm}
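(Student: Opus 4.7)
The plan is to prove the theorem locally on affine patches and then glue. Fix an affine open $V=\Spec^KM_V$ of $F_X$ and let $U=\chi^{-1}(V)$; by our standing hypothesis $U$ is affine, and the local blueprint $B_V=\bpquot{A_V}{\cR_V}$ of $Z$ has $A_V$ canonically identified with $\cM_X(U)\cup\{0\}$ sitting inside $\cO_X(U)$ (using that $\alpha$ is a monomorphism, so $\eta_V$ is injective).

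First I would compute the underlying monoid of $\Bend_{v_0}(B_V)$. Since the trivial valuation $v_0:k\to\B$ collapses $k^\times$ to $1$, the tensor product $A_V\otimes_{k^\bullet}\B$ is the quotient monoid $A_V/k^\times$, and the bend relations are of the form $a+\sum b_j\=\sum b_j$, which only affect multi-term sums and so do not further identify elements of this monoid. Passing to the associated subcanonical blue scheme via $\cF$ and then extracting the underlying monoid scheme $Z_0^\bullet$ yields, locally, the Kato-style spectrum of $A_V/k^\times$.

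Next I would identify the sheaf $\overline\cO_{Z_0^\bullet}$ with the structure sheaf of the Kato fan. By construction the stalk of $\overline\cO_{Z_0^\bullet}$ at a prime $\fp$ of $A_V/k^\times$ is the sharp quotient of the localization $(A_V/k^\times)_\fp$ by its units. The splitting $\cM_X(U)=\cO_X(U)^\times\cdot s(M_V)$ from Lemma \ref{lemma: maximal charts for a fine and saturated log scheme} shows that $A_V/A_V^\times$ is canonically $M_V$ (with adjoined zero); more generally, the sharp quotient of any localization of $A_V/k^\times$ depends only on the image of $\fp$ in $M_V$, which matches the Kato structure sheaf on $\Spec^KM_V$ on the nose. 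Compatibility with restrictions follows from Lemma \ref{lemma: an inclusion of affine Kato fans yields a finite localization of blueprints} together with Lemma \ref{lemma: bend relations commute with localizations}: an inclusion $V'\subset V$ produces compatible finite localizations of blueprints that descend through $\Bend_{v_0}$ and the sharp quotient to the transition maps of $F_X$. The local isomorphisms therefore glue to a natural isomorphism of monoidal spaces $(Z_0^\bullet,\overline\cO_{Z_0^\bullet})\simeq F_X$.

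For the consequence about $\Bend_v(Z)(\cO_\T)$, I would observe that since $v:k\to\cO_\T$ has image $\{0,1\}$ and every morphism $\Spec\cO_\T\to\Bend_v(Z)$ sends units of $\cO_\T$ to units and factors through its sharp quotient, the set $\Bend_v(Z)(\cO_\T)$ is naturally identified with $\Hom(\Spec^KS_0,Z_0^\bullet)$ where $S_0=\cO_\T^\bullet$. Under the isomorphism from the first half of the theorem, this agrees with $\Hom(\Spec^KS_0,F_X)=\overline\Sigma_X$; combined with Theorem \ref{thm: Ulirsch tropicalization as rational point set}, which already identifies $\Bend_v(Z)(\cO_\T)$ with $\Trop_\alpha^U(X)=\overline\Sigma_X$, this endows $F_X(\cO_\T)$ with the structure of the extended cone complex. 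The main obstacle is the bookkeeping of units in the third step: one must confirm that the bend relations do not create spurious identifications in the underlying monoid of $\Bend_{v_0}(B_V)$ and that the sharp quotient of localizations agrees with the Kato structure sheaf at every prime, and not merely at the closed point where Lemma \ref{lemma: maximal charts for a fine and saturated log scheme} applies directly.
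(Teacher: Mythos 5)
Your argument follows essentially the same route as the paper: reduce to an affine patch, observe that the trivial valuation collapses $B^\bullet\otimes_{k^\bullet}\B$ to $B^\bullet/k^\times$, argue that the bend relations introduce no new identifications of monoid elements, match the resulting spectrum with the affine Kato fan of $M_V$, and glue. The paper phrases the last steps slightly differently --- it composes explicit homeomorphisms $\Spec\Bend_{v_0}(B)^\bullet\to\Spec B^\bullet\to\Spec^K\cM_X(X)\to\Spec^K M$ and then compares global and local sections directly, whereas you do a stalk-by-stalk comparison and lean more explicitly on the splitting from Lemma \ref{lemma: maximal charts for a fine and saturated log scheme}; these are two presentations of the same computation. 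Likewise the final paragraph about $\Bend_v(Z)(\cO_\T)$ agrees in substance with the paper's (terse) remark that the extended cone complex structure is induced by the topology of $F_X$.

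One point deserves more care than "only affects multi-term sums": the bend generators $a+\sum b_j\=\sum b_j$ can have a \emph{single} term on the right when $\sum b_j$ is a monomial $b$, giving $a+b\=b$; a priori such relations could, in the generated preaddition, force an identification $a\=b$ of distinct monomials. The paper rules this out by invoking that $B$ is \emph{algebraic}: any relation $a\leq b$ between monomials in $B$ is symmetric, so by properness (axiom \ref{B6}) it forces $a=b$ already in $B^\bullet$, and hence all single-term bend generators are of the idempotent form $b+b\=b$. You flag this as "the main obstacle" and defer it, which is fair, but the missing ingredient is precisely algebraicity of $B$; without it the claim that the underlying monoid of $\Bend_{v_0}(B)$ is $B^\bullet/k^\times$ would not go through.
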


\begin{proof}
 It suffices to verify this theorem in the affine case. Thus let $X=\Spec R$, $Z=\Spec B$ and $F_X=\Spec^K M$.
 
 Then $\Bend_{v_0}(B)=\bpquot{(B^\bullet\otimes_{k^\bullet}\B)}{\bend_{v_0}(B)}$. If the bend relation contains $a\=b$, then this relation is a sequence of generators of $\bend_{v_0}(B)$. In particular, $\bend_{v_0}(B)$ must contain a generator of the form $\sum c_j\=b$, which must be of the form $b+c\=b$ and come from $b\leq c$. Since $B$ is algebraic, we have $b=c$ in $B$. This shows that the underlying monoid of $\Bend_{v_0}(B)$ is $B^\bullet\otimes_{k^\bullet}\B$.
 
 Since $v_0$ is surjective with fibres $v_0^{-1}(0)=\{0\}$ and $v_0^{-1}(1)=k^\times$, we have $B^\bullet\otimes_{k^\bullet}\B^\bullet\simeq B^\bullet/k^\times$. It is easily verified that this implies that the association $\fp\mapsto\pi^{-1}(\fp)$ defines a homeomorphism between the prime spectra of $\Bend_{v_0}(B)^\bullet= B^\bullet\otimes_{k^\bullet}\B^\bullet$ and $B^\bullet$.
 
 The association $\fp\mapsto\fp\cup\{0\}$ defines a homeomorphism between the affine Kato fan of $\cM_X(X)$ and the prime spectrum $B^\bullet=\cM_X(X)\cup\{0\}$. Similar to the surjection $\pi$, the surjection $\cM_X(X)\to M=\cM_X(X)/\cM_X(X)^\times$ induces a homeomorphism between the respective affine Kato fans.

 The composition of these homeomorphisms yields a homeomorphism $(Z_0^\bullet,\overline\cO_{Z_0^\bullet})\to F_X$. We have 
 \[
  M \ = \ \cM_X(X)/\cM_X(X)^\times \ = \ (B^\bullet-\{0\})/B^\times \ = \ (\Bend_{v_0}(B)^\bullet-\{0\})/\Bend_{v_0}(B)^\times,
 \]
 which shows that the monoids of global sections coincide. Since for both monoidal spaces $(Z_0^\bullet,\overline\cO_{Z_0^\bullet})$ and $F_X$, local sections are defined in terms of localizations modulo units of $M$ and $\Bend_{v_0}(B)^\bullet$, respectively, we conclude that $(Z_0^\bullet,\overline\cO_{Z_0^\bullet})$ is naturally isomorphic to $F_X$ as a monoidal space.
 
 The second claim of the theorem follows since the structure of $\overline\Sigma_X=F_X(\cO_\T)$ as an extended cone complex is induced by the topology of $F_X=(Z_0^\bullet,\overline\cO_{Z_0^\bullet})$.
\end{proof}


\subsection{What is new?}
\label{subsection: What is new for Ulirsch?}

The interpretation of the Ulirsch tropicalization as a scheme theoretic tropicalization enhances the topological spaces $\Trop_{\alpha,\iota}^U(Y)$ with a scheme structure, which was bound to subvarieties of toric varieties in terms of the Giansiracusa tropicalization so far. In particular, the scheme theoretic tropicalization endows the Ulirsch tropicalization intrinsically with a topology, which allows us to detach the Ulirsch tropicalization from its ambient extended cone complex. 

A posteriori, we can recover the extended cone complex via the natural identification of the Kato fan with the prime spectrum of the underlying monoid scheme of the scheme theoretic tropicalization.

Besides these structural improvements of the Ulirsch tropicalization, we observe that the associated blue scheme $Z$ of a fine and saturated log scheme $X$ can be tropicalized along any valuation $v:k\to \T$. This yields an immediate answer to some questions posed in section 9.1 of the overview paper \cite{Abramovich-Chen-Marcus-Ulirsch-Wise15} by Abramovich, Chen, Marcus, Ulirsch and Wise.


\bibliographystyle{plain}




\end{document}